\documentclass[onefignum,onetabnum,final]{siamart250211}
\usepackage{lipsum}
\usepackage{amsmath,amssymb,amsfonts,latexsym,stmaryrd}
\usepackage{graphicx,float}
\usepackage{mathrsfs} 
\usepackage{color}
\usepackage{epstopdf}
\usepackage{algorithmic}
\usepackage{multirow}
\ifpdf
\DeclareGraphicsExtensions{.eps,.pdf,.png,.jpg}
\else
\DeclareGraphicsExtensions{.eps}
\fi
\usepackage{diagbox}

\usepackage{amsmath,amssymb,epsfig,float,color,xcolor}
\def\CT{\mathcal{T}}
\newcommand\eff{\texttt{eff}}

\numberwithin{equation}{section}

\newcommand\N{\mathbb{N}}
\newcommand\R{\mathbb{R}}

\renewcommand\dim{\mathop{\mathrm{dim}}\nolimits}

\renewcommand\O{\Omega}
\newcommand\G{\Gamma}

\renewcommand\H{\mathrm{H}}

\newcommand\HcuO{\H_{\G}^1(\O)}

\newcommand\HurO{\H^{1+r}(\O)}

\newcommand\HusO{\H^{1+s}(\O)}

\renewcommand\S{\Sigma}
\renewcommand\sp{\mathop{\mathrm{sp}}\nolimits}

\def\l{\lambda}
\def\CE{{\mathcal E}}
\def\CT{{\mathcal T}}

\newcommand{\vertiii}[1]{{\left\vert\kern-0.25ex\left\vert\kern-0.25ex\left\vert #1 
		\right\vert\kern-0.25ex\right\vert\kern-0.25ex\right\vert}}
\allowdisplaybreaks

\newcommand{\vertiH}[1]{{\left\vert\kern-0.25ex\left\vert #1 
		\right\vert\kern-0.25ex\right\vert_{\H(h)}}}
\newsiamremark{remark}{Remark}
\newsiamremark{hypothesis}{Hypothesis}
\newsiamremark{problem}{Problem}
\crefname{hypothesis}{Hypothesis}{Hypotheses}
\newsiamthm{claim}{Claim}

\headers{Nitsche method for the Laplace eigenvalue problem}{Arbaz Khan, David Mora and Jesus Vellojin}

\title{Nitsche-based FEM for the Laplace eigenvalue problem: spectral approximation and a posteriori error analysis\thanks{Submitted to the editors DATE.
		\funding{DM was partially supported by project Centro de Modelamiento Matemático (CMM), FB210005, BASAL funds for centers of excellence, by the National Agency for Research and Development, ANID-Chile through project Anillo of Computational Mathematics for Desalination Processes ACT210087 and by FONDECYT project 1220881. }}}

\author{Arbaz Khan \thanks{Department of Mathematics, Indian Institute of Technology (IIT) Roorkee, India. \email{arbaz@ma.iitr.ac.in}}\and David Mora\thanks{GIMNAP, Departamento de Matem\'atica, Universidad del B\'io-B\'io, Concepci\'on, Chile
and CI$^2$MA, Universidad de Concepci\'on, Chile. \email{dmora@ubiobio.cl}}
	\and Jesus Vellojin\thanks{Departamento de Ciencias, Universidad T\'ecnica Federico Santa Mar\'ia, Valpara\'iso, Chile. \email{jesus.vellojinm@usm.cl}.}}

\usepackage{amsopn}

\begin{document}
	
	\maketitle
	
	\begin{abstract}
		In this paper, we present the numerical analysis of an elliptic eigenvalue problem in which the essential boundary condition is imposed weakly by means of the Nitsche method. The resulting discrete eigenvalue problem is studied within the framework of compact operator theory. We prove norm convergence of the discrete solution operator and derive error estimates for the eigenvalues and eigenfunctions, with rates depending on the chosen Nitsche variant. In addition, we develop an a posteriori error analysis and propose a residual-based estimator suitable for adaptive refinement. Several numerical experiments are presented to assess the convergence, stability and robustness of the method, including the influence of the Nitsche stabilization parameter and the performance of the adaptive strategy.		
	\end{abstract}
	
	\begin{keywords}
		Eigenvalue problems, Nitsche method, A posteriori error estimates
	\end{keywords}
	
	% REQUIRED
	\begin{AMS}
	65N15, 65N22, 65N25, 65N30
	\end{AMS}
	
%	\keywords{elasticity eigenvalue problem \and Stokes eigenvalue problem \and
%		finite elements \and a priori error estimates \and a posteriori error bounds}
%	\subclass{65N30 \and 65N12 \and 76D07 \and 65N15}

	%************************************************************************************************
	\section{Introduction}
	%************************************************************************************************
	
	Eigenvalue problems arise in numerous scientific and engineering contexts, including structural vibrations, quantum mechanics, and stability analysis of physical systems  \cite{rannacher1979nonconforming,bermudez1995finite,csendes1970numerical,davies1982finite,bossavit1990solving}. These problems typically involve finding eigenvalues and eigenfunctions of differential operators subject to appropriate boundary conditions. Typically, analytical solutions are challenging to find, providing the need of developing efficient numerical methods \cite{MR2652780,SUN-BOOK}. Among these,
	the finite element method \cite{BO,MR2652780}, Discontinuous Galerkin methods (DG) \cite{antonietti2006discontinuous,Buffa}, and Virtual element methods \cite{meng2020virtual,MV2020,MRR1} have emerged as a robust and versatile frameworks for approximating eigenvalue problems, offering systematic approaches to handle complex geometries, boundary conditions, and varying material properties. However, we note that the approximation
	of eigenvalue problems by stabilized methods can introduce artificial (spurious)
	modes that do not correspond to the true physical behavior of the system \cite{boffi2020approximation}.
	
	%boffi2020approximation

	Continuous finite element spaces have long been employed to approximate eigenvalue problems, with robust theoretical background that guarantees convergence and accuracy of the methods. We refer to \cite{SUN-BOOK,MR2652780} for an extensive review of finite element approximations for eigenvalue problems. In recent years, DG methods have gained significant attention as an alternative to continuous finite element spaces. DG methods offer several advantages, including the ability to handle nonconforming meshes, complex boundary conditions, and higher-order accuracy \cite{arnold2002unified, riviere2008discontinuous}. For eigenvalue problems, DG methods provide greater flexibility in enforcing inter-element continuity and controlling numerical dispersion \cite{hesthaven2007nodal, houston2004mixed}. Moreover, hybridized versions of DG methods have been developed to improve computational efficiency while preserving the theoretical convergence properties \cite{cockburn2009}. In \cite{antonietti2006discontinuous} the authors investigate the DG approximation of the Laplace eigenvalue problem, providing theoretical and computational insights into its performance. Their work demonstrates that DG methods can achieve optimal convergence rates and avoid spurious eigenvalues by properly designing stabilization terms. Additionally, the paper highlights the versatility of DG methods in handling irregular geometries and their potential for adaptive mesh refinement (see also \cite{Buffa,MR3962898,LMSisc2020}).
	
	In this paper we focus on an intermediate method between continuous finite element schemes and DG methods. More precisely, this study proposes the numerical analysis of eigenvalue problems through the weak imposition of boundary conditions with the Nitsche method. 	The Nitsche method is a numerical technique introduced in \cite{nitsche1971variationsprinzip} to address variational formulations involving essential boundary conditions without explicitly enforcing them. It is often considered a precursor to DG methods due to its ability to handle discontinuities and its flexibility in weakly imposing constraints \cite{arnold2002unified,juntunen2009nitsche,burman2012penalty}. The method has found applications in various models, such as  interface problems or slip boundary conditions on fluid problems \cite{hansbo2005nitsche,MR4744101}. One of the key features of the Nitsche method is its ability to accommodate problems involving discontinuities or coupling conditions across interfaces, making it particularly suitable for interface and contact problems in computational mechanics \cite{hansbo2002discontinuous,chouly2015symmetric,chouly2015nitsche,chouly2020hybrid}. For instance, in problems where materials with different properties are in contact, the Nitsche method allows for a seamless integration of the interface conditions into the variational formulation \cite{chouly2018unbiased,burman2020nitsche}.

	Particularly, Nitsche's method and its variants have been widely used for the imposition of different boundary conditions \cite{chouly2024finite}. Recent developments also show the usefulness of Nitsche-type techniques in mixed formulations for flow problems. For instance, in \cite{anaya2026nitsche}, a Nitsche-based finite element method with grad-div stabilization is proposed for the velocity-pressure formulation of the Brinkman problem with mixed boundary conditions on the pressure. In \cite{juntunen2009nitsche}, the authors study the extension of the symmetric method to generalized boundary conditions, providing also an a posteriori analysis. The authors also show the clear advantage of the Nitsche technique over the traditional penalty method. In \cite{burman2012penalty}, the author introduces a variation of Nitsche's method that eliminates the need for penalty parameters when imposing boundary conditions weakly. The study demonstrates that this nonsymmetric approach maintains stability without requiring a penalty term. The author provides optimal error estimates in the $H^1$ norm and suboptimal estimates in the $L^2$ norm, which are half an order less accurate.  In the realm of eigenvalue problems, there appear to be only a few numerical studies, \cite{harari2018spectral} and \cite{albocher2021spectral}, where the behavior of the spectrum in the symmetric variant of the Nitsche method is explored in different scenarios such as non-conforming meshes, anisotropic meshes, or convex and non-convex domains. A detailed study comparing the standard method, namely, imposing the boundary condition strongly, and the Nitsche technique is presented. More recently, in \cite{boffi2026maxwell-nitsche} the authors analyzed the Nitsche method for the prescription of essential boundary conditions for conforming finite element approximations of Maxwell's problem. Their work is particularly relevant because it treats Nitsche's method beyond the standard coercive elliptic setting, considering both an inf-sup stable Galerkin formulation and an augmented-stabilized formulation that allows the use of nodal finite element interpolations. Stability and convergence results are established for the corresponding boundary value problem. Nevertheless, this contribution concerns source problems for Maxwell's equations and does not address the spectral approximation of eigenvalue problems.
	
	    The preceding discussion shows that, although Nitsche's method has been extensively analyzed for boundary value problems and has also been explored numerically in spectral computations, a rigorous spectral approximation theory for eigenvalue problems with boundary conditions imposed by Nitsche's method is still missing. To the best of the authors' knowledge, no proof of convergence is available for the Laplace eigenproblem with mixed boundary conditions when the Dirichlet part is imposed weakly through Nitsche's technique. Thus, the main goal of this work is to fill the gap from the theoretical point of view. We propose and analyze a finite element method for the study of eigenvalue problems using the Nitsche technique. We will concentrate on the study of a simple problem such as the Laplace eigenproblem, where we consider mixed boundary conditions. A primal formulation is used, so that the definition of the solution operator and the spectral characterization is followed by arguments at the continuous level. However, the discrete scheme considers the imposition of the Dirichlet condition by means of the three variants of Nitsche's method: symmetric \cite{burman2023augmented}, incomplete \cite[Section 37.1]{ern2021finite}, and skew-symmetric \cite{freund1995weakly,burman2012penalty}. {Moreover, our analysis can be adapted to that of \cite{burman2012penalty} to cover a penalty-free type method (cf. Remark \ref{rem:burman-penalty-free})}. Being a method where there is a stabilization parameter that guarantees the coercivity of the corresponding bilinear forms, we study how the discrete solution operator converges in norm to the continuous as $h$ goes to zero. Hence, we can use the so-called Babu\v ska-Osborne abstract spectral approximation theory (see \cite{BO}).
	We prove optimal order error estimates (see Theorem~\ref{gapr}) for the eigenfunctions and
	a double order for the eigenvalues in the symmetric case, and a suboptimal
	order for the nonsymmetric methods. In addition, we develop a residual-based a posteriori error analysis for the symmetric Nitsche formulation. The proposed estimator includes the element residuals, the jumps of the normal derivative across interior facets, and the boundary residuals induced by the weak imposition of the Dirichlet condition. Under a saturation assumption, we prove reliability for simple eigenvalues, while local efficiency is obtained by standard bubble-function arguments. This analysis provides the theoretical basis for the adaptive refinement strategy used in the numerical experiments. We also discuss about the appearance of spurious eigenvalues	and eigenfunctions if the Nitsche stabilization parameter is not correctly tuned.
	Finally, we mention that the present analysis constitutes
	a stepping stone towards the more challenging goal of analyzing Nitsche methods
	for other eigenvalue problems, such as the Stokes eigenvalue problem with slip boundary conditions \cite{berselli20253d,falocchi2022remarks}, among others.
	
	The remainder of this paper is organized as follows: Section \ref{sec:model} provides a detailed review of the mathematical formulation of the Laplace eigenproblem with mixed boundary conditions. Section \ref{sec:fem} introduces and discusses the theoretical analysis for the Nitsche method. Section \ref{SEC:approximation} deals with some continuity and consistency properties to obtain error estimates. In Section \ref{sec:apost} we present an a posteriori error analysis for the proposed Nitsche method. Section \ref{sec:numerical-experiments} presents numerical experiments that validate the theoretical results in two and three dimensions. Finally, Section \ref{sec:conclusions} concludes with a summary of findings and perspectives for future research in this area.
	
	\subsection{Notations}\label{sec:notations}
	Let $\mathcal{O}$ be a subset of $\mathbb{R}^2$ with Lipschitz boundary $\partial\mathcal{O}$.   For $r \geq 0$ and $p \in[1, \infty]$, we denote by $\mathrm{L}^p(\mathcal{O})$ the usual Lebesgue space of maps from $\mathcal{O}$ to $\mathbb{R}$ endowed with the norm $\|\cdot\|_{\mathrm{L}^p(\mathcal{O})}$, while $\H^r(\mathcal{O})$ denotes a Hilbert space. We write $|\cdot|_{r, \mathcal{O}}$ and $\Vert \cdot\Vert_{r,\Omega}$ to denote the seminorm and norm in Hilbert spaces.
	%%%%%%%%%%%%%%%%%%%%%%%%%%%%%%%%%%%%
	
	\section{The model problem}
	\label{sec:model}
	
	Let $\O\subset\R^2$ be a convex bounded domain with polygonal boundary
	$\partial\O$. Let $\G$ and $\S$ be disjoint open subsets of
	$\partial\O$ such that $\partial\O=\bar{\G}\cup\bar{\S}$ and
	$\left|\G\right|\ne0$. We denote by $n$ the outward unit normal vector
	to $\partial\O$ and by $\partial_n$ the normal derivative.

	The eigenvalue problem with mixed boundary conditions reads as follows:
	Find $(\l,u)\in\R\times\HcuO$, $u\ne0$, such that
	\begin{equation}\label{eq:strong-formulation}
		\begin{split}
			-\Delta u=\l u\quad\text{in }\O,\\
			u=0\quad\text{on }\G,\\
			\partial_n u=0\quad\text{on }\S,
		\end{split}
	\end{equation}
	where
	$$
	\HcuO:=\{v\in\H^1(\Omega): v=0 \text{ on } \Gamma\}.
	$$
	By testing the first equation above with $v\in\HcuO$ and integrating by
	parts, we arrive at the following equivalent variational formulation:
	
	\begin{problem}
		\label{P1}
		Find $(\l,u)\in\R\times\HcuO$, $u\ne0$, such that
		$$
		\int_{\O}\nabla u\cdot\nabla v
		=\l\int_{\O}uv\qquad\forall v\in\HcuO.
		$$
	\end{problem}
	
	We rewrite the problem as follows:
	\begin{problem}
		\label{P2}
		Find $(\l,u)\in\R\times\HcuO$, $u\ne0$, such that
		$$
		A(u,v)=\l B(u,v)\qquad\forall v\in\HcuO,
		$$
	\end{problem}
	where the bilinear form $A:\HcuO\times\HcuO\rightarrow\mathbb{R}$
	is defined by
	\begin{align}\label{formA}
		A(u,v)
		& :=\int_{\O}\nabla u\cdot\nabla v \qquad u,v\in\HcuO,
	\end{align}
	and the bilinear form $B:L^2(\Omega)\times L^2(\Omega)\rightarrow\mathbb{R}$
	is defined by
	\begin{align*}
		B(u,v)
		& :=\int_{\O}uv \qquad u,v\in L^2(\Omega).
	\end{align*}
	All the previous bilinear forms are bounded and symmetric.
	
	Next, we define the solution operator associated with Problem~\ref{P2}:
	\begin{align*}
		T:\ L^2(\Omega) & \longrightarrow L^2(\Omega),
		\\
		f & \longmapsto Tf:=w,
	\end{align*}
	where $w\in\HcuO$ is the solution of the following source problem:
	\begin{equation}
		\label{T1}
		A(w,v)=B(f,v)
		\qquad\forall v\in\HcuO.
	\end{equation}
	
	The next result establishes that $A(\cdot,\cdot)$ (cf. \eqref{formA}) is $\HcuO$-elliptic.
	\begin{lemma}
		\label{elipt}
		There exists a constant $C>0$, depending on $\O$, such that
		$$
		A(v,v)
		\ge C\left\|v\right\|_{1,\O}^2
		\qquad\forall v\in\HcuO.
		$$
	\end{lemma}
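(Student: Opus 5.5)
The plan is to reduce the statement to a Poincaré--Friedrichs inequality on $\HcuO$. Since $A(v,v)=|v|_{1,\O}^2$ and $\|v\|_{1,\O}^2=\|v\|_{0,\O}^2+|v|_{1,\O}^2$, it suffices to show that there is $C_P>0$, depending only on $\O$ and $\G$, such that
$$
\|v\|_{0,\O}\le C_P\,|v|_{1,\O}\qquad\forall v\in\HcuO .
$$
Then $\|v\|_{1,\O}^2\le (1+C_P^2)|v|_{1,\O}^2=(1+C_P^2)A(v,v)$, and the lemma holds with $C=(1+C_P^2)^{-1}$.

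The Poincaré inequality will be proved by the standard compactness (contradiction) argument. Suppose it fails. Then there are $v_k\in\HcuO$ with $\|v_k\|_{0,\O}=1$ and $|v_k|_{1,\O}\to0$. The sequence is bounded in $\H^1(\O)$, so by Rellich--Kondrachov (valid since $\O$ is a bounded Lipschitz polygon) a subsequence converges in $L^2(\O)$ to some $v$. Because the gradients tend to zero, the subsequence is Cauchy in $\H^1(\O)$, so $v_k\to v$ in $\H^1(\O)$ and $\nabla v=0$. As $\O$ is connected (it is convex), $v$ is a constant $c$. Moreover, $\HcuO$ is closed in $\H^1(\O)$ because the trace operator $\H^1(\O)\to L^2(\partial\O)$ is continuous. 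Hence $v\in\HcuO$, and the trace of the constant $c$ vanishes on $\G$. Since $|\G|\ne0$, this forces $c=0$, which contradicts $\|v\|_{0,\O}=\lim\|v_k\|_{0,\O}=1$.

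The only delicate point is the use of the hypothesis $|\G|\neq0$, together with the closedness of $\HcuO$, to rule out nonzero constants. Both follow from continuity of the trace, so I do not expect a real obstacle. A constructive alternative would prove the Friedrichs inequality directly by integrating along segments from $\G$, but the compactness argument is cleaner and suffices here. The constant then depends only on $\O$ and $\G$, as claimed.
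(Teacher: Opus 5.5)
Your proof is correct. The paper states this lemma without proof and implicitly relies on the same reduction you make, namely the Poincar\'e--Friedrichs inequality on $\HcuO$ (valid because $|\G|\neq0$), and your compactness argument is the standard way to establish it; your note that the constant depends on $\G$ as well as on $\O$ is a slight sharpening of the paper's wording.
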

	Thus, as a consequence of the Lax-Milgram Theorem, we have that
	formulation \eqref{T1} is well-posed.
	
	We deduce that the linear operator $T$ is well
	defined and bounded. Also, $T$  is self-adjoint with respect to
	the inner products $A(\cdot,\cdot)$ in $\HcuO$ and $B(\cdot,\cdot)$ in $L^2$.
	
	Notice that $(\l,u)\in\R\times\HcuO$ solves
	Problem~\ref{P2} (and hence Problem~\ref{P1}) if and only if $Tu=\mu u$
	with $\mu\neq0$ and $u\ne0$, in which case $\mu:=\frac{1}{\l}$.
	
	The following well-known additional regularity result for the solution of
	problem~\eqref{T1} and consequently, for the eigenfunctions of $T$,
	is stated as follows.
	
	%has been proved in \cite[Lemma 2.2]{mora2015virtual}.
	
	\begin{lemma}
		\label{LEM:REG}
		There exists $r_{\O}>1/2$ such that the following results hold:
		\begin{itemize}
			\item[i)] for all $f\in L^2(\O)$ and for all $r_1\in[0,r_\O)$, the
			solution $w$ of problem~\eqref{T1} satisfies $w\in\HurO$ with
			$r:=\min\left\{r_1,1\right\}$ and there exists $C>0$ such that
			$$
			\left\|w\right\|_{1+r,\O}
			\le C\left\|f\right\|_{0,\O};
			$$
			\item[ii)] if $u$ is an eigenfunction of Problem~\ref{P1} with
			eigenvalue $\l$, for all $s\in[\frac12,r_\O)$, $u\in\HusO$ and there
			exists $\widehat{C}>0$ such that
			$$
			\left\|u\right\|_{1+s,\O}
			\le \widehat{C}\left\|u\right\|_{0,\O},
			$$
			where $\widehat{C}$ depends on the eigenvalue.
		\end{itemize}
	\end{lemma}
	%\begin{proof}
	%The proof of (i) follows from the classical regularity result for the
	%Laplace equation with Neumann boundary conditions (cf. \cite{G}). The
	%proof of (ii) follows from the same arguments and the fact that $w$ is
	%the solution of problem~\eqref{T1} with $f=\l w$, combined with a
	%bootstrap trick.
	%\end{proof}
	\begin{remark}
		The constant $r_{\O}>1/2$ is the Sobolev exponent for the Poisson
		problem with mixed boundary conditions.
		%If $\O$ is convex, then
		%$r_\O>1$, whereas, otherwise, $r_\O:=\pi/\omega$ with $\omega$
		%being the largest reentrant angle of $\O$ (see \cite{G}).
		Moreover, $T$ is a compact operator since the
		inclusion $\HurO\hookrightarrow\HcuO$ is compact.
	\end{remark}

	As a consequence of all the previous results, we have the following spectral
	characterization for the operator $T$.
	
	\begin{theorem}
		\label{CHAR_SP}
		The spectrum of $T$ decomposes as follows:
		$\sp(T)=\left\{0\right\}\cup\left\{\mu_k\right\}_{k\in\N}$, where
		$\left\{\mu_k\right\}_{k\in\N}$ is a
		sequence of finite-multiplicity eigenvalues of $T$ which converge to $0$,
		all of them are real and positive,
		and their corresponding eigenspaces lie in $\HusO$. Moreover, $\mu=0$ is not an eigenvalue of $T$.
	\end{theorem}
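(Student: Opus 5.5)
The plan is to combine the compactness of $T$ from the preceding remark with the spectral theorem for compact self-adjoint operators, and then use the regularity of Lemma~\ref{LEM:REG} to place the eigenspaces in $\HusO$.

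First, I would prove that $T$ is compact and self-adjoint on $L^2(\O)$ with respect to $B(\cdot,\cdot)$. Self-adjointness follows from the symmetry of $A$. For $f,g\in L^2(\O)$, testing \eqref{T1} for $Tg$ with $v=Tf$, and \eqref{T1} for $Tf$ with $v=Tg$, gives
\[
B(f,Tg)=A(Tg,Tf)=A(Tf,Tg)=B(g,Tf).
\]
For compactness, Lemma~\ref{LEM:REG}(i) gives $\|Tf\|_{1+r,\O}\le C\|f\|_{0,\O}$. Hence $T$ maps $L^2(\O)$ boundedly into $\HurO$ with $r>0$. The embedding $\HurO\hookrightarrow\HcuO\hookrightarrow L^2(\O)$ is compact (Rellich), so $T$ is compact. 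The spectral theorem for compact self-adjoint operators on the infinite-dimensional Hilbert space $L^2(\O)$ then gives three facts: $\sp(T)=\{0\}\cup\{\mu_k\}$, every nonzero $\mu_k$ is a real eigenvalue of finite multiplicity, and $\mu_k\to0$. The sequence is infinite because $T$ is injective (shown below) while $L^2(\O)$ is infinite-dimensional.

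Second, I would show positivity. If $Tu=\mu u$ with $\mu\neq0$ and $u\neq0$, then $u=\mu^{-1}Tu\in\HcuO$. Testing \eqref{T1} with $v=Tu$ and using Lemma~\ref{elipt} gives
\[
\mu B(u,u)=B(Tu,u)=A(Tu,Tu)=\mu^2A(u,u)\ge C\mu^2\|u\|_{1,\O}^2>0.
\]
Since $B(u,u)>0$, this yields $\mu>0$. Next, $0$ is not an eigenvalue: if $Tf=0$, then $B(f,v)=0$ for all $v\in\HcuO$. Because $\HcuO$ contains $C_c^\infty(\O)$, which is dense in $L^2(\O)$, it follows that $f=0$.

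Finally, for the regularity of the eigenspaces: an eigenfunction with $Tu=\mu u$ satisfies Problem~\ref{P1} with $\l=1/\mu$. Lemma~\ref{LEM:REG}(ii) then gives $u\in\HusO$ for every $s\in[\frac12,r_\O)$. The only delicate point in the whole argument is the compactness and density bookkeeping, and it is standard.
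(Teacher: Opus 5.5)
Your proposal is correct and follows essentially the same route as the paper, which gives no separate proof but presents the theorem as a direct consequence of the compactness of $T$ (the remark after Lemma~\ref{LEM:REG}), its self-adjointness with respect to $B(\cdot,\cdot)$, the ellipticity of Lemma~\ref{elipt}, and the regularity in Lemma~\ref{LEM:REG}(ii). You spell out the standard details the paper leaves implicit: the injectivity of $T$, which also justifies that the sequence $\{\mu_k\}$ is infinite, and the positivity computation $\mu B(u,u)=\mu^2 A(u,u)$.
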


	\section{Numerical discretization}
	\label{sec:fem}
	In this section we study a numerical scheme to approximate the eigenvalue problem, by using the Nitsche technique. Let us consider a shape-regular family of partitions of $\O$, denoted by $\{\mathcal{T}_h\}_{h>0}$.  Let $h_K$ be the diameter of a triangle  $K\in\CT_h$ and let us define $h:=\max\{h_K\,:\, K\in \CT_h\}$.  This partition induces a mesh, denoted by $\CE_h$, over the boundary $\Gamma$. We define $\CE_h$ as the set of facets along $\Gamma$. For each edge $F\in \CE_{h}$, its diameter is denoted by $h_F$.
	
	%	For each inner facet $F\in \CE_{\O}$ and for any  sufficiently smooth  function
	%	$v$, we define the jump of its normal derivative on $F$ by
	%	$$\left[\!\!\left[ \partial_n v\right]\!\!\right]_F:=\nabla (v|_{K})  \cdot \boldsymbol{n}_{K}+\nabla ( v|_{K'}) \cdot \boldsymbol{n}_{K'} ,$$
	%	where $K$ and $K'$ are  the two elements in $\CT_{h}$  sharing the
	%	facet $\ell$ and $\boldsymbol{n}_{K}$ and $\boldsymbol{n}_{K'}$ are the respective outer unit normal vectors.
	
	We now introduce the following finite element space
	\begin{equation*}
		%\label{eq:Vh-space}
		V_h:=\left\{v_h\in C(\overline{\Omega}) \,:\, v_h\vert_K\in\mathbb{P}_\ell(K)\quad
		\forall K\in\CT_{h}\right\},
	\end{equation*}
	where $\mathbb{P}_\ell$ denotes the space of polynomials of degree at most $\ell\geq1$.
	
	%	During this section, we will need the inverse and trace inequality estimates.
	%	\begin{lemma}[Inverse inequality]\label{lem:inverse-inequality}
		%		Given $0\leq m \leq l$, for $m,l\in\mathbb{N}$,
		%		and for all $v_h\in V_h$ and $K\in\mathcal{T}_h$,
		%		there exists a constant $C_{inv}>0$, independent of $K$, such that
		%		$$
		%		\vert v_h\vert_{l,K}\leq C_{inv} h_K^{m-l}\vert v_h\vert_{m,K}.
		%		$$
		%		\end{lemma}
	%		\begin{proof}
		%			The proof can be found in \cite[Lemma 12.1]{ern2021finite}.
		%			\end{proof}
	%		
	%		\begin{lemma}[Trace inequalities]\label{lem:trace-inequality}
		%		Let $K\in\mathcal{T}_h$ and $E\in\partial K$. The following statements hold.
		%		
		%		\begin{enumerate}
			%			\item Given $v_h\in V_h$, there exists a positive constant $C>0$, independent of $K\in\mathcal{T}_h$, such that
			%			$$
			%			\Vert v_h\Vert_{0,F}\leq C h_K^{-1/2}\Vert v_h\Vert_{0,K}.
			%			$$
			%			\item If $v\in \H^1(K)$, then for any $E\in\partial K$, there exists $C>0$, independent of $K$, such that
			%			$$
			%			\Vert v\Vert_{0,F}\leq C\left\{h_K^{-1/2}\Vert v\Vert_{0,K} + h_K^{1/2}\Vert \nabla v\Vert_{0,K} \right\}.
			%			$$
			%		\end{enumerate}
		%	\end{lemma}
	%		\begin{proof}
		%			The first estimate follows from \cite[Lemma 12.8]{ern2021finite}, while Young's inequality and \cite[Lemma 12.15]{ern2021finite} allow to obtain the second estimate.
		%			\end{proof}
	Now, we introduce the following mesh dependent norms:
	\begin{align}
		\label{nomsdf}
		&\Vert v\Vert_h^2:=\Vert\nabla v\Vert_{0,\O}^2+\sum_{F\in\CE_h}h_F^{-1}\Vert v\Vert_{0,F}^2,\\
		&\vertiii{v}_h^2:=\Vert v\Vert_{h}^2
		+\sum_{F\in\CE_h}h_F\left\Vert\partial_n v_h\right\Vert_{0,F}^2.\label{nomsdf2}
	\end{align}
	
	The result given below allows to conclude that in the discrete space $V_h$ these two norms are equivalent (see for example \cite{juntunen2009nitsche}).
	\begin{lemma}\label{lem:inverse-inequality-normal-derivative}
		There exists a positive constant $C_I$ such that
		$$
		\sum_{F\in\CE_h} h_F\Vert \partial_n v_h\Vert_{0,F}^2\leq C_I\Vert \nabla v_h\Vert_{0,\Omega}^2, \qquad \forall v_h\in V_h.
		$$
	\end{lemma}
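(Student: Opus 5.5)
The estimate is local: we prove a bound on each boundary facet and then sum. Fix $F\in\CE_h$ and let $K_F\in\CT_h$ be the element having $F$ as a facet. Since $F\subset\Gamma\subset\partial\O$, this element is unique. On $K_F$ the function $v_h|_{K_F}$ is a polynomial of degree at most $\ell$, so $\nabla v_h|_{K_F}\in[\mathbb{P}_{\ell-1}(K_F)]^2$. On $F$ the normal $n$ is constant, hence $\partial_n v_h|_F=\nabla v_h|_{K_F}\cdot n$ and
\[
\Vert \partial_n v_h\Vert_{0,F}\le \Vert \nabla v_h|_{K_F}\Vert_{0,F}.
\]

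The key step is the discrete trace inequality for polynomials. There is a constant $C$, depending only on $\ell$ and the shape-regularity of the family $\{\CT_h\}_{h>0}$, such that
\[
\Vert q\Vert_{0,F}\le C\,h_{K_F}^{-1/2}\Vert q\Vert_{0,K_F}\qquad\forall q\in\mathbb{P}_{\ell-1}(K_F).
\]
I would prove this by mapping to the reference element $\widehat K$ with the affine map $x=B_K\widehat x+b_K$. On $\widehat K$ both $\Vert\cdot\Vert_{0,\widehat F}$ and $\Vert\cdot\Vert_{0,\widehat K}$ are norms on the finite-dimensional space $\mathbb{P}_{\ell-1}(\widehat K)$, so they are equivalent, and the reference facet can be chosen from finitely many. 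Under the map, the boundary measure scales like $|F|\sim h_F^{d-1}$ and the volume measure like $|K|\sim h_K^{d}$. Shape regularity gives $h_F\simeq h_{K_F}$, which yields the factor $h_{K_F}^{-1/2}$. This scaling argument is the only nontrivial point. The care needed is that the constant depends only on $\ell$ and the shape-regularity constant, and not on $h$ or on the particular element.

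Applying the trace inequality componentwise to $q=\nabla v_h|_{K_F}$ and using $h_F\le h_{K_F}$ gives
\[
h_F\Vert \partial_n v_h\Vert_{0,F}^2\le C^2\,\frac{h_F}{h_{K_F}}\Vert\nabla v_h\Vert_{0,K_F}^2\le C^2\Vert\nabla v_h\Vert_{0,K_F}^2 .
\]
Summing over $F\in\CE_h$ requires bounding how many boundary facets share the same element $K_F$. Each element has at most $d+1$ facets, so each element is counted at most a fixed number of times. Hence
\[
\sum_{F\in\CE_h} h_F\Vert \partial_n v_h\Vert_{0,F}^2\le (d+1)C^2\Vert\nabla v_h\Vert_{0,\O}^2 ,
\]
and the lemma follows with $C_I:=(d+1)C^2$. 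Note that $\Vert v_h\Vert_h\le\vertiii{v_h}_h\le(1+C_I)^{1/2}\Vert v_h\Vert_h$, which is the equivalence of the two norms \eqref{nomsdf} and \eqref{nomsdf2} on $V_h$ announced before the statement.
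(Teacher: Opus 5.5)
Your argument is correct: it applies a discrete trace inequality to the polynomial $\nabla v_h|_{K_F}$ on each boundary facet, obtained by scaling from the reference element under shape regularity, and then sums using the bounded number of facets per element. The paper gives no proof of its own and simply cites \cite{juntunen2009nitsche}, where the estimate rests on this same local scaling argument, so your route matches the intended one.
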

	
	In the following, we present the discrete eigenvalue problem by means of the Nitsche technique.
	
	\subsection{The discrete eigenvalue problem}
	We are now in position to state the Nitsche method for the discrete eigenvalue problem. From the strong problem \eqref{eq:strong-formulation} we multiply by test functions in $\H^1(\Omega)$ instead of $\H_{\Gamma}^1(\Omega)$ and integrate by parts with no imposition of boundary conditions over $\Gamma$. The goal is to relax and penalize this Dirichlet boundary condition such that it is satisfied in the limit of small mesh size.
	
	Starting from the boundary condition $u=0$ on  $\Gamma$, we reformulate it to be imposed weakly as follows
	$$
	\alpha\langle u,v \rangle_{1/2,h,\Gamma} - \varepsilon\langle \partial_n v,u \rangle_{1/2,\Gamma} = 0,
	$$
	where the parameter $\alpha>0$ is the so-called Nitsche parameter, $\varepsilon\in\{-1,0,1\}$ and
	$$\langle f,g\rangle_{1/2,h,\Gamma}:=\sum_{F\in\CE_{h}}h_F^{-1}\int_F f\,g, \qquad \langle f,g\rangle_{1/2,\Gamma} := \sum_{F\in\CE_{h}}\int_F f\,g.$$
	%The parameter $\alpha>0$ is the so-called Nitsche parameter.
	
	The resulting discrete eigenvalue problem is given as follows.
	\begin{problem}
		\label{P2-discrete}
		Find $(\l_h,u_h)\in\mathbb{C}\times V_h$, $u_h\ne0$, such that
		$$
		A_h(u_h,v_h)=\l_h B(u_h,v_h)\qquad\forall v_h\in V_h,
		$$
	\end{problem}
	where the discrete sesquilinear form $A_h:V_h\times V_h\rightarrow\mathbb{C}$
	is defined by
	\begin{align*}
		A_h(u_h,v_h)
		& :=A(u_h,v_h) - \langle \partial_n u_h,v_h \rangle_{1/2,\Gamma} - \varepsilon\langle \partial_n v_h,u_h \rangle_{1/2,\Gamma} + \alpha\langle u_h,v_h \rangle_{1/2,h,\Gamma}.
	\end{align*}
	The bilinear form $B(\cdot,\cdot)$ remains the same as in the continuous case.
	
	The three different choices of $\varepsilon$ yields to the well-known Nitsche variants: $\varepsilon=1$ corresponds to the symmetric version of the Nitsche method (see \cite{nitsche1971variationsprinzip,burman2023augmented}), $\varepsilon=0$ is the incomplete formulation, which has less terms, but we loose the symmetry \cite[Section 37.1]{ern2021finite}, and the skew-symmetric version is when $\varepsilon=-1$. This variant has been proven to have discrete ellipticity for all $\alpha>0$ (see, for instance, \cite{freund1995weakly}). Moreover, in \cite{burman2012penalty} it is shown that a completely penalty-free scheme is feasible.
	
	\begin{remark}
		It is important to note that  $V_h$ is not a subspace of $\H^{1}_\G(\Omega)$. Hence, the Nitsche technique to impose the Dirichlet boundary induces naturally a non-conforming finite element method. However, the solution operator is defined such that the regularizing property is satisfied. This fact becomes relevant when studying the spectral correctness, as we will see below.
	\end{remark}
	
	%		The propose method in Problem \ref{P2-discrete} is consistent. We state this fact in the following Lemma.
	
	%		\begin{lemma}[Consistency]\label{lem:consistency}
		%			Let $(\lambda, u)\in \mathbb{R}\cap V$ be the solution to Problem \ref{P2}. Assume that $u\in \H^2(\Omega)$, then
		%			$$
		%			A_h(u,v_h)=\lambda B(u,v_h), \qquad \forall v_h\in V_h.
		%			$$
		%			\end{lemma}
	%			\begin{proof}
		%				Let $u=0$ on $\Gamma$ and $v_h\in V_h$. Then, from Green's identity, integration by parts and \eqref{eq:strong-formulation} we have
		%				$$
		%				\begin{aligned}
			%					A_h(u,v_h) - \lambda B(u,v_h)&= A(u,v_h) - \langle \partial_n u,v_h \rangle_{1/2,\Gamma} - \varepsilon\langle \partial_n v_h,u \rangle_{1/2,\Gamma} + \alpha\langle u,v_h \rangle_{1/2,h,\Gamma}- \lambda B(u,v_h)\\
			%					&=\int_{\O} \nabla u \cdot\nabla v_h - \langle \partial_n u,v_h \rangle_{1/2,\Gamma} - \lambda B(u,v_h)\\
			%					&=-\int_{\O} \Delta u \, v_h - \lambda B (u,v_h) \equiv 0.
			%				\end{aligned}
		%				$$
		%				\end{proof}
	
	The next result establishes that the discrete form $A_h(\cdot,\cdot)$ is
	elliptic in the $\Vert\cdot\Vert_{h}$ norm for $\varepsilon\in\{-1,0,1\}$. The proof follows standard arguments for proving ellipticity in general DG methods, but we include it for the sake of completeness.
	%For any $\varepsilon\in\{-1,0,1\}$.
	%well-posedness of problem \eqref{T1}
	%for any value of the parameter $\varepsilon$.
	\begin{lemma}
		\label{eliptD}
		For any $\varepsilon\in\{-1,0,1\}$,
		there exists $C>0$, depending on the Nitsche parameter $\alpha$, such that
		$$
		A_h(v_h,v_h)
		\ge C\Vert v_h\Vert_{h}^2
		\qquad\forall v_h\in V_h.
		$$
		Moreover, if $\varepsilon=-1$, the result holds for any $\alpha>0$.
	\end{lemma}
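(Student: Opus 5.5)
Take $v_h=u_h$ in the definition of $A_h$, which gives
$$
A_h(v_h,v_h)=\Vert\nabla v_h\Vert_{0,\O}^2-(1+\varepsilon)\langle \partial_n v_h,v_h\rangle_{1/2,\Gamma}+\alpha\sum_{F\in\CE_h}h_F^{-1}\Vert v_h\Vert_{0,F}^2 .
$$
The argument then splits into two cases according to the value of $1+\varepsilon$.

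\textbf{Case $\varepsilon=-1$.} The mixed term vanishes identically. Hence
$$
A_h(v_h,v_h)=\Vert\nabla v_h\Vert_{0,\O}^2+\alpha\sum_{F\in\CE_h}h_F^{-1}\Vert v_h\Vert_{0,F}^2\ge \min\{1,\alpha\}\Vert v_h\Vert_h^2
$$
for every $\alpha>0$. This is exactly the last assertion of Lemma~\ref{eliptD}, with $C=\min\{1,\alpha\}$.

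\textbf{Case $\varepsilon\in\{0,1\}$.} Set $\kappa:=1+\varepsilon\in\{1,2\}$. The mixed term must be absorbed, and this is the only step that needs care. First apply Cauchy--Schwarz facet by facet, weighting with $h_F^{1/2}$ and $h_F^{-1/2}$. Then use Young's inequality with a parameter $\delta>0$:
$$
\kappa\left|\langle \partial_n v_h,v_h\rangle_{1/2,\Gamma}\right|\le \kappa\Big(\sum_{F}h_F\Vert\partial_n v_h\Vert_{0,F}^2\Big)^{1/2}\Big(\sum_F h_F^{-1}\Vert v_h\Vert_{0,F}^2\Big)^{1/2}
\le \frac{\kappa\delta}{2}\sum_{F}h_F\Vert\partial_n v_h\Vert_{0,F}^2+\frac{\kappa}{2\delta}\sum_F h_F^{-1}\Vert v_h\Vert_{0,F}^2 .
$$
Lemma~\ref{lem:inverse-inequality-normal-derivative} bounds the first sum on the right by $C_I\Vert\nabla v_h\Vert_{0,\O}^2$. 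Substituting, we get
$$
A_h(v_h,v_h)\ge\Big(1-\frac{\kappa\delta C_I}{2}\Big)\Vert\nabla v_h\Vert_{0,\O}^2+\Big(\alpha-\frac{\kappa}{2\delta}\Big)\sum_F h_F^{-1}\Vert v_h\Vert_{0,F}^2 .
$$

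Now fix the parameters. Choose $\delta=1/(\kappa C_I)$, so the first coefficient equals $1/2$. The second coefficient then becomes $\alpha-\kappa^2C_I/2$. Therefore, whenever $\alpha>\kappa^2 C_I/2$ (that is, $\alpha>C_I/2$ for $\varepsilon=0$ and $\alpha>2C_I$ for $\varepsilon=1$), we obtain
$$
A_h(v_h,v_h)\ge C\Vert v_h\Vert_h^2,\qquad C:=\min\{1/2,\ \alpha-\kappa^2C_I/2\}>0 .
$$
This constant depends on $\alpha$, as the statement asserts. The lemma should thus be read with $\alpha$ sufficiently large in the cases $\varepsilon\in\{0,1\}$.

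The main point is that the hidden constant $C_I$ comes from Lemma~\ref{lem:inverse-inequality-normal-derivative}, so it depends only on shape regularity and on the degree $\ell$, and not on $h$. The threshold on $\alpha$ is therefore mesh independent.
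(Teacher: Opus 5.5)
Your proof is correct and matches the paper's argument: Cauchy--Schwarz and Young on the boundary coupling term, Lemma~\ref{lem:inverse-inequality-normal-derivative} to absorb the normal-derivative sum, and a choice of the Young parameter (your $\delta$ is the paper's $1/\eta$) that gives the same constants $1/2$ and $\alpha-C_I(\varepsilon+1)^2/2$. Your separate treatment of $\varepsilon=-1$ is slightly cleaner than the paper's, whose choice $\eta=C_I(\varepsilon+1)$ becomes the degenerate value $\eta=0$ in that case.
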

	\begin{proof}
		From the Cauchy-Schwarz inequality it follows, for any $\eta>0$, that
		$$
		\begin{aligned}
			A_h(v_h,v_h)&=\Vert \nabla v_h\Vert_{0,\O}^2- (\varepsilon+ 1) \langle \partial_n v_h,v_h\rangle_{1/2,\Gamma} + \alpha  \sum_{F\in\CE_h} h_F^{-1}\Vert v_h\Vert_{0,F}^2\\
			&\geq \Vert \nabla v_h\Vert_{0,\O}^2- (\varepsilon+ 1)\left(\sum_{F\in\CE_h} h_F^{1/2}\Vert \partial_n v_h \Vert_{0,F} h_F^{-1/2}\Vert v_h\Vert_{0,F}\right) + \alpha  \sum_{F\in\CE_h} h_F^{-1}\Vert v_h\Vert_{0,F}^2\\
			&\geq \Vert \nabla v_h\Vert_{0,\O}^2- (\varepsilon+ 1)\left(\sum_{F\in\CE_h} \frac{h_F}{2\eta}\Vert \partial_n v_h \Vert_{0,F}^2 +  \frac{\eta h_F^{-1}}{2}\Vert v_h\Vert_{0,F}^2\right) + \alpha  \sum_{F\in\CE_h} h_F^{-1}\Vert v_h\Vert_{0,F}^2.\\
		\end{aligned}
		$$
		An application of Lemma \ref{lem:inverse-inequality-normal-derivative} to the above estimate gives
		$$
		A_h(v_h,v_h)\geq \left(1 - \frac{C_I(\varepsilon+1)}{2\eta}\right)\Vert \nabla v_h\Vert_{0,\O}^2  + \left(\alpha - \frac{\eta(\varepsilon+1)}{2}\right)  \sum_{F\in\CE_h} h_F^{-1}\Vert v_h\Vert_{0,F}^2.\\
		$$
		Note that the positive requirement of the second term implies that $\alpha > \frac{\eta(\varepsilon + 1)}{2}$. Then, taking $\eta=C_I(\varepsilon+1)$ yields to
		$$
		A_h(v_h,v_h)\geq \min\{C_1,C_2\}\Vert v_h\Vert_{h}^2,
		$$
		where $C_1:=\frac{1}{2}$ and $C_2=\left(\alpha - \frac{C_I(\varepsilon+1)^2}{2}\right)$.
		
		Thus, the proof is complete.
	\end{proof}
	
	Next, we define the discrete solution operator associated with Problem~\ref{P2-discrete}:
	\begin{align*}
		T_h^\varepsilon:\ L^2(\Omega) & \longrightarrow L^2(\Omega),
		\\
		f & \longmapsto T_h^\varepsilon f:=w_h^\varepsilon,
	\end{align*}
	where $w_h^\varepsilon\in V_h$ is the solution of the following source problem:
	\begin{equation*}
		\label{T1D}
		A_h(w_h^\varepsilon,v_h)=B(f,v_h)
		\qquad\forall v_h\in V_h.
	\end{equation*}

	Notice that Lemma~\ref{eliptD} implies that the linear operator $T_h^\varepsilon$ is well
	defined and bounded uniformly with respect to $h$ and $\varepsilon\in\{-1,0,1\}$. Moreover, as in the
	continuous case, $(\l_h,u_h)\in\mathbb{C}\times V_h$ solves Problem~\ref{P2-discrete}
	if and only if $T_h^\varepsilon u_h=\mu_h u_h$ with
	$\mu_h\neq0$ and $u_h\ne0$, in which case $\mu_h:= 1/\l_h$.
	Moreover, $T_h^\varepsilon$ is self-adjoint for $\varepsilon=1$.
	
	In what follows, we write $T_h$ instead of $T_h^\varepsilon$, for simplicity.
	
	As a consequence, we have the following spectral characterization for $T_h$.
	
	\begin{theorem}
		\label{CHAR_SP_DISC}
		The spectrum of $T_h$ consists of $M_h:=\dim(V_h)$ eigenvalues
		$\mu_{h}^{(k)}\in\mathbb{C}$ repeated according to their respective multiplicities.
	\end{theorem}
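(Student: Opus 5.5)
The plan is to reduce the statement to finite-dimensional linear algebra. By construction the range of $T_h$ lies in $V_h$, and $M_h=\dim(V_h)<\infty$, so $T_h$ is a finite-rank operator on $L^2(\Omega)$. I would split $L^2(\Omega)=V_h\oplus V_h^{\perp}$, with the orthogonal complement taken in $B(\cdot,\cdot)$. Since $T_h f$ depends on $f$ only through the functional $v_h\mapsto B(f,v_h)$ on $V_h$, we have $T_h f=0$ for every $f\in V_h^\perp$. Hence, in this decomposition, $T_h$ has the block form
\[
T_h=\begin{pmatrix} T_h|_{V_h} & 0\\ 0 & 0\end{pmatrix}.
\]
Its spectrum is therefore $\sp(T_h|_{V_h})$, together with the point $0$ coming from the infinite-dimensional kernel $V_h^\perp$.

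Next I will show that $T_h|_{V_h}:V_h\to V_h$ is injective, hence invertible. If $T_h u_h=0$ with $u_h\in V_h$, then $B(u_h,v_h)=A_h(0,v_h)=0$ for all $v_h\in V_h$. Choosing $v_h=u_h$ gives $u_h=0$. To compute the eigenvalues concretely, I fix a basis $\{\phi_i\}_{i=1}^{M_h}$ of $V_h$ and introduce the stiffness matrix $\mathbf{A}_{ij}=A_h(\phi_j,\phi_i)$ and the mass matrix $\mathbf{M}_{ij}=B(\phi_j,\phi_i)$. Lemma~\ref{eliptD} makes $\mathbf{A}$ invertible: if $\mathbf{A}x=0$ then $A_h(v_h,v_h)=0$ for $v_h=\sum x_i\phi_i$, so $\Vert v_h\Vert_h=0$. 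Then $\Vert\nabla v_h\Vert_{0,\O}=0$ and $v_h$ vanishes on $\Gamma$ with $|\Gamma|\neq0$, so $v_h=0$. The matrix $\mathbf{M}$ is symmetric positive definite.

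In coordinates, $T_h|_{V_h}$ is represented by the matrix $\mathbf{A}^{-1}\mathbf{M}$, an invertible $M_h\times M_h$ complex matrix. By the fundamental theorem of algebra its characteristic polynomial has exactly $M_h$ roots $\mu_h^{(k)}\in\mathbb{C}$, counted with algebraic multiplicity, and all are nonzero. These roots are precisely the reciprocals $1/\l_h$ of the eigenvalues of Problem~\ref{P2-discrete}, i.e.\ of the generalized eigenproblem $\mathbf{A}x=\l_h\mathbf{M}x$.

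The main obstacle is interpretive rather than technical. For $\varepsilon\in\{-1,0\}$ the form $A_h$ is not symmetric, so the $\mu_h^{(k)}$ may be complex and the operator need not be diagonalizable. For this reason the multiplicities must be counted algebraically, and the statement has to be read as ``$M_h$ nonzero eigenvalues together with the trivial eigenvalue $0$ on $V_h^\perp$.'' In the case $\varepsilon=1$ I would add the remark that $\mathbf{A}$ is symmetric positive definite, so all $\mu_h^{(k)}$ are real and positive.
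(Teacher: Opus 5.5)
Your argument is correct and is the finite-dimensional reasoning the paper leaves implicit: the paper gives no written proof and presents the theorem as a direct consequence of the well-posedness of the discrete source problem (Lemma~\ref{eliptD}) and $\dim V_h<\infty$, and your reduction to $T_h|_{V_h}\cong\mathbf{A}^{-1}\mathbf{M}$ supplies exactly those details. Your caveat is also right: since $T_h$ acts on all of $L^2(\Omega)$, the point $0$ (with eigenspace $V_h^{\perp}$) also lies in $\sp(T_h)$, so the $M_h$ eigenvalues in the statement are the nonzero ones, i.e.\ the eigenvalues of $T_h|_{V_h}$.
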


	\setcounter{equation}{0}
	\section{Convergence and error estimates}
	\label{SEC:approximation}

	In order to prove that the solutions of the discrete
	problem converge to those of the continuous
	problem, we will follow the standard procedure
	for spectral theory for compact operators \cite{BO},
	which consist in showing that $T_h$
	converges in norm to $T$ as $h$ tends to zero.
	
	Before stating the convergence in norm of the solution operators, we need several auxiliary results. First, we state a continuity estimate for $A_h$ in the $\vertiii{\cdot}_h$ norm defined in \eqref{nomsdf2}. The reason for using this norm instead of $\Vert \cdot\Vert_h$ is because $A_h(\cdot,\cdot)$ is not continuous in $H^1(\Omega)$. 
	
	\begin{lemma}\label{lem:continuity-in-triple-norm}
		Let $w\in\HusO$ for $s>1/2$. Then, for all $v_h\in V_h$, there exists $C>0$, independent of $h$, such that
		$$
		|A_h(w,v_h)|\leq C \vertiii{w}_h\Vert v_h\Vert_h.
		$$
	\end{lemma}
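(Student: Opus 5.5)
We bound each of the four terms of $A_h(w,v_h)$ separately by Cauchy--Schwarz, splitting the boundary integrals facet by facet with the weights $h_F^{\pm1/2}$. First, though, we check that every term is well defined. Since $w\in\HusO$ with $s>1/2$, we have $\nabla w\in \H^{s}(\O)^2$ with $s>1/2$. Hence $\partial_n w$ has an $L^2$ trace on each facet $F\in\CE_h$, and the quantity $\sum_{F\in\CE_h}h_F\Vert\partial_n w\Vert_{0,F}^2$ appearing in $\vertiii{w}_h$ is finite. This is exactly why the hypothesis $s>1/2$ is needed, and why the plain $\Vert\cdot\Vert_h$ norm does not suffice on the $w$-side.

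The four terms are handled as follows.
\begin{itemize}
\item For the volume term, $|A(w,v_h)|\le \Vert\nabla w\Vert_{0,\O}\Vert\nabla v_h\Vert_{0,\O}$.
\item For the consistency term, we estimate
$$
|\langle\partial_n w,v_h\rangle_{1/2,\G}|\le \sum_{F\in\CE_h} h_F^{1/2}\Vert\partial_n w\Vert_{0,F}\,h_F^{-1/2}\Vert v_h\Vert_{0,F}\le \Big(\sum_{F\in\CE_h} h_F\Vert\partial_n w\Vert_{0,F}^2\Big)^{1/2}\Big(\sum_{F\in\CE_h} h_F^{-1}\Vert v_h\Vert_{0,F}^2\Big)^{1/2}.
$$
The first factor is controlled by $\vertiii{w}_h$ and the second by $\Vert v_h\Vert_h$.
\item For the penalty term, Cauchy--Schwarz in the same way gives $\alpha|\langle w,v_h\rangle_{1/2,h,\G}|\le \alpha\big(\sum_{F\in\CE_h} h_F^{-1}\Vert w\Vert_{0,F}^2\big)^{1/2}\big(\sum_{F\in\CE_h} h_F^{-1}\Vert v_h\Vert_{0,F}^2\big)^{1/2}$.
\item The symmetrization term $\varepsilon\langle\partial_n v_h,w\rangle_{1/2,\G}$ is the only one where the normal derivative falls on the discrete function. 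We bound it by $\big(\sum_{F\in\CE_h} h_F\Vert\partial_n v_h\Vert_{0,F}^2\big)^{1/2}\big(\sum_{F\in\CE_h} h_F^{-1}\Vert w\Vert_{0,F}^2\big)^{1/2}$. Lemma \ref{lem:inverse-inequality-normal-derivative} then gives $\sum_{F\in\CE_h} h_F\Vert\partial_n v_h\Vert_{0,F}^2\le C_I\Vert\nabla v_h\Vert_{0,\O}^2\le C_I\Vert v_h\Vert_h^2$. This step is what allows the right-hand side to involve only $\Vert v_h\Vert_h$ rather than $\vertiii{v_h}_h$.
\end{itemize}

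Summing the four bounds and applying the discrete Cauchy--Schwarz inequality gives
$$
|A_h(w,v_h)|\le \max\{1,\alpha,|\varepsilon|C_I^{1/2}\}\cdot 4\,\vertiii{w}_h\Vert v_h\Vert_h,
$$
so the constant $C$ depends only on $\alpha$, $C_I$ and $|\varepsilon|\le1$, and is independent of $h$. There is no real obstacle here. The only point needing care is the bookkeeping of which function carries the normal derivative in each boundary term. The inverse estimate of Lemma \ref{lem:inverse-inequality-normal-derivative} may be used only on $v_h\in V_h$, whereas for $w$ the normal-derivative term must be kept inside $\vertiii{w}_h$ and justified by the trace regularity coming from $s>1/2$.
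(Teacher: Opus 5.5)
Your proposal is correct and follows essentially the same route as the paper: a facet-wise weighted Cauchy--Schwarz bound on each of the four terms of $A_h(w,v_h)$, with Lemma~\ref{lem:inverse-inequality-normal-derivative} used to control $\sum_{F\in\CE_h}h_F\Vert\partial_n v_h\Vert_{0,F}^2$ by $\Vert\nabla v_h\Vert_{0,\O}^2$. The only cosmetic difference is the order of steps. The paper first collects everything into $C\vertiii{w}_h\vertiii{v_h}_h$ and then invokes the norm equivalence on $V_h$, whereas you apply the inverse estimate directly inside the symmetrization term.
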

	\begin{proof}
		Thanks to Lemma \ref{lem:inverse-inequality-normal-derivative} we have
		$$
		\begin{aligned}
			|A_h(w,v_h)| &\leq \left\vert \int_{\O} \nabla w \cdot\nabla v_h \right\vert + \left\vert \langle \partial_n w,v_h\rangle_{1/2,\Gamma}\right\vert  + |\varepsilon| \left \vert \langle \partial_n v_h,w\rangle_{1/2,\Gamma} \right \vert  + \alpha|\langle w, v_h\rangle_{1/2,h,\Gamma}|\\
			&\leq \Vert \nabla w\Vert_{0,\O}\Vert \nabla v_h\Vert_{0,\O} + \sum_{F\in\CE_h} h_F^{1/2} \Vert \partial_n w\Vert_{0,F} h_F^{-1/2}\Vert v_h\Vert_{0,F}  \\
			&\hspace{2cm}+ |\varepsilon|\sum_{F\in\CE_h}h_F^{1/2} \Vert \partial_n v_h\Vert_{0,F} h_F^{-1/2}\Vert w\Vert_{0,F}+ \alpha\sum_{F\in\CE_h} h_F^{-1/2} \Vert  w\Vert_{0,F} h_F^{-1/2}\Vert v_h\Vert_{0,F}\\
			&\leq C \left(\Vert \nabla w\Vert_{0,\O}^2 + \sum_{F\in\CE_h} h_F\Vert \partial_n w\Vert_{0,F}^2+ h_F^{-1}\Vert w\Vert_{0,F}^2\right)^{1/2}\\
			&\hspace{3cm}\times \left(\Vert\nabla v_h\Vert_{0,\O}^2+\sum_{F\in\CE_h}h_F\Vert \partial_n v_h\Vert_{0,F}^2+ h_F^{-1}\Vert v_h\Vert_{0,F}^2\right)^{1/2}\\
			&\leq C \vertiii{w}_h\vertiii{v_h}_h\leq C\vertiii{w}_h\Vert v_h\Vert_h.
		\end{aligned}
		$$
	\end{proof}
	
	In turn, the following result state the consistency of the solution for the source problem \eqref{T1}.
	
	\begin{lemma}[Consistency]\label{lem:consistency}
		Let $w$ be the solution to \eqref{T1}. Then
		$$
		A_h(w,v_h)=B(f,v_h), \qquad \forall v_h\in V_h.
		$$
	\end{lemma}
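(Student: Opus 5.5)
The plan is to recover the strong form of \eqref{T1} from its weak form and then integrate by parts against an arbitrary $v_h\in V_h$. Since $v_h$ need not vanish on $\G$, the resulting boundary term on $\G$ is exactly the one that $A_h$ subtracts, while the terms of $A_h$ containing $w$ on $\G$ vanish because $w\in\HcuO$.

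\emph{Step 1: regularity and the strong form.} By Lemma~\ref{LEM:REG} i) we have $w\in\HurO$ with $r>1/2$, so $\nabla w$ has a trace on each facet $F\in\CE_h$ lying in $L^2(F)$. All terms in $A_h(w,v_h)$ are therefore well defined, which is consistent with Lemma~\ref{lem:continuity-in-triple-norm}. Testing \eqref{T1} with $v\in C_c^\infty(\O)\subset\HcuO$ gives $-\Delta w=f$ in the sense of distributions, hence in $L^2(\O)$. Green's formula in its generalized form, valid for $w\in\H^1(\O)$ with $\Delta w\in L^2(\O)$ and $\partial_n w\in\H^{-1/2}(\partial\O)$, then yields
$$
\int_\O\nabla w\cdot\nabla v=\int_\O f\,v+\langle\partial_n w,v\rangle_{\partial\O}\qquad\forall v\in\H^1(\O).
$$
Restricting to $v\in\HcuO$ and comparing with \eqref{T1} shows $\partial_n w=0$ on $\S$ in the weak sense.

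\emph{Step 2: evaluate $A_h(w,v_h)$.} Because $w=0$ on $\G$, the terms $\varepsilon\langle\partial_n v_h,w\rangle_{1/2,\G}$ and $\alpha\langle w,v_h\rangle_{1/2,h,\G}$ vanish. This holds for every $\varepsilon\in\{-1,0,1\}$, since $\partial_n v_h$ is piecewise polynomial on each $F\in\CE_h$. We are left with
$$
A_h(w,v_h)=\int_\O\nabla w\cdot\nabla v_h-\langle\partial_n w,v_h\rangle_{1/2,\G}.
$$
Applying the Green formula of Step 1 with $v=v_h\in V_h\subset\H^1(\O)$ gives
$$
\int_\O\nabla w\cdot\nabla v_h=B(f,v_h)+\int_\G\partial_n w\,v_h+\int_\S\partial_n w\,v_h.
$$
The $\S$ integral vanishes by the Neumann condition. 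The $\G$ integral equals $\langle\partial_n w,v_h\rangle_{1/2,\G}$, because the facets $F\in\CE_h$ tile $\G$. Substituting, this cancels the remaining boundary term and leaves $A_h(w,v_h)=B(f,v_h)$.

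\emph{Main obstacle.} The delicate point is justifying the boundary duality pairings at the available regularity, especially splitting $\langle\partial_n w,v_h\rangle_{\partial\O}$ into the $\G$ and $\S$ contributions. Doing so requires that $\partial_n w$ is a genuine $L^2$ function on $\partial\O$, and not merely an element of $\H^{-1/2}$, so that it can be integrated against the restriction of $v_h$ to $\G$ and to $\S$ separately. Here I would use $w\in\H^{1+r}(\O)$ with $r>1/2$: then $\nabla w\in\H^r(\O)^2$ has an $L^2(\partial\O)$ trace facet by facet on the polygon. With this in hand, the generalized Green formula reduces to ordinary surface integrals, obtained for instance by density of smooth functions in $\H^{1+r}(\O)$. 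The splitting then holds, and the weak Neumann condition becomes $\partial_n w=0$ a.e.\ on $\S$.
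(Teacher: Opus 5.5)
Your proof is correct and follows the same route as the paper: the $\varepsilon$- and $\alpha$-terms vanish because $w=0$ on $\Gamma$, and integrating by parts against $v_h$, using $-\Delta w=f$ and $\partial_n w=0$ on $\Sigma$, cancels the remaining boundary term on $\Gamma$. You also show that $\partial_n w\in L^2$ facet by facet, using $w\in\H^{1+r}(\O)$ with $r>1/2$, which supplies the regularity justification that the paper leaves implicit.
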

	\begin{proof}
		We have that $w=0$ on $\Gamma$. Then, from Green's formula,
		integration by parts and \eqref{T1} we have
		$$
		\begin{aligned}
			A_h(w,v_h) - B(f,v_h)&= A(w,v_h) - \langle \partial_n w,v_h \rangle_{1/2,\Gamma} - \varepsilon\langle \partial_n v_h,w \rangle_{1/2,\Gamma} + \alpha\langle w,v_h \rangle_{1/2,h,\Gamma}- B(f,v_h)\\
			&=\int_{\O} \nabla w \cdot\nabla v_h - \langle \partial_n w,v_h \rangle_{1/2,\Gamma} -  B(f,v_h)\\
			&=-\int_{\O} \Delta w \, v_h - B (f,v_h) \equiv 0.
		\end{aligned}
		$$
	\end{proof}
	
	We also need the following interpolation estimate \cite[Lemma 3.4]{juntunen2009nitsche}.
	\begin{lemma}\label{lem:interpolation-estimate}
		Let $z\in\HusO$ for $s>1/2$. Then, it holds that
		$$
		\inf_{v_h\in V_h}\vertiii{z-v_h}_h\leq Ch^{\min\{s,\ell\}}\Vert z \Vert_{1+s,\O},
		$$
		where $\ell\ge1$ represents the polynomial degree of the method.
	\end{lemma}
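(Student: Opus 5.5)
The plan is to take $v_h := I_h z$, where $I_h$ is a quasi-interpolant that is well defined for $\H^{1+s}$ functions with $s>1/2$, for instance the Lagrange interpolant of degree $\ell$. In two dimensions the embedding $\H^{1+s}(\O)\hookrightarrow C(\overline{\O})$ holds for $s>0$, so nodal values make sense. One could also use a Scott--Zhang operator. We then bound each of the three contributions to $\vertiii{z-I_hz}_h^2$ element by element and sum. Since the infimum is bounded by the value at any particular $v_h$, this is enough.

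We proceed as follows.

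\textbf{Volume term.} The standard local estimate gives
$$
\Vert\nabla(z-I_hz)\Vert_{0,K}\le Ch_K^{\min\{s,\ell\}}|z|_{1+\min\{s,\ell\},K}.
$$
For fractional $s$ this comes from Bramble--Hilbert / Dupont--Scott on the reference element and scaling. Summing over $K$ yields $Ch^{\min\{s,\ell\}}\Vert z\Vert_{1+s,\O}$.

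\textbf{Boundary term $h_F^{-1}\Vert z-I_hz\Vert_{0,F}^2$.} For $F\subset\partial K$, apply the scaled trace inequality
$$
\Vert v\Vert_{0,F}^2\le C\bigl(h_K^{-1}\Vert v\Vert_{0,K}^2+h_K\Vert\nabla v\Vert_{0,K}^2\bigr)
$$
with $v=z-I_hz$. Combined with $\Vert z-I_hz\Vert_{0,K}\le Ch_K^{1+\min\{s,\ell\}}|z|_{1+\min\{s,\ell\},K}$ and $h_F\simeq h_K$ (shape regularity), this gives $h_F^{-1}\Vert z-I_hz\Vert_{0,F}^2\le Ch_K^{2\min\{s,\ell\}}|z|^2_{1+\min\{s,\ell\},K}$.

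\textbf{Normal derivative term $h_F\Vert\partial_n(z-I_hz)\Vert_{0,F}^2$.} This is the main obstacle, since $\nabla(z-I_hz)$ lies only in $\H^{s}(K)$ with possibly $s<1$. The $\H^1$ trace inequality above is therefore not available, and we use the fractional scaled trace inequality, valid for $1/2<s$:
$$
\Vert g\Vert_{0,F}^2\le C\bigl(h_K^{-1}\Vert g\Vert_{0,K}^2+h_K^{2s-1}|g|_{s,K}^2\bigr),\qquad g\in\H^s(K).
$$
This is obtained on the reference element from the continuity of the trace $\H^s(\hat K)\to L^2(\hat F)$ for $s>1/2$, followed by scaling. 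This step is exactly where the hypothesis $s>1/2$ enters.

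Take $g=\nabla(z-I_hz)$. We have $\Vert g\Vert_{0,K}\le Ch_K^{s}|z|_{1+s,K}$, and the $\H^s$-stability of the interpolation error on the reference element gives $|g|_{s,K}\le C|z|_{1+s,K}$ (for $s\le\ell$). Hence
$$
h_F\Vert\partial_n(z-I_hz)\Vert_{0,F}^2\le C\bigl(h_K^{2s}+h_K^{2s}\bigr)|z|_{1+s,K}^2.
$$
If $s>\ell$, the same argument is run with $s$ replaced by $\ell$, which gives $h^{\min\{s,\ell\}}$.

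Summing the three local bounds over $K\in\CT_h$ (respectively over $F\in\CE_h$, each facet belonging to one element) and taking square roots gives the claimed estimate.
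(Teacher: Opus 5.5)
Your proof is correct. It uses the interpolant with local Bramble--Hilbert estimates, the $\H^1$ scaled trace inequality for $h_F^{-1}\Vert z-I_hz\Vert_{0,F}^2$, and the fractional scaled trace inequality for $h_F\Vert\partial_n(z-I_hz)\Vert_{0,F}^2$ (the only place where $s>1/2$ is genuinely needed); this is the standard argument behind \cite[Lemma 3.4]{juntunen2009nitsche}, which the paper cites instead of giving a proof.

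One step should be stated more carefully. For $s>1$, including your $s>\ell$ case, the reference-element trace bound involves the full $\H^{s}(\hat K)$ norm, so keeping only $\Vert g\Vert_{0,K}$ and $|g|_{s,K}$ after scaling needs the equivalence $\Vert \hat g\Vert_{s,\hat K}\simeq \Vert \hat g\Vert_{0,\hat K}+|\hat g|_{s,\hat K}$. Alternatively, when $\min\{s,\ell\}\ge 1$, simply use the trace inequality with exponent $1$ together with $|z-I_hz|_{2,K}\le Ch_K^{\min\{s,\ell\}-1}|z|_{1+\min\{s,\ell\},K}$.
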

	
	The following auxiliary result will be used to prove convergence
	of the proposed discretization. The proof use the same argument
	as those in \cite{juntunen2009nitsche}.
	
	\begin{lemma}
		There exists a constant $C > 0$, independent of $h$ and $\varepsilon$, such that for any
		$\varepsilon\in\{- 1, 0, 1\}$ and for all $f\in L^2(\Omega)$,
		if $w=Tf$ and $w_h^\varepsilon=T_hf$, then
		%\begin{equation}
		$$
		\Vert(T-T_h)f\Vert_h=\Vert w-w_h^\varepsilon\Vert_h\le C h^r\Vert f\Vert_{0,\O},
		$$
		%\end{equation}
		for $r$ as in Lemma~\ref{LEM:REG}(i).
		\end{lemma}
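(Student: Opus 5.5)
We argue as in a Céa-type (Strang-type) estimate, using the discrete ellipticity in Lemma~\ref{eliptD}, consistency (Lemma~\ref{lem:consistency}), continuity (Lemma~\ref{lem:continuity-in-triple-norm}) and the interpolation estimate (Lemma~\ref{lem:interpolation-estimate}). Fix $f\in L^2(\O)$, $w=Tf$, $w_h:=w_h^\varepsilon=T_hf$. By Lemma~\ref{LEM:REG}(i), $w\in\HurO$ with $\Vert w\Vert_{1+r,\O}\le C\Vert f\Vert_{0,\O}$. To use Lemmas~\ref{lem:continuity-in-triple-norm} and \ref{lem:interpolation-estimate}, which require $s>1/2$, we take $r>1/2$. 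This is possible because $r_\O>1/2$: pick $r_1\in(1/2,r_\O)$. If a smaller $r$ is wanted, the bound with the larger exponent implies it, since $h$ is bounded.

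Let $v_h\in V_h$ be arbitrary and split $w-w_h=(w-v_h)+(v_h-w_h)$. First, $\Vert w-v_h\Vert_h\le\vertiii{w-v_h}_h$ by the definitions \eqref{nomsdf}--\eqref{nomsdf2}. Second, set $e_h:=w_h-v_h\in V_h$. Lemma~\ref{eliptD} gives
$$
C\Vert e_h\Vert_h^2\le A_h(e_h,e_h)=A_h(w_h-w,e_h)+A_h(w-v_h,e_h).
$$
Galerkin orthogonality follows from Lemma~\ref{lem:consistency} together with the discrete equation: $A_h(w-w_h,e_h)=B(f,e_h)-B(f,e_h)=0$. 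The form $A_h$ has been extended to arguments $w\in\HusO$, which is legitimate because traces of $\partial_n w$ on $\Gamma$ lie in $L^2$ for $s>1/2$. It is linear in the first slot, so the manipulation is valid. By Lemma~\ref{lem:continuity-in-triple-norm}, applied to $w-v_h\in\HurO$ (discrete functions are $H^{1+r}$ only for $r<1/2$ globally, hence the point below),
$$
C\Vert e_h\Vert_h^2\le C\vertiii{w-v_h}_h\Vert e_h\Vert_h .
$$
Thus $\Vert e_h\Vert_h\le C\vertiii{w-v_h}_h$.

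The main subtlety is this application of continuity. The difference $w-v_h$ is not globally in $\HusO$ for $s>1/2$, so rather than citing Lemma~\ref{lem:continuity-in-triple-norm} verbatim I would redo its proof for $w-v_h$. That proof only uses Cauchy--Schwarz on each term and that $\partial_n(w-v_h)|_F\in L^2(F)$ for boundary facets $F$; both hold, since $w\in\HurO$ with $r>1/2$ and $v_h$ is piecewise polynomial. The bound $\vertiii{e_h}_h\le C\Vert e_h\Vert_h$ on the test side comes from Lemma~\ref{lem:inverse-inequality-normal-derivative}. All constants depend only on $\alpha$, $C_I$ and shape regularity, and $|\varepsilon|\le1$, so they are uniform in $\varepsilon\in\{-1,0,1\}$ and $h$.

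Combining the two pieces gives $\Vert w-w_h\Vert_h\le C\inf_{v_h\in V_h}\vertiii{w-v_h}_h$. Lemma~\ref{lem:interpolation-estimate} with $s=r$ and $\min\{r,\ell\}=r$ (since $r\le1\le\ell$) then yields $\Vert w-w_h\Vert_h\le Ch^r\Vert w\Vert_{1+r,\O}\le Ch^r\Vert f\Vert_{0,\O}$, which is the claim.
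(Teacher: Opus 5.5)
Your proof is correct and follows essentially the same Céa-type route as the paper. The paper likewise combines coercivity (Lemma~\ref{eliptD}) with consistency to get $\Vert w_h^\varepsilon-v_h\Vert_h^2\le C A_h(w-v_h,w_h^\varepsilon-v_h)\le C\vertiii{w-v_h}_h\Vert w_h^\varepsilon-v_h\Vert_h$, then concludes with the triangle inequality and Lemma~\ref{lem:interpolation-estimate}. Your extra care is sound and fills points the paper leaves implicit: choosing $r_1>1/2$ so that the $s>1/2$ lemmas apply, and noting that the continuity bound has to be re-derived facetwise for $w-v_h$, which is not globally in $\H^{1+s}(\O)$ for $s>1/2$.
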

	\begin{proof}
		From Lemma \ref{eliptD} and Lemma \ref{lem:consistency} we have that
		$$
		\Vert w^{\varepsilon}_h - v_h\Vert_h^2\leq C A_h(w^{\varepsilon}_h
		-v_h,w^{\varepsilon}_h-v_h)= CA_h(w-v_h,w^{\varepsilon}_h-v_h) \qquad \forall v_h\in V_h.
		$$
		Next, from Lemma \ref{lem:continuity-in-triple-norm}, we have that
		$$
		A_h(w-v_h,w^{\varepsilon}_h-v_h) \leq C\vertiii{ w-v_h}_h \Vert w^{\varepsilon}_h-v_h\Vert_h \qquad \forall v_h\in V_h.
		$$
		Hence
		$$
		\Vert w_h^{\epsilon}-v_h\Vert_h\leq C \vertiii{w-v_h}_h \qquad \forall v_h\in V_h.
		$$
		Then the result follows by triangle inequality and Lemma~\ref{lem:interpolation-estimate}.
	\end{proof}
	
	We are now in a position to prove the convergence in norm of $T_h$ to $T$ as $h\to0$. This is stated below.
	%By using the Aubin-Nitsche trick, we have the following result.
	
	\begin{theorem}\label{convergT}
		There exists a constant $C > 0$, independent of $h$ and $\varepsilon$, such that for any
		$\varepsilon\in\{- 1, 0, 1\}$ and for all $f\in L^2(\Omega)$,
		if $w=Tf$ and $w^{\varepsilon}_h=T_hf$, then
		%\begin{equation}
		$$
		\Vert(T-T_h)f\Vert_{0,\O}\le C (h^r+\vert\varepsilon-1\vert h^{1/2})h^r\Vert f\Vert_{0,\O},
		$$
		%\end{equation}
		for $r$ as in Lemma~\ref{LEM:REG}(i).
	\end{theorem}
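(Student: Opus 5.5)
This is an Aubin--Nitsche duality argument. Let $e:=w-w_h^\varepsilon$. Since $T$ is self-adjoint, consider the dual problem: find $z:=Te\in\HcuO$ with $A(v,z)=B(e,v)$ for all $v\in\HcuO$. By Lemma~\ref{LEM:REG}(i), $z\in\HurO$ with $\Vert z\Vert_{1+r,\O}\le C\Vert e\Vert_{0,\O}$. Also $-\Delta z=e$ in $\O$, $z=0$ on $\G$ and $\partial_n z=0$ on $\S$.

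The first step is an error representation for $e$, which in general does not vanish on $\G$. Integrate by parts:
$$
\Vert e\Vert_{0,\O}^2=\int_\O\nabla e\cdot\nabla z-\langle \partial_n z,e\rangle_{1/2,\G}.
$$
Since $z=0$ on $\G$, the terms $\langle\partial_n e, z\rangle$ and $\alpha\langle e,z\rangle_{1/2,h,\G}$ vanish. Hence
$$
\Vert e\Vert_{0,\O}^2=A_h(e,z)+(1-\varepsilon)\langle \partial_n z,e\rangle_{1/2,\G}\cdot(-1)\cdot(-1)^{0},
$$
that is, up to the sign convention, $\Vert e\Vert_{0,\O}^2=A_h(e,z)-(1-\varepsilon)\langle\partial_n z,e\rangle_{1/2,\G}$. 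I will check this bookkeeping carefully, because the adjoint of $A_h$ interchanges the roles of $\varepsilon$ and $1$. The key point is that $A_h(\cdot,z)$ matches the adjoint problem exactly only when $\varepsilon=1$; otherwise a residual boundary term weighted by $|1-\varepsilon|$ survives.

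The second step uses Galerkin orthogonality. By Lemma~\ref{lem:consistency}, $A_h(e,v_h)=0$ for all $v_h\in V_h$, so $A_h(e,z)=A_h(e,z-z_h)$ with $z_h$ the quasi-interpolant from Lemma~\ref{lem:interpolation-estimate}. Continuity then gives
$$
|A_h(e,z-z_h)|\le C\vertiii{e}_h\vertiii{z-z_h}_h\le C\vertiii{e}_h\, h^r\Vert e\Vert_{0,\O}.
$$
This continuity is the symmetric analogue of Lemma~\ref{lem:continuity-in-triple-norm}, now with both arguments nondiscrete but in $\H^{1+r}+V_h$; it follows from the same Cauchy--Schwarz computation in $\vertiii{\cdot}_h$. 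We also need $\vertiii{e}_h\le Ch^r\Vert f\Vert_{0,\O}$. This follows as in the previous lemma by splitting $e=(w-v_h)+(v_h-w_h^\varepsilon)$. The first piece is bounded by Lemma~\ref{lem:interpolation-estimate}. For the second piece, which is discrete, Lemma~\ref{lem:inverse-inequality-normal-derivative} gives $\vertiii{\cdot}_h\le C\Vert\cdot\Vert_h$, and the previous lemma controls it. Together these yield the $h^{2r}$ contribution.

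The third step, the main obstacle, bounds the nonsymmetric term $|1-\varepsilon|\,|\langle\partial_n z,e\rangle_{1/2,\G}|$, which cannot be treated by orthogonality. I will use Cauchy--Schwarz in the form $\left(\sum_F h_F\Vert\partial_n z\Vert_{0,F}^2\right)^{1/2}\left(\sum_F h_F^{-1}\Vert e\Vert_{0,F}^2\right)^{1/2}$. The second factor is at most $\Vert e\Vert_h\le Ch^r\Vert f\Vert_{0,\O}$. For the first factor, the trace theorem for $z\in\H^{1+r}$ with $r>1/2$ gives $\Vert\partial_n z\Vert_{0,\G}\le C\Vert z\Vert_{1+r,\O}\le C\Vert e\Vert_{0,\O}$, so the factor is bounded by $Ch^{1/2}\Vert e\Vert_{0,\O}$. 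This produces the term $|\varepsilon-1|h^{1/2}h^r$. Dividing by $\Vert e\Vert_{0,\O}$ then gives the statement, with constants independent of $\varepsilon$ since only three values occur.
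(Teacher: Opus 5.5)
Your proof is correct and follows the paper's argument. Both use duality with $z=Te$, the identity $\Vert e\Vert_{0,\Omega}^2=A_h(e,z)+(\varepsilon-1)\langle\partial_n z,e\rangle_{1/2,\Gamma}$, Galerkin orthogonality with an interpolant $z_h$, and a Cauchy--Schwarz/trace bound on the boundary term that produces the $|\varepsilon-1|h^{1/2}h^r$ contribution. You can replace your garbled sign line by the one-line computation $A_h(e,z)=A(e,z)-\varepsilon\langle\partial_n z,e\rangle_{1/2,\Gamma}$, which holds because $z=0$ on $\Gamma$.

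The only difference is in the continuity step. You bound $A_h(e,z-z_h)$ by $\vertiii{e}_h\vertiii{z-z_h}_h$ and correctly supply $\vertiii{e}_h\le Ch^r\Vert f\Vert_{0,\Omega}$ by splitting $e$. The paper instead writes the bound with $\Vert w-w_h^\varepsilon\Vert_h$ and a Scott--Zhang interpolant, which is legitimate if $z_h$ is taken to vanish on $\Gamma$, since this removes the terms containing $\partial_n e$. Both routes give the same rate.
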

	\begin{proof}
		We consider the following auxiliary problem
		\begin{equation}\label{aux-problem}
			\begin{split}
				-\Delta z&=w-w^{\varepsilon}_h\quad\text{in }\O,\\
				z&=0\quad\qquad\text{on }\G,\\
				\partial_n z&=0\quad\qquad\text{on }\S.
			\end{split}
		\end{equation}
		From Lemma~\ref{LEM:REG}(i), we have that
		$z\in\HurO$ and there exists $C>0$
		%there exists $r\in(\frac{1}{2},1]$
		such that
		$$\Vert z\Vert_{1+r,\O}\le C\Vert w-w^{\varepsilon}_h\Vert_{0,\O}.$$
		
		Next, by testing \eqref{aux-problem} with $(w-w_h^{\varepsilon})$
		and using the boundary conditions for $z$, we have that
		\begin{equation*}
			\begin{split}
				\Vert w-w^{\varepsilon}_h&\Vert_{0,\O}^2=\int_{\O}\nabla(w-w^{\varepsilon}_h)\cdot\nabla z-\int_{\G}(w-w^{\varepsilon}_h)\partial_nz\\
				&=\int_{\O}\nabla(w-w^{\varepsilon}_h)\cdot\nabla z- \langle \partial_n (w-w^{\varepsilon}_h),z \rangle_{1/2,\Gamma} - \varepsilon\langle \partial_n z,(w-w^{\varepsilon}_h) \rangle_{1/2,\Gamma} + \alpha\langle (w-w^{\varepsilon}_h),z \rangle_{1/2,h,\Gamma}\\
				&\quad+(\varepsilon-1)\langle \partial_n z,(w-w^{\varepsilon}_h)\rangle_{1/2,\Gamma}\\
				&=A_h(w-w^{\varepsilon}_h,z)+(\varepsilon-1)\langle \partial_n z,(w-w^{\varepsilon}_h)\rangle_{1/2,\Gamma}.
			\end{split}
		\end{equation*}
		Now by using that $A_h(w-w^{\varepsilon}_h,v_h)=0$ for all $v_h\in V_h$, we get
		$$
		\Vert w-w^{\varepsilon}_h\Vert_{0,\O}^2
		=A_h(w-w^{\varepsilon}_h,z-z_h)+(\varepsilon-1)\langle \partial_n z,(w-w^{\varepsilon}_h)\rangle_{1/2,\Gamma},
		$$
		with $z_h$ the Scott-Zhang interpolant of $z$ \cite{scott1990finite}. Then, it follows that
		$$
		\Vert w-w^{\varepsilon}_h\Vert_{0,\O}^2\le C h^r\Vert w-w^{\varepsilon}_h\Vert_h\Vert z\Vert_{1+r,\O}
		+(\varepsilon-1)\langle \partial_n z,(w-w^{\varepsilon}_h)\rangle_{1/2,\Gamma},
		$$
		Next, it can be proved that
		$$
		\vert (\varepsilon-1)\langle \partial_n z,(w-w^{\varepsilon}_h)\rangle_{1/2,\Gamma}\vert\le
		\vert\varepsilon-1\vert\Vert\partial_n z\Vert_{0,\G}\Vert w-w^{\varepsilon}_h\Vert_{0,\G}
		\le\vert\varepsilon-1\vert h^{1/2}\Vert z\Vert_{1+r,\O}\Vert w-w^{\varepsilon}_h\Vert_{h}.
		$$
		Thus, we obtain
		\begin{equation}\label{dfghtrf}
			\Vert w-w^{\varepsilon}_h\Vert_{0,\O}\le C (h^r+\vert\varepsilon-1\vert h^{1/2})h^r\Vert f\Vert_{0,\O}.
		\end{equation}
	\end{proof}

	Next, an immediate consequence of
	Theorem~\ref{convergT} is that isolated parts of $sp(T)$ are approximated
	by isolated parts of $sp(T_h)$.
	It means that if $\mu$ is a nonzero eigenvalue of $T$ with algebraic
	multiplicity $m$, hence there exist $m$ eigenvalues
	$\mu_h^{(1)},\ldots,\mu_h^{(m)}$ of $T_h$
	(repeated according to their respective multiplicities)
	that will converge to $\mu$ as $h$ goes to zero.
	
	Now, let us denote by $\mathcal{S}$ and $\mathcal{S}_h$ the
	eigenspace associated to the eigenvalue $\mu$ and the spanned
	of the eigenspaces associated to $\mu_h^{(1)},\ldots,\mu_h^{(m)}$, respectively.
	
	We also recall the definition of the \textit{gap} $\hat{\delta}$ between two closed
	subspaces $\mathcal{X}$ and $\mathcal{Y}$ of $L^2(\O)$:
	$$
	\hat{\delta}(\mathcal{X},\mathcal{Y})
	:=\max\left\{\delta(\mathcal{X},\mathcal{Y}),\delta(\mathcal{Y},\mathcal{X})\right\},$$
	where
	$$
	\delta(\mathcal{X},\mathcal{Y})
	:=\sup_{\mathbf{x}\in\mathcal{X}:\
		\left\|x\right\|_{0,\O}=1}\delta(x,\mathcal{Y}),
	\quad\text{with }\delta(x,\mathcal{Y}):=
	\inf_{y\in\mathcal{Y}}\|x-y\|_{0,\O}.$$
	We also define  $$\gamma_h:=\sup\limits_{v\in \mathcal{S}:\Vert v\Vert_{0,\O}=1}
	\Vert(T-T_h)v\Vert_{0,\O}.$$

	The following error estimates for the approximation
	of eigenvalues and eigenfunctions hold true.
	The result can be obtained from Theorems~7.1 and~7.3 from \cite{BO}.
	
	\begin{theorem}
		\label{gap}
		There exists a strictly positive constant $C$ such that
		\begin{align}
			\hat{\delta}(\mathcal{S},\mathcal{S}_h)
			& \leq C \gamma_h,\nonumber\\
			\left|\mu-\mu_h^{(j)}\right|
			& \le C \gamma_h \qquad \forall j=1,\ldots,m.\nonumber
		\end{align}
	\end{theorem}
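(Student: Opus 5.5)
The estimates follow from the Babu\v{s}ka--Osborne theory applied to the compact operator $T$ on $L^2(\O)$ and its approximations $T_h$. The task is therefore to check the hypotheses of \cite[Theorems~7.1 and~7.3]{BO} and then specialize their conclusions. Let $\mu\neq0$ be an eigenvalue of $T$ of algebraic multiplicity $m$. By Theorem~\ref{CHAR_SP}, $\mu$ is isolated and $T$ is self-adjoint with respect to $B(\cdot,\cdot)$, so ascent and algebraic multiplicity coincide with the geometric data and $\mathcal{S}$ is the full generalized eigenspace. The essential hypothesis is norm convergence $\Vert T-T_h\Vert_{\mathcal{L}(L^2(\O))}\to0$, which Theorem~\ref{convergT} supplies with rate $(h^r+\vert\varepsilon-1\vert h^{1/2})h^r$. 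Consequently, for $h$ small a circle $\mathcal{C}$ around $\mu$ that separates it from the rest of $\sp(T)$ lies in the resolvent set of $T_h$. Moreover, $\Vert (z-T_h)^{-1}\Vert$ is bounded uniformly for $z\in\mathcal{C}$ and $h$ small, and exactly $m$ eigenvalues $\mu_h^{(j)}$ of $T_h$ lie inside $\mathcal{C}$.

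For the gap estimate I will introduce the Riesz spectral projectors
$$
E:=\frac{1}{2\pi i}\int_{\mathcal{C}}(z-T)^{-1}\,dz,\qquad E_h:=\frac{1}{2\pi i}\int_{\mathcal{C}}(z-T_h)^{-1}\,dz,
$$
whose ranges are $\mathcal{S}$ and $\mathcal{S}_h$. For $x\in\mathcal{S}$ we have $Ex=x$, and the resolvent identity gives
$$
(E-E_h)x=\frac{1}{2\pi i}\int_{\mathcal{C}}(z-T_h)^{-1}(T-T_h)(z-T)^{-1}x\,dz .
$$
Since $(z-T)^{-1}x=(z-\mu)^{-1}x\in\mathcal{S}$ for $x\in\mathcal{S}$ (semisimplicity), this yields $\Vert(E-E_h)|_{\mathcal{S}}\Vert\le C\gamma_h$, hence $\delta(\mathcal{S},\mathcal{S}_h)\le C\gamma_h$. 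The reverse direction $\delta(\mathcal{S}_h,\mathcal{S})$ follows from \cite[Theorem~7.1]{BO}, using that $\dim\mathcal{S}_h=\dim\mathcal{S}=m$ and that a standard lemma bounds the gap in one direction by the gap in the other plus a small term once the dimensions are equal.

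For the eigenvalues I will apply \cite[Theorem~7.3]{BO}. With ascent $\alpha=1$ (from self-adjointness of $T$), it gives $|\mu-\mu_h^{(j)}|\le C\big(\sum_{i,k}|((T-T_h)\phi_i,\phi_k^*)|+\Vert (T-T_h)|_{\mathcal{S}}\Vert\,\Vert (T-T_h)^*|_{\mathcal{S}^*}\Vert\big)$, where $\mathcal{S}^*=\mathcal{S}$. Each term is bounded by $C\gamma_h$; the second term is even $O(\gamma_h\Vert T-T_h\Vert)$. Note that $T_h$ is not self-adjoint when $\varepsilon\neq1$, so the discrete $\mu_h^{(j)}$ may be complex, but Theorem~7.3 does not require self-adjointness of $T_h$.

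The main obstacle is verifying the hypotheses cleanly for the non-self-adjoint case $\varepsilon\in\{-1,0\}$, namely that the ascent and multiplicity bookkeeping of \cite{BO} refers only to $T$. This holds because all multiplicity data in the abstract theory are those of the limit operator $T$, while only norm convergence is needed of $T_h$. The constants depend on $\mu$ through the contour $\mathcal{C}$ but not on $h$.
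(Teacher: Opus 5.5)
Your proposal is correct and takes essentially the paper's route: the paper's proof is just an appeal to \cite[Theorems~7.1 and~7.3]{BO} once norm convergence (Theorem~\ref{convergT}) is known, and you check exactly those hypotheses (self-adjointness of $T$ gives ascent one and $\mathcal{S}^*=\mathcal{S}$, no self-adjointness is needed for $T_h$) and bound each term by $\gamma_h$. Your Riesz-projector bound for $\delta(\mathcal{S},\mathcal{S}_h)$ is the first half of the proof of \cite[Theorem~7.1]{BO}; it implicitly (and correctly) reads $\mathcal{S}_h$ as the range of $E_h$, i.e.\ the sum of generalized eigenspaces, which is the reading the statement needs when $\varepsilon\neq1$.
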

	
	Moreover, employing the additional regularity of the eigenfunctions (cf. Lemma~\ref{LEM:REG}(ii)),
	we immediately obtain the following bound.
	
	\begin{theorem}\label{gapr}
		There exist $s> 1/2$ and $C>0$, independent of $h$, such that
		\begin{align}
			&\Vert(T-T_h)v\Vert_{0,\O}
			\le C h^{\min\{s,\ell\}+\frac{1}{2}}||v||_{0,\O}\qquad \forall
			v\in \mathcal{S}.\label{bou_gamma_h}
		\end{align}
		Moreover, if $\varepsilon  = 1$, there exists $C > 0$,
		independent of $h$, such that
		\begin{align}
			&\Vert(T-T_h)v\Vert_{0,\O}
			\le C h^{2\min\{s,\ell\}}||v||_{0,\O}\qquad \forall
			v\in \mathcal{S},\label{bou2_gamma_h}
		\end{align}
		and as a consequence,
		%\begin{align*}
		$$
		\begin{aligned}
			&\gamma_h \leq C h^{\min\{s,\ell\}+\frac{1}{2}}\qquad \varepsilon\in\{0,-1\}, %\label{bound1r3}\\
			&\gamma_h \leq C h^{2\min\{s,\ell\}}\qquad \varepsilon=1, %\label{bound1rwe}
		\end{aligned}
		$$
		%\end{align*}
		where $\ell\ge1$ represents the polynomial degree of the method.
	\end{theorem}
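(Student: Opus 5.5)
The plan is to repeat the duality argument of Theorem~\ref{convergT} with $f=v\in\mathcal{S}$, but to use the extra regularity of eigenfunctions (Lemma~\ref{LEM:REG}(ii)) in the energy error. The regularity of the dual solution $z$ stays limited to Lemma~\ref{LEM:REG}(i).

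\emph{Step 1 (energy estimate on $\mathcal{S}$).} Let $v\in\mathcal{S}$, so $Tv=\mu v$. Set $w=Tv=\mu v$ and $w_h^\varepsilon=T_hv$. By Lemma~\ref{LEM:REG}(ii), $w\in\HusO$ with $\Vert w\Vert_{1+s,\O}\le C\Vert v\Vert_{0,\O}$ for some $s\in[\frac12,r_\O)$. Since $\O$ is convex, $r_\O>1$, and the constant now depends on $\mu$, which is fixed. The argument of the lemma preceding Theorem~\ref{convergT} uses discrete ellipticity (Lemma~\ref{eliptD}), consistency (Lemma~\ref{lem:consistency}) and continuity in the triple norm (Lemma~\ref{lem:continuity-in-triple-norm}). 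Combining it with Lemma~\ref{lem:interpolation-estimate} gives
$$
\Vert w-w_h^\varepsilon\Vert_h\le C\inf_{v_h\in V_h}\vertiii{w-v_h}_h\le C h^{\min\{s,\ell\}}\Vert v\Vert_{0,\O}.
$$

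\emph{Step 2 (duality).} Let $z$ solve \eqref{aux-problem} with datum $w-w_h^\varepsilon$. As in the proof of Theorem~\ref{convergT},
$$
\Vert w-w_h^\varepsilon\Vert_{0,\O}^2=A_h(w-w_h^\varepsilon,z-z_h)+(\varepsilon-1)\langle\partial_n z,w-w_h^\varepsilon\rangle_{1/2,\Gamma}.
$$
For the first term, if $\varepsilon=1$ then $A_h$ is symmetric. Lemma~\ref{lem:continuity-in-triple-norm} (applied with the roles of the arguments exchanged) and Lemma~\ref{lem:interpolation-estimate} then bound it by $C\Vert w-w_h^\varepsilon\Vert_h\, h^{r}\Vert z\Vert_{1+r,\O}$. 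For $\varepsilon\ne1$ I would check directly that each term of $A_h(w-w_h^\varepsilon,z-z_h)$ is bounded the same way, using $\vertiii{z-z_h}_h\le Ch^r\Vert z\Vert_{1+r,\O}$ and, for the discrete factor, the inverse inequality of Lemma~\ref{lem:inverse-inequality-normal-derivative}. The second term is treated exactly as before: $\Vert\partial_n z\Vert_{0,\Gamma}\le C\Vert z\Vert_{1+r,\O}$ by the trace theorem, and $\Vert w-w_h^\varepsilon\Vert_{0,\Gamma}\le h^{1/2}\Vert w-w_h^\varepsilon\Vert_h$ by definition of $\Vert\cdot\Vert_h$. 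Dividing by $\Vert w-w_h^\varepsilon\Vert_{0,\O}$ yields
$$
\Vert (T-T_h)v\Vert_{0,\O}\le C\,(h^r+|\varepsilon-1|h^{1/2})\,h^{\min\{s,\ell\}}\Vert v\Vert_{0,\O}.
$$

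\emph{Step 3 (choice of exponents).} Since $\O$ is convex, I take $r=1$, and $s$ can be taken in $[\frac12,\min\{r_\O,\ell\}]$. For $\varepsilon\in\{0,-1\}$ the $h^{1/2}$ term dominates because $r\ge 1/2$, which gives \eqref{bou_gamma_h}. For $\varepsilon=1$ the boundary term vanishes and the bound is $h^{r+\min\{s,\ell\}}$. Taking $r\ge \min\{s,\ell\}$, which is possible because $r=1\ge s$ when $s\le1$, gives \eqref{bou2_gamma_h}. The bounds on $\gamma_h$ then follow by taking the supremum over unit $v\in\mathcal{S}$.

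\emph{Main obstacle.} I expect the delicate point to be the case $s>1$ with $\ell\ge2$. Then the duality factor coming from $z$ is only $h^{\min\{r,\ell\}}$, since $z$ has regularity $1+r$ with $r$ limited by $r_\O$, and this yields $h^{\min\{s,\ell\}+\min\{r,\ell\}}$ rather than literally $h^{2\min\{s,\ell\}}$. I would handle this by noting that $r$ and $s$ share the same upper bound $r_\O$, or else by restricting the statement to $s\le1$ (equivalently choosing $s$ with $\min\{s,\ell\}\le r$), so that $h^{r+\min\{s,\ell\}}\le h^{2\min\{s,\ell\}}$ for $h\le1$. The rest is routine.
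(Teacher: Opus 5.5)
Your proposal is essentially the paper's proof: rerun the duality argument of Theorem~\ref{convergT} with $f=v\in\mathcal{S}$, use Lemma~\ref{LEM:REG}(ii) for the energy error, and note that the boundary term drops when $\varepsilon=1$; your explicit choice $\min\{s,\ell\}\le r$ (allowed by the existential ``there exist $s$'') is in fact more careful than the paper about the $h^{2\min\{s,\ell\}}$ bound. One correction: convexity does not give $r_\Omega>1$ under mixed boundary conditions (a Dirichlet--Neumann switch at a corner of angle $\omega\in(\pi/2,\pi)$ caps the regularity at $\pi/(2\omega)<1$), so instead of fixing $r=1$ take $r=s\in(\tfrac12,\min\{r_\Omega,1\})$, which keeps the trace bound $\Vert\partial_n z\Vert_{0,\Gamma}\le C\Vert z\Vert_{1+r,\Omega}$, the $h^{1/2}$ domination for $\varepsilon\ne1$, and $h^{r+s}=h^{2\min\{s,\ell\}}$ for $\varepsilon=1$.
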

	
	\begin{proof}
		The inequality~\eqref{bou_gamma_h} is obtained repeating the proof of Theorem~\ref{convergT}.
		Estimate \eqref{bou2_gamma_h} also follows from Theorem~\ref{convergT}
		by noticing that $\vert\varepsilon-1\vert=0$ in \eqref{dfghtrf}.
	\end{proof}
	
	The error estimate from the previous theorem yields a similar one
	for the eigenvalues $\l=\frac{1}{\mu}$ of Problem~\ref{P2}.
	
	\begin{remark}\label{rem:burman-penalty-free}
		%We observe that since the method is unconditionally stable because of Lemma~\ref{eliptD}, there will be no spurious eigenvalues. 
%		Note that the theoretical convergence rates proposed in Theorem~\ref{gapr}
%		are satisfied for all $\alpha>0$. 
		We note that if we consider $\alpha=0$ and $\varepsilon=-1$,
		we have the penalty-free scheme proposed in \cite{burman2012penalty}. In this case,
		we need an inf-sup result for $A_h(\cdot,\cdot)$ and the theory proposed in this section
		can be adapted to show the convergence of the method.
		We will report some numerical test illustrating the behavior of this scheme
		(cf. Section~\ref{subsec:rectangle-domain}).
	\end{remark}
	
		\section{A posteriori error analysis}
		\label{sec:apost}
The aim of this section is to introduce and analyze a residual-based a posteriori error estimator for the Laplace eigenvalue problem discretized by the symmetric Nitsche method. In what follows, we restrict the analysis to the fully Dirichlet case considered in Problem~\ref{P2-discrete}, and to simple eigenvalues. Let $(\lambda,u)$ be a simple eigenpair of the continuous problem and let $(\lambda_h,u_h)\in\mathbb{R}\times V_h$ be the corresponding discrete eigenpair, normalized by
\[
   \|u\|_{0,\Omega}=1,\qquad \|u_h\|_{0,\Omega}=1,
\]
with the sign chosen so that $(u,u_h)_{0,\Omega}>0$.
\\
We denote by $\mathcal E_h^0$ the set of interior facets of $\mathcal T_h$, while $\mathcal E_h$ denotes the set of boundary facets contained in $\Gamma$, as in Section~\ref{sec:fem}. For each facet $F$, we set
\[
   h_F:=\operatorname{diam}(F).
\]
If $F=K^+\cap K^-\in\mathcal E_h^0$, with unit outward normals $ n^\pm$ to $K^\pm$, we define
\[
   [\![\nabla v_h\cdot n]\!] := \nabla v_h|_{K^+}\cdot n^+  + \nabla v_h|_{K^-}\cdot n^-.
\]
For a boundary facet $F\subset\Gamma$, we recall that $ n$ denotes the unit outward normal to $\Omega$.

%		The aim of this section is to introduce a suitable
%		residual-based error estimator for the Nitsche FEM for the laplace
%		eigenvalue problem. Moreover, on the forthcoming analysis we will focus only on single eigenvalues. \cred{Definir las caras internas y la $h_{\mathcal{F}}$}.
		
		%As a consequence of the mesh regularity assumptions,
		%we have that each  triangle $K\in\CT_h$ admits a sub-triangulation $\CT_h^{K}$
		%obtained by joining each vertex of $K$ with the midpoint of the ball with respect
		%to which $K$ is starred. Let $\CT_h:=\bigcup_{K\in\CT_h}\CT_h^{K}$.
		%Since we are also assuming \textbf{A2}, $\big\{\CT_h\big\}_{0 < h \leq 1}$
		%is a shape-regular family of triangulations of $\O$. 
		
		\subsection{Residual-based a posteriori error estimator}
		In this section, we introduce an a posteriori error estimator based on residuals. We focus on the symmetric Nitsche formulation, that is, $\varepsilon=1$, since this is the variant for which the optimal a priori eigenvalue convergence is obtained. ```latex
We focus on the symmetric Nitsche formulation, that is, $\varepsilon=1$, since this is the variant for which the optimal a priori eigenvalue convergence is obtained. For each element $K\in\mathcal T_h$, we define the element residual
\[
   R_K:=\lambda_h u_h+\Delta u_h \qquad \text{in }K.
\]
The local error indicator is given by
\[
   \eta_K^2 := \eta_{R,K}^2+\eta_{J,K}^2+\eta_{\Gamma,K}^2,
\]
where
\[
   \eta_{R,K}^2 := h_K^2\|R_K\|_{0,K}^2,
\]
\[
   \eta_{J,K}^2 := \sum_{F\subset\partial K\cap\mathcal E_h^0}h_F\|[\![\nabla u_h\cdot n]\!]\|_{0,F}^2,
\]
and
\[
   \eta_{\Gamma,K}^2 := \sum_{F\subset\partial K\cap\mathcal E_h}h_F^{-1}\|u_h\|_{0,F}^2.
\]
%Here, $\alpha>0$ is the Nitsche stabilization parameter introduced in Section~\ref{sec:fem}. 
%In the symmetric case considered here, $\alpha$ is assumed to be sufficiently large so that the bilinear form $A_h(\cdot,\cdot)$ is coercive in the mesh-dependent norm. 
The global estimator is defined by
\begin{equation}\label{errest1}
   \eta_h := \left(\sum_{K\in\mathcal{T}_h}\eta_K^2\right)^{1/2}.
\end{equation}
The stabilization parameter $\alpha$ is not included in the definition of the estimator. Throughout this section, $\alpha$ is assumed to be fixed and sufficiently large to guarantee the coercivity of the symmetric Nitsche bilinear form. Hence, the constants in the reliability estimate may depend on $\alpha$, but they are independent of the mesh size.

%		and the data oscillation term is given by 
%		$$
%		\Theta :=\left(\sum_{K\in\mathcal{T}_h}\Theta_K^2\right)^{1/2},
%		$$
%		where $\Theta_K := \|(2\mu_h)^{-1/2} (\cblue{\mu}-\mu_h)\beps_h (\bu_h) \|_{0,K}$.

We introduce the following saturation assumption \cite{braess1996posteriori}, which is used for the reliability estimate. Let $(\lambda_{h/2},u_{h/2})\in\mathbb{R}\times V_{h/2}$ be the discrete eigenpair associated with the same simple eigenvalue branch, computed on the uniformly refined mesh $\mathcal{T}_{h/2}$. We assume that $u_{h/2}$ is normalized in $L^2(\Omega)$ and oriented consistently with $u_h$. There exists a constant $0<\gamma<1$, independent of $h$, such that
\begin{equation}\label{satassum11}
   \|u-u_{h/2}\|_{h/2}\le \gamma\|u-u_h\|_{h}.
\end{equation}

\subsection{Reliability}

The reliability estimate is obtained by comparing the discrete eigenfunction on the mesh
$\mathcal T_h$ with the one computed on the uniformly refined mesh $\mathcal T_{h/2}$.
The proof follows a two-level argument from \cite{juntunen2009nitsche}, adapted here to the spectral setting. Let
$v\in V_{h/2}$ and let $\overline{v}\in V_h$ be its Lagrange interpolant on the coarse
mesh. We set
\[
   \chi:=v-\overline{v}.
\]
By standard scaling arguments, there holds
\begin{align}
&\sum_{K\in\mathcal T_{h/2}}h_K^{-2}\|\chi\|_{0,K}^2 + \sum_{F\in\mathcal E_{h/2}^0}h_F^{-1}\|\chi\|_{0,F}^2 +
\sum_{F\in\mathcal E_{h/2}} \left(h_F^{-1}\|\chi\|_{0,F}^2 + h_F\|\partial_n \chi\|_{0,F}^2 \right)\leq C \vertiii{v}_{h/2}^2.
\label{eq:lagrange-two-xi-level}
\end{align}
Since $v\in V_{h/2}$, the inverse trace inequality yields
\[
   \vertiii{v}_{h/2}\leq C\|v\|_{h/2}.
\]
Hence, for the function $v$ chosen below with $\|v\|_{h/2}=1$, the right-hand side
of \eqref{eq:lagrange-two-xi-level} is bounded by a constant independent of $h$.

We also introduce the higher-order term
\[
   \theta_h := \|\lambda u-\lambda_hu_h\|_{0,\Omega} + \|\lambda u-\lambda_{h/2}u_{h/2}\|_{0,\Omega}.
\]
For normalized eigenfunctions associated with a simple eigenvalue, $\theta_h$ is of
higher order with respect to the energy error.

\begin{theorem}[Reliability]
Assume that the saturation property \eqref{satassum11} holds for the discrete
eigenfunction associated with the same simple eigenvalue branch. Then there exists a
constant $C_\alpha>0$, independent of $h$ but possibly depending on the fixed Nitsche
stabilization parameter $\alpha$, such that
\[
   \|u-u_h\|_h \leq C_\alpha\big(\eta_h(u_h)+\theta_h\big).
\]
\end{theorem}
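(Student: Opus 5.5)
The plan is to run the two-level argument of \cite{juntunen2009nitsche}. First, the saturation assumption \eqref{satassum11} lets us replace the error $u-u_h$ by the computable discrete difference $u_{h/2}-u_h$. Second, we bound that difference with the residual estimator. For the first step, the triangle inequality gives
\[
\|u-u_h\|_{h}\le \|u-u_{h/2}\|_{h/2}+\|u_{h/2}-u_h\|_{h/2}\le \gamma\|u-u_h\|_h+\|u_{h/2}-u_h\|_{h/2}.
\]
Here we use that $\|\cdot\|_h\le C\|\cdot\|_{h/2}$, since $h_F$ is halved on refined boundary facets; this only affects constants. It follows that $\|u-u_h\|_h\le (1-\gamma)^{-1}C\|u_{h/2}-u_h\|_{h/2}$. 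Since $u_h\in V_h\subset V_{h/2}$, the difference $e:=u_{h/2}-u_h$ lies in $V_{h/2}$. Lemma~\ref{eliptD} on the fine mesh then yields
\[
\|e\|_{h/2}\le C\sup_{v\in V_{h/2},\,\|v\|_{h/2}=1}A_{h/2}(e,v).
\]
Here $A_{h/2}$ denotes the Nitsche form built with fine boundary facets. We take $\alpha$ large enough for coercivity on both levels; this is the source of the $\alpha$-dependence of $C_\alpha$.

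For the second step, fix $v\in V_{h/2}$ with $\|v\|_{h/2}=1$, let $\overline v\in V_h$ be its coarse Lagrange interpolant, and set $\chi=v-\overline v$. The fine and coarse discrete eigenproblems give $A_{h/2}(u_{h/2},v)=\lambda_{h/2}B(u_{h/2},v)$ and $A_h(u_h,\overline v)=\lambda_h B(u_h,\overline v)$. The Nitsche forms on the two meshes differ only in the penalty weight, since $h_F^{-1}$ doubles on refined facets. We write
\[
A_{h/2}(e,v)=\lambda_{h/2}B(u_{h/2},v)-\lambda_hB(u_h,v)+\big[\lambda_hB(u_h,\chi)-A_{h/2}(u_h,\chi)\big]+\big[A_h(u_h,\overline v)-A_{h/2}(u_h,\overline v)\big].
\]
The first difference is bounded by $\|\lambda_{h/2}u_{h/2}-\lambda_hu_h\|_{0,\Omega}\|v\|_{0,\Omega}$. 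Adding and subtracting $\lambda u$ bounds this by $\theta_h$, using a discrete Poincaré/Friedrichs inequality $\|v\|_{0,\Omega}\le C\|v\|_{h/2}$. The last bracket equals $-\alpha\sum_{F}h_F^{-1}\int_F u_h\overline v$ up to a factor. By Cauchy--Schwarz it is bounded by $C\alpha\,\eta_{\Gamma}\,(\sum h_F^{-1}\|\overline v\|_{0,F}^2)^{1/2}$, and the last factor is controlled by $\vertiii{v}_{h/2}$ through \eqref{eq:lagrange-two-xi-level}.

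For the middle bracket we integrate by parts elementwise on $\mathcal T_h$, which is legitimate because $u_h$ is piecewise polynomial on the coarse mesh. This gives
\[
\lambda_hB(u_h,\chi)-A_{h/2}(u_h,\chi)=\sum_K\int_K R_K\chi-\sum_{F\in\mathcal E_h^0}\int_F[\![\nabla u_h\cdot n]\!]\chi+\sum_{F\subset\Gamma}\Big(\int_F\partial_n\chi\,u_h-\alpha h_F^{-1}\int_F u_h\chi\Big).
\]
The $\int_\Gamma\partial_nu_h\,\chi$ terms cancel between the integration by parts and the consistency term. Each piece is then estimated by Cauchy--Schwarz with matching weights, $h_K\|R_K\|\cdot h_K^{-1}\|\chi\|$, $h_F^{1/2}\|[\![\cdot]\!]\|\cdot h_F^{-1/2}\|\chi\|$, and $h_F^{-1/2}\|u_h\|\cdot h_F^{1/2}\|\partial_n\chi\|$. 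The $\chi$-factors are bounded by \eqref{eq:lagrange-two-xi-level} together with the inverse estimate $\vertiii{v}_{h/2}\le C\|v\|_{h/2}=C$.

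Collecting the terms gives $\|e\|_{h/2}\le C_\alpha(\eta_h+\theta_h)$, and hence the claim. The main obstacle is the bookkeeping in the second step. We must check the exact cancellation of the boundary normal-derivative terms, which is where $\varepsilon=1$ and the consistency of the Nitsche form are used. We must also make sure that interior fine facets not lying on coarse facets contribute no jump, since $u_h$ is smooth inside each coarse element. Finally, the mismatch of boundary penalty weights between $\mathcal E_h$ and $\mathcal E_{h/2}$ must be absorbed into $\eta_\Gamma$ rather than left as an uncontrolled term. If the weight mismatch proves awkward, I would instead use the coarse form $A_h$ on the whole of $V_{h/2}$ with coarse-facet weights. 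That form is still coercive on $V_{h/2}$ for $\alpha$ large, by Lemma~\ref{eliptD} with a modified inverse constant, and the penalty-mismatch term then disappears.
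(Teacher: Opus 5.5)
Your proposal is correct and follows essentially the same two-level argument as the paper. The steps match: saturation to reduce to $\|u_{h/2}-u_h\|_{h/2}$, coercivity of $A_{h/2}$, the split $v=\overline v+\chi$ with elementwise integration by parts on $\mathcal T_h$ for the $\chi$-part, and the penalty-weight mismatch $(A_h-A_{h/2})(u_h,\overline v)=-\alpha\sum_{F\in\mathcal E_h}h_F^{-1}\int_F u_h\overline v$ bounded by $C\alpha\,\eta_\Gamma$, a step you make more explicit than the paper does. Your fallback of using coarse penalty weights on $V_{h/2}$ would not actually remove the mismatch, because $u_{h/2}$ solves the fine-weight eigenproblem and the mismatch would reappear as a boundary term in $u_{h/2}$; your main route does not need it.
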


\begin{proof}
By the triangle inequality and the saturation assumption \eqref{satassum11}, we have
\[
   \|u-u_h\|_h \leq \frac{1}{1-\gamma}\|u_{h/2}-u_h\|_{h/2}.
\]
Therefore, it is enough to bound $\|u_{h/2}-u_h\|_{h/2}$. Since the symmetric Nitsche
bilinear form is coercive on the refined mesh, there exists $v\in V_{h/2}$, with
$\|v\|_{h/2}=1$, such that
\[
   \|u_{h/2}-u_h\|_{h/2} \leq C_\alpha A_{h/2}(u_{h/2}-u_h,v).
\]
%Let $\overline{v}\in V_h$ be the Lagrange interpolant of $v$ on the coarse mesh and
%set $\chi:=v-\overline{v}$. 
Then
\begin{equation}\label{releq11}
A_{h/2}(u_{h/2}-u_h,v) =A_{h/2}(u_{h/2}-u_h,\chi) + A_{h/2}(u_{h/2}-u_h,\overline{v})=: W_1+W_2.
\end{equation}

We first estimate $W_1$. Since $u_{h/2}$ solves the discrete eigenvalue problem on
the refined mesh,
\[
   A_{h/2}(u_{h/2},\chi) = \lambda_{h/2}(u_{h/2},\chi)_{0,\Omega}.
\]
Thus,
\[
   W_1 = \lambda_{h/2}(u_{h/2},\chi)_{0,\Omega} - A_{h/2}(u_h,\chi).
\]
Adding and subtracting $\lambda_h(u_h,\chi)_{0,\Omega}$ gives
\[
   W_1 = (\lambda_{h/2}u_{h/2}-\lambda_hu_h,\chi)_{0,\Omega} + \left[\lambda_h(u_h,\chi)_{0,\Omega} -  A_{h/2}(u_h,\chi) \right].
\]
The first term is bounded by the higher-order contribution. Indeed, using the
definition of $\theta_h$, the Poincar\'e inequality and the interpolation estimate
\eqref{eq:lagrange-two-xi-level}, we get
\[
   |(\lambda_{h/2}u_{h/2}-\lambda_hu_h,\chi)_{0,\Omega}| \leq C\theta_h\|\chi\|_{0,\Omega} \leq  C\theta_h.
\]

For the second term, an elementwise integration by parts yields
\begin{align}
\lambda_h(u_h,\chi)_{0,\Omega}-A_{h/2}(u_h,\chi) =& \sum_{K\in\mathcal T_h}(R_K,\chi)_{0,K} -\sum_{F\in\mathcal E_h^0} \big([\![\nabla u_h\cdot n]\!],\chi\big)_{0,F} \nonumber\\
&+ \sum_{F\in\mathcal E_h} \Big( (\partial_n\chi,u_h)_{0,F} -\alpha h_F^{-1}(u_h,\chi)_{0,F} \Big),
\label{eq:residual-identity-reliability}
\end{align}
where $R_K:=\lambda_hu_h+\Delta u_h$. We now estimate each term in \eqref{eq:residual-identity-reliability}. By the Cauchy--Schwarz inequality and \eqref{eq:lagrange-two-xi-level},
\[
\left|\sum_{K\in\mathcal T_h}(R_K,\chi)_{0,K}\right| \leq \left( \sum_{K\in\mathcal T_h}h_K^2\|R_K\|_{0,K}^2 \right)^{1/2} \left(
\sum_{K\in\mathcal T_h}h_K^{-2}\|\chi\|_{0,K}^2 \right)^{1/2} \leq C\eta_h(u_h).\]
Similarly,
\[
\left| \sum_{F\in\mathcal E_h^0} \big([\![\nabla u_h\cdot n]\!],\chi\big)_{0,F} \right| \leq \left(\sum_{F\in\mathcal E_h^0}h_F
\|[\![\nabla u_h\cdot n]\!]\|_{0,F}^2 \right)^{1/2} \left( \sum_{F\in\mathcal E_h^0}h_F^{-1}\|\chi\|_{0,F}^2 \right)^{1/2} \leq C\eta_h(u_h).
\]
The boundary contribution is estimated by using the last two terms in
\eqref{eq:lagrange-two-xi-level}. Indeed,
\[
\begin{aligned}
&\left|\sum_{F\in\mathcal E_h} \Big( (\partial_n\chi,u_h)_{0,F} -\alpha h_F^{-1}(u_h,\chi)_{0,F} \Big) \right|\\
&\quad \le\left(\sum_{F\in\mathcal E_h}h_F^{-1}\|u_h\|_{0,F}^2\right)^{1/2}\left[\left(\sum_{F\in\mathcal E_h}h_F\|\partial_n\chi\|_{0,F}^2 \right)^{1/2} + \alpha \left( \sum_{F\in\mathcal E_h} h_F^{-1}\|\chi\|_{0,F}^2 \right)^{1/2} \right].
\end{aligned}
\]
By \eqref{eq:lagrange-two-xi-level} and the fact that
\(\|v\|_{h/2}=1\), the bracketed factor is bounded by a constant depending at most
on the fixed parameter \(\alpha\). Therefore,
\[
\left| \sum_{F\in\mathcal E_h} \Big( (\partial_n\chi,u_h)_{0,F} -\alpha h_F^{-1}(u_h,\chi)_{0,F} \Big) \right|\le C_\alpha \eta_\Gamma(u_h) \le C_\alpha \eta_h(u_h).
\]
Therefore,
\[
   |W_1| \leq C_\alpha\big(\eta_h(u_h)+\theta_h\big).
\]

We now estimate $W_2$. Since $\overline{v}\in V_h$, the discrete eigenvalue equation
on the coarse mesh gives
\[
   A_h(u_h,\overline{v}) = \lambda_h(u_h,\overline{v})_{0,\Omega}.
\]
Hence,
\begin{align*}
W_2 &= A_{h/2}(u_{h/2},\overline{v}) - A_{h/2}(u_h,\overline{v}) \\
&=\lambda_{h/2}(u_{h/2},\overline{v})_{0,\Omega} - \lambda_h(u_h,\overline{v})_{0,\Omega} + \big(A_h-A_{h/2}\big)(u_h,\overline{v}).
\end{align*}
The first two terms are bounded by $C\theta_h$, using the stability of the interpolant
and the normalization of $v$. The last term comes from the change of the Nitsche boundary
weights when passing from $\mathcal T_h$ to $\mathcal T_{h/2}$. Since $\alpha$ is fixed,
this contribution is bounded by
\[
   \left|\big(A_h-A_{h/2}\big)(u_h,\overline{v})\right| \leq  C_\alpha \eta_h(u_h).
\]
Consequently,
\[
   |W_2| \leq C_\alpha\big(\eta_h(u_h)+\theta_h\big).
\]
Combining the bounds for $W_1$ and $W_2$ in \eqref{releq11}, we obtain
\[
   \|u_{h/2}-u_h\|_{h/2} \leq C_\alpha\big(\eta_h(u_h)+\theta_h\big).
\]
The assertion follows from the saturation argument.
\end{proof}
		\subsection{Efficiency}
The following standard estimates for element and facet bubble functions will be used in the proof of the local efficiency bounds; see, for instance, \cite{MR1885308,MR3059294}.

\begin{lemma}[Interior bubble functions]
\label{burbujainterior}
For any $K\in\mathcal T_h$, let $\psi_K$ be the corresponding interior bubble function. Then, there exists a constant $C>0$, independent of $h_K$, such that
\[
   C^{-1}\|q\|_{0,K}^2 \le \int_K \psi_K q^2 \le C\|q\|_{0,K}^2\qquad\forall q\in\mathbb P_k(K),
\]
and
\[
   \|\psi_K q\|_{0,K} + h_K\|\nabla(\psi_K q)\|_{0,K}\le C\|q\|_{0,K} \qquad \forall q\in\mathbb P_k(K).
\]
\end{lemma}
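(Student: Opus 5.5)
The two estimates of Lemma~\ref{burbujainterior} follow from a reference-element argument combined with scaling. We fix the reference simplex $\widehat K$ and the affine map $F_K:\widehat K\to K$, $F_K(\hat x)=B_K\hat x+b_K$. We define $\psi_K$ as the image of the reference bubble $\widehat\psi=c\prod_{i}\hat\lambda_i$, i.e., $\psi_K=\widehat\psi\circ F_K^{-1}$, normalized so that $0\le\psi_K\le 1$. Here $\hat\lambda_i$ are the barycentric coordinates. Then $\psi_K$ vanishes on $\partial K$ and is strictly positive in the interior of $K$.

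The first (norm-equivalence) estimate is proved on the reference element first. On the finite-dimensional space $\mathbb P_k(\widehat K)$, the map $\hat q\mapsto(\int_{\widehat K}\widehat\psi\,\hat q^2)^{1/2}$ is a norm. It is clearly a seminorm, and if it vanishes then $\hat q=0$ on the interior, where $\widehat\psi>0$, so $\hat q\equiv0$. Since all norms on a finite-dimensional space are equivalent, it is equivalent to $\|\hat q\|_{0,\widehat K}$, with constants depending only on $k$ and $\widehat K$. For the transfer to $K$, we note that $q\in\mathbb P_k(K)$ exactly when $\hat q=q\circ F_K\in\mathbb P_k(\widehat K)$, and that both $\int_K\psi_K q^2$ and $\|q\|_{0,K}^2$ equal $|\det B_K|$ times their reference counterparts. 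The Jacobian factor therefore cancels, and the constants are independent of $h_K$. The upper bound also follows trivially from $\psi_K\le1$.

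For the second estimate, $\|\psi_Kq\|_{0,K}\le\|q\|_{0,K}$ follows directly from $0\le\psi_K\le1$. For the gradient term, we observe that $\psi_Kq$ is a polynomial of degree $k+\dim\Omega+1$ on $K$. The local inverse inequality $\|\nabla p\|_{0,K}\le Ch_K^{-1}\|p\|_{0,K}$, valid for polynomials of fixed degree on shape-regular elements, then gives $h_K\|\nabla(\psi_Kq)\|_{0,K}\le C\|\psi_Kq\|_{0,K}\le C\|q\|_{0,K}$. The inverse inequality itself is proved by the same pattern: norm equivalence on $\widehat K$, followed by the chain rule $\nabla=B_K^{-T}\widehat\nabla$ together with $\|B_K^{-1}\|\le Ch_K^{-1}$, which comes from shape regularity.

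The only delicate point is keeping track of which constants depend on what. All constants must depend only on the polynomial degree $k$, the space dimension, and the shape-regularity constant of $\{\mathcal T_h\}_{h>0}$, and never on $h_K$. This is guaranteed because every $h_K$-dependence enters only through $|\det B_K|$, which cancels, and through $\|B_K^{-1}\|$, which is absorbed by the factor $h_K$. Since the lemma is standard (see \cite{MR1885308,MR3059294}), in the paper it would suffice to cite these references after recording the scaling argument above.
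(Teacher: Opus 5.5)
Your proposal is correct and is essentially the paper's approach. The paper gives no proof of its own and defers to the two cited monographs, whose proofs are this same reference-element norm-equivalence plus affine-scaling argument, with $|\det B_K|$ cancelling in the first estimate and $\|B_K^{-1}\|\le Ch_K^{-1}$ (from shape regularity) handling the gradient bound.
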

\begin{lemma}[Facet bubble functions]
\label{burbuja}
For any facet $F\subset\partial K$, let $\psi_F$ be the corresponding facet bubble function. Then, there exists a constant $C>0$, independent of $h_F$, such that
\[
   C^{-1}\|q\|_{0,F}^2 \le \int_F \psi_F q^2 \le C\|q\|_{0,F}^2 \qquad \forall q\in\mathbb P_k(F).
\]
Moreover, for all $q\in\mathbb P_k(F)$, there exists an extension, still denoted by $q$, to the element patch $\omega_F$ such that
\[
   h_F^{-1/2}\|\psi_F q\|_{0,\omega_F} + h_F^{1/2}\|\nabla(\psi_F q)\|_{0,\omega_F} \le C\|q\|_{0,F}.
\]
\end{lemma}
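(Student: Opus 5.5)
The plan is to prove both statements on a fixed reference configuration and transfer them to $F$ and $\omega_F$ by affine scaling, using shape regularity of $\{\CT_h\}_{h>0}$ to keep all constants uniform.

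\textbf{Reference setting.} Let $\widehat K$ be the reference triangle with reference edge $\widehat F$. Let $\widehat\psi_{\widehat F}$ be the product of the two barycentric coordinates associated with the endpoints of $\widehat F$ (in 3D, the three vertices of the face). Then $\widehat\psi_{\widehat F}$ is positive in the relative interior of $\widehat F$, vanishes on $\partial\widehat F$ and on the other facets of $\widehat K$, and satisfies $0\le\widehat\psi_{\widehat F}\le1$.

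\textbf{Norm equivalence on $\widehat F$.} On the finite-dimensional space $\mathbb P_k(\widehat F)$, the map $\widehat q\mapsto(\int_{\widehat F}\widehat\psi_{\widehat F}\widehat q^{\,2})^{1/2}$ is a norm. Indeed, if it vanishes, then $\widehat q=0$ on an open subset of $\widehat F$, so $\widehat q\equiv0$. Equivalence of norms then gives
\[
\widehat C^{-1}\|\widehat q\|_{0,\widehat F}^2\le\int_{\widehat F}\widehat\psi_{\widehat F}\widehat q^{\,2}\le\widehat C\|\widehat q\|_{0,\widehat F}^2.
\]
The upper bound is in fact trivial because $\widehat\psi_{\widehat F}\le1$.

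\textbf{Extension on $\widehat K$.} Extend $\widehat q$ to $\widehat K$ by taking it constant along the direction normal to $\widehat F$. This is a fixed linear map $E:\mathbb P_k(\widehat F)\to\mathbb P_k(\widehat K)$. Since $\widehat q\mapsto\widehat\psi_{\widehat F}E\widehat q$ is linear between finite-dimensional spaces, we get
\[
\|\widehat\psi_{\widehat F}E\widehat q\|_{0,\widehat K}+\|\widehat\nabla(\widehat\psi_{\widehat F}E\widehat q)\|_{0,\widehat K}\le\widehat C\|\widehat q\|_{0,\widehat F}.
\]

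\textbf{Scaling to $F$ and $\omega_F$.} For $K\subset\omega_F$, let $G_K:\widehat K\to K$ be the affine map sending $\widehat F$ onto $F$. Define $\psi_F|_K:=\widehat\psi_{\widehat F}\circ G_K^{-1}$ and $q|_K:=(E\widehat q)\circ G_K^{-1}$, where $\widehat q:=q\circ G_K|_{\widehat F}$. The extension of $q$ is defined through the reference face, and the restriction of $\psi_F$ to $F$ is the same polynomial from both sides. Hence $\psi_F q$ is continuous across $F$ and vanishes on $\partial\omega_F$, so it lies in $\H^1_0(\omega_F)$.

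The change of variables gives $\|q\|_{0,F}^2=|F|/|\widehat F|\,\|\widehat q\|_{0,\widehat F}^2$ with $|F|\sim h_F^{d-1}$, and $\|w\|_{0,K}^2=|K|/|\widehat K|\,\|\widehat w\|_{0,\widehat K}^2$ with $|K|\sim h_F^{d}$. For the gradient we use $\|\nabla w\|_{0,K}\le\|B_K^{-1}\|\,|\det B_K|^{1/2}\|\widehat\nabla\widehat w\|_{0,\widehat K}$ together with $\|B_K^{-1}\|\le C h_K^{-1}$ and $h_K\sim h_F$, which hold by shape regularity. Combining these:
\begin{itemize}
\item The facet identity scales by the same factor $|F|/|\widehat F|$ on both sides, which yields the first pair of inequalities.
\item For the element bound, $\|\psi_Fq\|_{0,K}\le Ch_F^{d/2}\|\widehat q\|_{0,\widehat F}\le Ch_F^{1/2}\|q\|_{0,F}$.
\item For the gradient bound, $\|\nabla(\psi_Fq)\|_{0,K}\le Ch_F^{d/2-1}\|\widehat q\|_{0,\widehat F}\le Ch_F^{-1/2}\|q\|_{0,F}$.
\end{itemize}
Multiplying by $h_F^{-1/2}$ and $h_F^{1/2}$ respectively and summing over the at most two elements of $\omega_F$ gives the second statement. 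The constants depend only on $k$ and the shape-regularity constant.

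\textbf{Main obstacle.} The only delicate point is defining the extension consistently on both elements of the patch, so that $\psi_Fq$ is a genuine $\H^1$ function on $\omega_F$ and the constants stay uniform. Using a single fixed extension operator on the reference element, composed with each element's affine map that fixes $F$, resolves this. The rest is routine scaling, and Lemma~\ref{burbujainterior} is obtained by the identical argument with the interior bubble.
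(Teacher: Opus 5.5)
Your proposal is correct and follows essentially the same route as the paper, which gives no proof of its own for this lemma and defers to the standard references cited just before it, where this reference-element norm-equivalence and affine-scaling argument is the proof used. Your handling of the extension, through a fixed operator on $\widehat K$ so that $\psi_F q\in H^1_0(\omega_F)$, and your observation that the constants depend only on $k$ and the shape-regularity constant match that classical treatment.
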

\begin{theorem}[Local efficiency]
Let $(\lambda,u)$ be a simple eigenpair and let $(\lambda_h,u_h)$ be the corresponding discrete eigenpair. Then, for every $K\in\mathcal T_h$, there exists a constant $C>0$, independent of $h$, such that
\[
   \eta_{R,K} \le C\left(\|u-u_h\|_{h,K} + h_K\|\lambda u-\lambda_hu_h\|_{0,K} \right),
\]
\[
   \eta_{J,K} \le C\left(\|u-u_h\|_{h,\omega_K} + \left(\sum_{K'\subset\omega_K}h_{K'}^2\|\lambda u-\lambda_hu_h\|_{0,K'}^2\right)^{1/2}\right),
\]
and
\[
   \eta_{\Gamma,K} \le C\|u-u_h\|_{h,K}.
\]
Here $\omega_K$ denotes the union of $K$ and the elements sharing at least one facet with $K$.
\end{theorem}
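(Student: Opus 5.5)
The plan is the standard Verfürth bubble-function technique. The key identity is the error equation obtained from the continuous problem. Since $u\in\HcuO$ is smooth enough (Lemma~\ref{LEM:REG}(ii)), satisfies $-\Delta u=\lambda u$, has no normal-derivative jumps across interior facets, and vanishes on $\Gamma$, integration by parts against any $v\in\H^1_0(\omega)$ supported in a small patch $\omega$ away from $\Gamma$ gives
\[
\int_\omega \nabla(u-u_h)\cdot\nabla v=\sum_{K\subset\omega}\int_K (\lambda u-\lambda_h u_h)v+\sum_{K\subset\omega}\int_K R_K v-\sum_{F\subset\omega,\,F\in\mathcal E_h^0}\int_F [\![\nabla u_h\cdot n]\!]v .
\]
The bubble test functions $\psi_K q$ and $\psi_F q$ vanish on element and facet boundaries, so the Nitsche boundary terms never enter. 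Throughout, the local norm $\|\cdot\|_{h,K}$ is understood as $\|\nabla\cdot\|_{0,K}$ plus the $h_F^{-1}$-weighted boundary part on the facets of $K$ lying on $\Gamma$.

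\emph{Step 1 (element residual).} When $u_h$ is polynomial, $R_K\in\mathbb P_\ell(K)$, so there is no data oscillation. Take $v=\psi_K R_K$, extended by zero. By Lemma~\ref{burbujainterior}, $C^{-1}\|R_K\|_{0,K}^2\le\int_K\psi_K R_K^2$. The identity above gives
\[
\int_K\psi_K R_K^2=\int_K\nabla(u-u_h)\cdot\nabla(\psi_K R_K)-\int_K(\lambda u-\lambda_hu_h)\psi_K R_K .
\]
Apply Cauchy--Schwarz together with $\|\nabla(\psi_KR_K)\|_{0,K}\le Ch_K^{-1}\|R_K\|_{0,K}$ and $\|\psi_KR_K\|_{0,K}\le C\|R_K\|_{0,K}$. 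Dividing by $\|R_K\|_{0,K}$ and multiplying by $h_K$ yields the first bound.

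\emph{Step 2 (interior jumps).} For $F\in\mathcal E_h^0$ with $F\subset\partial K$, set $J_F=[\![\nabla u_h\cdot n]\!]\in\mathbb P_{\ell-1}(F)$ and extend it to $\omega_F$ as in Lemma~\ref{burbuja}. Then take $v=\psi_FJ_F$. The identity gives
\[
\int_F\psi_FJ_F^2=\sum_{K'\subset\omega_F}\Big(\int_{K'}R_{K'}\psi_FJ_F+\int_{K'}(\lambda u-\lambda_hu_h)\psi_FJ_F-\int_{K'}\nabla(u-u_h)\cdot\nabla(\psi_FJ_F)\Big).
\]
The facet bubble bounds give $\|\psi_FJ_F\|_{0,\omega_F}\le Ch_F^{1/2}\|J_F\|_{0,F}$ and $\|\nabla(\psi_FJ_F)\|_{0,\omega_F}\le Ch_F^{-1/2}\|J_F\|_{0,F}$. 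Multiplying through by $h_F^{1/2}$, using Step 1 to control $h_{K'}\|R_{K'}\|$, invoking shape regularity ($h_F\sim h_{K'}$), and summing over the facets of $K$ gives the second bound on $\omega_K$.

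\emph{Step 3 (boundary term).} This is the trivial one. Since $u=0$ on $\Gamma$, for $F\subset\partial K\cap\Gamma$ we have $h_F^{-1}\|u_h\|_{0,F}^2=h_F^{-1}\|u-u_h\|_{0,F}^2$. This quantity is exactly the boundary part of $\|u-u_h\|_{h,K}^2$, so $\eta_{\Gamma,K}\le\|u-u_h\|_{h,K}$ with $C=1$.

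The main obstacle is not technical but definitional. One must fix the meaning of the localized norm $\|\cdot\|_{h,K}$ and of the patch $\omega_K$ consistently, so that Step 2 uses only gradient parts on neighboring elements. One must also justify the error identity for an $u$ that is only in $\H^{1+s}$ with $s>1/2$. Here $\nabla u\cdot n$ is in $L^2(F)$ and single-valued across interior facets, which I would argue by density or by the distributional identity $-\Delta u=\lambda u\in L^2$. The remaining estimates are routine scaling arguments.
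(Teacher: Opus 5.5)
Your proposal is correct and follows essentially the same route as the paper. The paper also uses an interior bubble test function for $R_K$, a facet bubble test function for the jump combined with the already-proved bound on $\eta_{R,K'}$, and the identity $u=0$ on $\Gamma$ for the boundary term. The differences are cosmetic. You use the unscaled test functions $\psi_K R_K$ and $\psi_F J_F$ instead of $h_K^2\psi_K R_K$ and $h_F\psi_F J_F$. You also obtain the residual identity by testing the weak form on the patch, which only needs $\Delta u=-\lambda u\in L^2$, so the interior-trace regularity concern you flag does not actually arise in your argument.
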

\begin{proof}
We prove the three estimates separately.

First, we consider the element residual. Let
\[
   R_K:=\lambda_hu_h+\Delta u_h \qquad \text{in } K,
\]
and define the local test function
\[
   \chi_K:=h_K^2\psi_K R_K,
\]
where $\psi_K$ is the interior bubble function associated with $K$. Since $\chi_K$ vanishes on $\partial K$, integration by parts and the identity $-\Delta u=\lambda u$ give
\[
\begin{aligned}
(R_K,\chi_K)_{0,K} &=(\lambda_hu_h+\Delta u_h,\chi_K)_{0,K} \\
&=(\lambda_hu_h-\lambda u,\chi_K)_{0,K} + (\Delta(u_h-u),\chi_K)_{0,K} \\
&= (\lambda_hu_h-\lambda u,\chi_K)_{0,K} - (\nabla(u_h-u),\nabla\chi_K)_{0,K}.
\end{aligned}
\]
Using Lemma~\ref{burbujainterior}, we have
\[
   h_K^2\|R_K\|_{0,K}^2 \le C(R_K,\chi_K)_{0,K}.
\]
Therefore, by the Cauchy--Schwarz inequality and the bubble estimates,
\[
\begin{aligned}
\eta_{R,K}^2 &= h_K^2\|R_K\|_{0,K}^2 \\
&\le C\left( \|\nabla(u-u_h)\|_{0,K} + h_K\|\lambda u-\lambda_hu_h\|_{0,K}\right) \eta_{R,K}.
\end{aligned}
\]
Dividing by $\eta_{R,K}$ gives
\[
   \eta_{R,K} \le C\left(\|u-u_h\|_{h,K} + h_K\|\lambda u-\lambda_hu_h\|_{0,K}\right).
\]

We now estimate the jump contribution. Let $F\in\mathcal E_h^0$ be an interior facet, and set
\[
   J_F:=[\![\nabla u_h\cdot n]\!].
\]
Since the exact solution satisfies flux continuity across interior facets, we have
\[
   J_F=[\![\nabla(u_h-u)\cdot n]\!].
\]
Let $v_F:=h_F\psi_F J_F,$ where $\psi_F$ is the facet bubble function extended to the patch $\omega_F$. Using the facet bubble estimates, we obtain
\[
   h_F\|J_F\|_{0,F}^2 \le C(J_F,v_F)_{0,F}.
\]
Integrating by parts over the two elements in $\omega_F$ sharing $F$, and using $-\Delta u=\lambda u$, we get
\[
\begin{aligned}
(J_F,v_F)_{0,F} &= \sum_{K'\subset\omega_F} \left[ (\nabla(u_h-u),\nabla v_F)_{0,K'} + (\Delta(u_h-u),v_F)_{0,K'}\right] \\
&=
\sum_{K'\subset\omega_F} \left[ (\nabla(u_h-u),\nabla v_F)_{0,K'} + (R_{K'}+\lambda u-\lambda_hu_h,v_F)_{0,K'} \right].
\end{aligned}
\]
Hence, by the Cauchy--Schwarz inequality and Lemma~\ref{burbuja},
$$
h_F\|J_F\|_{0,F}^2 \le C\left[ \|u-u_h\|_{h,\omega_F} \hspace{-0.1cm}+ \left( \sum_{K'\subset\omega_F} h_{K'}^2\|\lambda u-\lambda_hu_h\|_{0,K'}^2 \right)^{1/2} \hspace{-0.2cm}+\hspace{-0.1cm} \left( \sum_{K'\subset\omega_F} \eta_{R,K'}^2 \right)^{1/2} \right] h_F^{1/2}\|J_F\|_{0,F}.
$$
Using the already proved efficiency estimate for $\eta_{R,K'}$, we absorb the residual contribution into the first two terms and obtain
\[
   h_F^{1/2}\|J_F\|_{0,F}\le C\left[\|u-u_h\|_{h,\omega_F}+\left(\sum_{K'\subset\omega_F}h_{K'}^2\|\lambda u-\lambda_hu_h\|_{0,K'}^2\right)^{1/2}\right].
\]
Summing over all interior facets contained in $\partial K$ gives
\[
   \eta_{J,K} \le C\left(\|u-u_h\|_{h,\omega_K} + \left(\sum_{K'\subset\omega_K}h_{K'}^2\|\lambda u-\lambda_hu_h\|_{0,K'}^2
   \right)^{1/2} \right].
\]

Finally, we estimate the boundary contribution. Since the exact eigenfunction satisfies $u=0$ on $\Gamma$, we have, for every boundary facet $F\subset\partial K\cap\mathcal E_h$,
\[
   h_F^{-1}\|u_h\|_{0,F}^2 = h_F^{-1}\|u_h-u\|_{0,F}^2.
\]
Therefore,
\[
   \eta_{\Gamma,K}^2  = \sum_{F\subset\partial K\cap\mathcal E_h}h_F^{-1}\|u_h\|_{0,F}^2 \le \|u-u_h\|_{h,K}^2,
\]
which yields
\[
   \eta_{\Gamma,K}  \le C\|u-u_h\|_{h,K}.
\]
The proof is complete.
\end{proof}
	
	\section{Numerical results}
	\label{sec:numerical-experiments}
	In this section we present a series of numerical tests to assess the performance of the proposed finite element method. In order to implement the scheme, we resort to the DOLFINx library \cite{barrata2023dolfinx,scroggs2022basix}. The meshes have been constructed using GMSH \cite{geuzaine2009gmsh}. The eigenvalue problem is solved using the Scalable Library for Eigenvalue Problem Computations (SLEPc) \cite{hernandez2005slepc}. The General Non-Hermitian Eigenvalue Problem configuration (GNHEP) with TARGET\_MAGNITUDE is considered to deal with the incomplete ($\varepsilon=0$) and skew-symmetric ($\varepsilon=-1$) variants of the Nitsche method, as well as the computation of possible spurious eigenvalues. The convergence rates of the eigenvalues have been  obtained with a standard least square fitting.

	We denote by $N$ the mesh refinement level, whereas $\texttt{dof}$ denotes the number of degrees of freedom.  We denote by $\lambda_{h,i}$ the $i$-th discrete eigenvalue.
	
	The absolute error on the $i$-th eigenvalue in the Nitsche method is denoted by $e(\lambda_i)$ with 
	$$
	e(\lambda_i):=\vert \lambda_{h,i}-\lambda_{i}\vert.
	$$
	Similarly, the error on the $i$-th eigenvalue for the corresponding usual standard finite element method (imposing strongly the boundary condition on $V_h$) is denoted by $e_{std}(\lambda_{i})$.
	
	With the aim of assessing the performance of our estimator,  we consider domains   with singularities in two and three dimensions in order to observe the improvement of the convergence rate.  On each adaptive iteration, we use the blue-green marking strategy to refine each $T'\in \CT_{h}$ whose indicator $\eta_{T'}$ satisfies
		$$
		\eta_{T'}\geq 0.5\max\{\eta_{T}\,:\,T\in\CT_{h} \}.
		$$
		We define the effectivity indexes with respect to $\eta$ and the eigenvalue $\lambda_i$ by 
		$$\eff(\lambda_i):=e(\lambda_i)/\eta^2.$$
	
	Regarding the usual scheme, it is well-known that computing the spectrum yields a finite number of eigenpairs, that we refer to as \textit{physical}. On the other hand, as we will see below, Nitsche's technique tends to overestimate the exact (extrapolated) eigenvalues. Also, from the theory we note that, if we use the same mesh on both methods, a badly chosen Nitsche’s parameter will yield to additional eigenpairs with no physical meaning (spurious). This is of course for the case when the coercivity of $A_h(\cdot,\cdot)$ is broken.

	\subsection{Spectrum study on a rectangle domain}\label{subsec:rectangle-domain}
	In this section we study the incidence of the Nitsche parameter on the correct approximation of the spectrum. To this end we consider the rectangle domain $\Omega:=(0,L)\times(0,2L)$, with $L=1$. The domain boundary is such that $\Gamma:=(0,y)\cup(L,y)$, $y\in[0,2L]$, and $\Sigma:=\partial\Omega\backslash\Gamma$. More precisely, Dirichlet boundary conditions are specified on the short sides of the rectangle, while Neumann boundary are considered on the long ones. An example of the domain is given in Figure \ref{fig:rectangle-domain}.
	
	The exact eigenvalues and eigenfunctions for this configuration are given by
	$$
	\lambda = \frac{\pi}{L}\left(n^2+ \frac{m^2}{4}\right), \quad u=\cos\left(\frac{n\pi x}{L}\right)\sin\left(\frac{m\pi y}{2L}\right).
	$$
	where $n\geq0$ and $m\geq1$. 
	\begin{figure}[!hbt]\centering
		\includegraphics[scale=0.32]{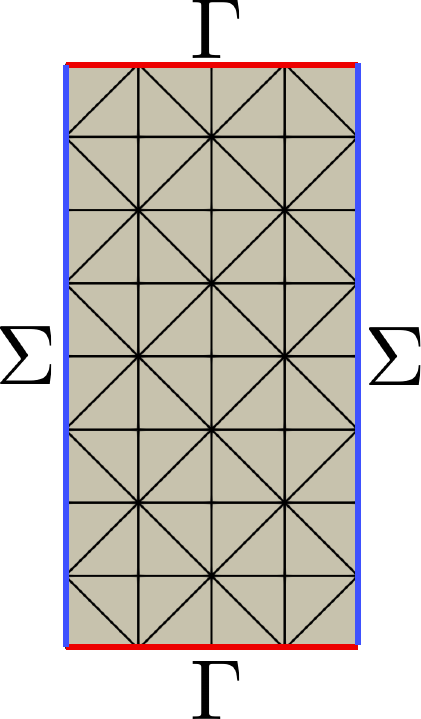}
		\caption{Test \ref{subsec:rectangle-domain}. Rectangle domain $\Omega=(0,L)\times(0,2L)$, $L=1$, with mesh level $N=4$, and prescribed Dirichlet and Neuman boundary conditions on $\Gamma$ and $\Sigma$, respectively.}
		\label{fig:rectangle-domain}
	\end{figure}

	\begin{table}[!hbt]
		\centering
		{\setlength{\tabcolsep}{3.8pt}\footnotesize
			\caption{Test \ref{subsec:rectangle-domain}. First tenth computed eigenvalues $\lambda_{h,i}$ for $k=1$, mesh level $N=8$ and different values of Nitsche parameter $\alpha$.}
			\label{tabla:square-spurious}
			\begin{tabular}{|ccc|ccc|ccc|c|}
				\hline
				\multicolumn{3}{|c|}{$\alpha=0$}                      &           \multicolumn{3}{c}{$\alpha=0.5$}            & \multicolumn{3}{|c|}{$\alpha=1$}                     &         \\ \hline\hline
				$\varepsilon=1$  & $\varepsilon=0$ & $\varepsilon=-1$ & $\varepsilon=1$  & $\varepsilon=0$ & $\varepsilon=-1$ & $\varepsilon=1$ & $\varepsilon=0$ & $\varepsilon=-1$ & Exacts  \\ \hline
				\boxed{-217.3650}& \boxed{0.0000}  &      2.4702      & \boxed{-89.7777} &     2.4713      &      2.4717      &     2.4740      &     2.4726      &      2.4724      & 2.4674  \\
				\boxed{-217.3650} & \boxed{0.0000}  &      9.9119      & \boxed{-89.7777} &     9.9293      &      9.9384      &     9.9747      &     9.9531      &      9.9493      & 9.8696  \\
				\boxed{-90.9595}  &     2.6507      &     12.5202      & \boxed{-61.6008} &     12.5339     &     12.5391      &     12.5643     &     12.5500     &     12.5470      & 12.3370 \\
				\boxed{-90.9595} &     10.6929     &     20.1748      & \boxed{-61.6008} &     20.2453     &     20.2772      &     20.4036     &     20.3329     &     20.3189      & 19.7392 \\
				\boxed{0.0000}    &     10.8748     &     22.4022      &      2.4740      &     22.4901     &     22.5507      &     22.7326     &     22.6273     &     22.6100      & 22.2066 \\
				\boxed{0.0000}     &     12.2053     &     32.9889      &      9.9778      &     33.2048     &     33.3325      &     33.7091     &     33.5053     &     33.4677      & 32.0762 \\
				2.4740     &     15.2092     &     40.0111      &     12.5714      &     40.2759     &     40.5483      &     41.0986     &     40.7965     &     40.7530      & 39.4784 \\
				12.5676         &     23.3702     &     43.9436      &     15.9869      &     44.0228     &     44.0400      &     44.1304     &     44.0882     &     44.0773      & 41.9458 \\
				9.9765     &     24.4006     &     50.9856      &     15.9877      &     51.4935     &     51.9293      &     52.7928     &     52.3566     &     52.2857      & 49.3480 \\
				20.4260    &     37.6534     &     52.1185      &     20.4469      &     52.4612     &     52.5501      &     52.9342     &     52.7591     &     52.7151      & 49.3480 \\ \hline\hline
		\end{tabular}}
		
	\end{table}
	
	Table \ref{tabla:square-spurious} presents the results when $N=8$, $\varepsilon\in\{-1,0,1\}$ with different choices of $\alpha$. We note that two spurious eigenvalues with multiplicity 2 are found in the spectrum of the problem in the symmetric Nitsche method when $\alpha=0$ and $\alpha=0.5$. Although for $\alpha=0$ the ellipticity of the bilinear form $A_h$ is not guaranteed according to Lemma \ref{eliptD}, we note that the skew-symmetric variant of the method yield the correct spectrum.
	This is in line with the penalty-free scheme discussed in Remark~\ref{rem:burman-penalty-free}.
	
	An example of the spurious eigenvalues found are depicted in Figure \ref{fig:spurious-eigenvalues}. Boundary layers across $\Gamma$ are observed for the symmetric case, similar to the results from \cite[Section 3]{harari2018spectral}. The first six computed eigenvalues for $\varepsilon=-1$ are presented in Figure \ref{fig:eigenvalues-nonsymetric}, where we observe an excellent agreement with the analytical eigenmodes. It is important to note that, although not observed if this experiment, the eigensolver may report complex eigenvalues for non-symmetric methods ($\varepsilon=0$ or $\varepsilon=-1$), as we will see below.
	
	We also studied the accuracy of the methods when computing a large number of eigenvalues. On Figure \ref{fig:comparison-spurious} we have plotted the spectrum for different choices of $\alpha$ and the exact eigenvalues in each case. For $\alpha=0$ we observe the spurious eigenvalues on $x=0$. An overprediction for $\varepsilon=-1$ is observed for all the choices of $\alpha$ and $N=8$, while the same is observed for all the methods when increasing the value of $\alpha$. For $\alpha=1$ we have a clean spectrum in all the methods, with considerable overprediction of the incomplete and skew-symmetric variants over the symmetric scheme. Note that for $\alpha=10$, there is no difference between the selected method with respect to their accuracy. 
	
	With respect to the above, we made a study of the dependence of the eigenvalues on the stabilization parameter. The results are described in Figure \ref{fig:alfa-dependence-rectangle}. We can observe that from $\alpha=2$ there is no significant difference in the computed eigenvalue. It is worth noting that the precision decimals can be important when convergence rates are required. We also note that the Nitsche method predicts with the same accuracy as the standard method when $\alpha>0$ is big enough. The spectrum cleanliness observed in Table \ref{tabla:square-spurious} for small eigenvalues is also evident in the relative accuracy on Figure \ref{fig:alfa-dependence-rectangle} when $\alpha=1$. Moreover, the accuracy behaves like $\log(1)=0$ for $\alpha>1$.
	
	Finally, we study the convergence of the schemes in Tables \ref{tabla:square-convergence-k1}--\ref{tabla:square-convergence-k3}. We note that the convergence rates behaves like the one predicted in Theorem \ref{gapr}. For $k=1$ the regularity result allows to obtain $\mathcal{O}(h^2)$ for all the methods, while for $k>1$ we observe that the skew-symmetric and incomplete schemes behave roughly like $\mathcal{O}(h^{2k-0.5})$.

	\begin{table}[!hbt]
		\centering
		{\setlength{\tabcolsep}{3.8pt}\footnotesize
			\caption{Test \ref{subsec:rectangle-domain}. Relative error and convergence behavior for the first six lowest computed eigenvalues on the three variants of the Nitsche method with $k=1$. The stabilization parameter is set to be $\alpha=10$.}
			\label{tabla:square-convergence-k1}
			\begin{tabular}{|c|c|cccc|}
				\hline
				$\varepsilon$& Exacts & \multicolumn{4}{c|}{Relative error (rate)} \\
				\hline
				\multirow{6}{0.02\linewidth}{1}&    2.4674 &   6.88e-03  &   1.72e-03 (2.22) &   7.62e-04 (2.12) &   4.28e-04 (2.09)   \\
				&    9.8696 &   2.79e-02  &   6.88e-03 (2.23) &   3.05e-03 (2.13) &   1.72e-03 (2.09)   \\
				&   12.3370 &   4.81e-02  &   1.19e-02 (2.23) &   5.28e-03 (2.13) &   2.97e-03 (2.09)   \\
				&   19.7392 &   8.92e-02  &   2.21e-02 (2.23) &   9.77e-03 (2.13) &   5.49e-03 (2.09)   \\
				&   22.2066 &   6.40e-02  &   1.56e-02 (2.26) &   6.88e-03 (2.14) &   3.86e-03 (2.09)   \\
				&   32.0762 &   1.38e-01  &   3.40e-02 (2.24) &   1.50e-02 (2.13) &   8.44e-03 (2.09)   \\
				\hline
				\multirow{6}{0.02\linewidth}{0}&    2.4674 &   6.82e-03  &   1.71e-03 (2.21) &   7.60e-04 (2.12) &   4.28e-04 (2.08)   \\
				&    9.8696 &   2.76e-02  &   6.85e-03 (2.23) &   3.04e-03 (2.12) &   1.71e-03 (2.09)   \\
				&   12.3370 &   4.80e-02  &   1.19e-02 (2.23) &   5.27e-03 (2.13) &   2.96e-03 (2.09)   \\
				&   19.7392 &   8.88e-02  &   2.20e-02 (2.23) &   9.75e-03 (2.13) &   5.48e-03 (2.09)   \\
				&   22.2066 &   6.35e-02  &   1.55e-02 (2.25) &   6.86e-03 (2.13) &   3.86e-03 (2.09)   \\
				&   32.0762 &   1.37e-01  &   3.39e-02 (2.23) &   1.50e-02 (2.13) &   8.42e-03 (2.09)   \\
				\hline
				\multirow{6}{0.02\linewidth}{-1}&    2.4674 &   6.76e-03  &   1.70e-03 (2.20) &   7.57e-04 (2.12) &   4.27e-04 (2.08)   \\
				&    9.8696 &   2.74e-02  &   6.82e-03 (2.22) &   3.03e-03 (2.12) &   1.71e-03 (2.08)   \\
				&   12.3370 &   4.78e-02  &   1.19e-02 (2.23) &   5.27e-03 (2.13) &   2.96e-03 (2.09)   \\
				&   19.7392 &   8.84e-02  &   2.19e-02 (2.22) &   9.74e-03 (2.13) &   5.48e-03 (2.09)   \\
				&   22.2066 &   6.29e-02  &   1.54e-02 (2.24) &   6.84e-03 (2.13) &   3.85e-03 (2.09)   \\
				&   32.0762 &   1.36e-01  &   3.38e-02 (2.23) &   1.50e-02 (2.13) &   8.41e-03 (2.09)   \\
				\hline
				&$N$&5&10&15 &20\\
				\hline
				\hline
		\end{tabular}}
		
	\end{table}
	
	\begin{table}[!hbt]
		\centering
		{\setlength{\tabcolsep}{3.8pt}\footnotesize
			\caption{Test \ref{subsec:rectangle-domain}. Relative error and convergence behavior for the first six lowest computed eigenvalues on the three variants of the Nitsche method with $k=2$. The stabilization parameter is set to be $\alpha=10$.}
			\label{tabla:square-convergence-k2}
			\begin{tabular}{|c|c|cccc|}
				\hline
				$\varepsilon$& Exacts & \multicolumn{4}{c|}{Relative error (rate)} \\
				\hline
				\multirow{6}{0.02\linewidth}{1}&    2.4674 &   1.13e-05  &   7.13e-07 (4.20) &   1.41e-07 (4.12) &   4.48e-08 (4.08)   \\
				&    9.8696 &   1.76e-04  &   1.13e-05 (4.17) &   2.26e-06 (4.11) &   7.15e-07 (4.08)   \\
				&   12.3370 &   4.27e-04  &   2.77e-05 (4.16) &   5.51e-06 (4.10) &   1.75e-06 (4.08)   \\
				&   19.7392 &   1.28e-03  &   8.45e-05 (4.13) &   1.69e-05 (4.09) &   5.37e-06 (4.07)   \\
				&   22.2066 &   8.62e-04  &   5.69e-05 (4.13) &   1.14e-05 (4.09) &   3.61e-06 (4.07)   \\
				&   32.0762 &   3.02e-03  &   2.06e-04 (4.08) &   4.13e-05 (4.08) &   1.32e-05 (4.06)   \\
				\hline
				\multirow{6}{0.02\linewidth}{0}&    2.4674 &   2.67e-05  &   2.65e-06 (3.51) &   7.15e-07 (3.33) &   2.87e-07 (3.24)   \\
				&    9.8696 &   2.37e-04  &   1.90e-05 (3.83) &   4.54e-06 (3.64) &   1.68e-06 (3.53)   \\
				&   12.3370 &   4.34e-04  &   2.87e-05 (4.13) &   5.80e-06 (4.06) &   1.87e-06 (4.02)   \\
				&   19.7392 &   1.31e-03  &   8.97e-05 (4.08) &   1.85e-05 (4.02) &   6.04e-06 (3.97)   \\
				&   22.2066 &   9.93e-04  &   7.41e-05 (3.95) &   1.65e-05 (3.82) &   5.78e-06 (3.72)   \\
				&   32.0762 &   3.11e-03  &   2.19e-04 (4.03) &   4.54e-05 (4.00) &   1.49e-05 (3.96)   \\
				\hline
				\multirow{6}{0.02\linewidth}{-1}&    2.4674 &   4.13e-05  &   4.48e-06 (3.38) &   1.26e-06 (3.23) &   5.16e-07 (3.16)   \\
				&    9.8696 &   2.94e-04  &   2.63e-05 (3.67) &   6.71e-06 (3.47) &   2.60e-06 (3.37)   \\
				&   12.3370 &   4.41e-04  &   2.96e-05 (4.11) &   6.08e-06 (4.02) &   1.99e-06 (3.97)   \\
				&   19.7392 &   1.35e-03  &   9.46e-05 (4.04) &   2.00e-05 (3.96) &   6.67e-06 (3.89)   \\
				&   22.2066 &   1.12e-03  &   9.03e-05 (3.82) &   2.14e-05 (3.67) &   7.84e-06 (3.56)   \\
				&   32.0762 &   3.19e-03  &   2.32e-04 (3.99) &   4.93e-05 (3.93) &   1.66e-05 (3.87)   \\
				\hline
				&$N$&5&10&15 &20\\
				\hline
				\hline
		\end{tabular}}
		
	\end{table}
	
	\begin{table}[!hbt]
		\centering
		{\setlength{\tabcolsep}{3.8pt}\footnotesize
			\caption{Test \ref{subsec:rectangle-domain}. Relative error and convergence behavior for the first six lowest computed eigenvalues on the three variants of the Nitsche method with $k=3$. The stabilization parameter is set to be $\alpha=10$.}
			\label{tabla:square-convergence-k3}
			\begin{tabular}{|c|c|cccc|}
				\hline
				$\varepsilon$& Exacts & \multicolumn{4}{c|}{Relative error (rate)} \\
				\hline
				\multirow{6}{0.02\linewidth}{1}&    2.4674 &   6.80e-09  &   1.06e-10 (6.22) &   8.95e-12 (6.22) &   1.52e-12 (6.24)   \\
				&   12.3370 &   1.73e-06  &   2.73e-08 (6.20) &   2.40e-09 (6.12) &   4.28e-10 (6.08)   \\
				&   19.7392 &   8.83e-06  &   1.40e-07 (6.19) &   1.23e-08 (6.12) &   2.20e-09 (6.08)   \\
				&   22.2066 &   4.90e-06  &   7.74e-08 (6.20) &   6.80e-09 (6.12) &   1.21e-09 (6.08)   \\
				&   32.0762 &   3.43e-05  &   5.51e-07 (6.18) &   4.86e-08 (6.11) &   8.66e-09 (6.08)   \\
				
				\hline
				\multirow{6}{0.02\linewidth}{0}&    2.4674 &   3.56e-08  &   1.00e-09 (5.33) &   1.24e-10 (5.27) &   2.59e-11 (5.52)   \\
				&    9.8696 &   8.92e-07  &   2.12e-08 (5.59) &   2.50e-09 (5.38) &   5.58e-10 (5.29)   \\
				&   12.3370 &   1.86e-06  &   3.14e-08 (6.10) &   2.94e-09 (5.96) &   5.56e-10 (5.87)   \\
				&   19.7392 &   9.95e-06  &   1.76e-07 (6.03) &   1.70e-08 (5.87) &   3.31e-09 (5.77)   \\
				&   22.2066 &   7.20e-06  &   1.50e-07 (5.78) &   1.64e-08 (5.57) &   3.50e-09 (5.46)   \\
				&   32.0762 &   3.84e-05  &   6.83e-07 (6.02) &   6.61e-08 (5.88) &   1.28e-08 (5.78)   \\
				\hline
				\multirow{6}{0.02\linewidth}{-1}&    2.4674 &   6.27e-08  &   1.85e-09 (5.27) &   2.30e-10 (5.25) &   4.65e-11 (5.63)   \\
				&    9.8696 &   1.32e-06  &   3.47e-08 (5.44) &   4.28e-09 (5.27) &   9.81e-10 (5.20)   \\
				&   12.3370 &   1.98e-06  &   3.53e-08 (6.02) &   3.45e-09 (5.85) &   6.77e-10 (5.74)   \\
				&   19.7392 &   1.10e-05  &   2.09e-07 (5.92) &   2.15e-08 (5.73) &   4.37e-09 (5.61)   \\
				&   22.2066 &   9.35e-06  &   2.18e-07 (5.61) &   2.54e-08 (5.41) &   5.64e-09 (5.31)   \\
				&   32.0762 &   4.23e-05  &   8.08e-07 (5.91) &   8.27e-08 (5.74) &   1.68e-08 (5.63)   \\
				\hline
				&$N$&5&10&15 &20\\
				\hline
				\hline
		\end{tabular}}
	\end{table}
	
	\begin{figure}[!hbt]\centering
		\begin{minipage}{0.49\linewidth}\centering
			\includegraphics[scale=0.298, trim=0cm 1cm 1cm 2cm,clip]{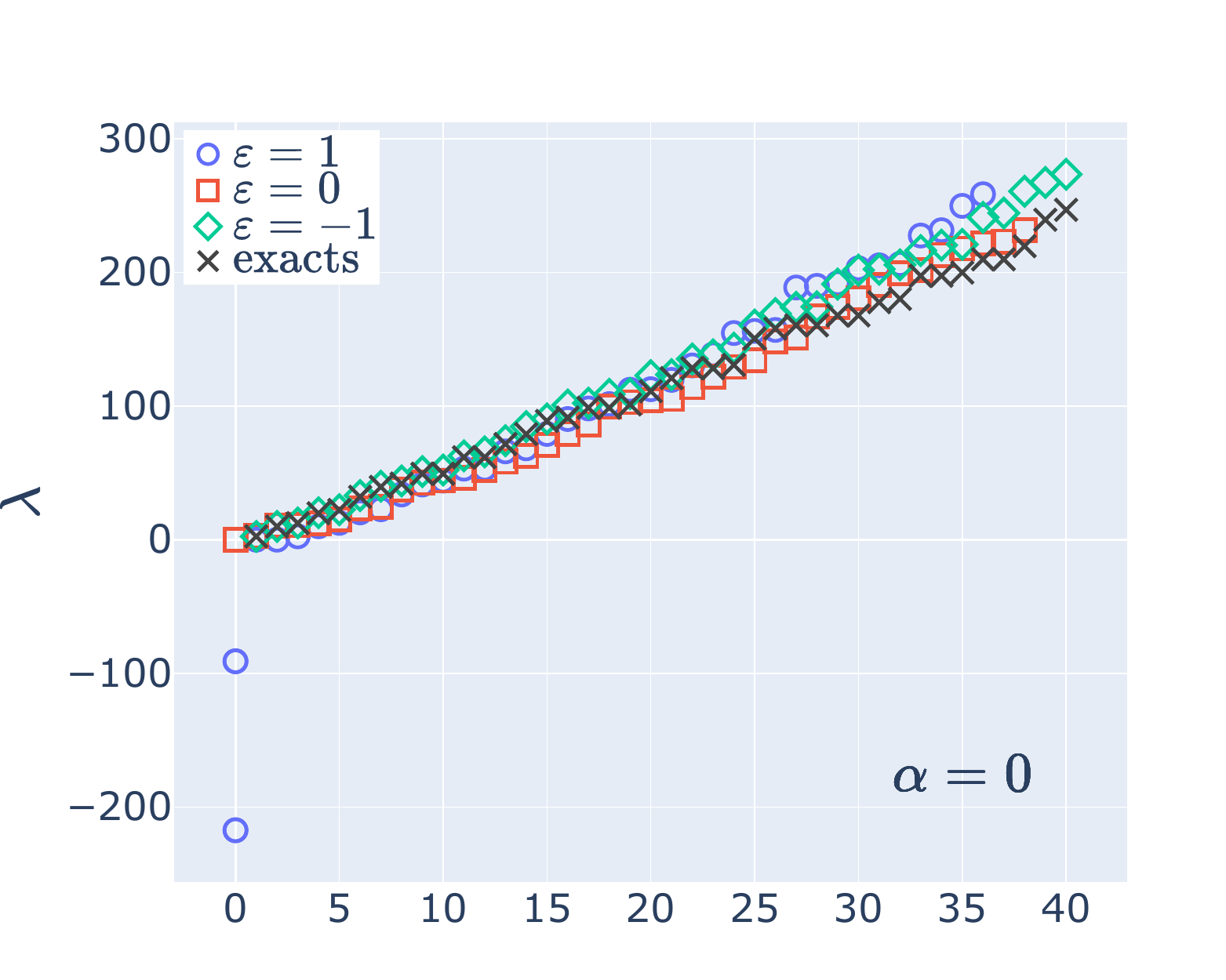}
		\end{minipage}
		\begin{minipage}{0.49\linewidth}\centering
			\includegraphics[scale=0.298, trim=0cm 1cm 1cm 2cm,clip]{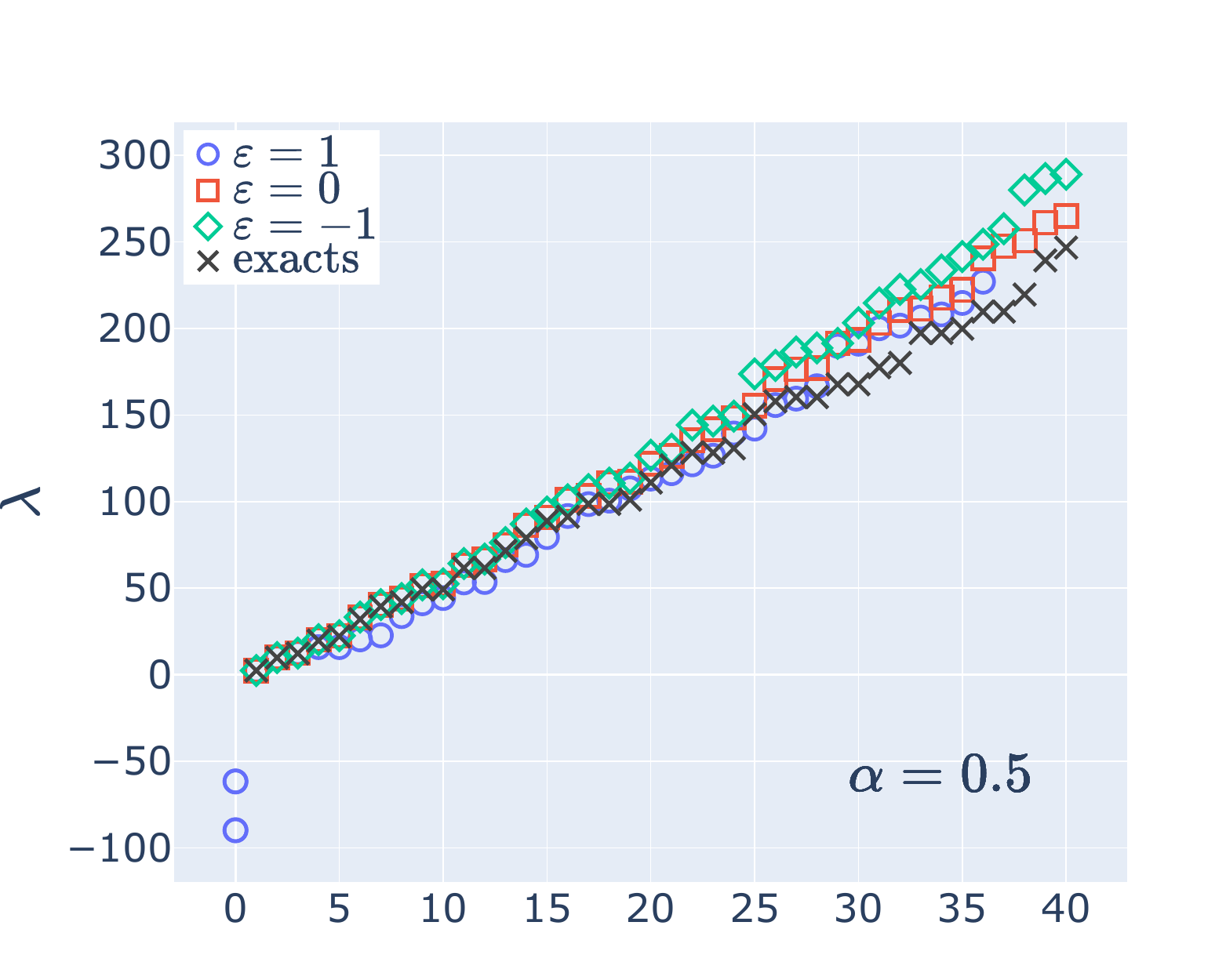}
		\end{minipage}\\
		\begin{minipage}{0.49\linewidth}\centering
			\includegraphics[scale=0.298, trim=0cm 1cm 1cm 2cm,clip]{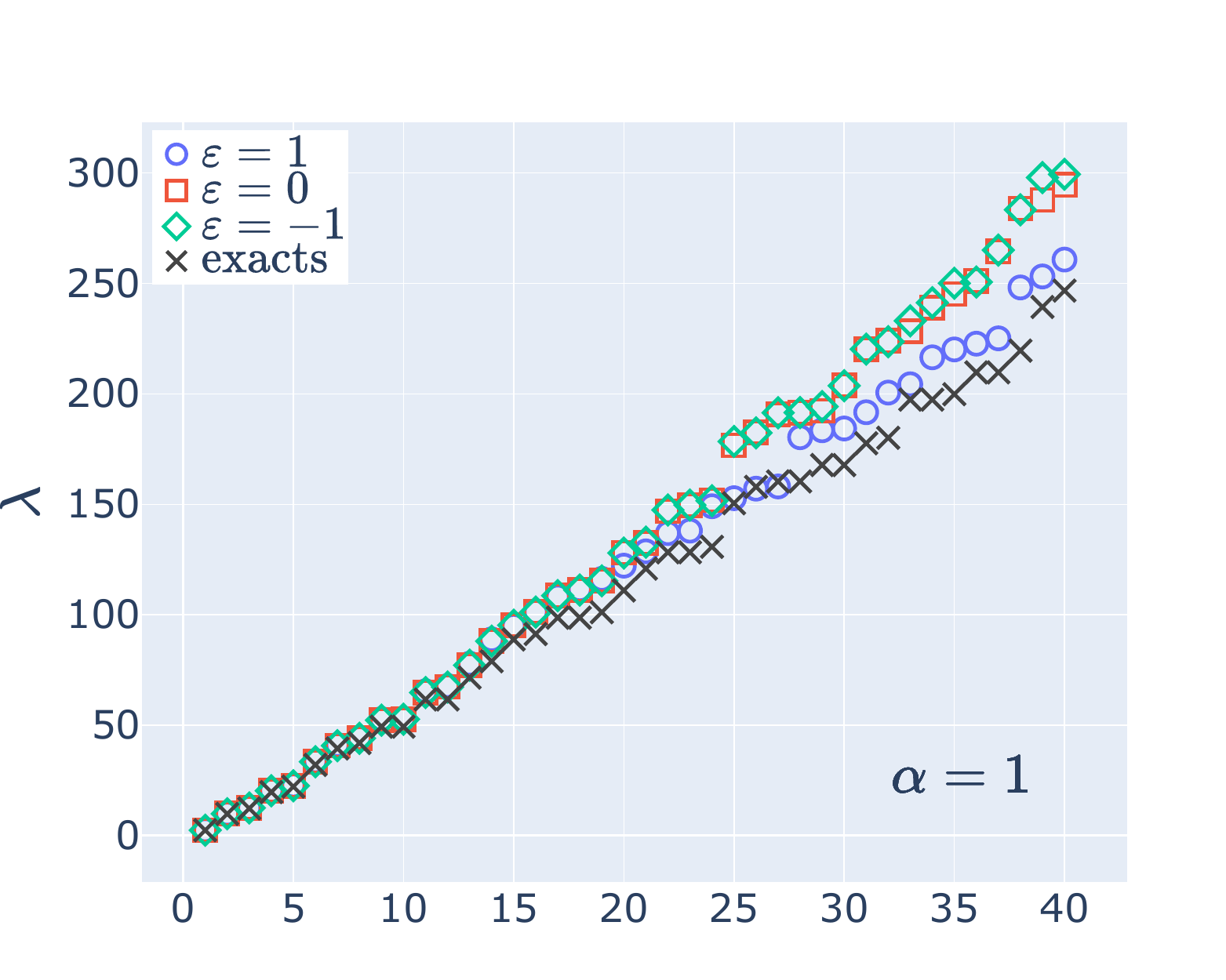}
		\end{minipage}
		\begin{minipage}{0.49\linewidth}\centering
			\includegraphics[scale=0.298, trim=0cm 1cm 1cm 2cm,clip]{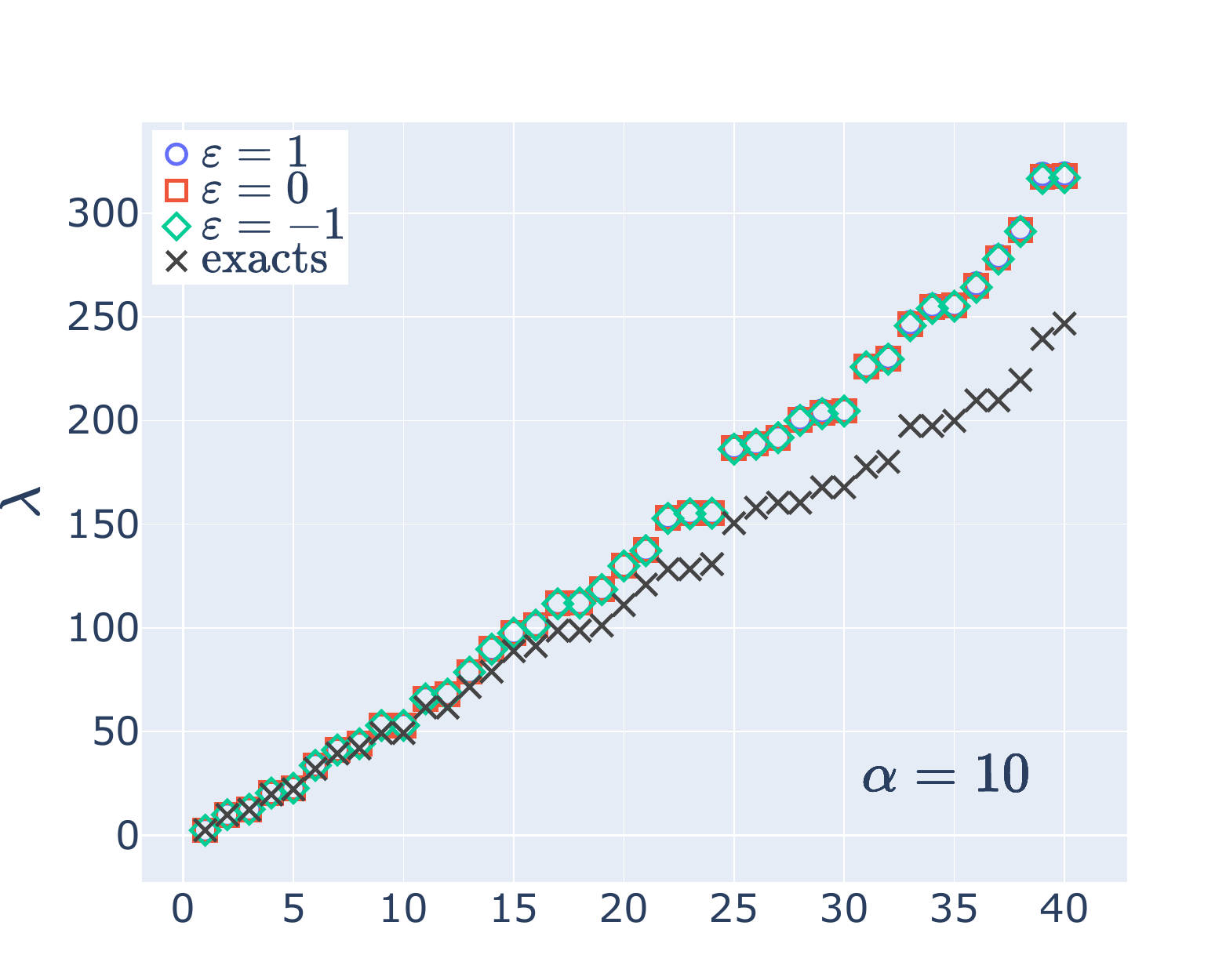}
		\end{minipage}\\
		\caption{Test \ref{subsec:rectangle-domain}. Computed spectrum in the rectangle domain for different choices of $\alpha$ compared with the analytical solutions on the lowest order case $k=1$. Spurious eigenvalues are assigned to x-axis value 0.}
		\label{fig:comparison-spurious}
	\end{figure}
	
	\begin{figure}[!hbt]\centering
		\begin{minipage}{0.49\linewidth}\centering
			\includegraphics[scale=0.298,trim=0cm 0cm 1cm 1cm,clip]{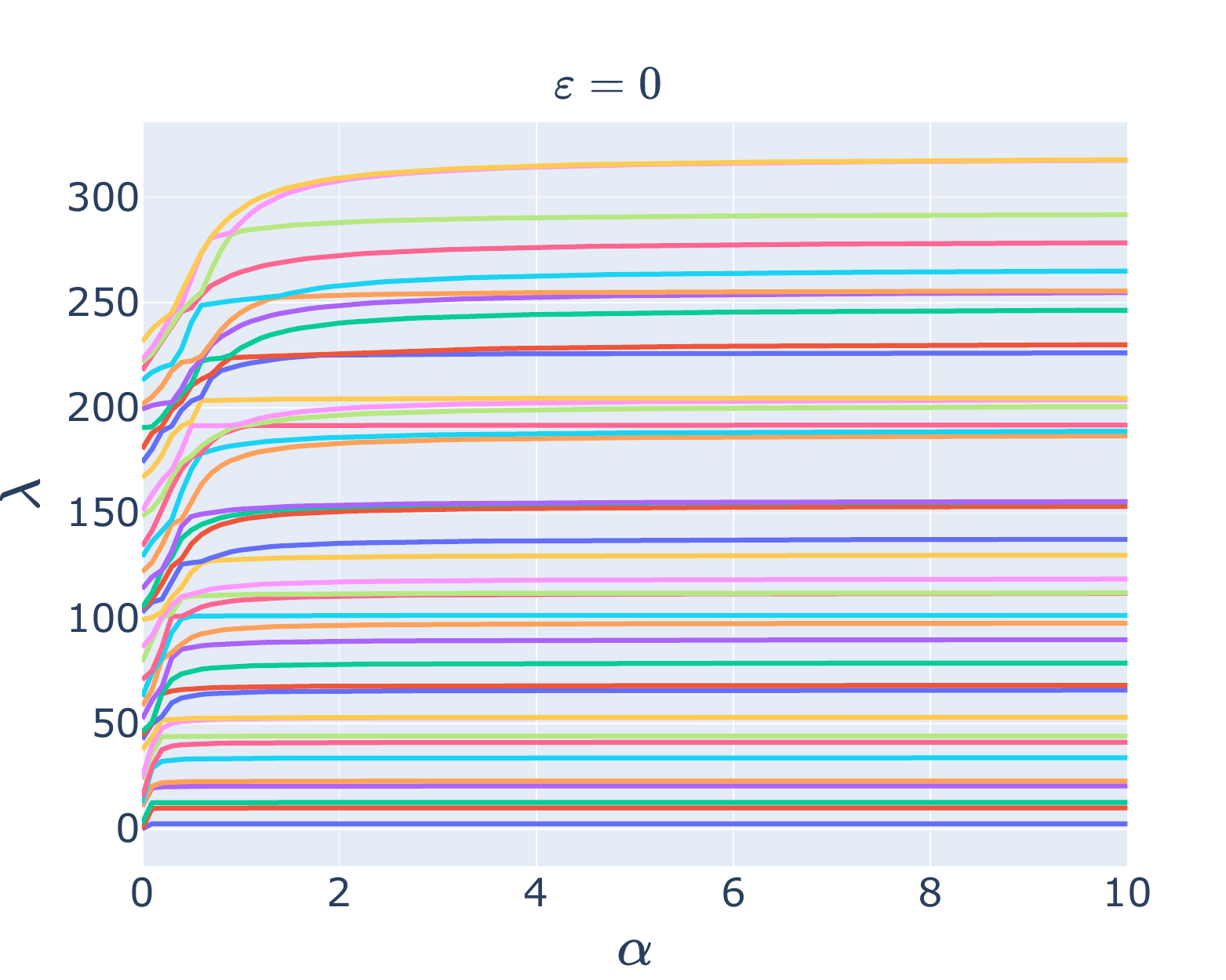}
		\end{minipage}
		\begin{minipage}{0.49\linewidth}\centering
			\includegraphics[scale=0.298,trim=0cm 0cm 1cm 1cm,clip]{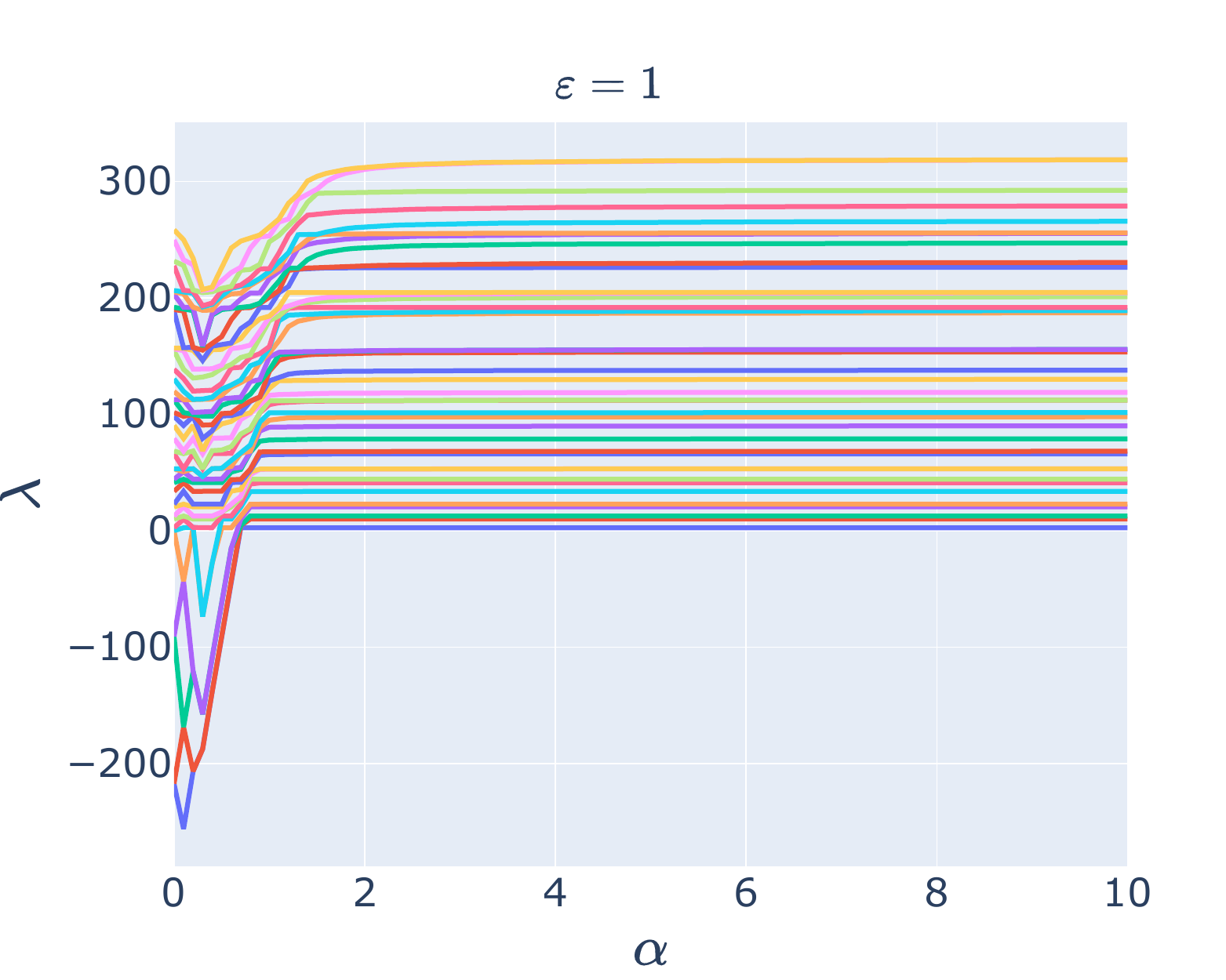}
		\end{minipage}\\
		\begin{minipage}{0.49\linewidth}\centering
			\includegraphics[scale=0.298,trim=0cm 0cm 1cm 1cm,clip]{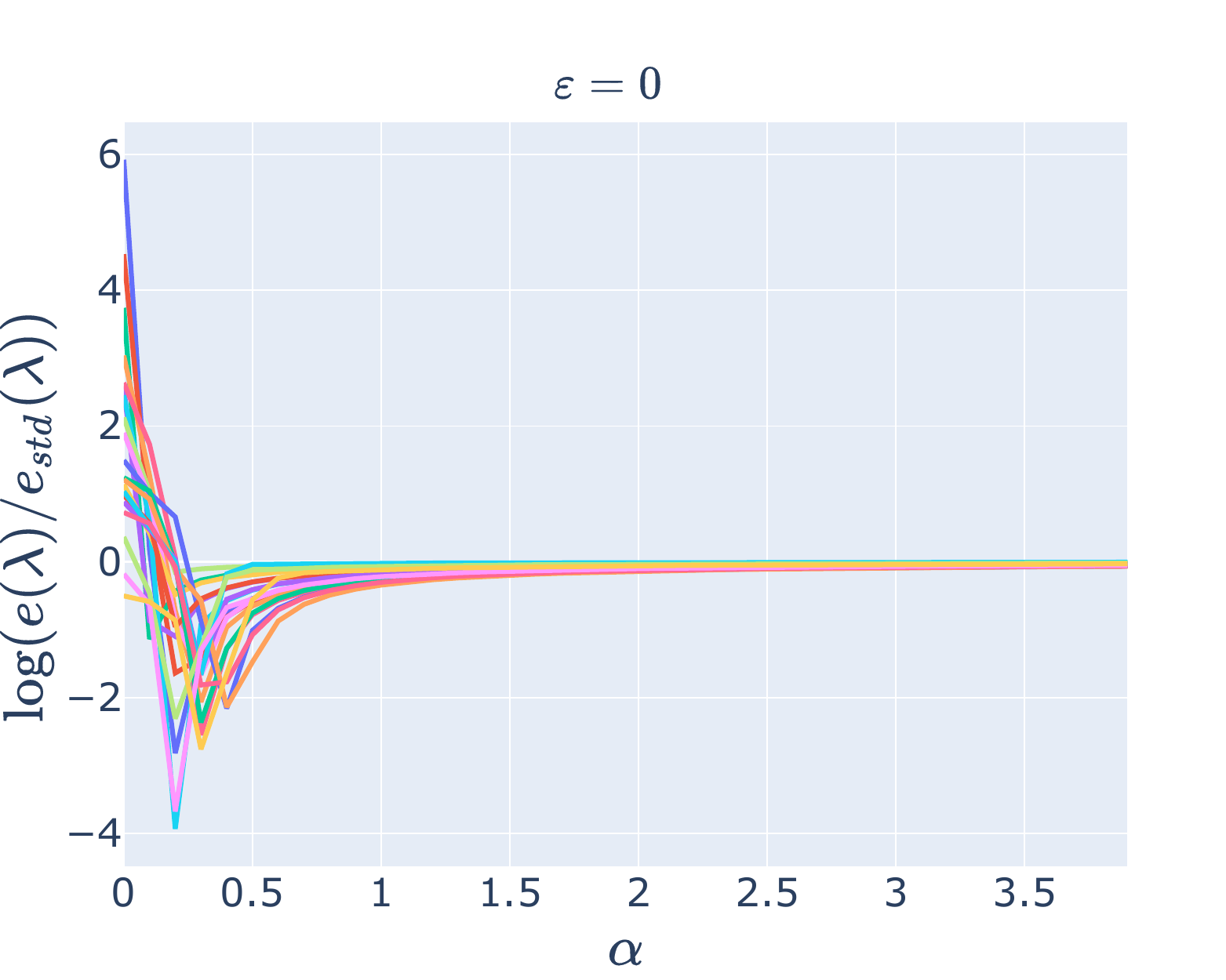}
		\end{minipage}
		\begin{minipage}{0.49\linewidth}\centering
			\includegraphics[scale=0.298,trim=0cm 0cm 1cm 1cm,clip]{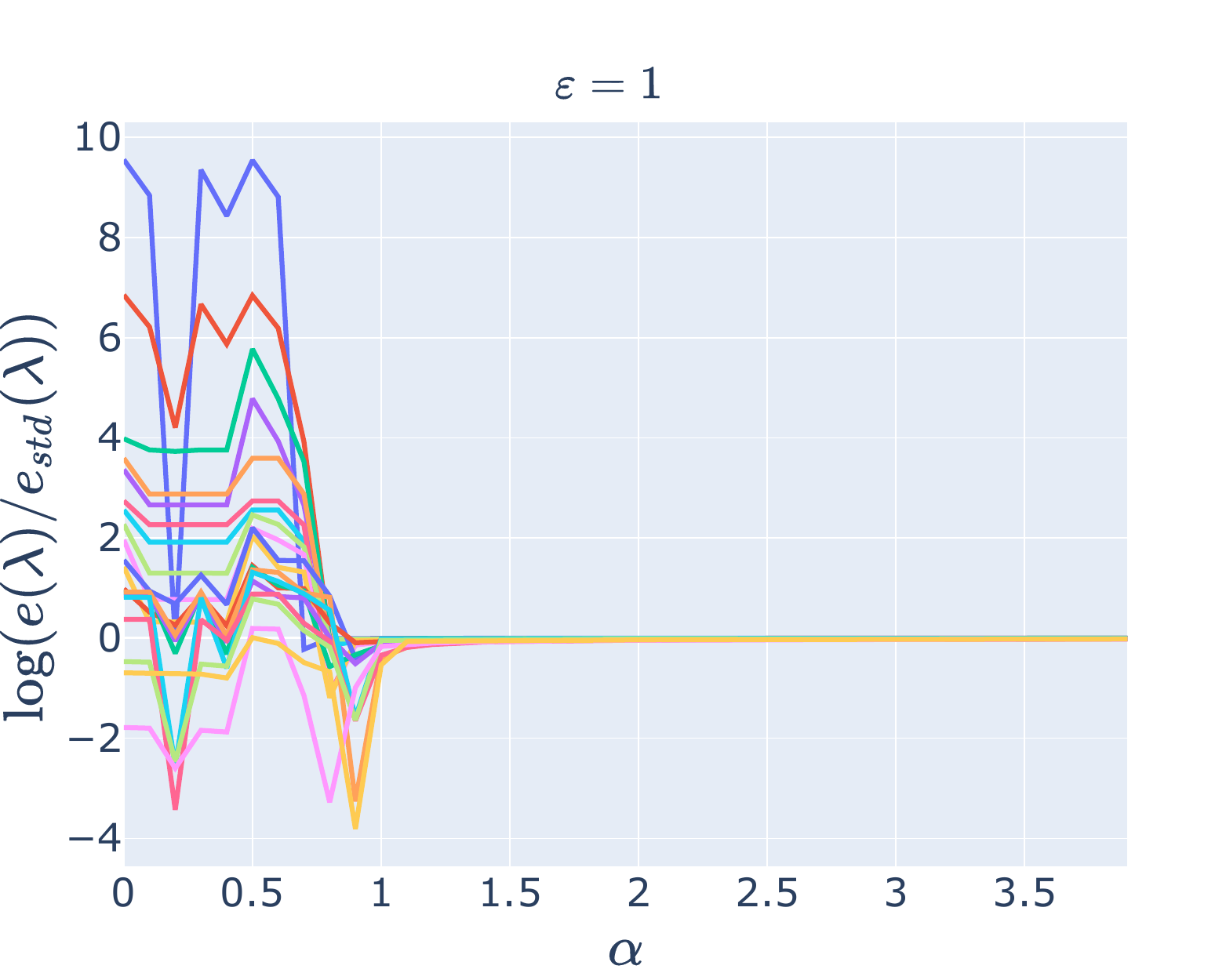}
		\end{minipage}\\
		\caption{Test \ref{subsec:rectangle-domain}. Dependence of the eigenvalues in the rectangle domain when using Nitsche's symmetric and incomplete variants with respect to the stabilization parameter $\alpha$ and $k=1$, $N=5$. Top: computation of the first 40 eigenvalues for each $\alpha$. Bottom: relative accuracy of first 20 computed eigenvalues.}
		\label{fig:alfa-dependence-rectangle}
	\end{figure}
	\begin{figure}[!hbt]\centering
		\begin{minipage}{0.15\linewidth}\centering
			{\footnotesize $\lambda_{h,1},\varepsilon=1$}\\
			\includegraphics[scale=0.12,trim=29cm 1cm 29cm 1cm,clip]{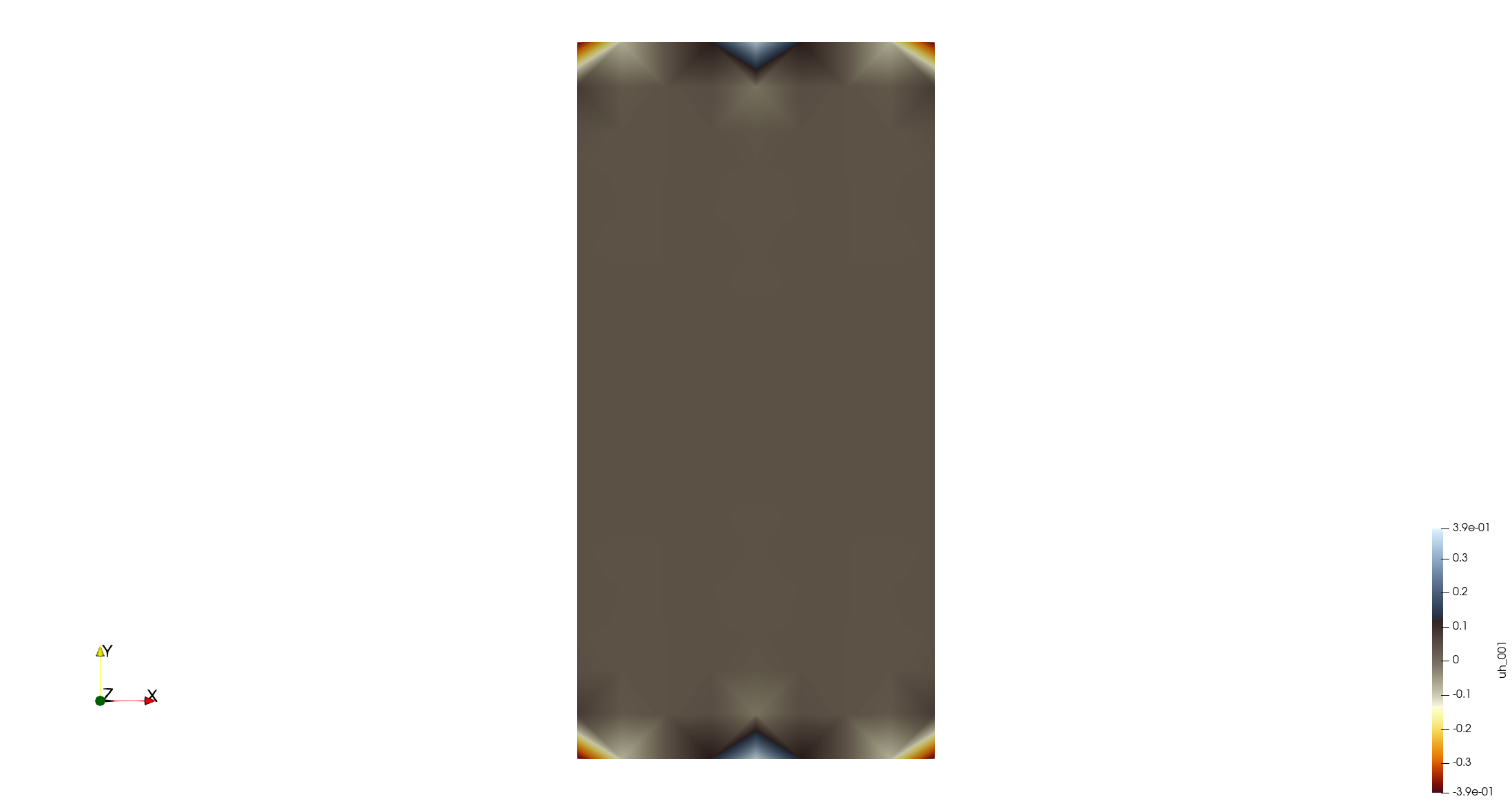}
		\end{minipage}
		\begin{minipage}{0.15\linewidth}\centering
			{\footnotesize $\lambda_{h,3},\varepsilon=1$}\\
			\includegraphics[scale=0.12,trim=29cm 1cm 29cm 1cm,clip]{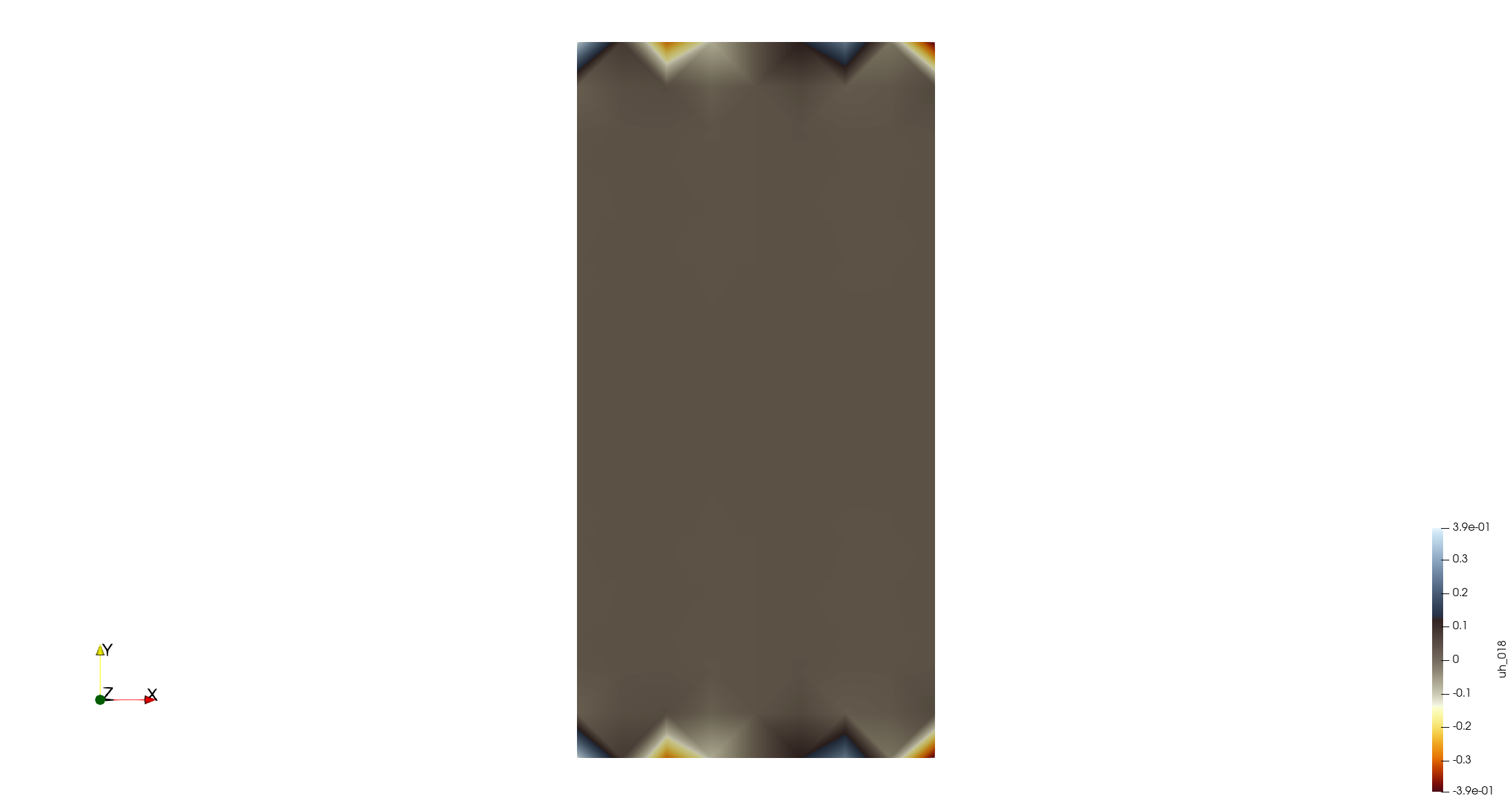}
		\end{minipage}
		\begin{minipage}{0.15\linewidth}\centering
			{\footnotesize $\lambda_{h,5},\varepsilon=1$}\\
			\includegraphics[scale=0.12,trim=29cm 1cm 29cm 1cm,clip]{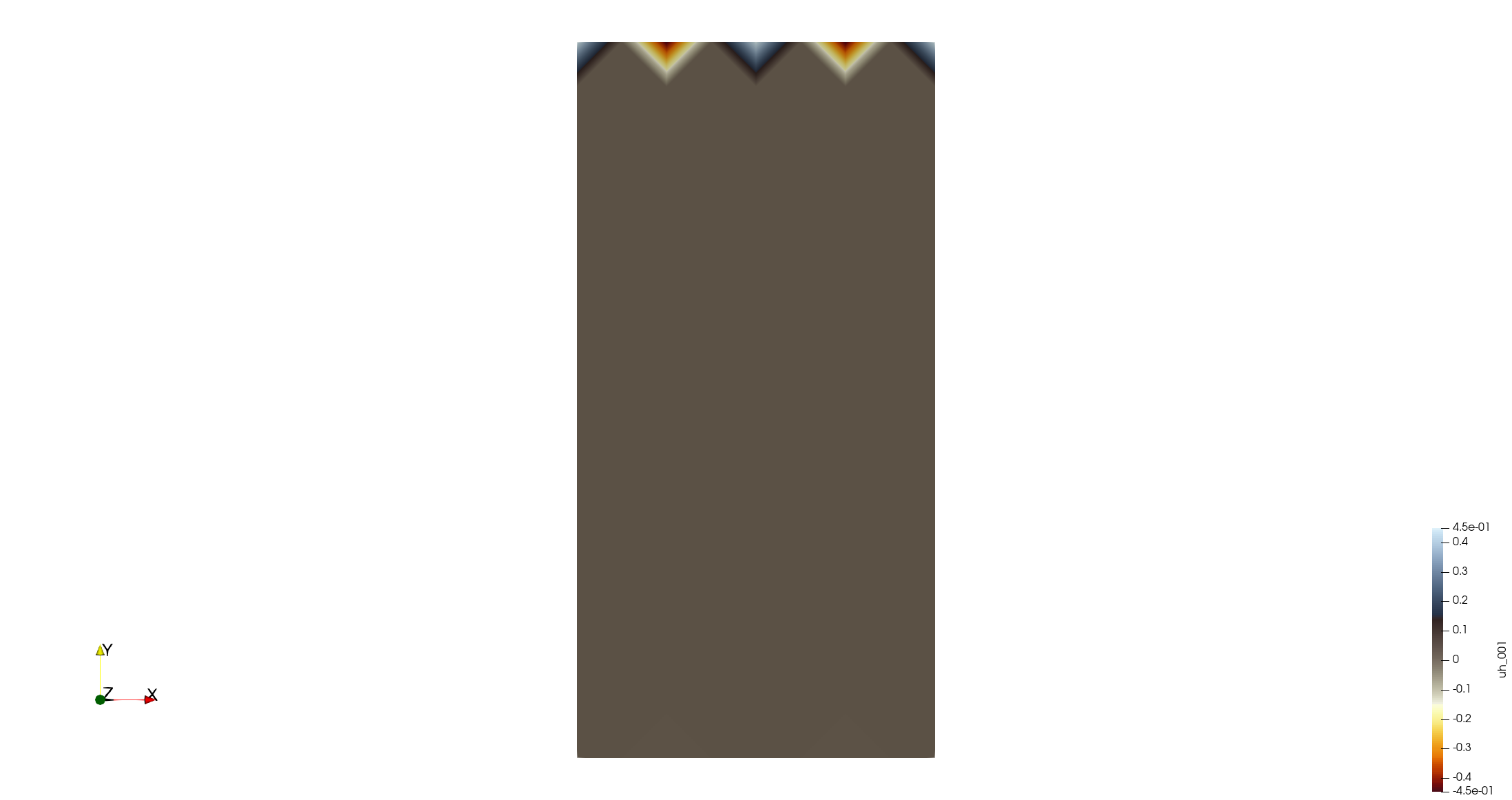}
		\end{minipage}
		\begin{minipage}{0.15\linewidth}\centering
			{\footnotesize $\lambda_{h,6},\varepsilon=1$}\\
			\includegraphics[scale=0.12,trim=29cm 1cm 29cm 1cm,clip]{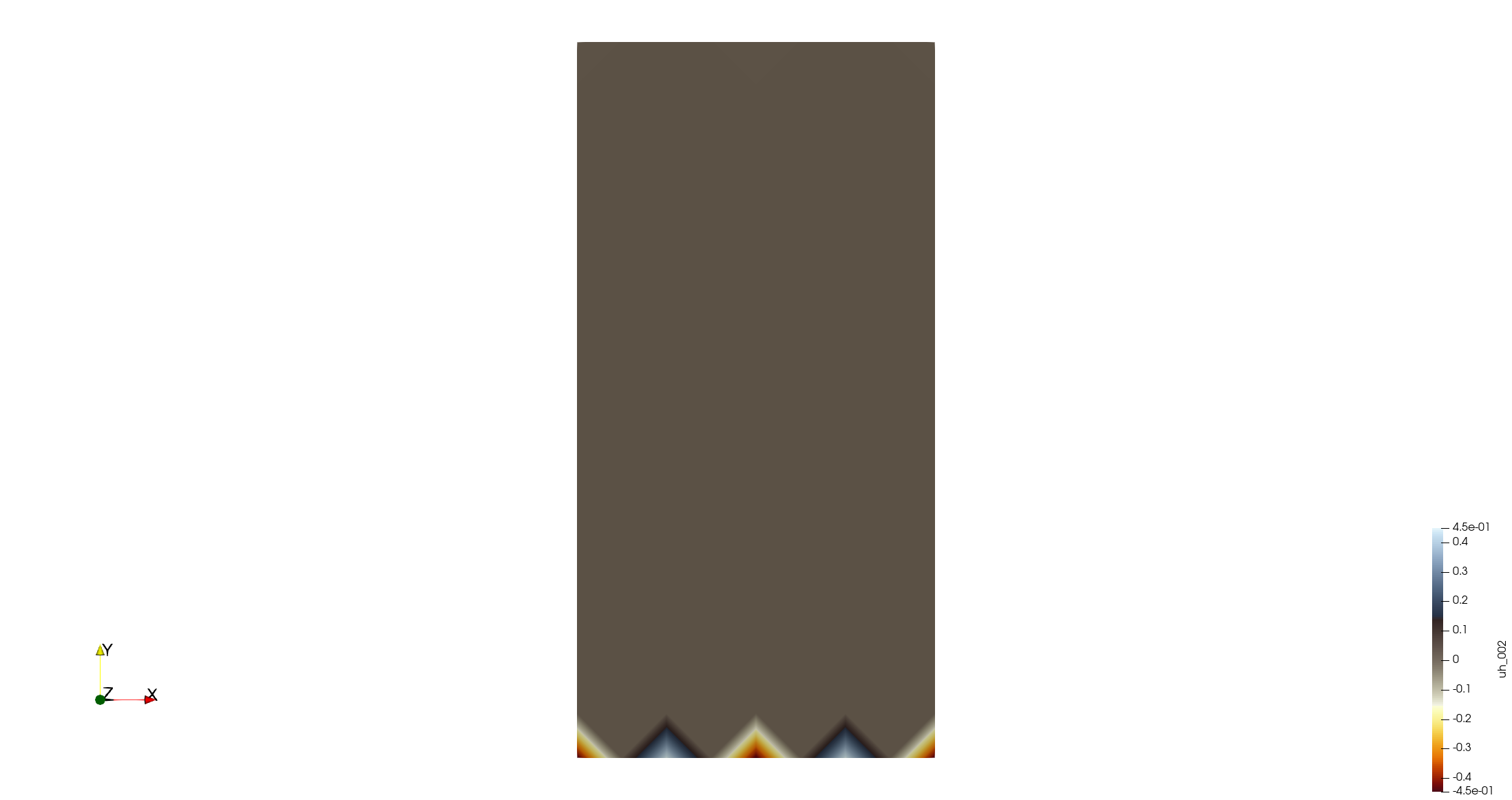}
		\end{minipage}
		\begin{minipage}{0.15\linewidth}\centering
			{\footnotesize $\lambda_{h,1},\varepsilon=0$}\\
			\includegraphics[scale=0.12,trim=29cm 1cm 29cm 1cm,clip]{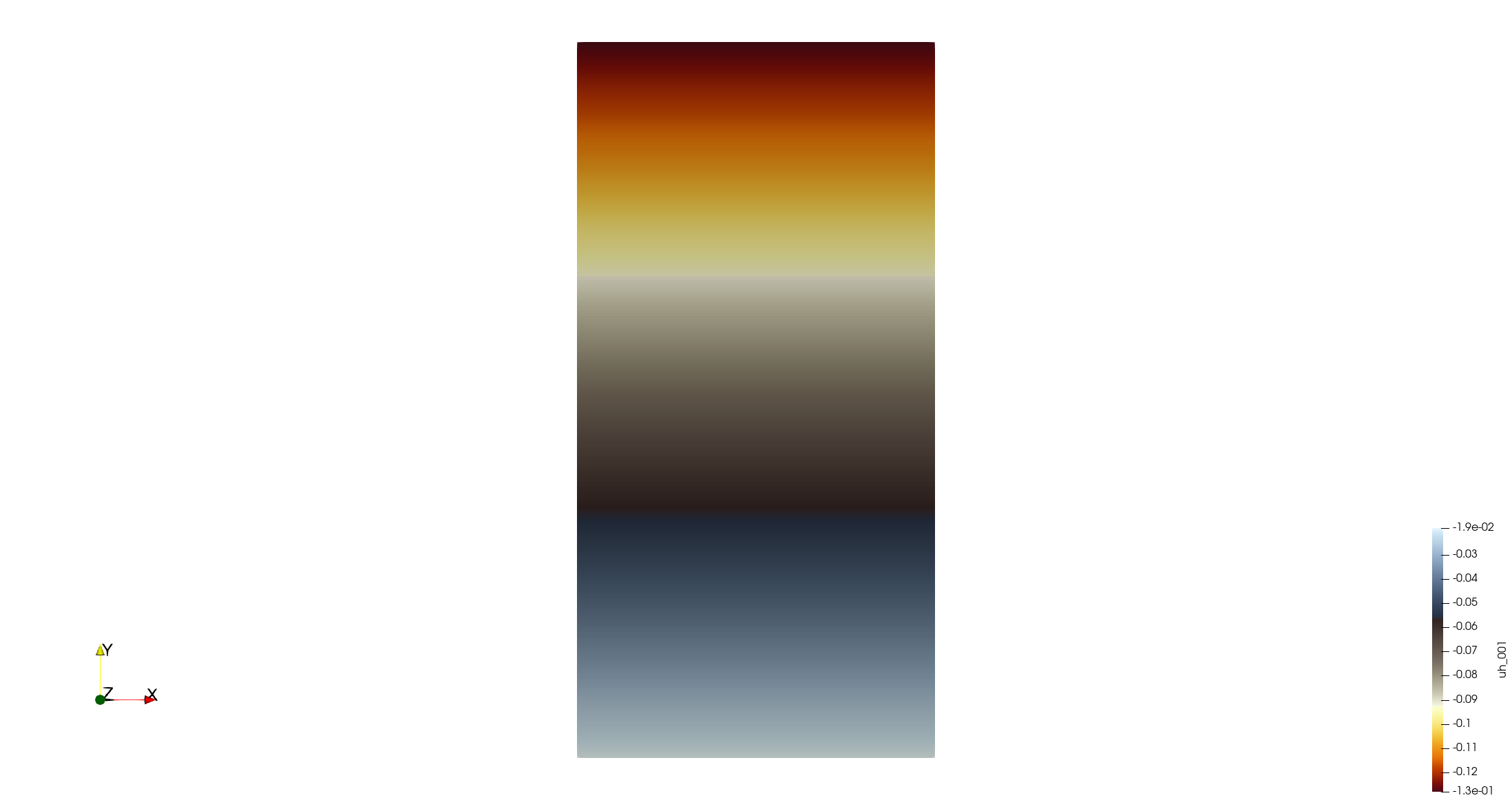}
		\end{minipage}
		\begin{minipage}{0.15\linewidth}\centering
			{\footnotesize $\lambda_{h,2},\varepsilon=0$}\\
			\includegraphics[scale=0.12,trim=29cm 1cm 29cm 1cm,clip]{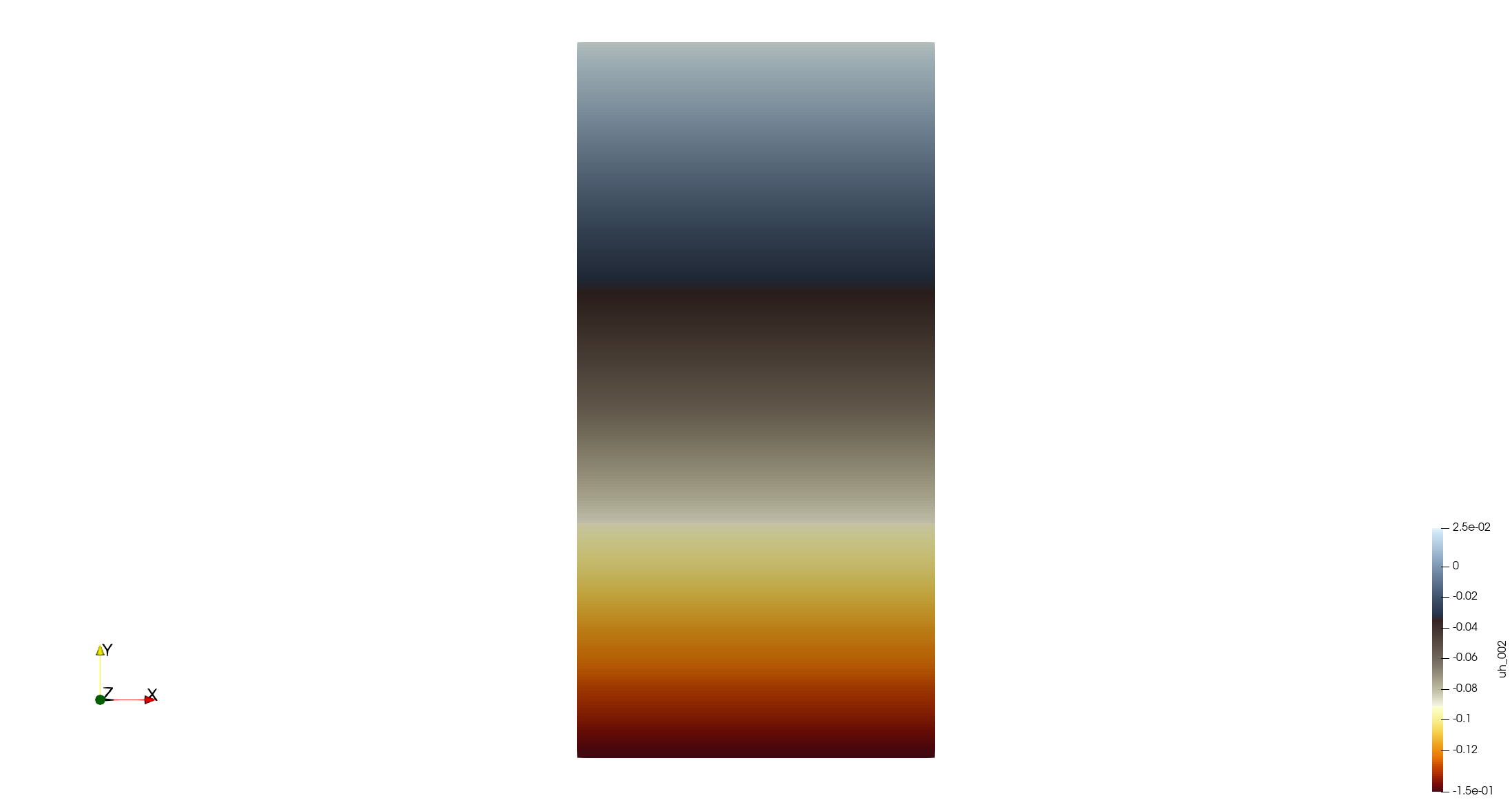}
		\end{minipage}
		\caption{Test \ref{subsec:rectangle-domain}. Comparison between spurious eigenmodes in the rectangle domain when taking $\alpha=0$ on the symmetric ($\varepsilon=1$) and incomplete ($\varepsilon=0$) variants of the Nitsche method for $N=8$.}
		\label{fig:spurious-eigenvalues}
	\end{figure}
	\begin{figure}[!hbt]\centering
		\begin{minipage}{0.15\linewidth}\centering
			{\footnotesize $\lambda_{h,1}$}\\
			\includegraphics[scale=0.12,trim=29cm 1cm 29cm 1cm,clip]{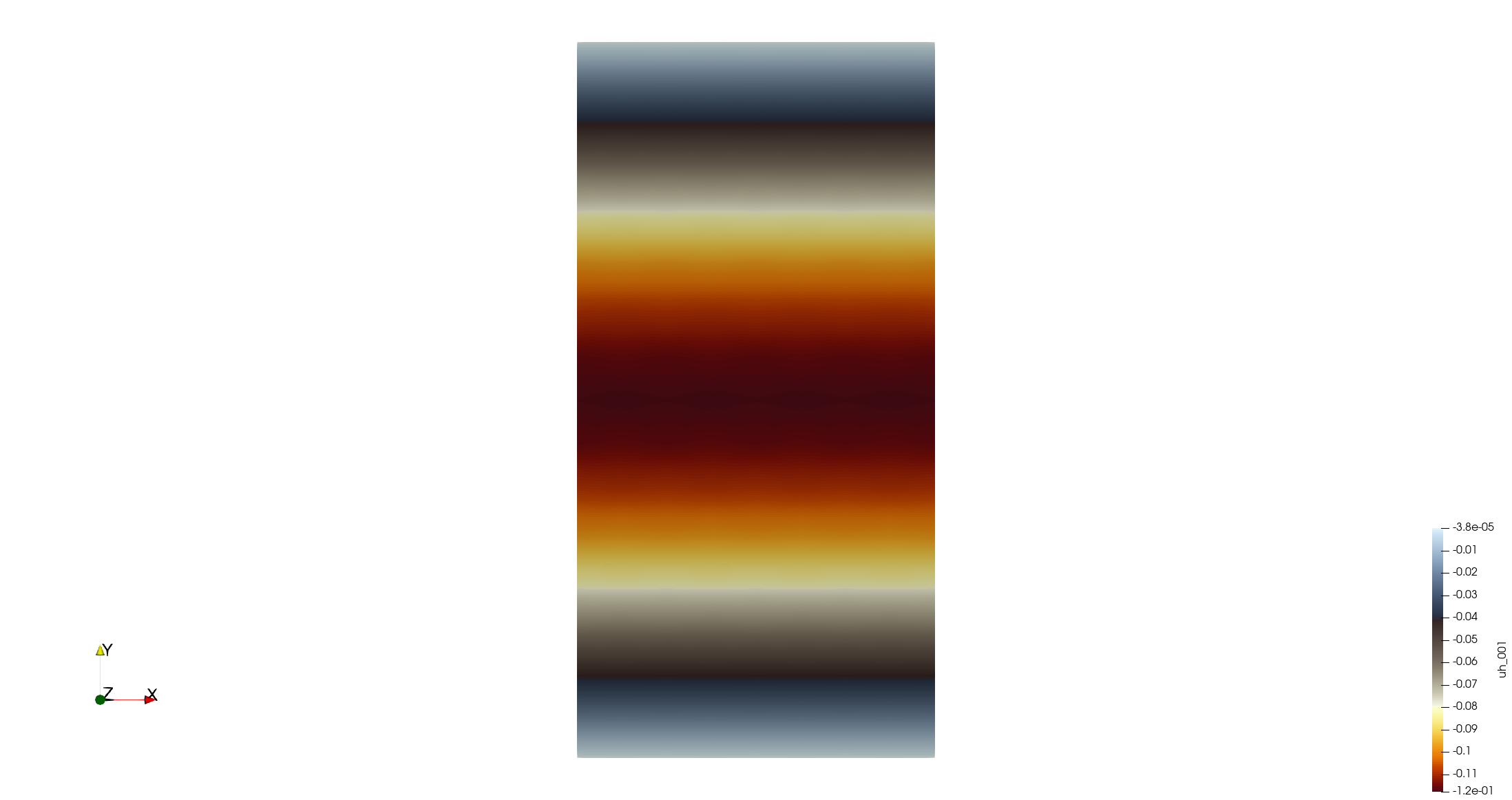}
		\end{minipage}
		\begin{minipage}{0.15\linewidth}\centering
			{\footnotesize $\lambda_{h,2}$}\\
			\includegraphics[scale=0.12,trim=29cm 1cm 29cm 1cm,clip]{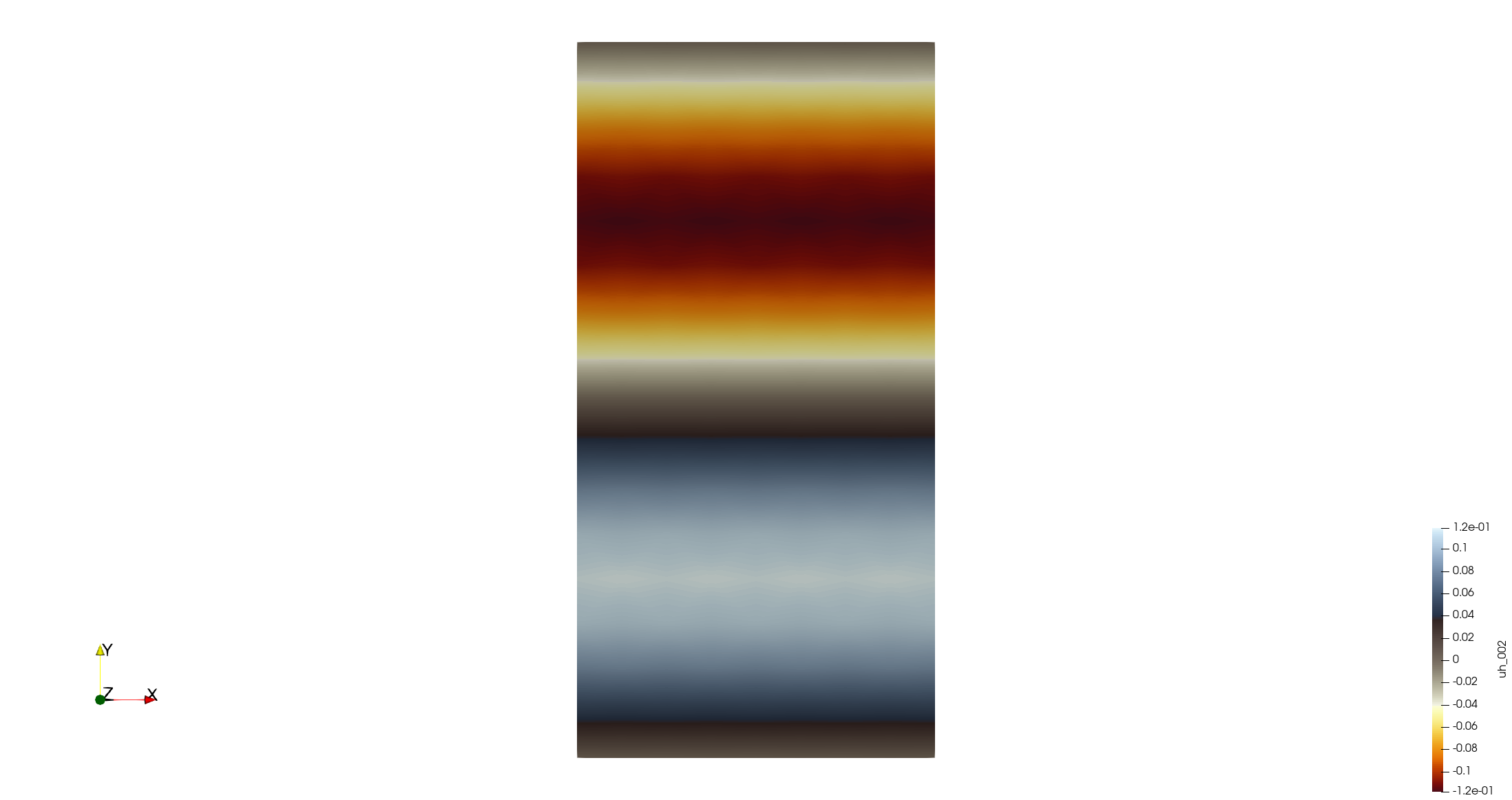}
		\end{minipage}
		\begin{minipage}{0.15\linewidth}\centering
			{\footnotesize $\lambda_{h,3}$}\\
			\includegraphics[scale=0.12,trim=29cm 1cm 29cm 1cm,clip]{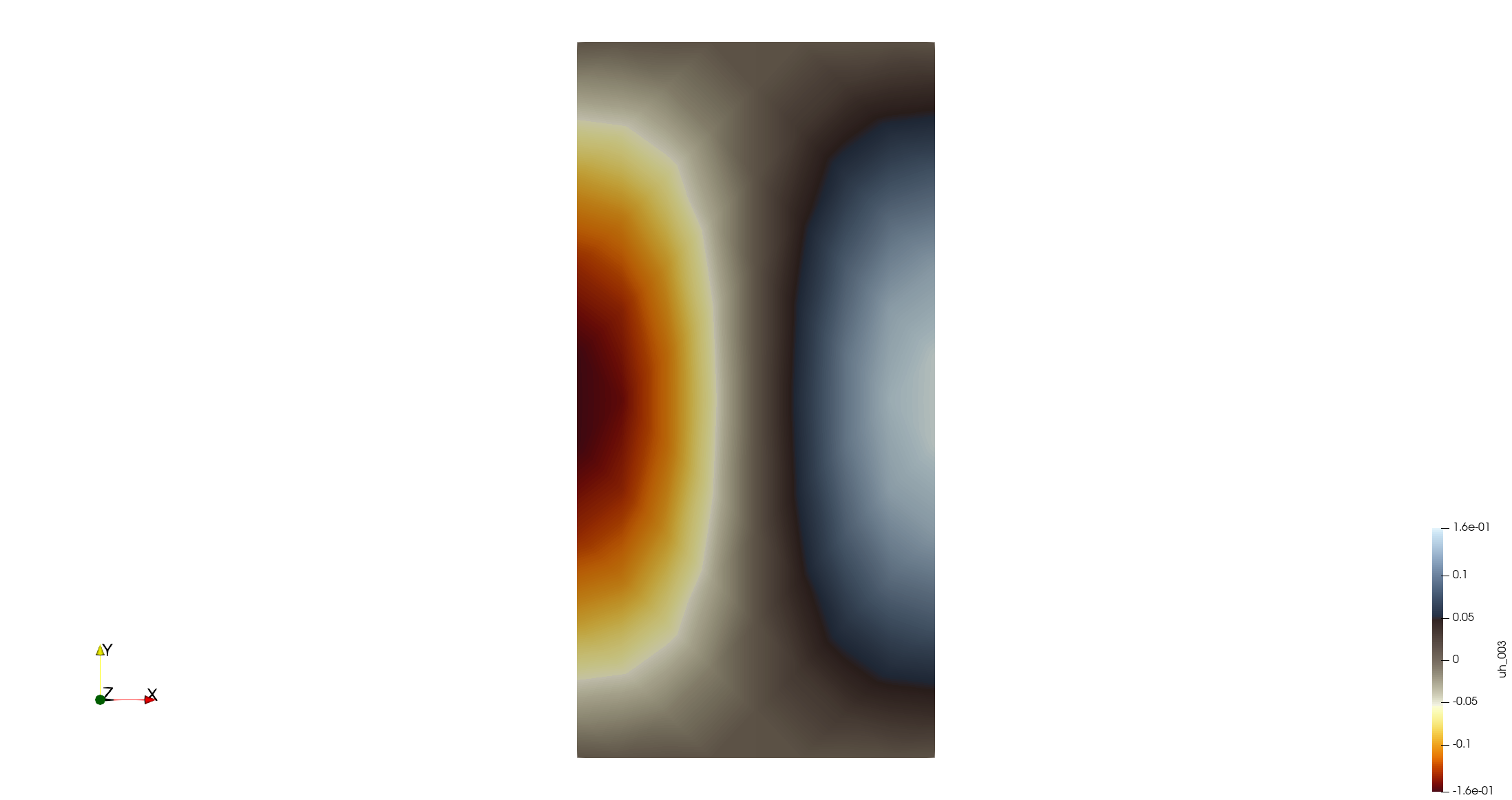}
		\end{minipage}
		\begin{minipage}{0.15\linewidth}\centering
			{\footnotesize $\lambda_{h,4}$}\\
			\includegraphics[scale=0.12,trim=29cm 1cm 29cm 1cm,clip]{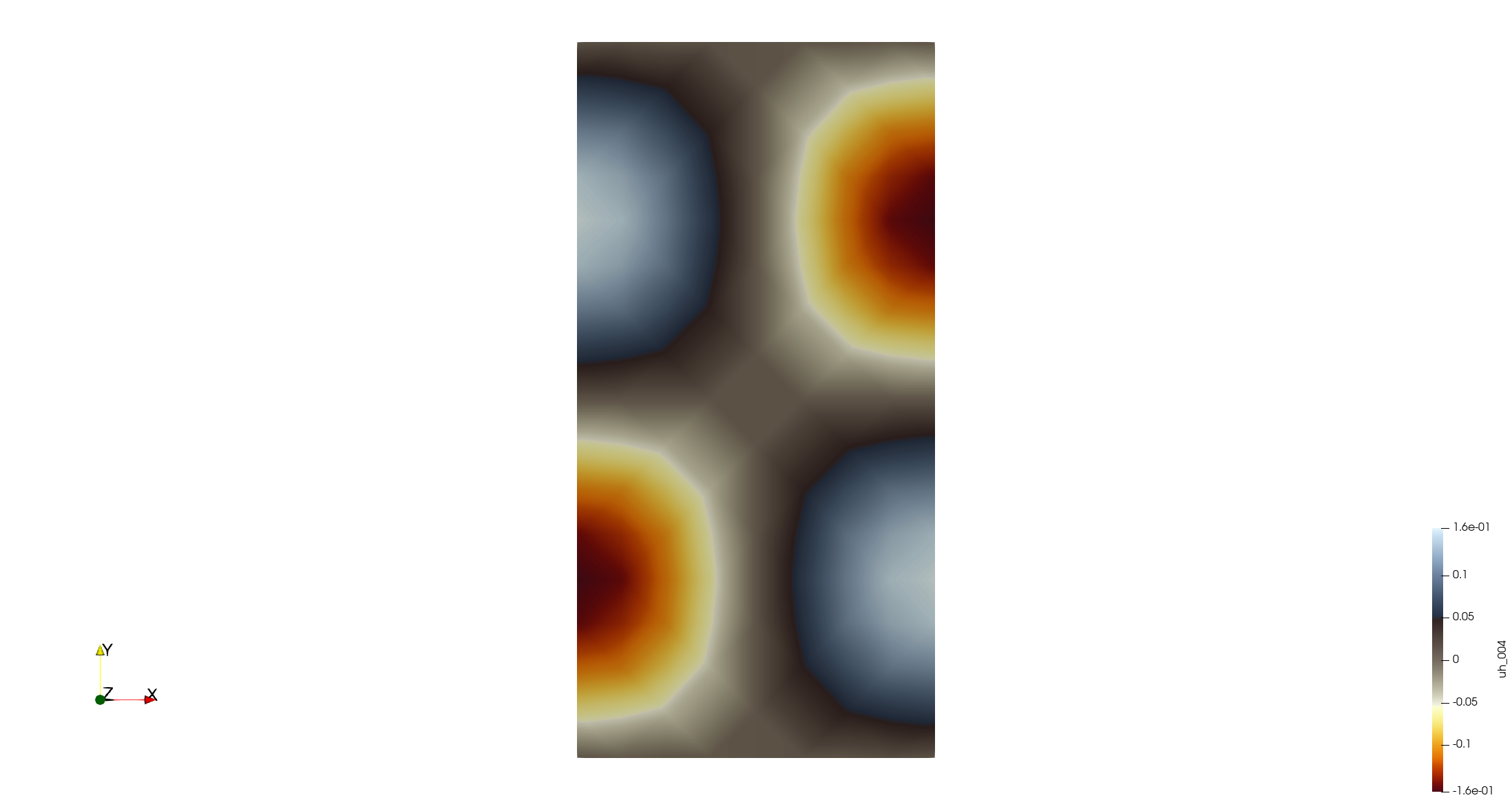}
		\end{minipage}
		\begin{minipage}{0.15\linewidth}\centering
			{\footnotesize $\lambda_{h,5}$}\\
			\includegraphics[scale=0.12,trim=29cm 1cm 29cm 1cm,clip]{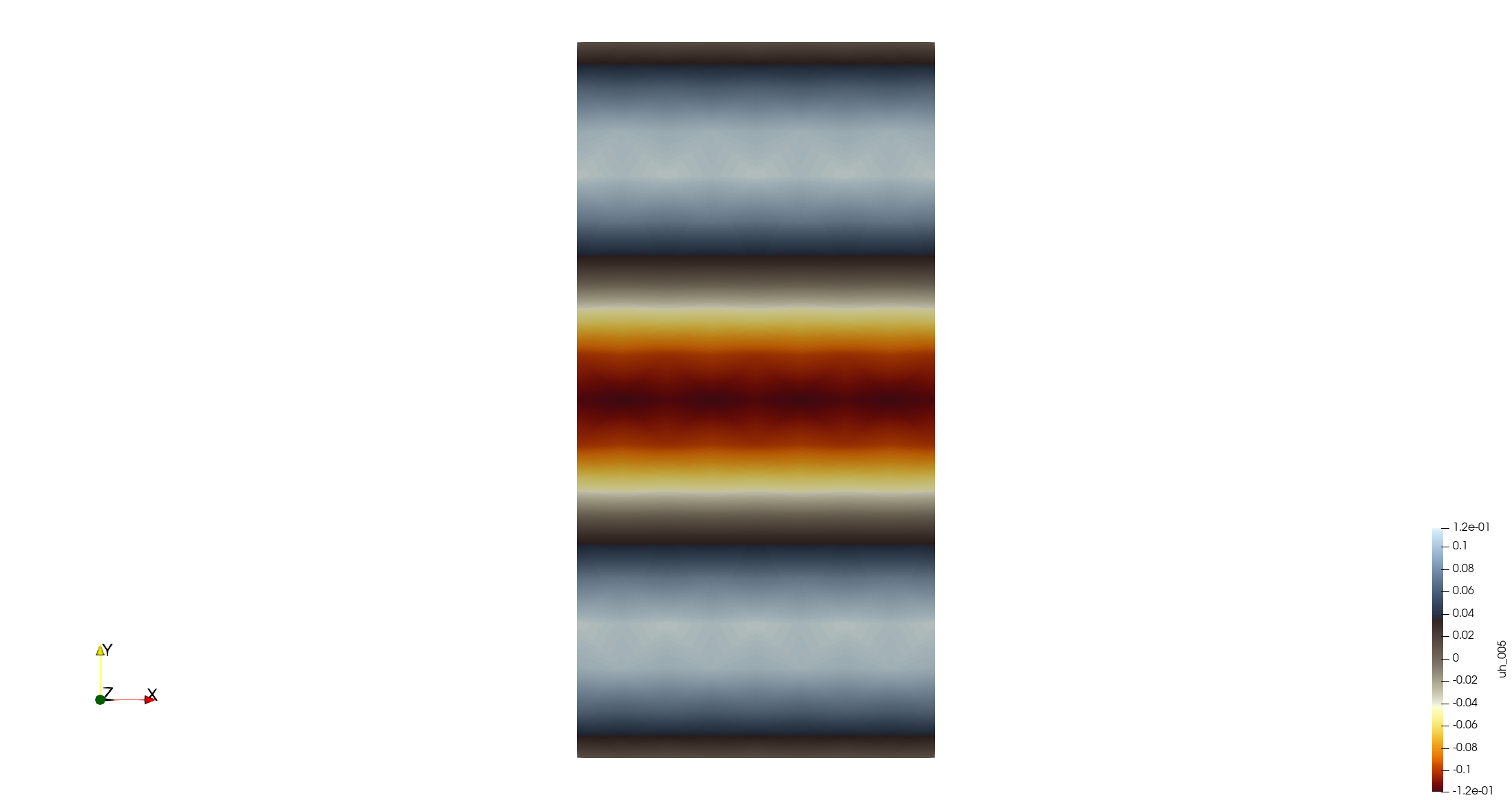}
		\end{minipage}
		\begin{minipage}{0.15\linewidth}\centering
			{\footnotesize $\lambda_{h,6}$}\\
			\includegraphics[scale=0.12,trim=29cm 1cm 29cm 1cm,clip]{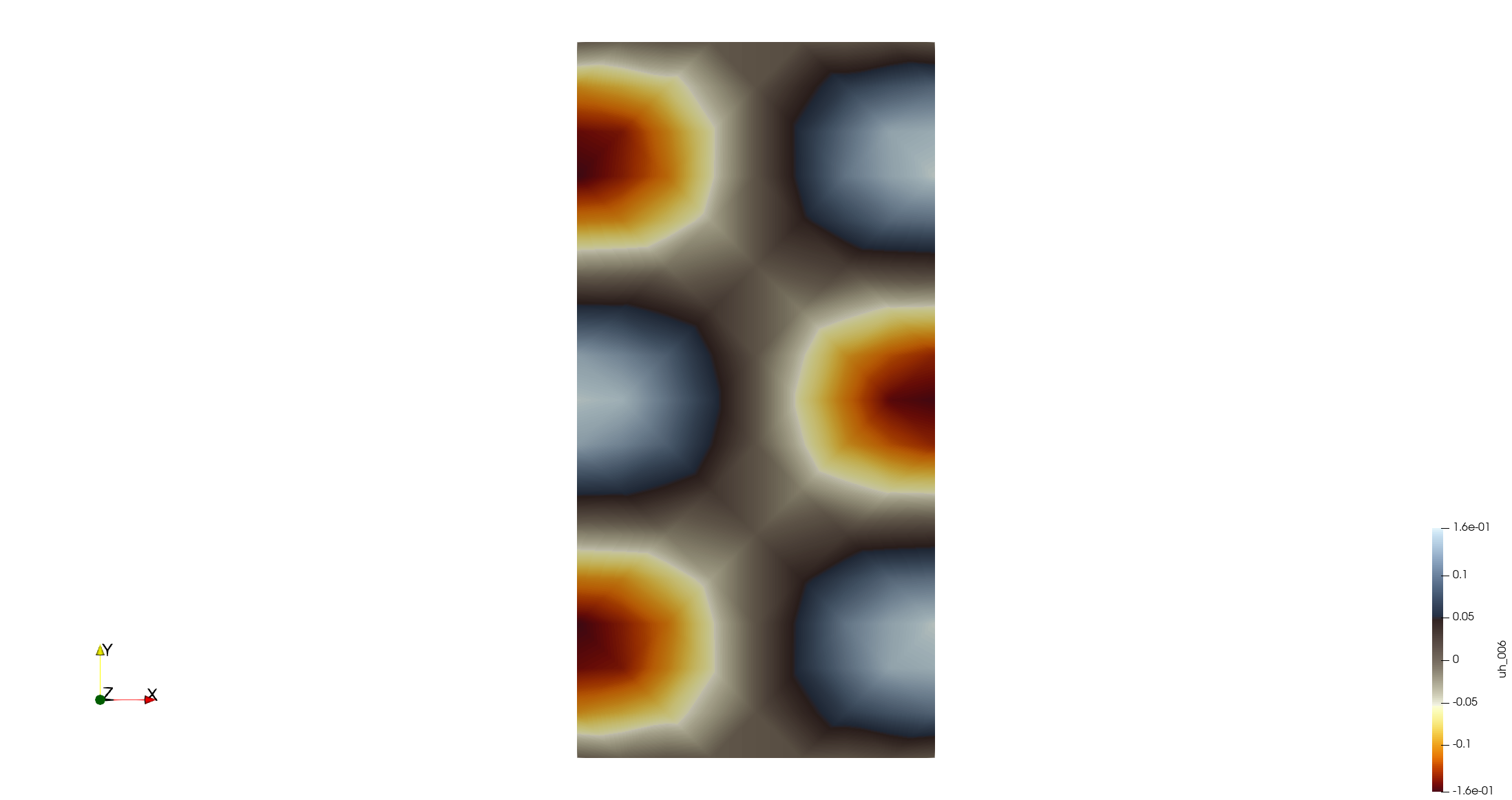}
		\end{minipage}
		\caption{Test \ref{subsec:rectangle-domain}. Surface plot of the first six lowest computed eigenmodes when taking $\alpha=0$ on the skew-symmetric ($\varepsilon=-1$) variant of the Nitsche method for $N=8$.}
		\label{fig:eigenvalues-nonsymetric}
	\end{figure}

	\subsection{The L-shaped domain}\label{subsec:lshape-domain}
	This experiment test the performance of the schemes when a domain with geometrical singularities is considered. To this end, let us define the L-shaped domain $\Omega:=(0,1)^2\backslash\left((1/2,1)\times(0,1/2)\right)$ with homogeneous boundary conditions, i.e, $u=0$ on $\Gamma$ and $\Sigma=\emptyset$. Because of the re-entrant corner at $(x,y)=(0,0)$, it is expected that at least one of the eigenvalues becomes singular. We also study the effects of the stabilization on the spectrum correctness. The mesh levels are such that $h\approx 1/N$. Since in this particular case, there is no analytical expression for the spectrum, the extrapolated eigenvalues have been calculated by least squares fitting.
	
	From the results depicted in Figure \ref{fig:alfa-dependence-lshape2D}, we observe that the symmetric and incomplete schemes may become unstable for $\alpha<2$. The symmetric variant of Nitsche is the one presenting more oscillations near $\alpha=0$, while for $\varepsilon=0$, we observe a computation of near-zero eigenvalues, similar to the rectangle domain. Looking at the relative accuracy results, we note that the Nitsche method is more accurate for the selected mesh size. We can justify this behavior due to the overprediction seen in the first test. It should be noted that the errors and extrapolations presented were calculated in the same mesh levels for the standard and the Nitsche method. Moreover, the errors were computed using the standard method extrapolated values.
	
	Inspired by the above observation, we have computed a convergence history in Table \ref{tabla:lshape-convergence-k1}. Here we note that the first eigenvalue behaves like $\mathcal{O}(h^{1.7})$ in the three Nitsche variants due to the singularity in $(x,y)=(0.5,0.5)$. Instead, the method converges with larger orders for rest of the eigenvalues. We also present Figure \ref{fig:lshape-eigenvalues-spurious}, where we show the first four eigenmodes computed when $\alpha=0.1$. We observed that boundary layers are formed across the domain boundary for the symmetric scheme. Similar results were observed in \cite{harari2018spectral}. A loss of the Dirichlet boundary imposition is observed for $\varepsilon=0$, yielding to spurious eigenvalues. Note also that the skew-symmetric method gives the physical eigenvalues.

	We emphasize that the oscillations and figures shown correspond to the computation of the real part of the eigenvalues in the non-symmetric methods. In fact, according to the theory presented in this work, there is a possibility that the spectrum is complex. For example, for $\alpha=0.1$, we observed the appearance of the eigenvalues $-72.2156 \pm 12.0544i$, $79.1095\pm 13.0344i$, and $86.7849\pm 6.8064i$ for $\varepsilon=0$. For similar $\alpha$ and $\varepsilon=-1$ we obtained $383.876262\pm8.310906i$, $522.893254\pm10.143418i$, $635.700189\pm5.190318i$, $649.005625\pm17.160873i$ and $711.650930\pm10.436213i$ with a target of 40 eigenvalues in the eigensolver, while, as expected, no complex eigenvalues were observed in the symmetric scheme.
	
	\begin{figure}[!hbt]\centering
		\begin{minipage}{0.49\linewidth}\centering
			\includegraphics[scale=0.298,trim=0cm 0cm 1cm 1cm,clip]{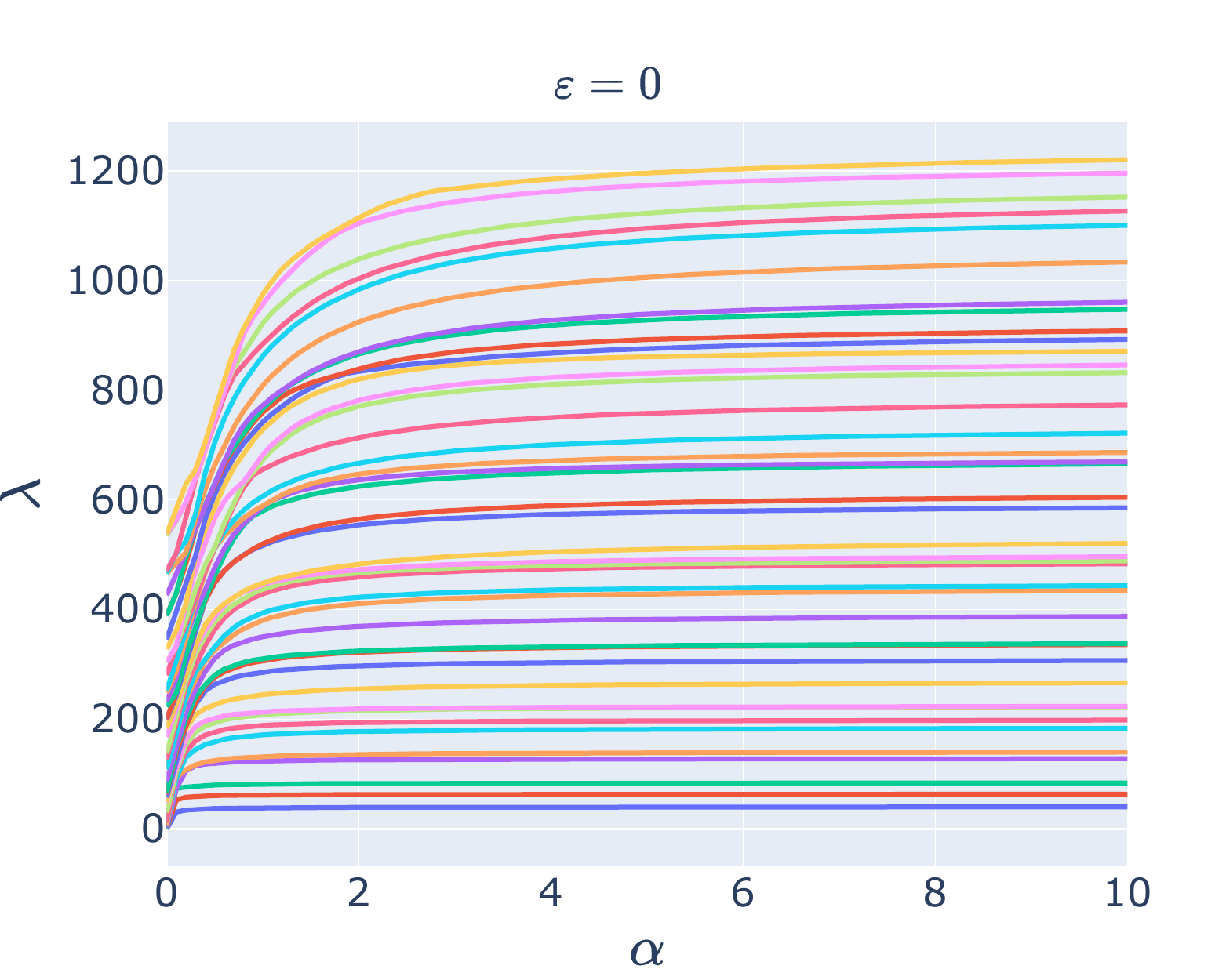}
		\end{minipage}
		\begin{minipage}{0.49\linewidth}\centering
			\includegraphics[scale=0.298,trim=0cm 0cm 1cm 1cm,clip]{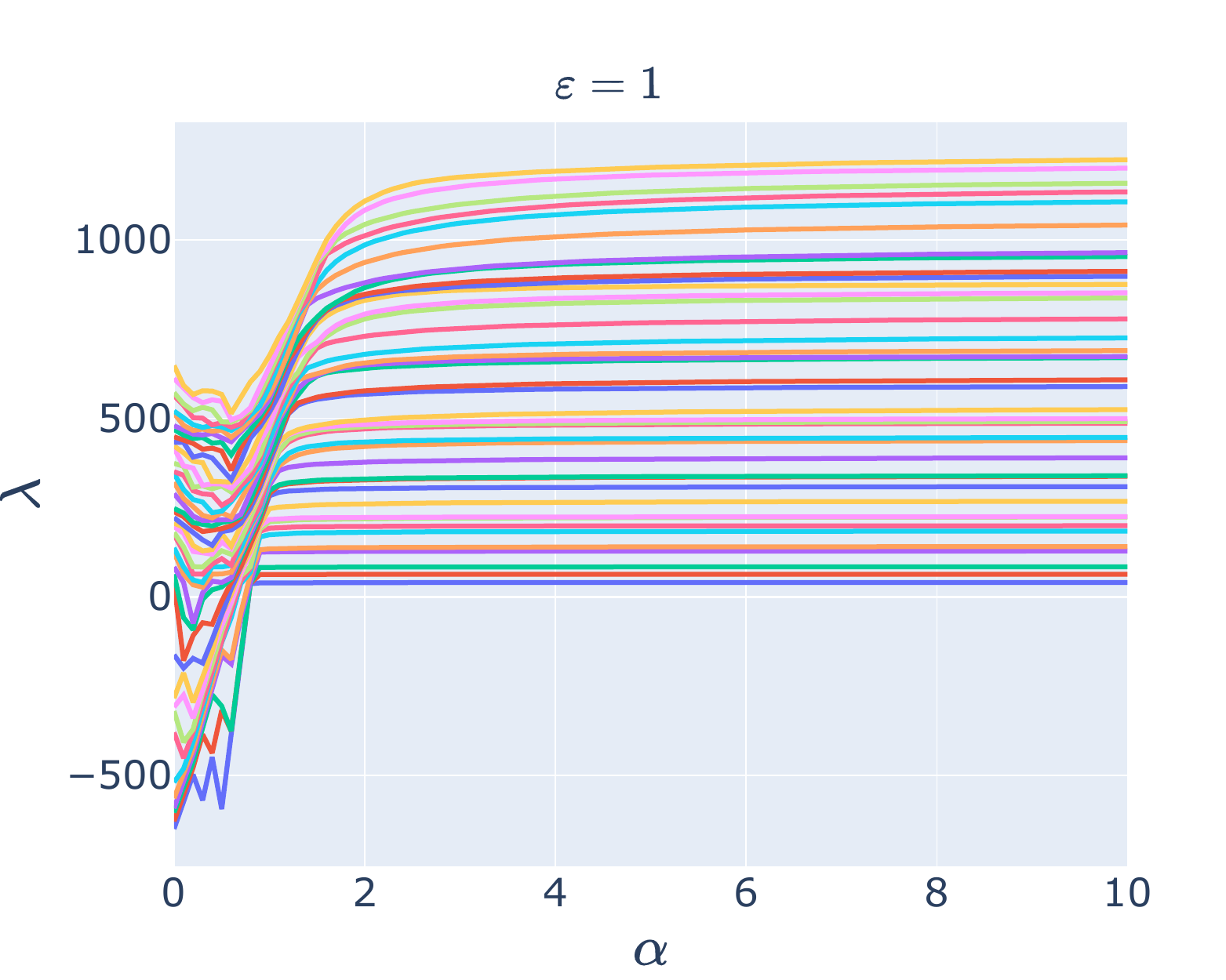}
		\end{minipage}\\
		\begin{minipage}{0.49\linewidth}\centering
			\includegraphics[scale=0.298,trim=0cm 0cm 1cm 1cm,clip]{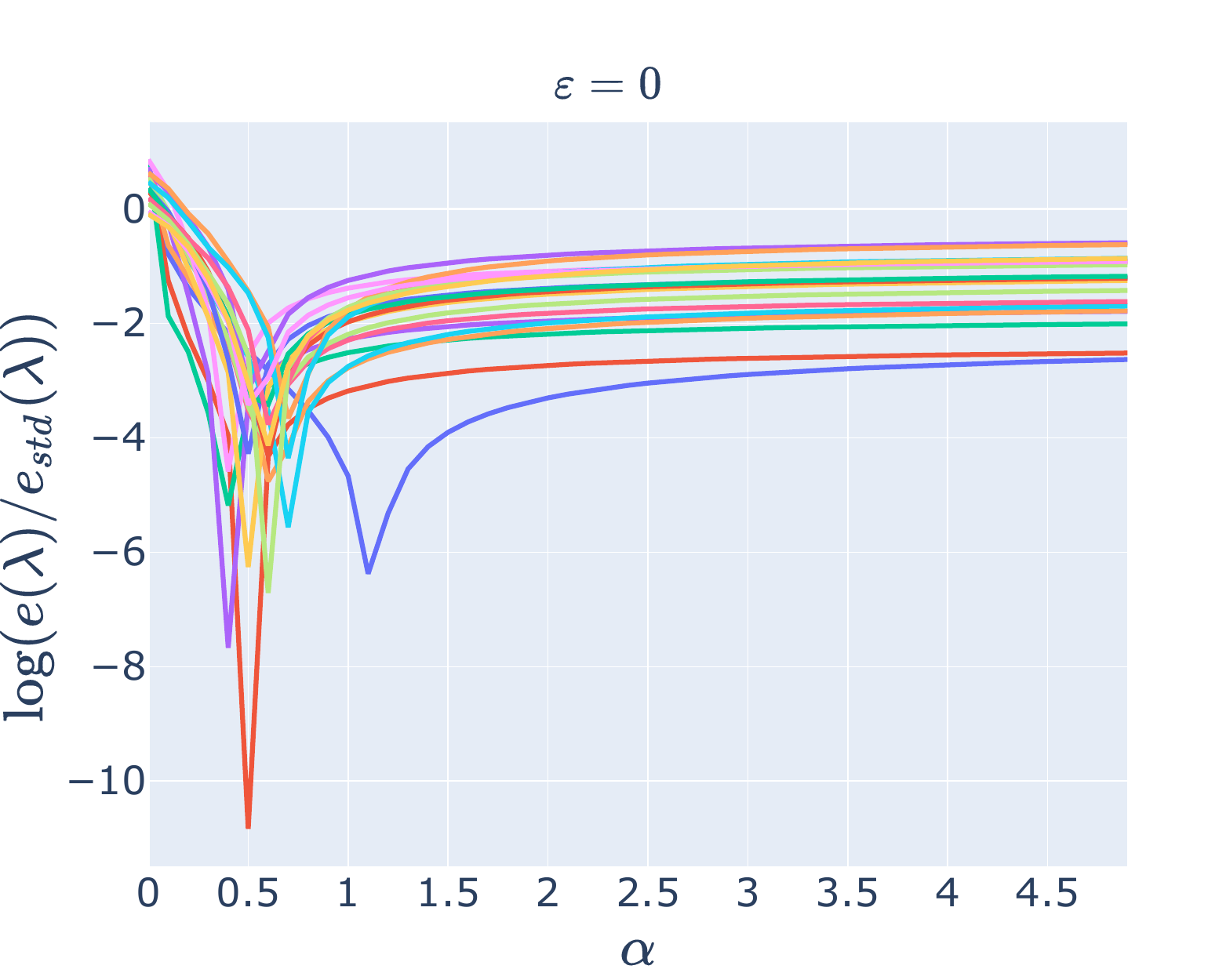}
		\end{minipage}
		\begin{minipage}{0.49\linewidth}\centering
			\includegraphics[scale=0.298,trim=0cm 0cm 1cm 1cm,clip]{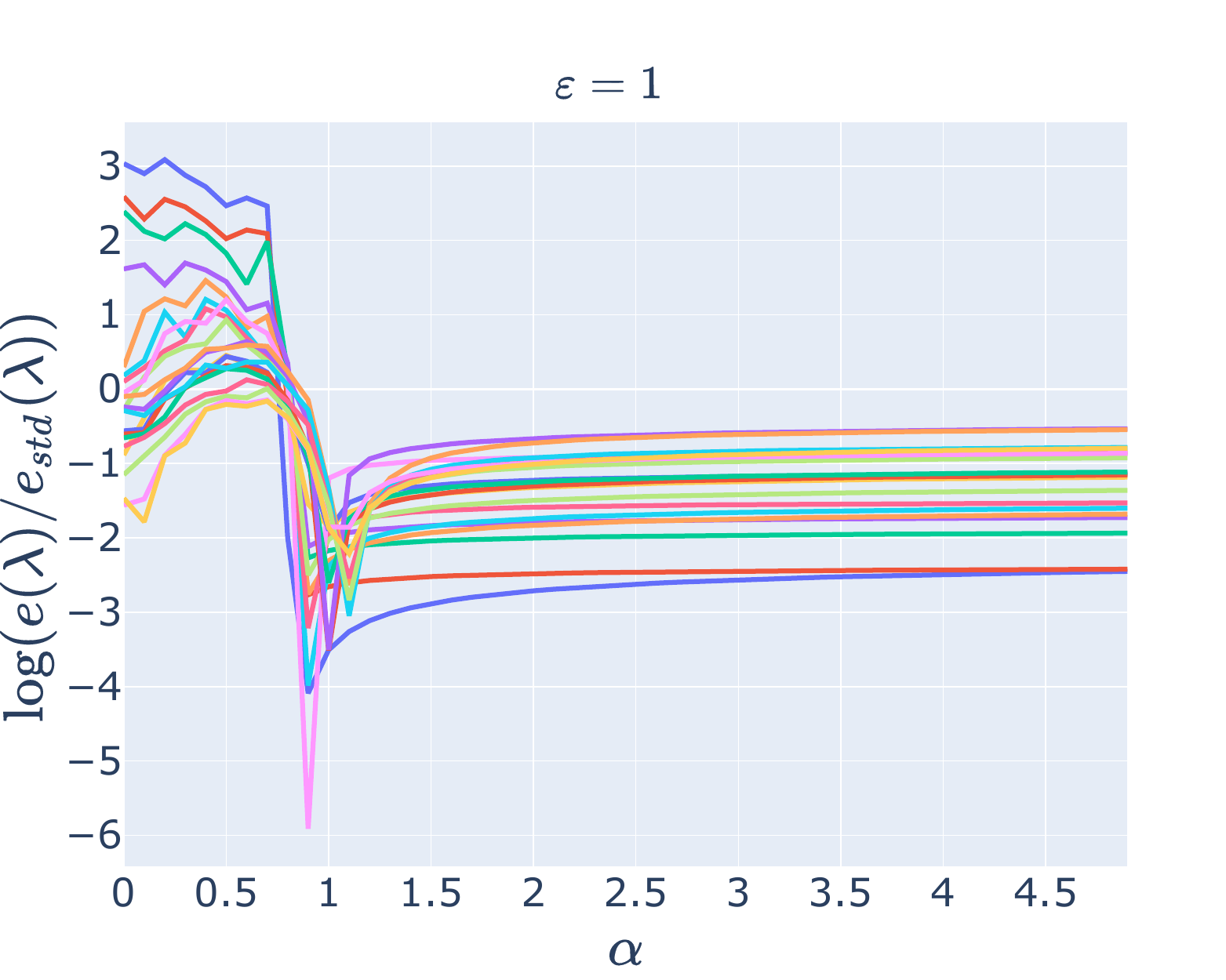}
		\end{minipage}\\
		\caption{Test \ref{subsec:lshape-domain}. Dependence of the eigenvalues in the L-shaped domain when using Nitsche's symmetric and incomplete variants with respect to the stabilization parameter $\alpha$ and $k=1$, $N=10$. Top: computation of the first 40 eigenvalues for each $\alpha$. Bottom: relative accuracy of first 20 computed eigenvalues.}
		\label{fig:alfa-dependence-lshape2D}
	\end{figure}

	\begin{table}[!hbt]
		\centering
		{\setlength{\tabcolsep}{3.8pt}\footnotesize
			\caption{Test \ref{subsec:lshape-domain}. Relative error and convergence behavior in the L-shaped domain for the first fourth lowest computed eigenvalues on the three variants of the Nitsche method with $k=1$. The stabilization parameter is set to be $\alpha=10$.}
			\label{tabla:lshape-convergence-k1}
			\begin{tabular}{|c|c|cccc|}
				\hline
				$\varepsilon$& Extrapolated & \multicolumn{4}{c|}{Relative error (rate)} \\
				\hline
				\multirow{4}{0.02\linewidth}{1}&   38.5635 &   8.93e-03  &   5.64e-03 (1.66) &   3.84e-03 (1.77) &   2.92e-03 (1.55)   \\
				&   60.7912 &   6.51e-03  &   3.65e-03 (2.08) &   2.33e-03 (2.08) &   1.61e-03 (2.08)   \\
				&   78.9601 &   9.69e-03  &   5.45e-03 (2.08) &   3.47e-03 (2.09) &   2.40e-03 (2.07)   \\
				&  118.1004 &   1.30e-02  &   7.26e-03 (2.11) &   4.61e-03 (2.10) &   3.16e-03 (2.13)   \\
				&  127.6937 &   1.73e-02  &   1.00e-02 (1.96) &   6.51e-03 (2.00) &   4.62e-03 (1.93)   \\
				\hline
				\multirow{4}{0.02\linewidth}{0}&   38.5644 &   8.20e-03  &   5.13e-03 (1.69) &   3.49e-03 (1.78) &   2.62e-03 (1.60)   \\
				&   60.7923 &   6.36e-03  &   3.57e-03 (2.08) &   2.28e-03 (2.06) &   1.57e-03 (2.10)   \\
				&   78.9601 &   9.53e-03  &   5.37e-03 (2.07) &   3.43e-03 (2.07) &   2.37e-03 (2.07)   \\
				&  118.1010 &   1.28e-02  &   7.16e-03 (2.10) &   4.56e-03 (2.09) &   3.12e-03 (2.12)   \\
				&  127.6977 &   1.66e-02  &   9.57e-03 (1.99) &   6.20e-03 (2.00) &   4.36e-03 (1.97)   \\
				\hline
				\multirow{4}{0.02\linewidth}{-1}&   38.5649 &   7.54e-03  &   4.68e-03 (1.73) &   3.17e-03 (1.80) &   2.36e-03 (1.66)   \\
				&   60.7933 &   6.24e-03  &   3.50e-03 (2.09) &   2.24e-03 (2.06) &   1.54e-03 (2.12)   \\
				&   78.9620 &   9.37e-03  &   5.28e-03 (2.07) &   3.37e-03 (2.07) &   2.33e-03 (2.08)   \\
				&  118.1012 &   1.26e-02  &   7.08e-03 (2.09) &   4.51e-03 (2.08) &   3.10e-03 (2.11)   \\
				&  127.7012 &   1.59e-02  &   9.14e-03 (2.01) &   5.91e-03 (2.01) &   4.13e-03 (2.02)   \\
				\hline
				&$N$&30&40&50 &60\\
				\hline
				\hline
		\end{tabular}}
	\end{table}

	\begin{figure}[!hbt]
		\centering
		\begin{minipage}{0.24\linewidth}\centering
			{\footnotesize $\lambda_{h,1},\varepsilon=1$}\\
			\includegraphics[scale=0.11,trim=22cm 4cm 22cm 4cm,clip]{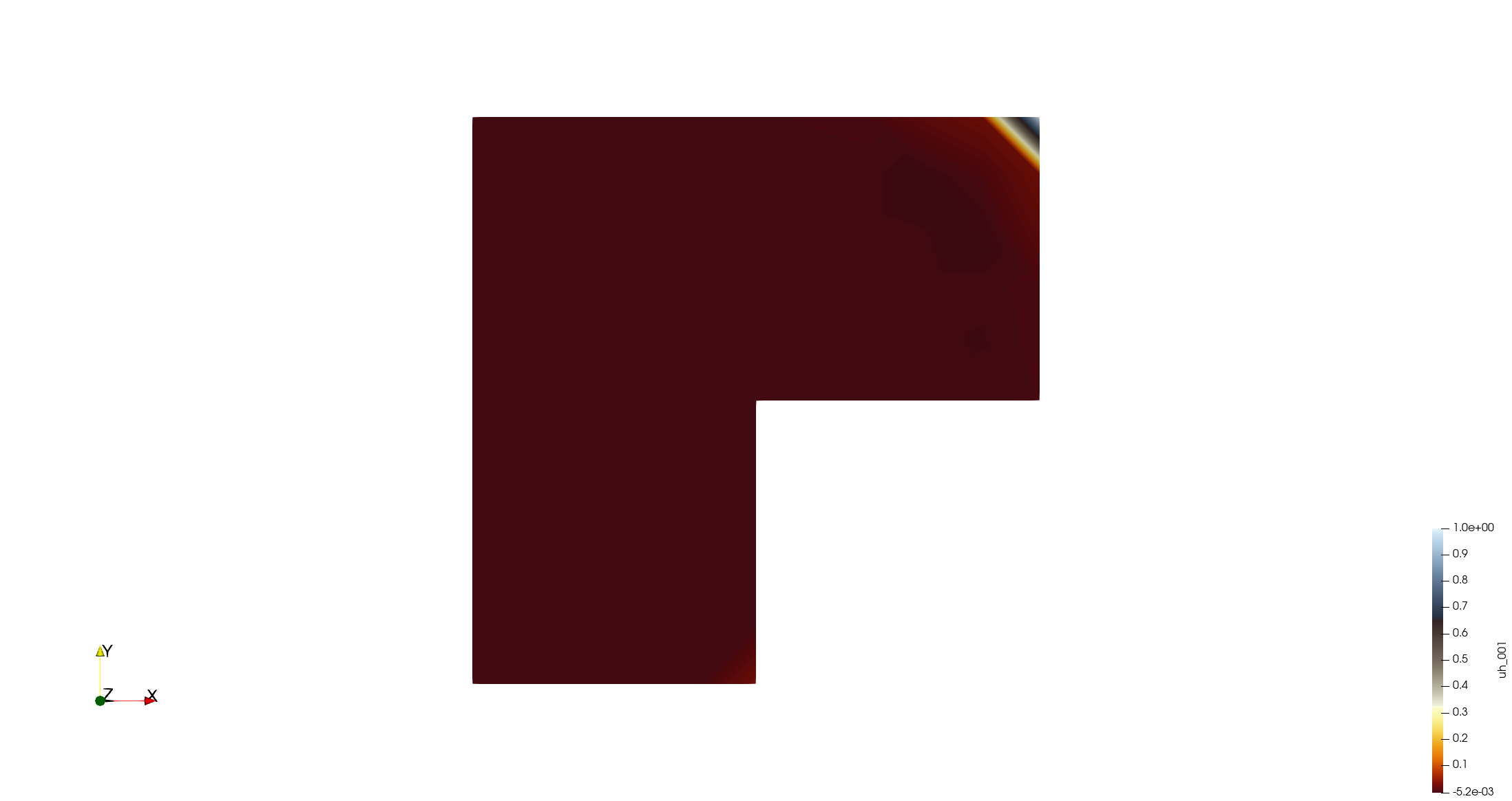}
		\end{minipage}
		\begin{minipage}{0.24\linewidth}\centering
			{\footnotesize $\lambda_{h,2},\varepsilon=1$}\\
			\includegraphics[scale=0.11,trim=22cm 4cm 22cm 4cm,clip]{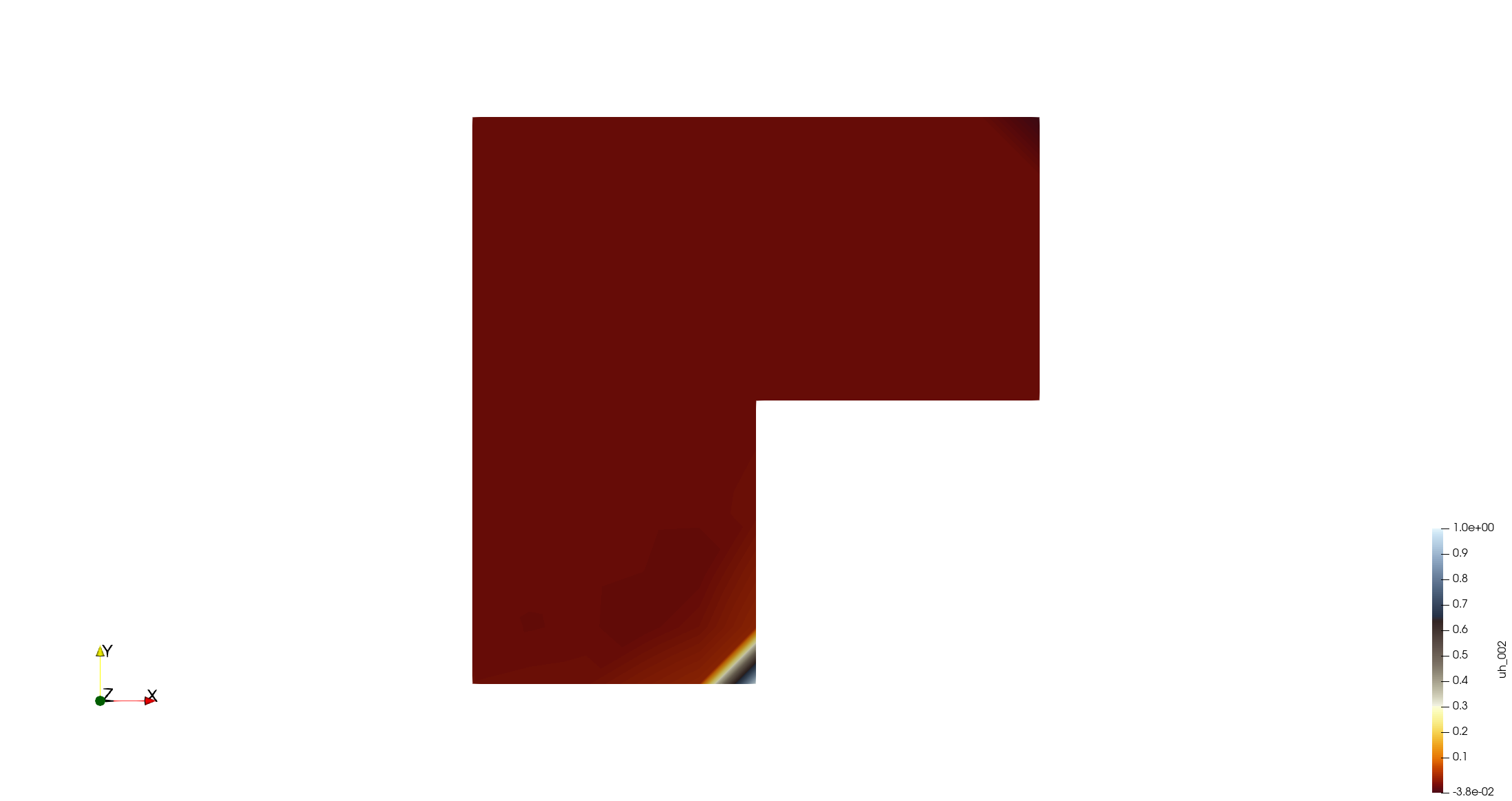}
		\end{minipage}
		\begin{minipage}{0.24\linewidth}\centering
			{\footnotesize $\lambda_{h,3},\varepsilon=1$}\\
			\includegraphics[scale=0.11,trim=22cm 4cm 22cm 4cm,clip]{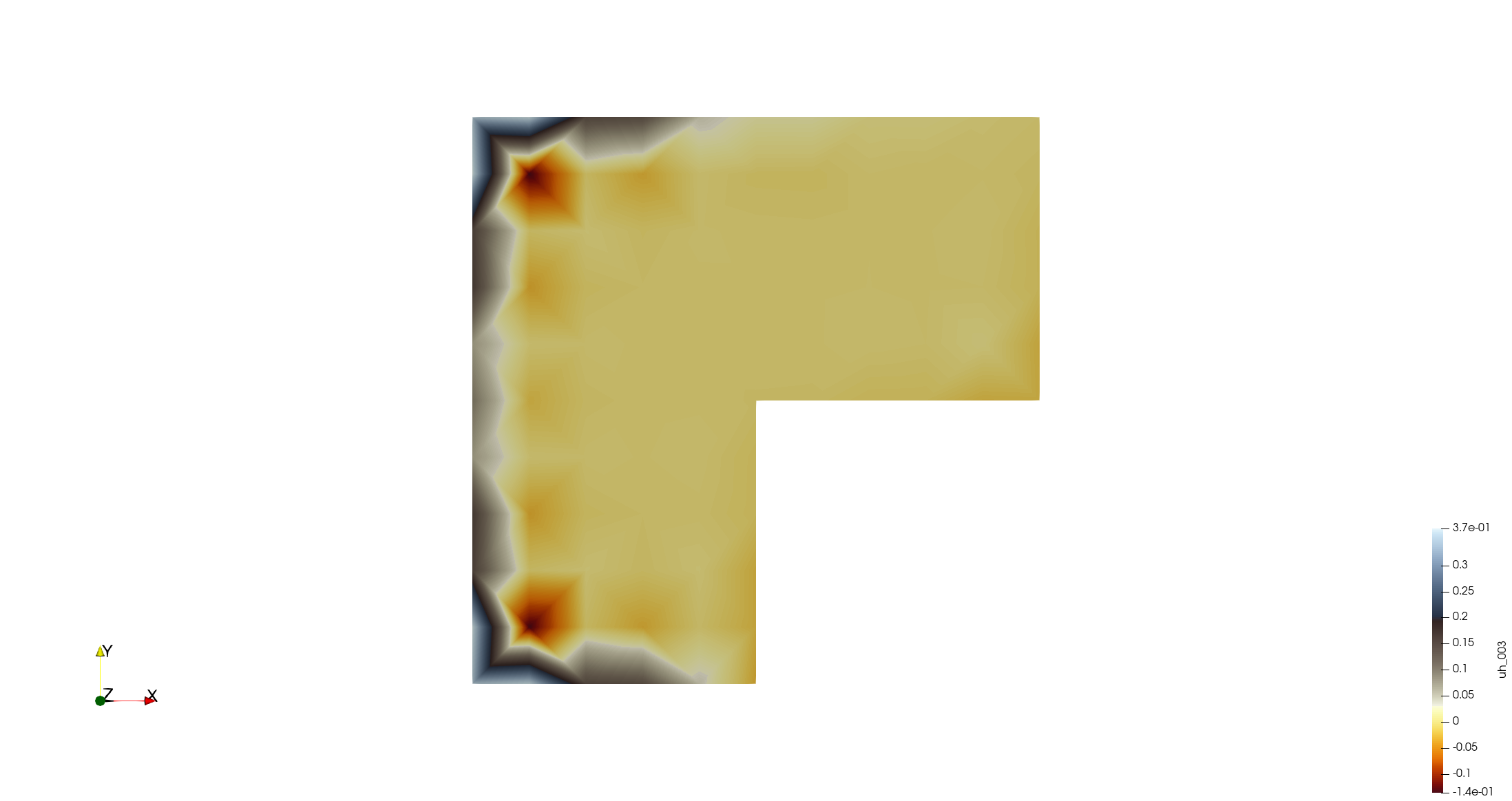}
		\end{minipage}
		\begin{minipage}{0.24\linewidth}\centering
			{\footnotesize $\lambda_{h,4},\varepsilon=1$}\\
			\includegraphics[scale=0.11,trim=22cm 4cm 22cm 4cm,clip]{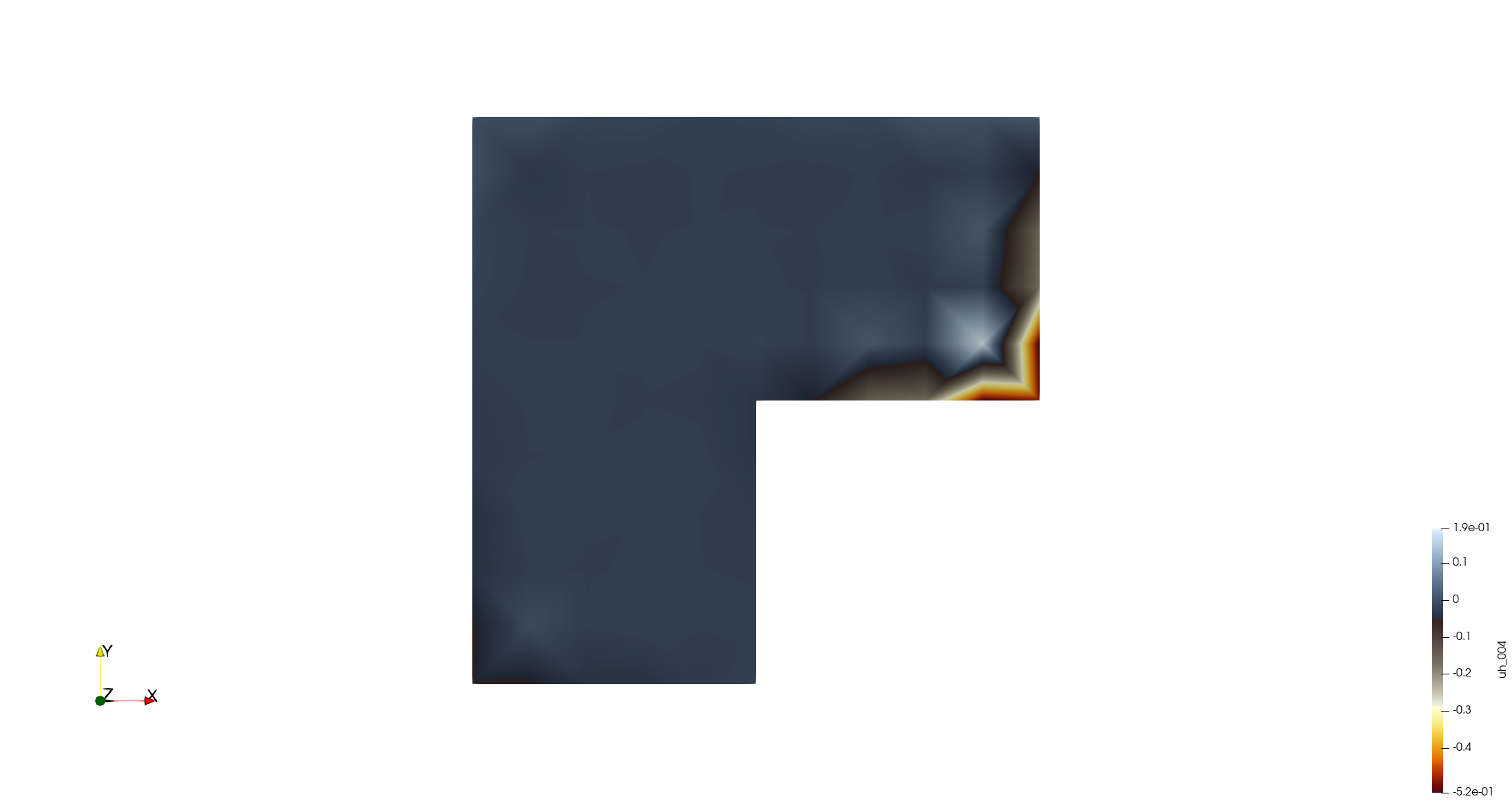}
		\end{minipage}\\
		\begin{minipage}{0.24\linewidth}\centering
			{\footnotesize $\lambda_{h,1},\varepsilon=0$}\\
			\includegraphics[scale=0.11,trim=22cm 4cm 22cm 4cm,clip]{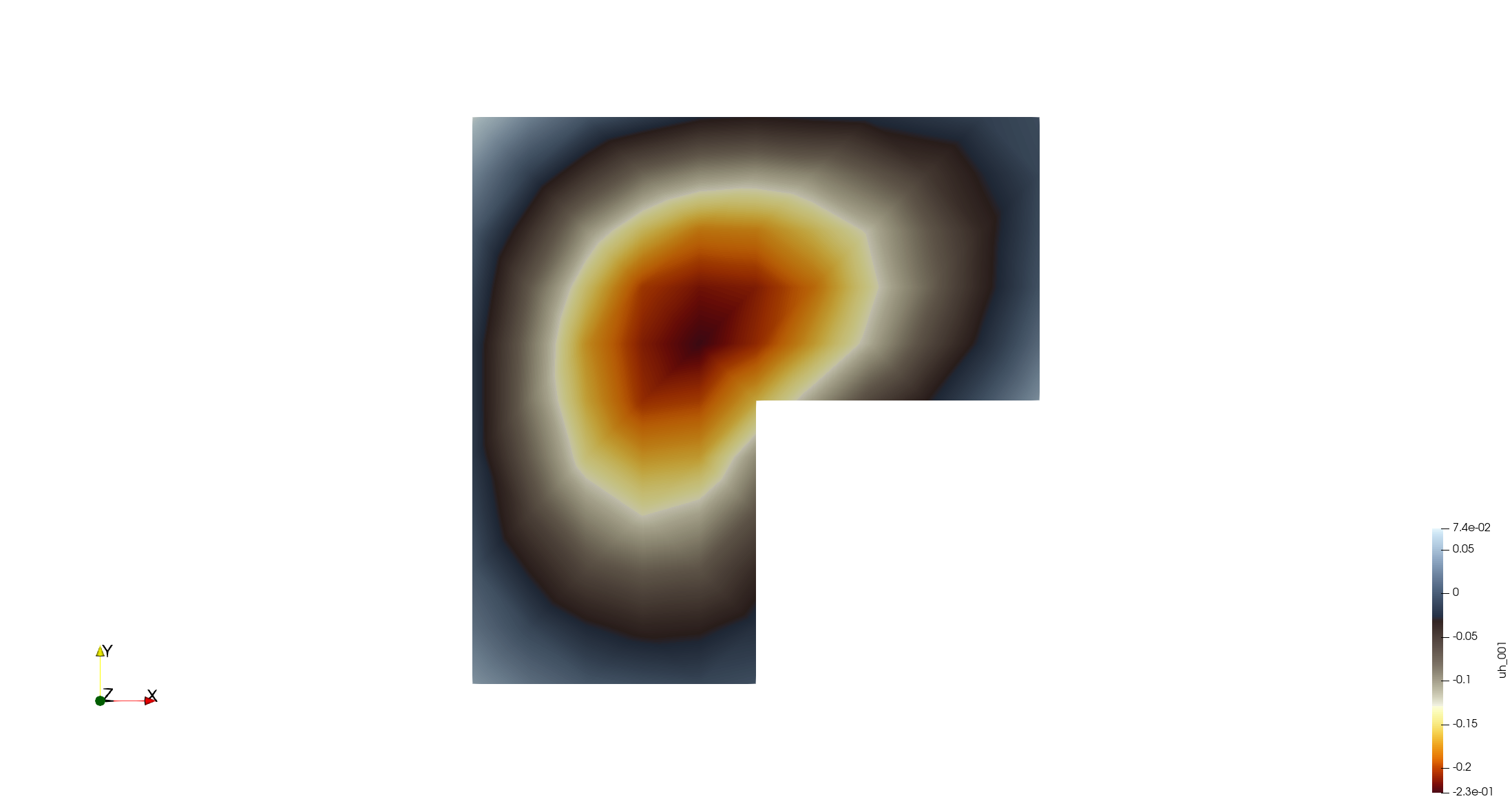}
		\end{minipage}
		\begin{minipage}{0.24\linewidth}\centering
			{\footnotesize $\lambda_{h,2},\varepsilon=0$}\\
			\includegraphics[scale=0.11,trim=22cm 4cm 22cm 4cm,clip]{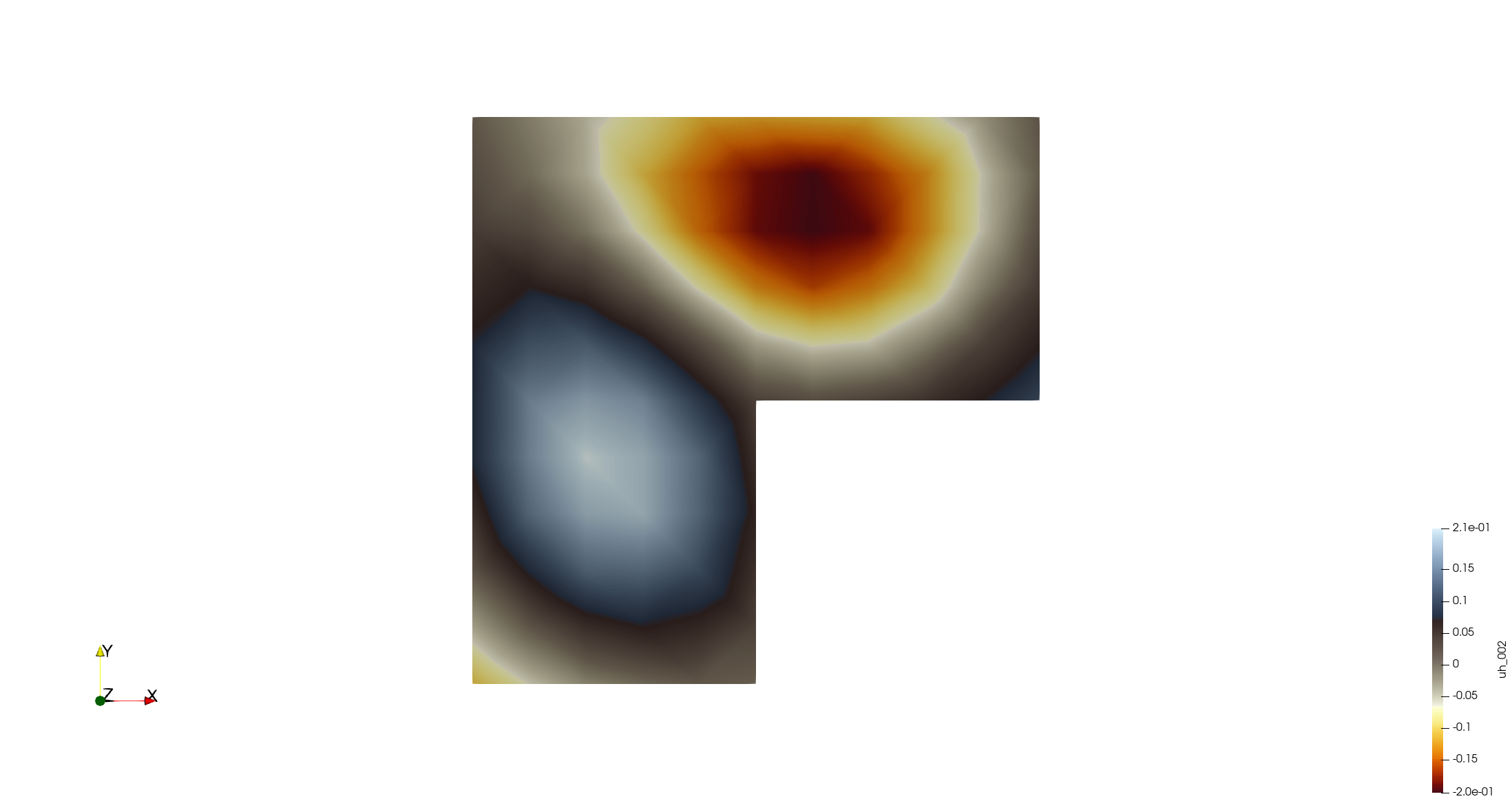}
		\end{minipage}
		\begin{minipage}{0.24\linewidth}\centering
			{\footnotesize $\lambda_{h,3},\varepsilon=0$}\\
			\includegraphics[scale=0.11,trim=22cm 4cm 22cm 4cm,clip]{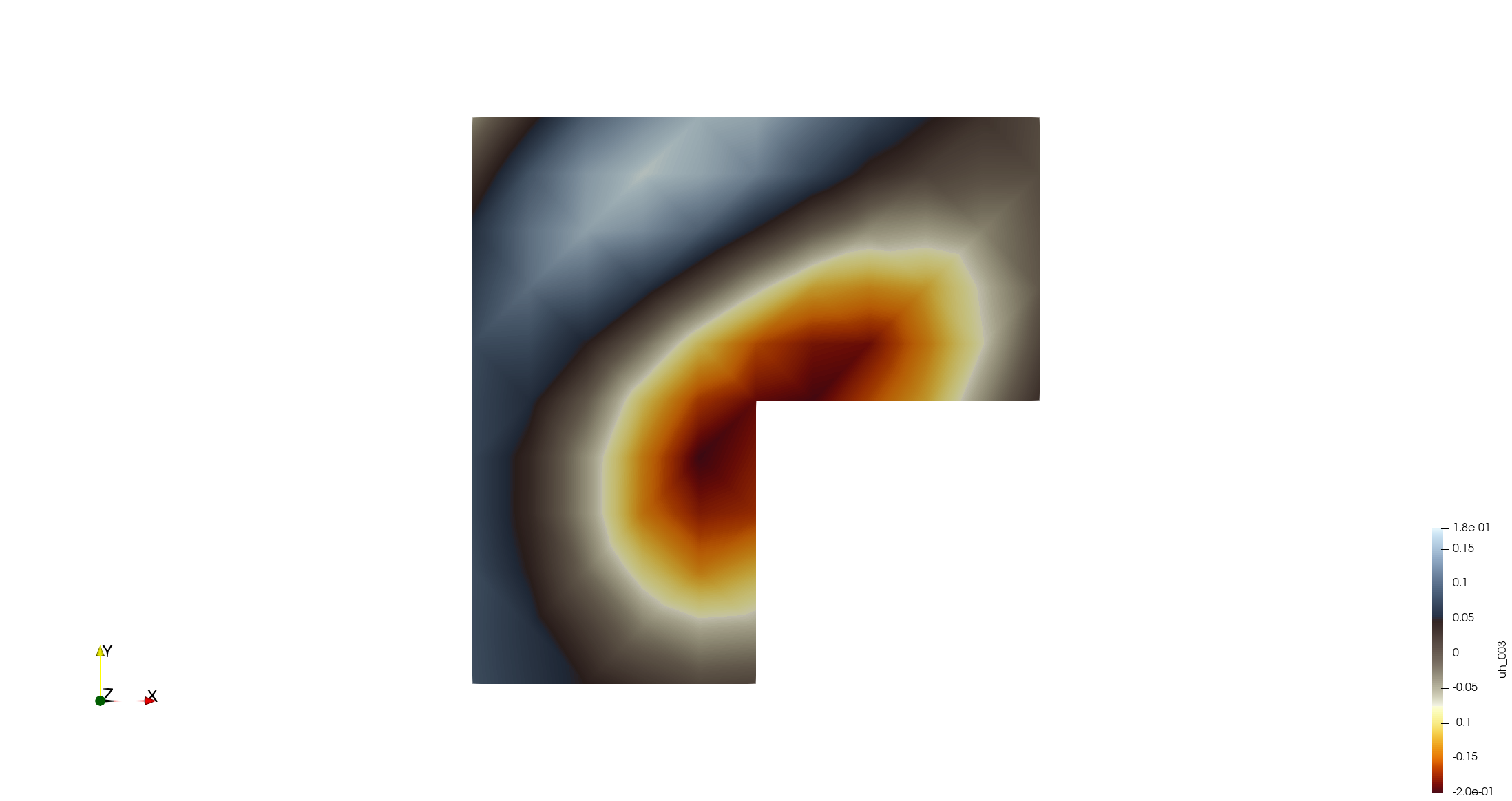}
		\end{minipage}
		\begin{minipage}{0.24\linewidth}\centering
			{\footnotesize $\lambda_{h,4},\varepsilon=0$}\\
			\includegraphics[scale=0.11,trim=22cm 4cm 22cm 4cm,clip]{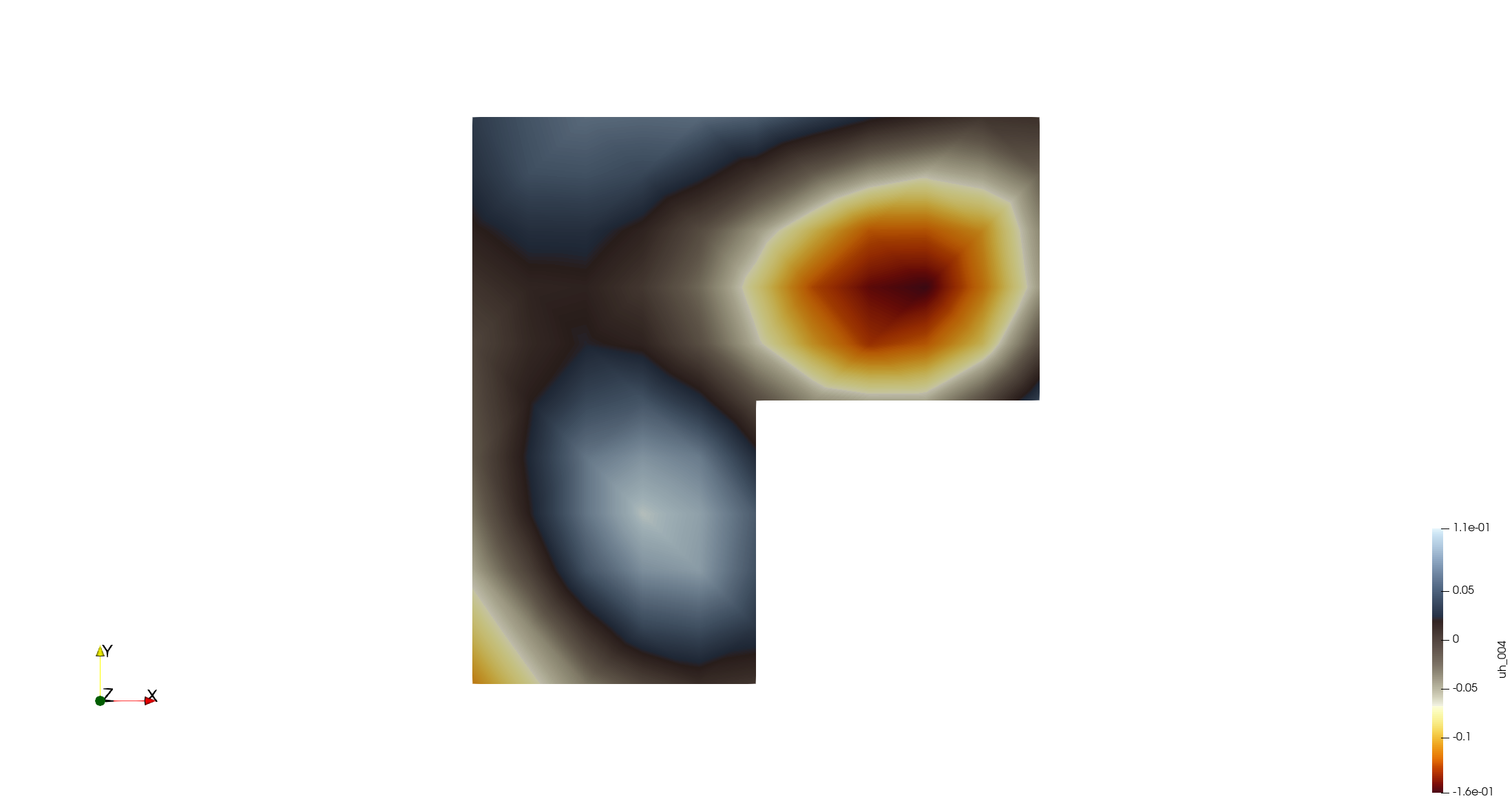}
		\end{minipage}\\
		\begin{minipage}{0.24\linewidth}\centering
			{\footnotesize $\lambda_{h,1},\varepsilon=-1$}\\
			\includegraphics[scale=0.11,trim=22cm 4cm 22cm 4cm,clip]{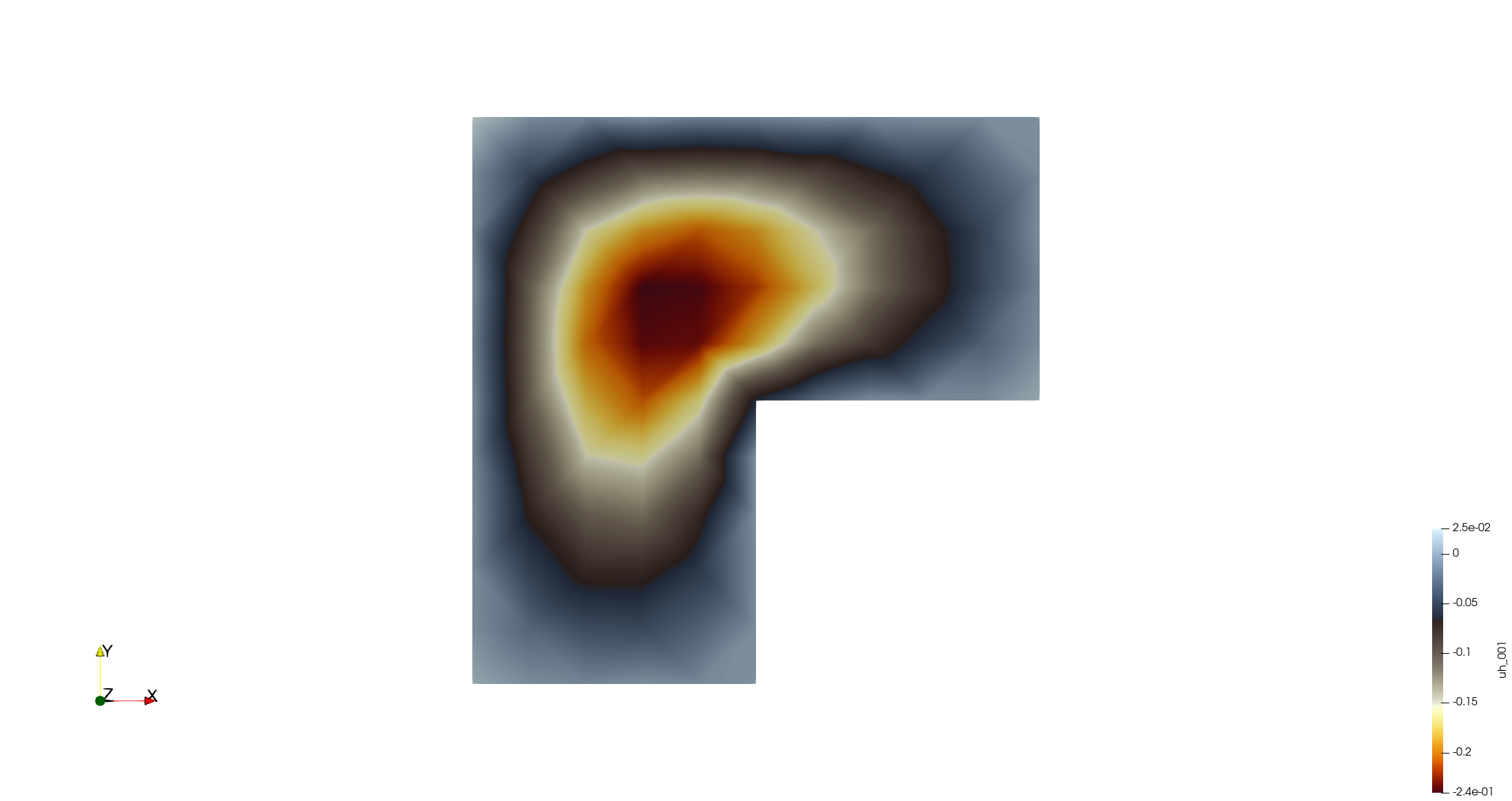}
		\end{minipage}
		\begin{minipage}{0.24\linewidth}\centering
			{\footnotesize $\lambda_{h,2},\varepsilon=-1$}\\
			\includegraphics[scale=0.11,trim=22cm 4cm 22cm 4cm,clip]{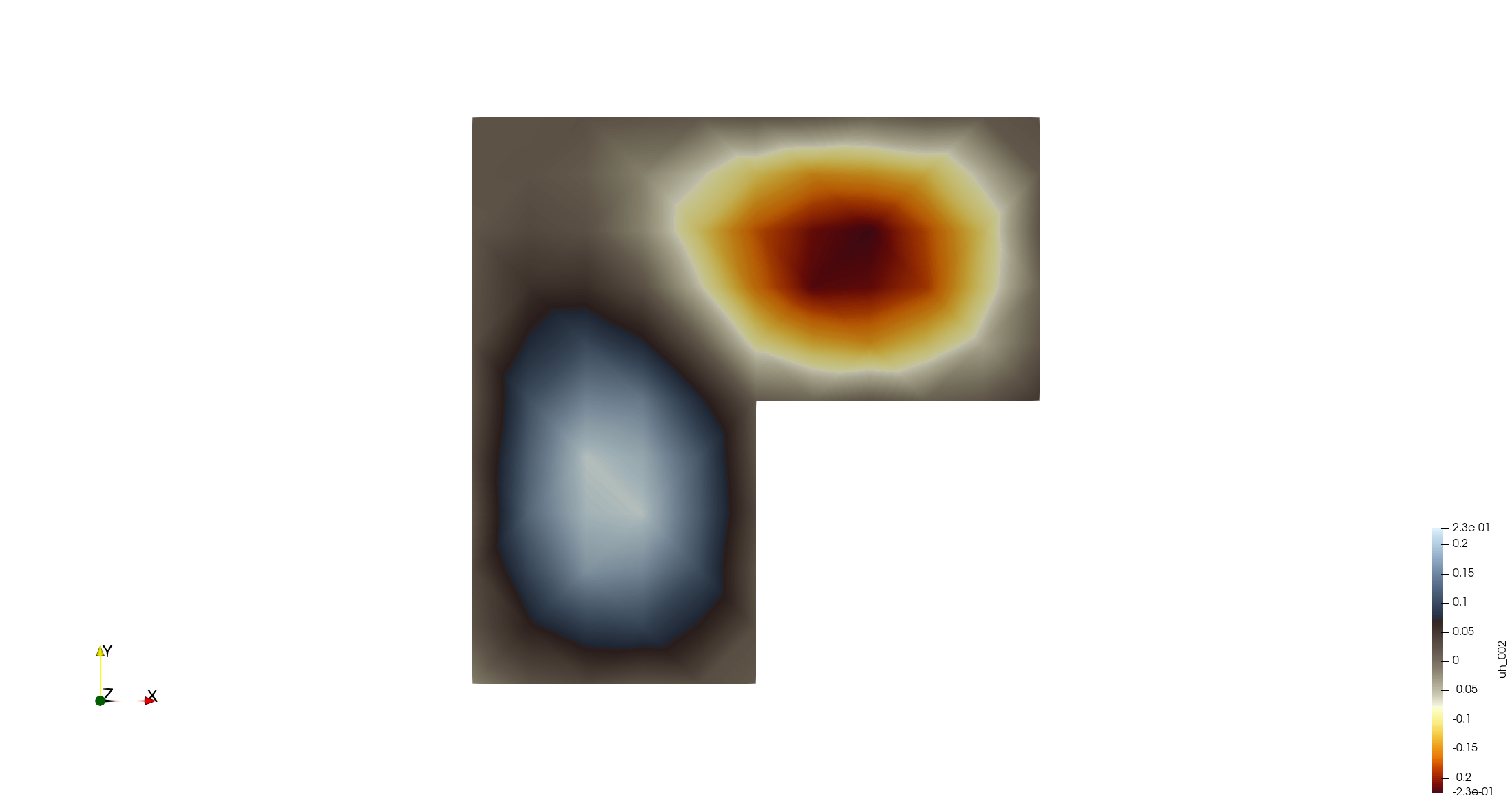}
		\end{minipage}
		\begin{minipage}{0.24\linewidth}\centering
			{\footnotesize $\lambda_{h,3},\varepsilon=-1$}\\
			\includegraphics[scale=0.11,trim=22cm 4cm 22cm 4cm,clip]{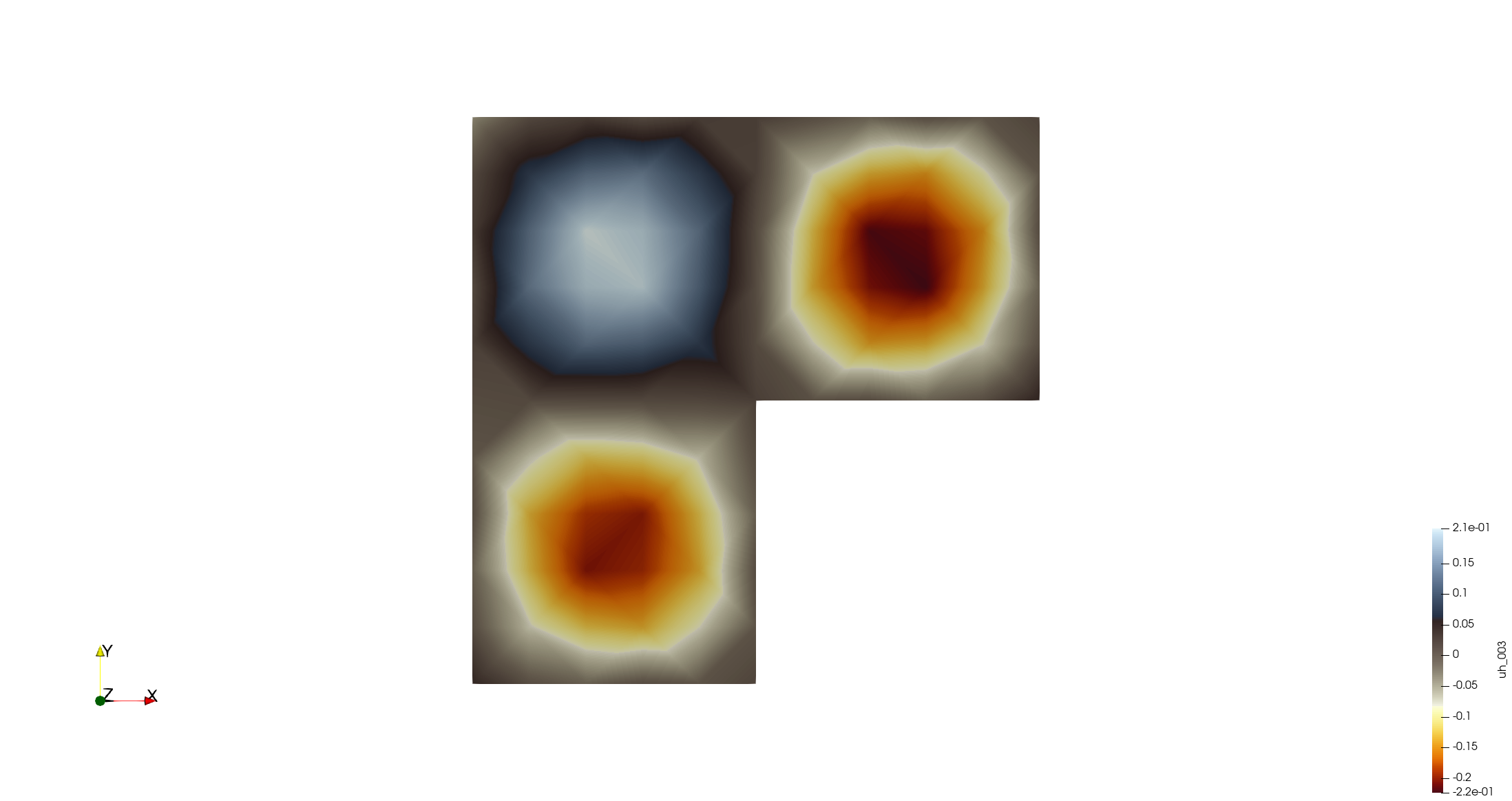}
		\end{minipage}
		\begin{minipage}{0.24\linewidth}\centering
			{\footnotesize $\lambda_{h,4},\varepsilon=-1$}\\
			\includegraphics[scale=0.11,trim=22cm 4cm 22cm 4cm,clip]{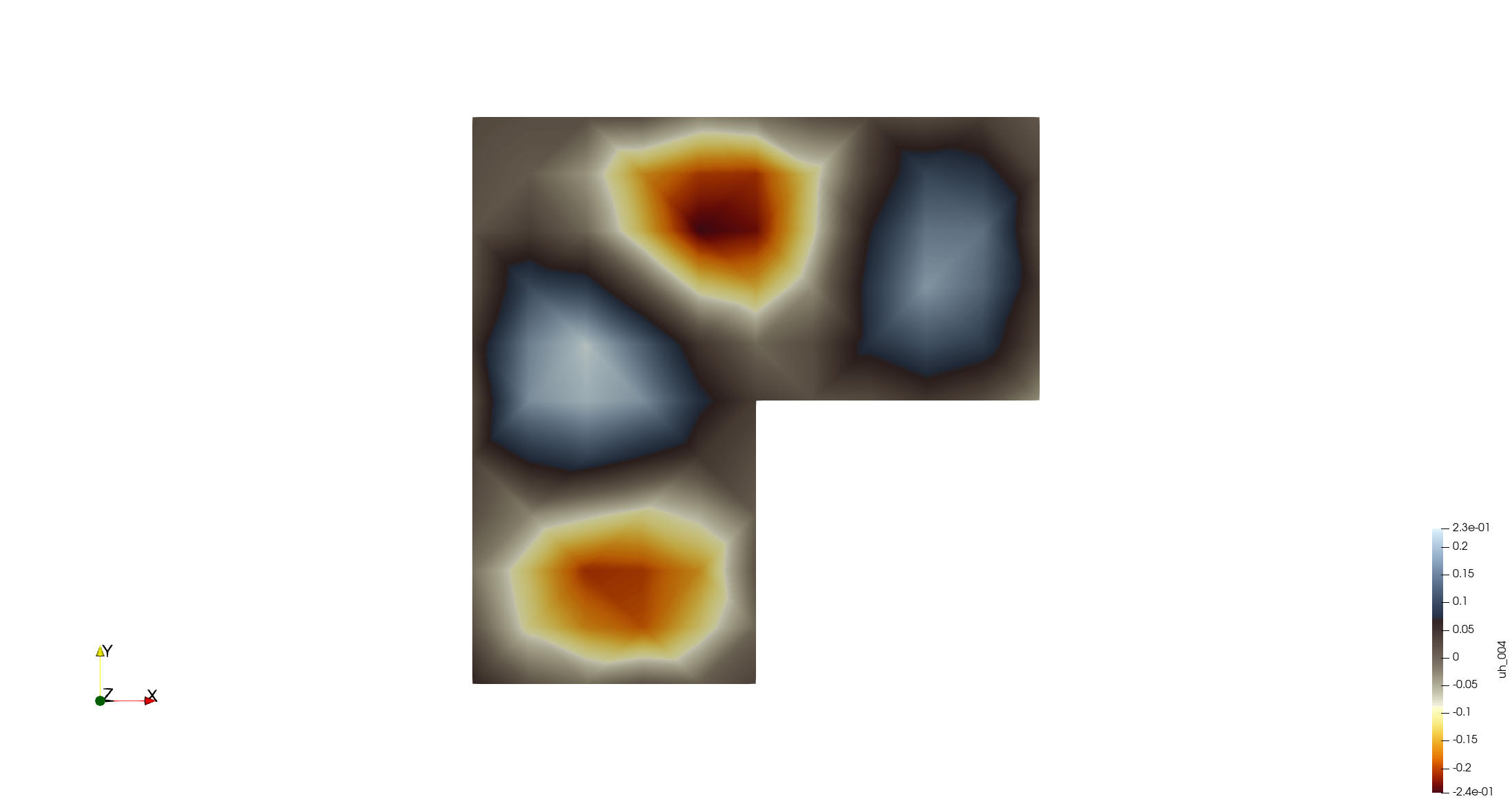}
		\end{minipage}
		\caption{Test \ref{subsec:lshape-domain}. Surface plot of the first 4 lowest computed eigenmodes in the L-shaped domain when taking $\alpha=0.1$ in the three variants of Nitsche method for $k=1$ and $N=8$.}
		\label{fig:lshape-eigenvalues-spurious}
	\end{figure}
	
	\begin{figure}[!hbt]\centering
		\begin{minipage}{0.49\linewidth}\centering
			\includegraphics[scale=0.4,trim=0cm 0cm 2cm 2cm,clip]{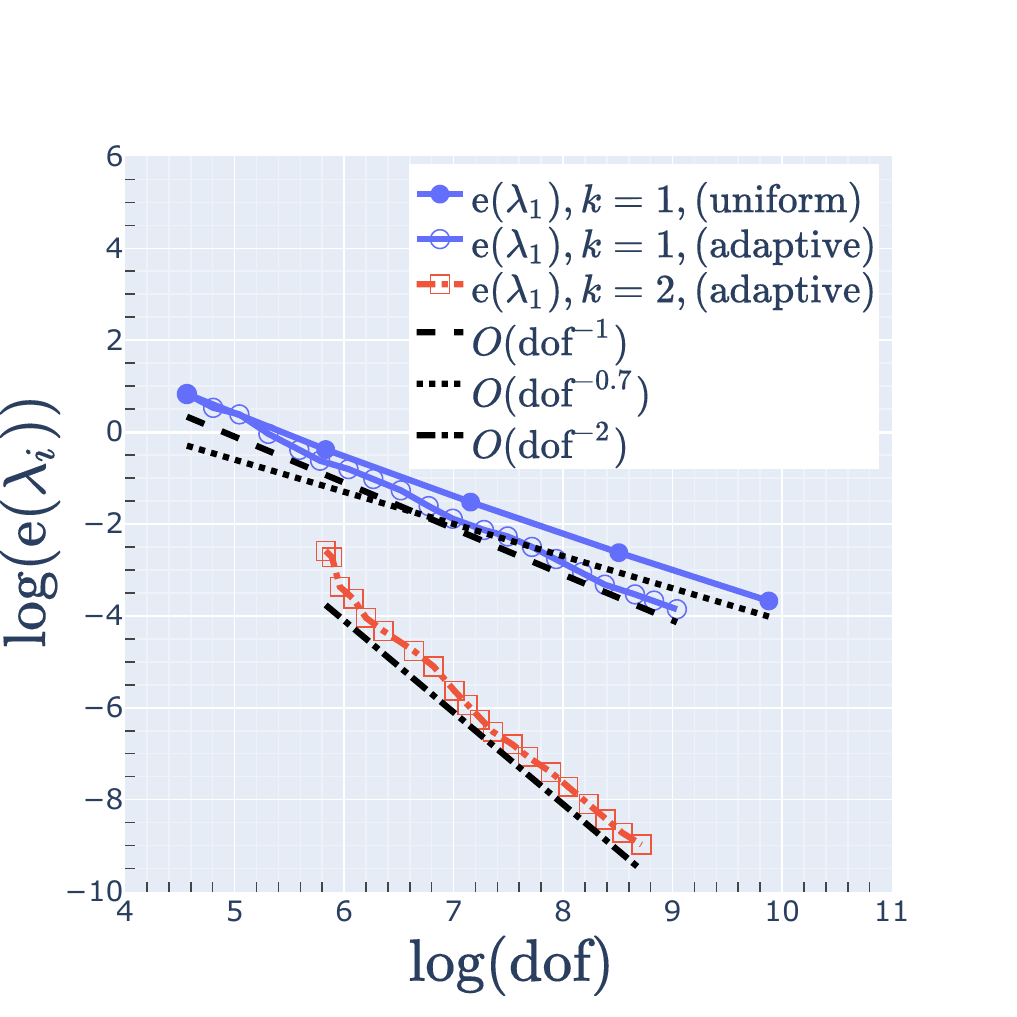}
		\end{minipage}
		\begin{minipage}{0.49\linewidth}\centering
			\includegraphics[scale=0.4,trim=0cm 0cm 2cm 2cm,clip]{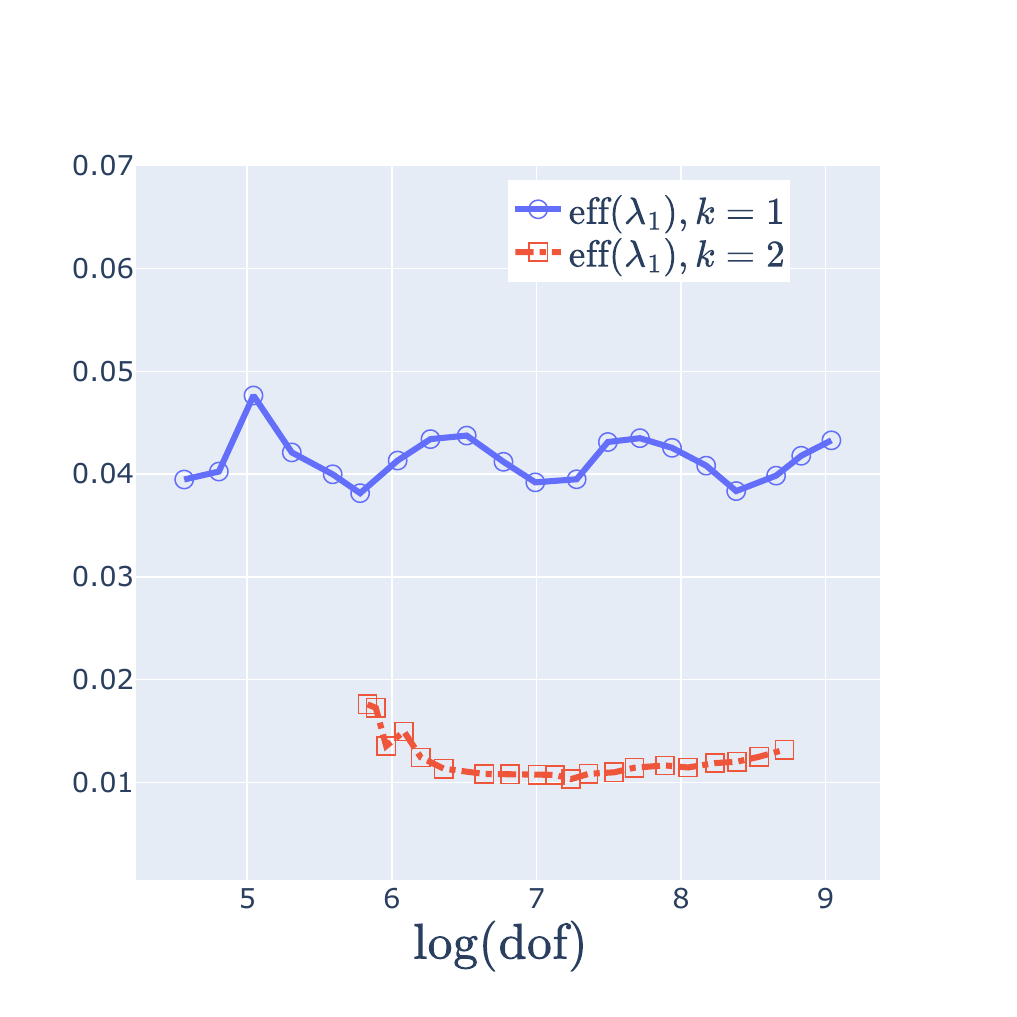}
		\end{minipage}
		\caption{Test \ref{subsec:lshape-domain}. Error history and efficiency curves for the symmetric version of the adaptive Nitsche method in the Lshape domain.}
		\label{fig:lshape-error-adaptive}
	\end{figure}
	
	\begin{figure}[!hbt]\centering
		\begin{minipage}{0.24\linewidth}\centering
			{\footnotesize $\texttt{dof}=96$, iter = 1}
			\includegraphics[scale=0.08,trim=27cm 7cm 28cm 7cm,clip]{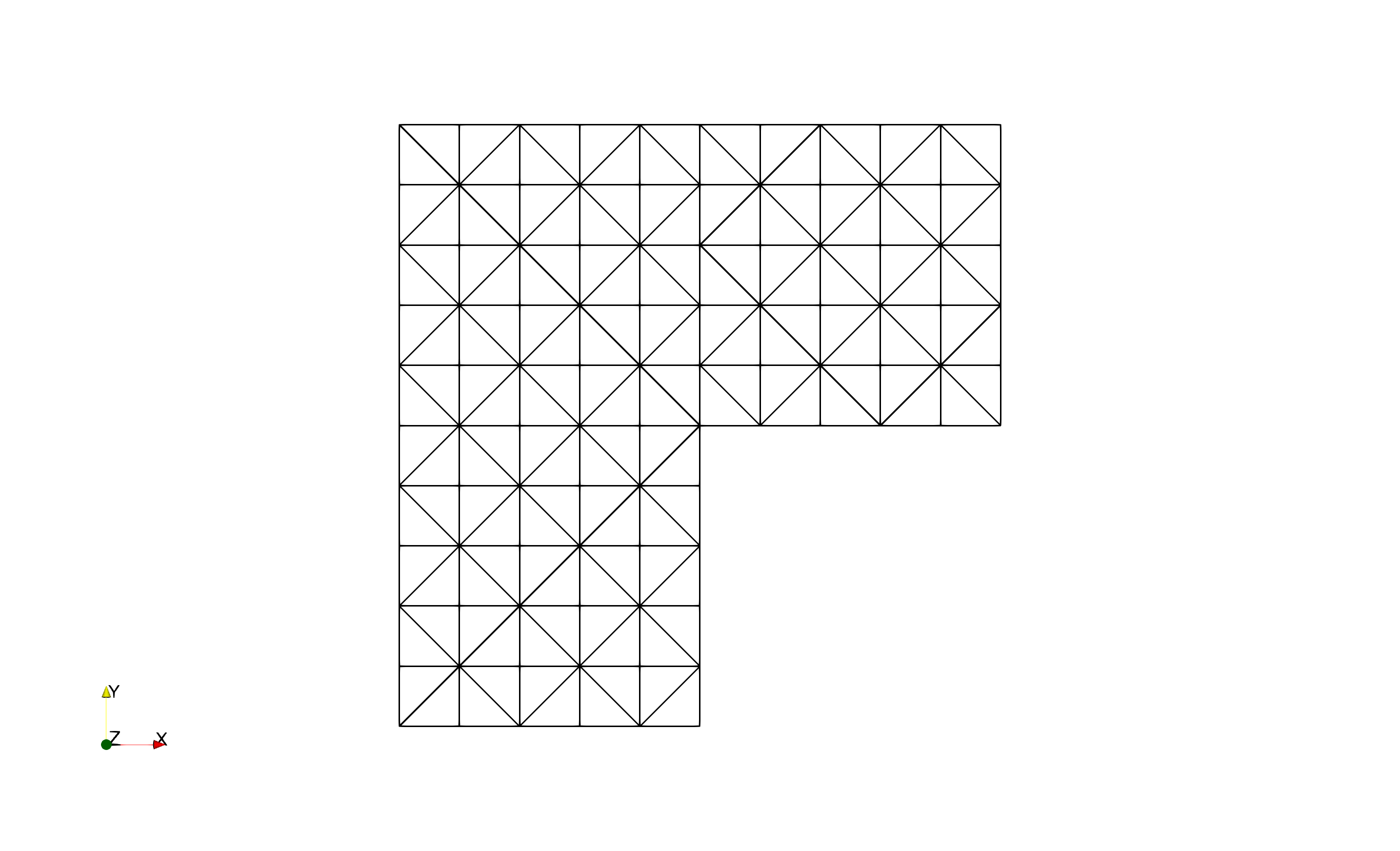}
		\end{minipage}
		\begin{minipage}{0.24\linewidth}\centering
			{\footnotesize $\texttt{dof}=873$, iter = 10}
			\includegraphics[scale=0.08,trim=27cm 7cm 28cm 7cm,clip]{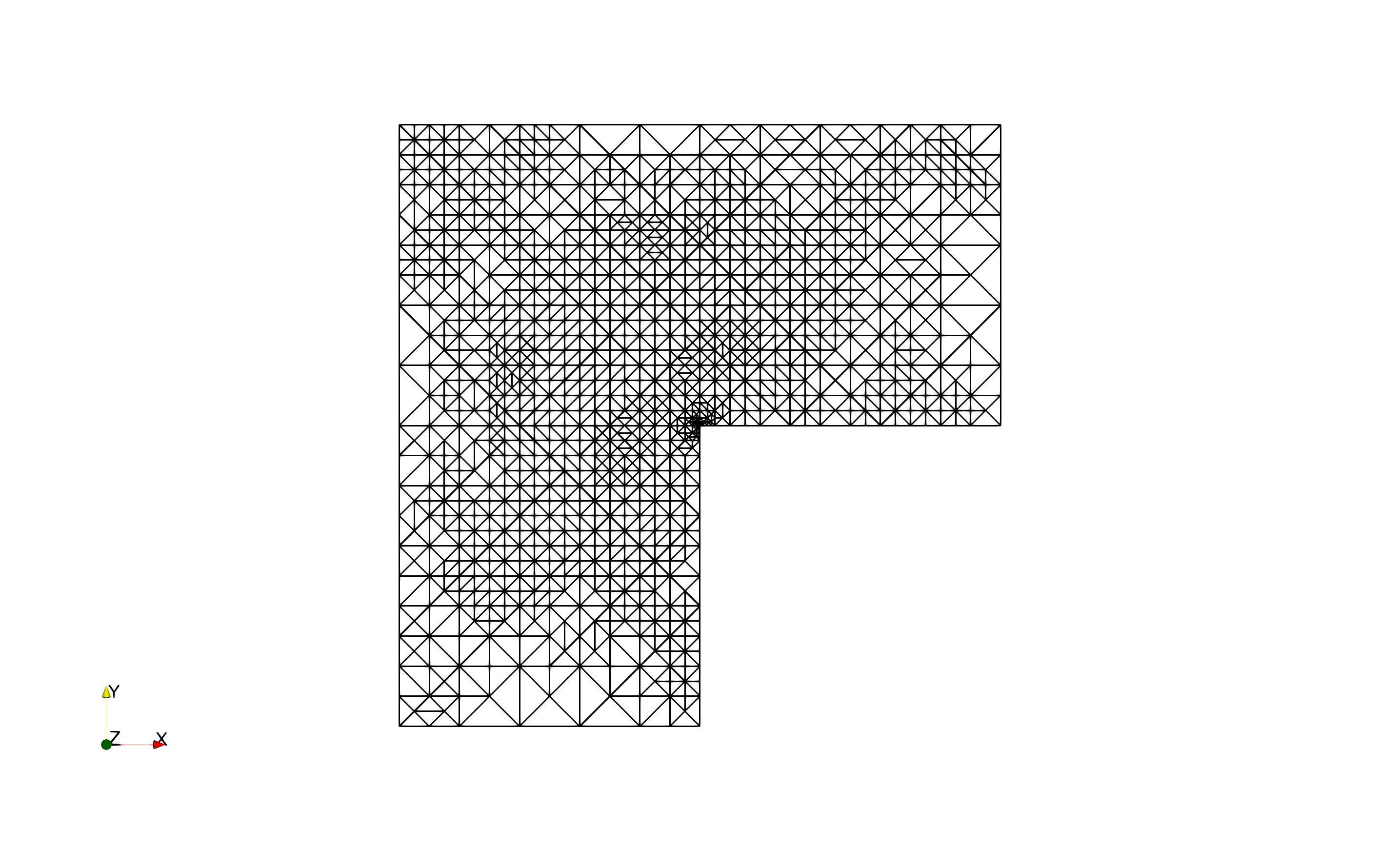}
		\end{minipage}
		\begin{minipage}{0.24\linewidth}\centering
			{\footnotesize $\texttt{dof}=2800$, iter = 15}
			\includegraphics[scale=0.08,trim=27cm 7cm 28cm 7cm,clip]{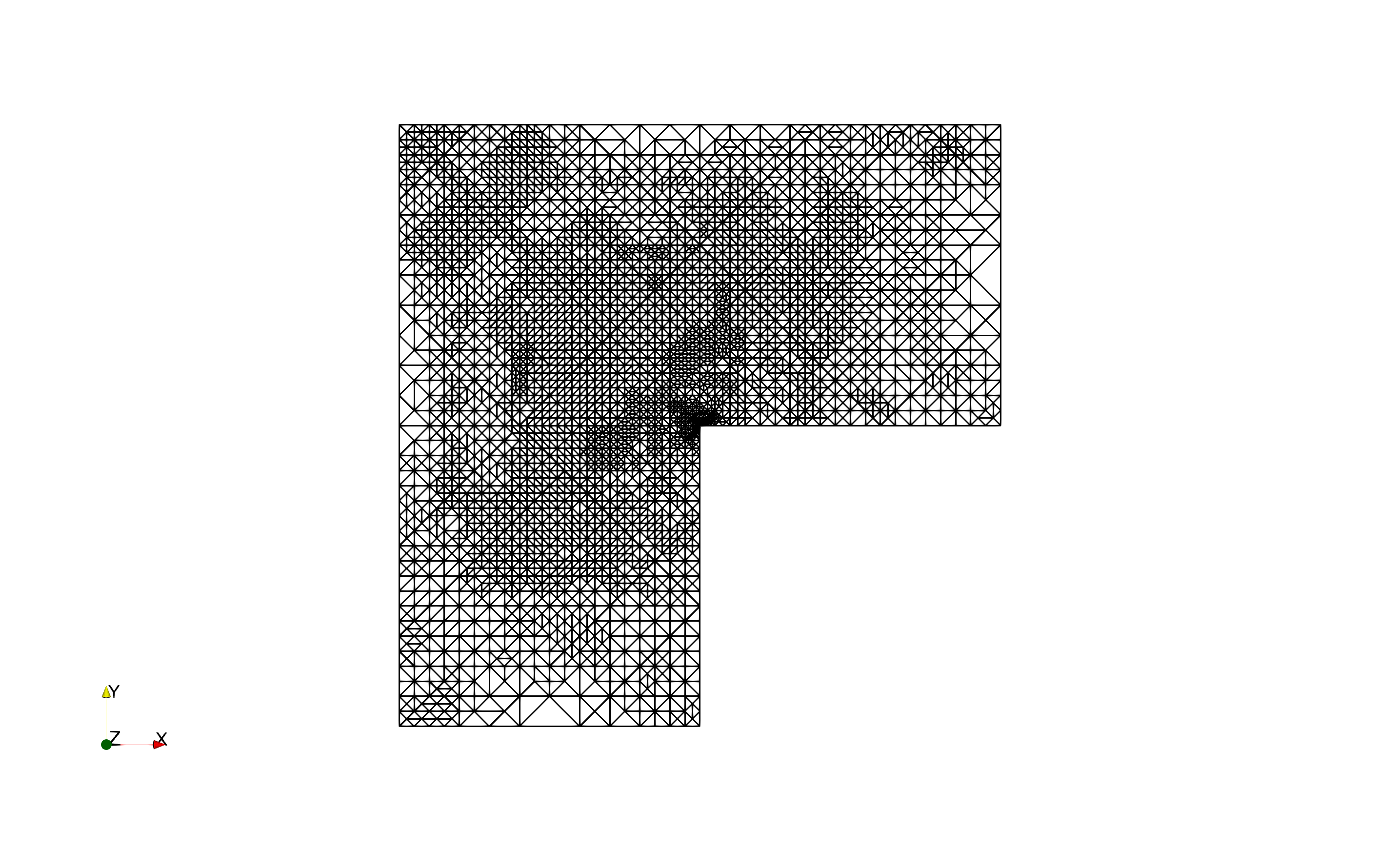}
		\end{minipage}
		\begin{minipage}{0.24\linewidth}\centering
			{\footnotesize $\texttt{dof}=8430$, iter = 20}
			\includegraphics[scale=0.08,trim=27cm 7cm 28cm 7cm,clip]{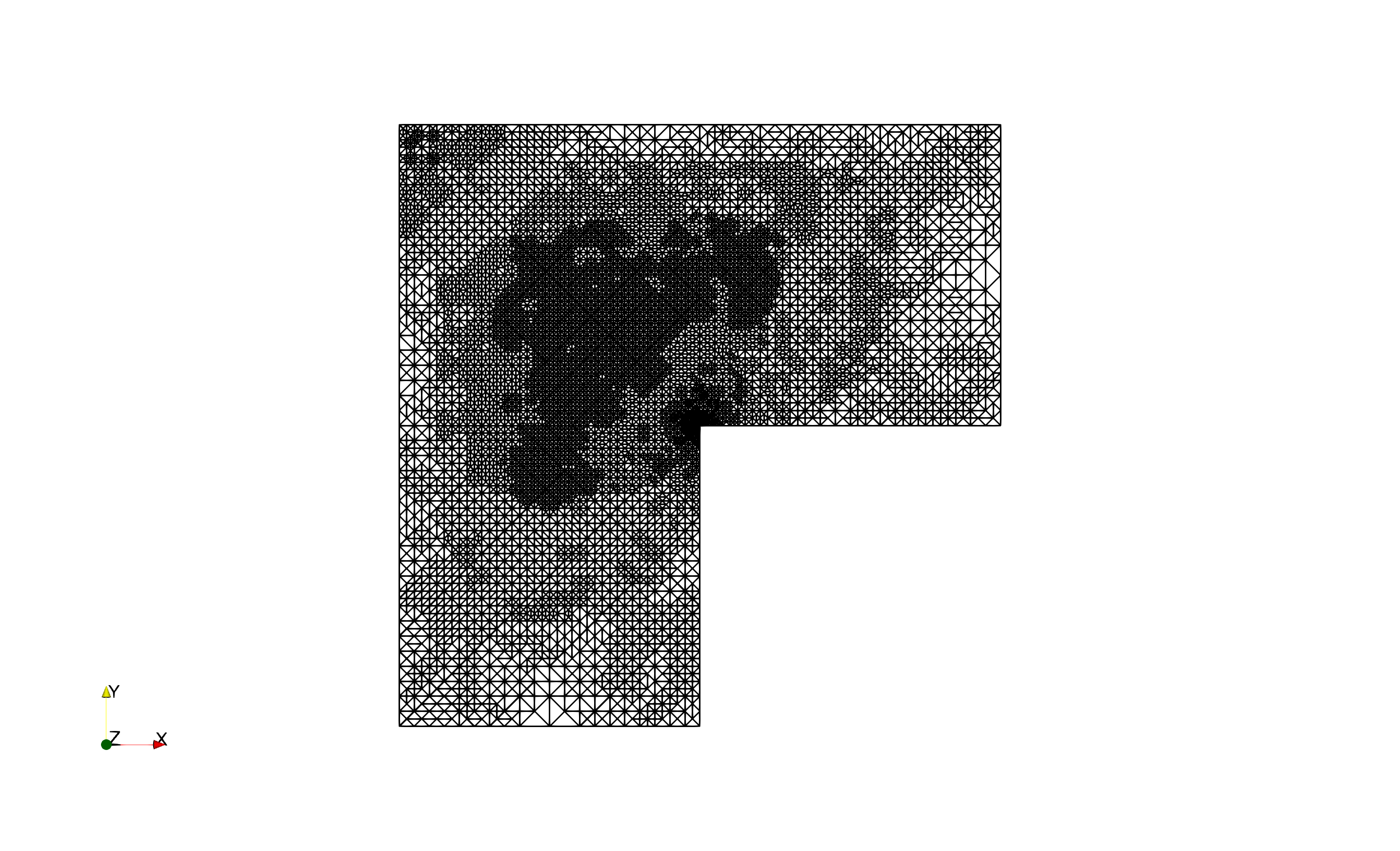}
		\end{minipage}
		\caption{Test \ref{subsec:lshape-domain}. Initial L-shaped meshed domain, followed by intermediate adaptive meshes at different iteration steps.
		}
		\label{fig:lshape-adaptive-meshes}
	\end{figure}
	
	\subsection{Extension to a 3D setting}\label{subsec:3d-extension}

Although the theoretical analysis has been presented in the two-dimensional setting, the same operator-based construction can be extended to three-dimensional polyhedral domains whenever the required elliptic regularity is available. In the present work, we only use this extension in the fully Dirichlet case, for which the regularity needed below is available on convex polyhedra. Let $\Omega\subset\mathbb{R}^3$ be a bounded Lipschitz polyhedron and consider the extension to three dimensions of Problem \eqref{P1}--\eqref{P2} with $|\Gamma_D|>0$. Then we have the continuous solution operator
\[
   T^{3D}:L^2(\Omega)\to L^2(\Omega),\qquad Tf:=u,
\]
where $u\in V$ is the solution of the previous source problem. The compactness of
$T^{3D}$ follows from the compact embedding $H^1(\Omega)\hookrightarrow L^2(\Omega)$.

The key point for extending the convergence theory is the regularity of the solution operator. If $\Omega$ is a convex polyhedron and $\Gamma_D=\partial\Omega$, then we have the standard elliptic regularity estimates and consequently, in the notation of Lemma~\ref{LEM:REG}, the three-dimensional fully
Dirichlet convex case corresponds to (see \cite{grisvard1986problemes,MR961439})
\[
   u\in H^{1+r}(\Omega),\qquad r>0.
\]
Thus, Lemma~\ref{LEM:REG} remains valid in this setting. 
For the Nitsche discretization, the discrete solution operator is given by
\[
   T_h^{3D}:L^2(\Omega)\to V_h,\qquad
   A_h(T_h^{3D} f,v_h)=(f,v_h)_{0,\Omega}\qquad \forall v_h\in V_h,
\]
where $A_h(\cdot,\cdot)$ denotes the corresponding Nitsche bilinear form. For a sufficiently large stabilization parameter, $A_h$ is coercive in the mesh-dependent energy norm
\[
   \|v_h\|_{h}^{2}
   :=
   \|\nabla v_h\|_{0,\Omega}^{2}
   +\sum_{F\subset\Gamma_D}\alpha h_F^{-1}\|v_h\|_{0,F}^{2}.
\]
where $h_F$ is the face diameter. Hence $T_h^{3D}$ is well defined. The consistency, stability and approximation arguments used in the previous sections then yield
\[
   \|T^{3D}-T_h^{3D}\|_{\mathcal{L}(L^2(\Omega),L^2(\Omega))}\to 0
   \qquad \text{as } h\to 0.
\]
Therefore, the Babu\v{s}ka--Osborn spectral approximation framework applies also in this case. If $u$ is an eigenfunction associated with a simple eigenvalue and $u\in H^{1+r}(\Omega)$, one can follow the arguments from Section \ref{SEC:approximation} to obtain
\[
   \|u-u_h\|_{h}\lesssim h^{\min\{k,r\}},
   \qquad
   |\lambda-\lambda_h|\lesssim h^{2\min\{k,r\}}
\]
for the symmetric Nitsche method with full Dirichlet boundary conditions. 

In what follows, we present two three-dimensional experiments with different purposes. The first one is the unit cube with full Dirichlet boundary conditions. This test lies within the regularity regime described above and is used to verify the convergence
rate with the Nitsche method. The second one is a Fichera-type non-convex domain. For this geometry, the re-entrant corner and re-entrant edges induce singular eigenfunctions. We consider both a fully Dirichlet configuration and a mixed Dirichlet--Neumann configuration. The fully Dirichlet case is used to assess the adaptive strategy in a non-convex three-dimensional domain, whereas the mixed case is used as a numerical experiment beyond the scope of the regularity theory used in this work. We show that the proposed estimator and adaptive refinement strategy remain effective in a configuration where geometric singularities interact with changes of boundary condition.

	\subsubsection{The unit cube domain}\label{subsec:cube-domain}
	This test aims to assess the robustness of the method for three-dimensional problems. The model problem, numerical scheme and error indicators follow the same strategy as in the two-dimensional case. However, the theoretical estimates proved in the previous sections rely on the regularity statement in Lemma~\ref{LEM:REG}, which was stated for the two-dimensional setting. Therefore, the present experiment should be understood as going beyond the scope of the available theory. The test considers the study of the spectrum on the domain $\Omega:=(0,1)^3$. We take $u=0$ and $\partial\Omega=\Gamma$. In this case, the exact eigenvalues are given by
	$$
	\lambda_{mnp}=\pi^2(m^2+n^2+p^2),\qquad m,n,p\geq 1.
	$$
	with $m,n,p\geq 1$.  Here, $N$ scales as the number of cells such that the number of tetrahedrons is $6(N +1)^3$ and $h\approx 1/N$. 
	
	The convergence results for a sufficiently large value of $\alpha$ are given in Tables \ref{tabla:cube-convergence-k1}--\ref{tabla:cube-convergence-k2}. From these tables, we note that all the schemes exhibit the same convergence orders predicted by the two-dimensional theory, namely the expected order $\mathcal{O}(h^{2k})$ for the symmetric variant and the corresponding suboptimal behavior for the nonsymmetric variants. The choice of this stabilization parameter is justified by the results depicted in Figure \ref{fig:alfa-dependence-cube}. Here, we observe that small values of $\alpha$ may introduce instabilities, together with eigenvalue crossings and veering in the system. For $\alpha > 4$, we observe a good separation of the spectrum with considerable overprediction for some eigenvalues. Although not presented in this paper, we have analyzed the error history of these overpredicted eigenvalues and obtained the optimal convergence rates according to the scheme ($\mathcal{O}(h^{2k})$ for $\varepsilon=1$ and $\mathcal{O}(h^{2k-0.5})$ for the rest). Similar to the rectangle example, we observe that the relative accuracy converges asymptotically to $\log(1)=0$ from below, which indicates a slightly smaller error when using Nitsche's method.

	Similar to the previous example, we also observed the appearance of complex eigenvalues in this case for small values of $\alpha$. For example, for $\varepsilon=-1$ and $\alpha=0.1$ we obtained $135.811760\pm17.700768 i$, $196.729753+13.754241i$, $204.619429\pm22.832995i$, and $221.582774\pm25.628950i$,  while $29.7811 \pm 7.6192 i$, $37.0800 \pm 5.0612 i$ and $75.237506\pm5.257980i$ were observed when $\varepsilon=0$. Higher values of $\alpha$ provided a clean spectrum for all the variants.
	
	We finish this test by presenting contour plots of some of the lowest computed eigenmodes in Figure \ref{fig:cube-eigenvalues-spurious} with a small value of $\alpha$. We observe boundary surface layers for $\varepsilon=1$ accumulated along the edges and corner points of the domain. For $\varepsilon=0$, we note that the imposition of fixed boundary conditions is lost, while for $\varepsilon=-1$ we have an excellent agreement with the exact eigenmodes. We recall that, from the results in Figure \ref{fig:alfa-dependence-cube}, similar outcomes are achieved for the symmetric and incomplete variants if we choose, for example, $\alpha > 5$.
	
	\begin{table}[!hbt]
		\centering
		{\setlength{\tabcolsep}{3.8pt}\footnotesize
			\caption{Test \ref{subsec:cube-domain}. Relative error and convergence behavior for the first six lowest computed eigenvalues in the unit cube domain for the three variants of the Nitsche method with $k=1$. The stabilization parameter is set to be $\alpha=10$.}
			\label{tabla:cube-convergence-k1}
			\begin{tabular}{|c|c|cccc|}
				\hline
				$\varepsilon$& Exacts & \multicolumn{4}{c|}{Relative error (rate)} \\
				\hline
				\multirow{6}{0.02\linewidth}{1}&   29.6088 &   1.65e-01  &   4.11e-02 (2.30) &   1.83e-02 (2.17) &   1.03e-02 (2.12)   \\
				&   59.2176 &   2.50e-01  &   6.21e-02 (2.30) &   2.76e-02 (2.16) &   1.55e-02 (2.11)   \\
				&   59.2176 &   2.50e-01  &   6.21e-02 (2.30) &   2.76e-02 (2.16) &   1.55e-02 (2.11)   \\
				&   59.2176 &   4.16e-01  &   9.95e-02 (2.36) &   4.38e-02 (2.19) &   2.45e-02 (2.13)   \\
				&   88.8264 &   4.10e-01  &   1.05e-01 (2.25) &   4.68e-02 (2.15) &   2.64e-02 (2.11)   \\
				
				\hline
				\multirow{6}{0.02\linewidth}{0}&   29.6088 &   1.64e-01  &   4.09e-02 (2.29) &   1.82e-02 (2.16) &   1.02e-02 (2.11)   \\
				&   59.2176 &   2.47e-01  &   6.17e-02 (2.29) &   2.75e-02 (2.16) &   1.55e-02 (2.11)   \\
				&   59.2176 &   2.47e-01  &   6.17e-02 (2.29) &   2.75e-02 (2.16) &   1.55e-02 (2.11)   \\
				&   59.2176 &   4.14e-01  &   9.92e-02 (2.36) &   4.37e-02 (2.19) &   2.45e-02 (2.13)   \\
				&   88.8264 &   4.06e-01  &   1.04e-01 (2.25) &   4.66e-02 (2.14) &   2.63e-02 (2.10)   \\
				\hline
				\multirow{6}{0.02\linewidth}{-1}&   29.6088 &   1.63e-01  &   4.08e-02 (2.29) &   1.82e-02 (2.16) &   1.02e-02 (2.11)   \\
				&   59.2176 &   2.45e-01  &   6.14e-02 (2.28) &   2.74e-02 (2.16) &   1.54e-02 (2.11)   \\
				&   59.2176 &   2.45e-01  &   6.14e-02 (2.28) &   2.74e-02 (2.16) &   1.54e-02 (2.11)   \\
				&   59.2176 &   4.13e-01  &   9.89e-02 (2.36) &   4.36e-02 (2.19) &   2.45e-02 (2.13)   \\
				&   88.8264 &   4.03e-01  &   1.04e-01 (2.24) &   4.65e-02 (2.14) &   2.63e-02 (2.10)   \\
				&   88.8264 &   4.03e-01  &   1.04e-01 (2.24) &   4.65e-02 (2.14) &   2.63e-02 (2.10)   \\
				\hline
				&$N$&4&8&16 &32\\
				\hline
				\hline
		\end{tabular}}
	\end{table}
	
	\begin{table}[!hbt]
		\centering
		{\setlength{\tabcolsep}{3.8pt}\footnotesize
			\caption{Test \ref{subsec:cube-domain}. Relative error and convergence behavior for the first six lowest computed eigenvalues in the unit cube domain for the three variants of the Nitsche method with $k=2$. The stabilization parameter is set to be $\alpha=10$.}
			\label{tabla:cube-convergence-k2}
			\begin{tabular}{|c|c|cccc|}
				\hline
				$\varepsilon$& Exacts & \multicolumn{4}{c|}{Relative error (rate)} \\
				\hline
				\multirow{6}{0.02\linewidth}{1}&   29.6088 &   7.37e-03  &   5.36e-04 (4.12) &   3.53e-05 (4.10) &   2.24e-06 (4.07)   \\
				&   59.2176 &   1.77e-02  &   1.38e-03 (4.01) &   9.28e-05 (4.07) &   5.93e-06 (4.06)   \\
				&   59.2176 &   1.77e-02  &   1.38e-03 (4.01) &   9.28e-05 (4.07) &   5.93e-06 (4.06)   \\
				&   59.2176 &   3.52e-02  &   2.73e-03 (4.02) &   1.85e-04 (4.06) &   1.19e-05 (4.05)   \\
				&   88.8264 &   4.04e-02  &   3.42e-03 (3.88) &   2.38e-04 (4.02) &   1.54e-05 (4.04)   \\
				\hline
				\multirow{6}{0.02\linewidth}{0}&   29.6088 &   7.62e-03  &   5.78e-04 (4.06) &   4.10e-05 (3.99) &   2.97e-06 (3.87)   \\
				&   59.2176 &   1.81e-02  &   1.46e-03 (3.96) &   1.05e-04 (3.97) &   7.52e-06 (3.89)   \\
				&   59.2176 &   1.81e-02  &   1.46e-03 (3.96) &   1.05e-04 (3.97) &   7.52e-06 (3.89)   \\
				&   59.2176 &   3.58e-02  &   2.83e-03 (3.99) &   1.99e-04 (4.00) &   1.36e-05 (3.96)   \\
				&   88.8264 &   4.08e-02  &   3.52e-03 (3.85) &   2.54e-04 (3.96) &   1.75e-05 (3.94)   \\
				\hline
				\multirow{6}{0.02\linewidth}{-1}&   29.6088 &   7.87e-03  &   6.18e-04 (4.00) &   4.65e-05 (3.90) &   3.68e-06 (3.74)   \\
				&   59.2176 &   1.85e-02  &   1.54e-03 (3.91) &   1.17e-04 (3.89) &   9.05e-06 (3.77)   \\
				&   59.2176 &   1.85e-02  &   1.54e-03 (3.91) &   1.17e-04 (3.89) &   9.05e-06 (3.77)   \\
				&   59.2176 &   3.64e-02  &   2.93e-03 (3.96) &   2.13e-04 (3.96) &   1.53e-05 (3.88)   \\
				&   88.8264 &   4.11e-02  &   3.62e-03 (3.82) &   2.70e-04 (3.92) &   1.96e-05 (3.87)   \\
				\hline
				&$N$&4&8&16 &32\\
				\hline
				\hline
		\end{tabular}}
	\end{table}
	
	\begin{figure}[!hbt]\centering
		\begin{minipage}{0.49\linewidth}\centering
			\includegraphics[scale=0.298,trim=0cm 0cm 1cm 1cm,clip]{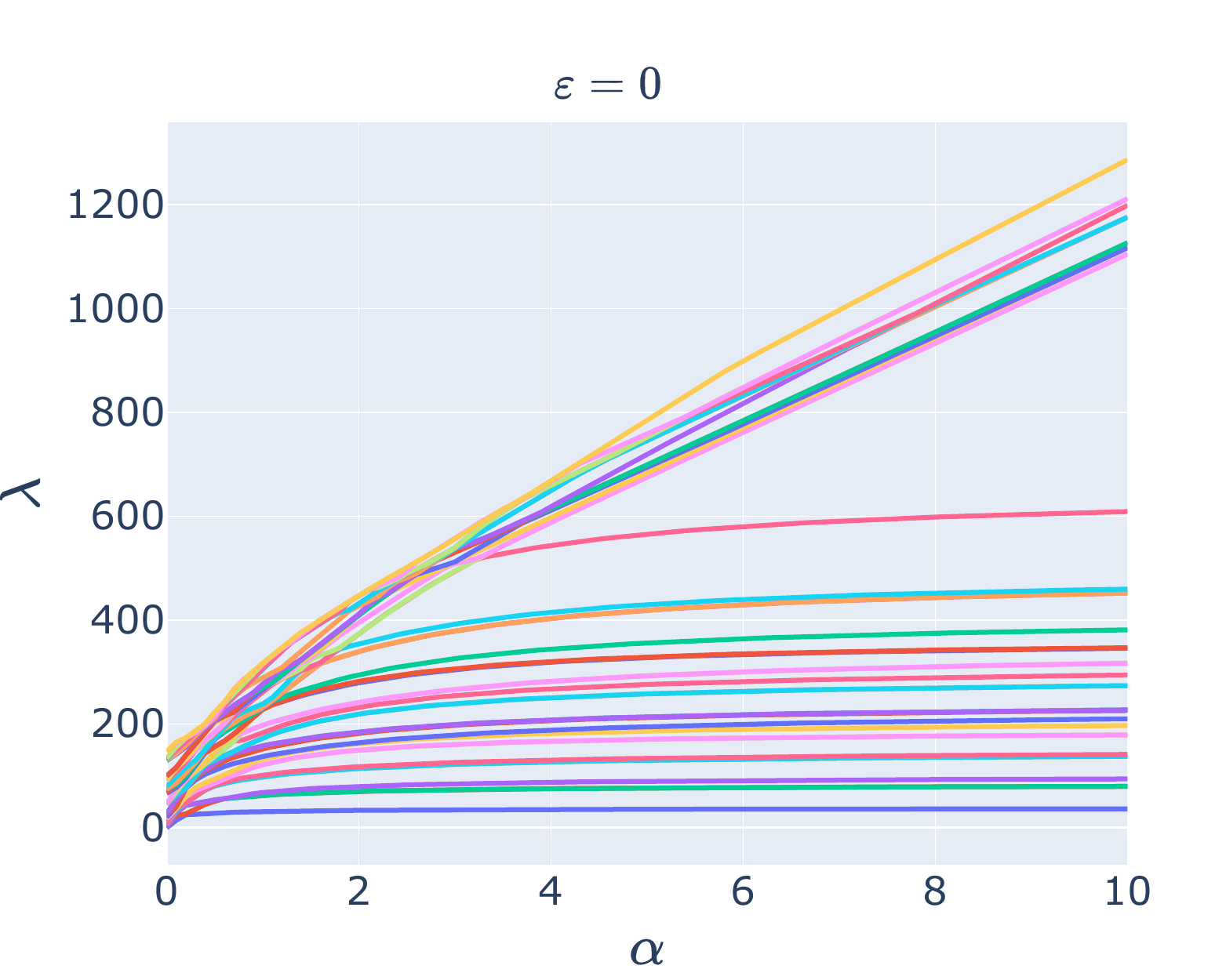}
		\end{minipage}
		\begin{minipage}{0.49\linewidth}\centering
			\includegraphics[scale=0.298,trim=0cm 0cm 1cm 1cm,clip]{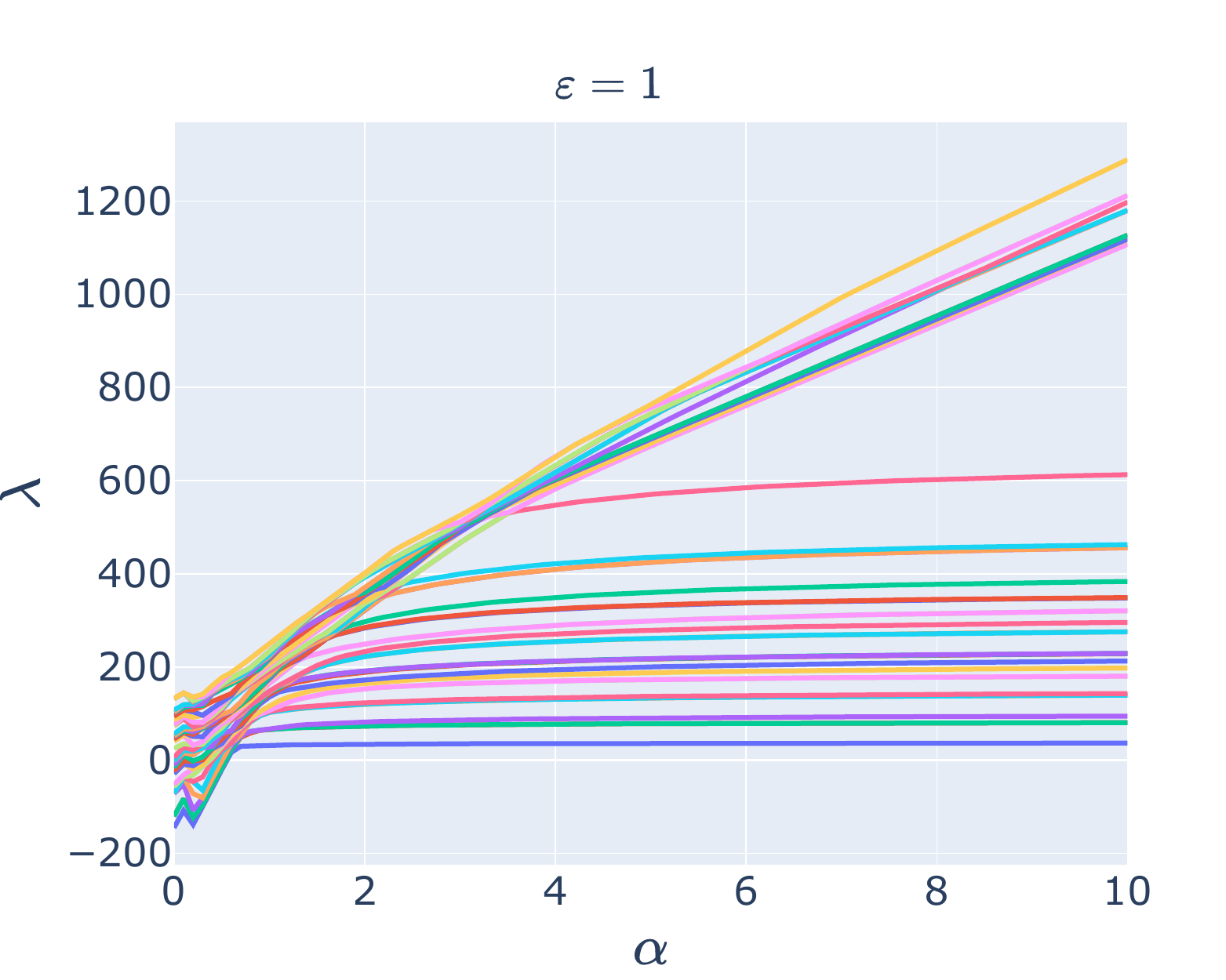}
		\end{minipage}\\
		\begin{minipage}{0.49\linewidth}\centering
			\includegraphics[scale=0.298,trim=0cm 0cm 1cm 1cm,clip]{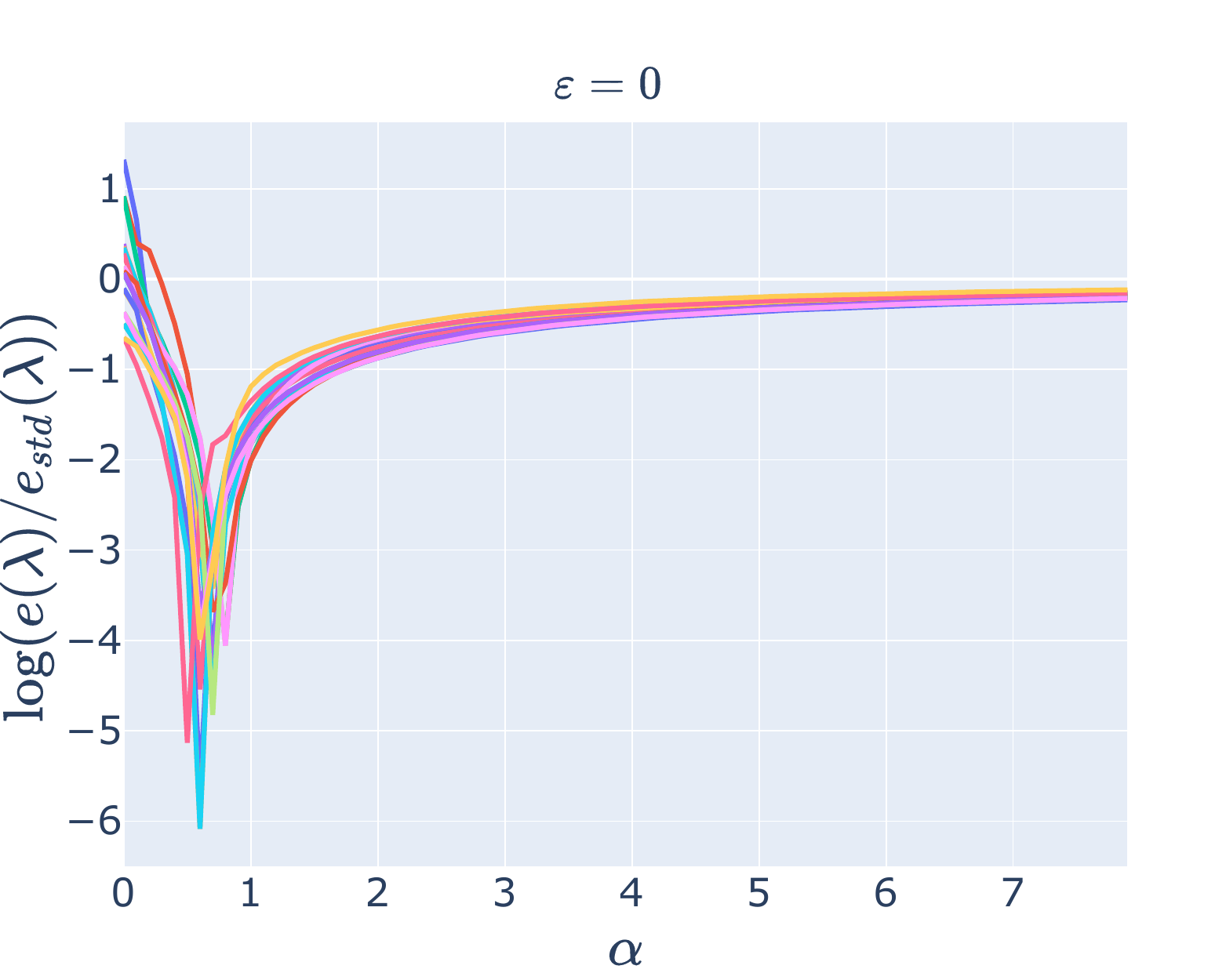}
		\end{minipage}
		\begin{minipage}{0.49\linewidth}\centering
			\includegraphics[scale=0.298,trim=0cm 0cm 1cm 1cm,clip]{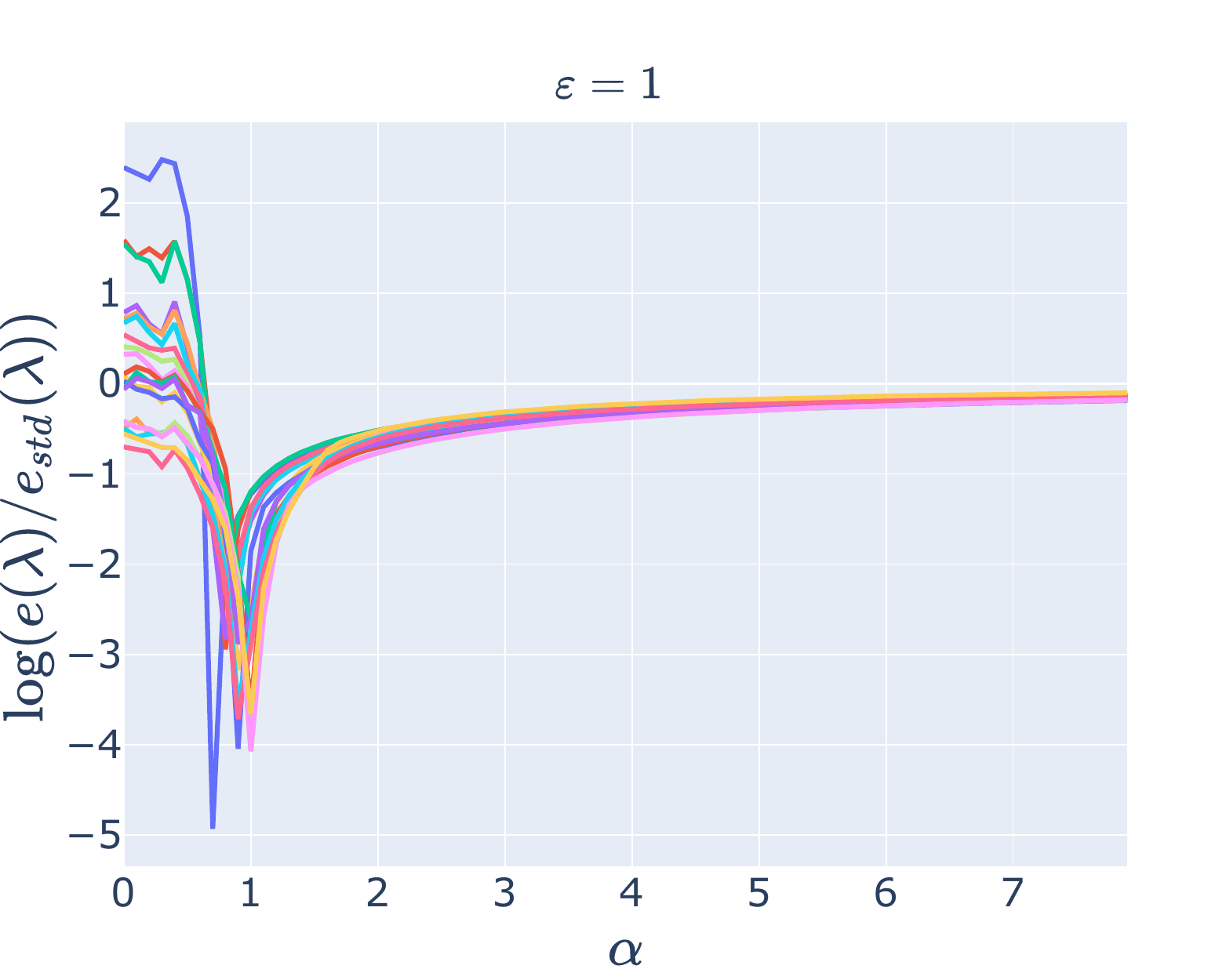}
		\end{minipage}\\
		\caption{Test \ref{subsec:cube-domain}. Dependence of the eigenvalues in the unit cube domain when using Nitsche's symmetric and incomplete variants with respect to the stabilization parameter $\alpha$, and $k=1, N=4$. Top: computation of the first 40 eigenvalues for each $\alpha$. Bottom: relative accuracy of first 20 computed eigenvalues.}
		\label{fig:alfa-dependence-cube}
	\end{figure}
	
	\begin{figure}[!hbt]\centering
		\begin{minipage}{0.32\linewidth}\centering
			{\footnotesize $\lambda_{h,1},\varepsilon=1$}\\
			\includegraphics[scale=0.12,trim=20cm 0cm 20cm 3cm,clip]{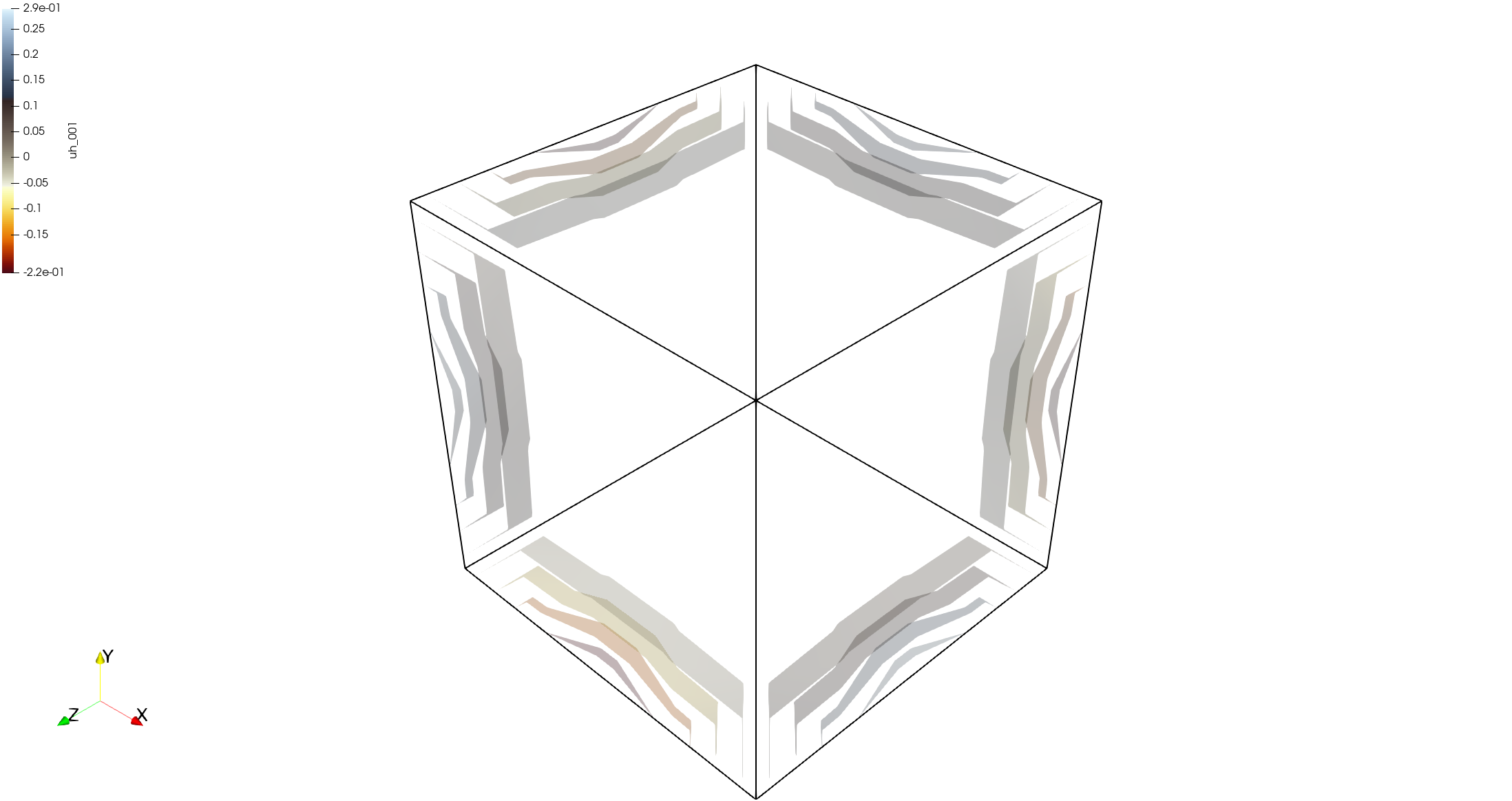}
		\end{minipage}
		\begin{minipage}{0.32\linewidth}\centering
			{\footnotesize $\lambda_{h,2},\varepsilon=1$}\\
			\includegraphics[scale=0.12,trim=20cm 0cm 20cm 3cm,clip]{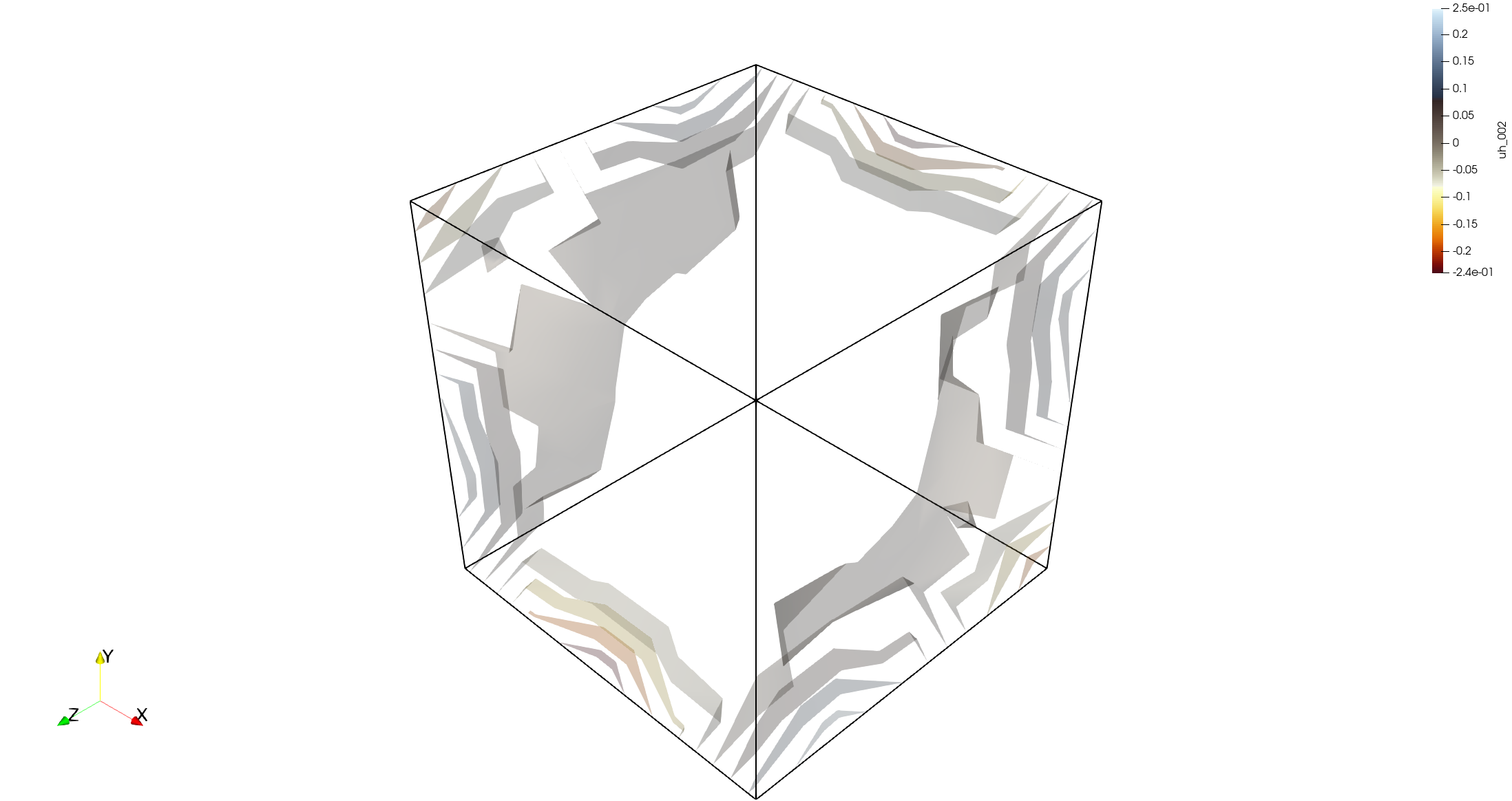}
		\end{minipage}
		\begin{minipage}{0.32\linewidth}\centering
			{\footnotesize $\lambda_{h,5},\varepsilon=1$}\\
			\includegraphics[scale=0.12,trim=20cm 0cm 20cm 3cm,clip]{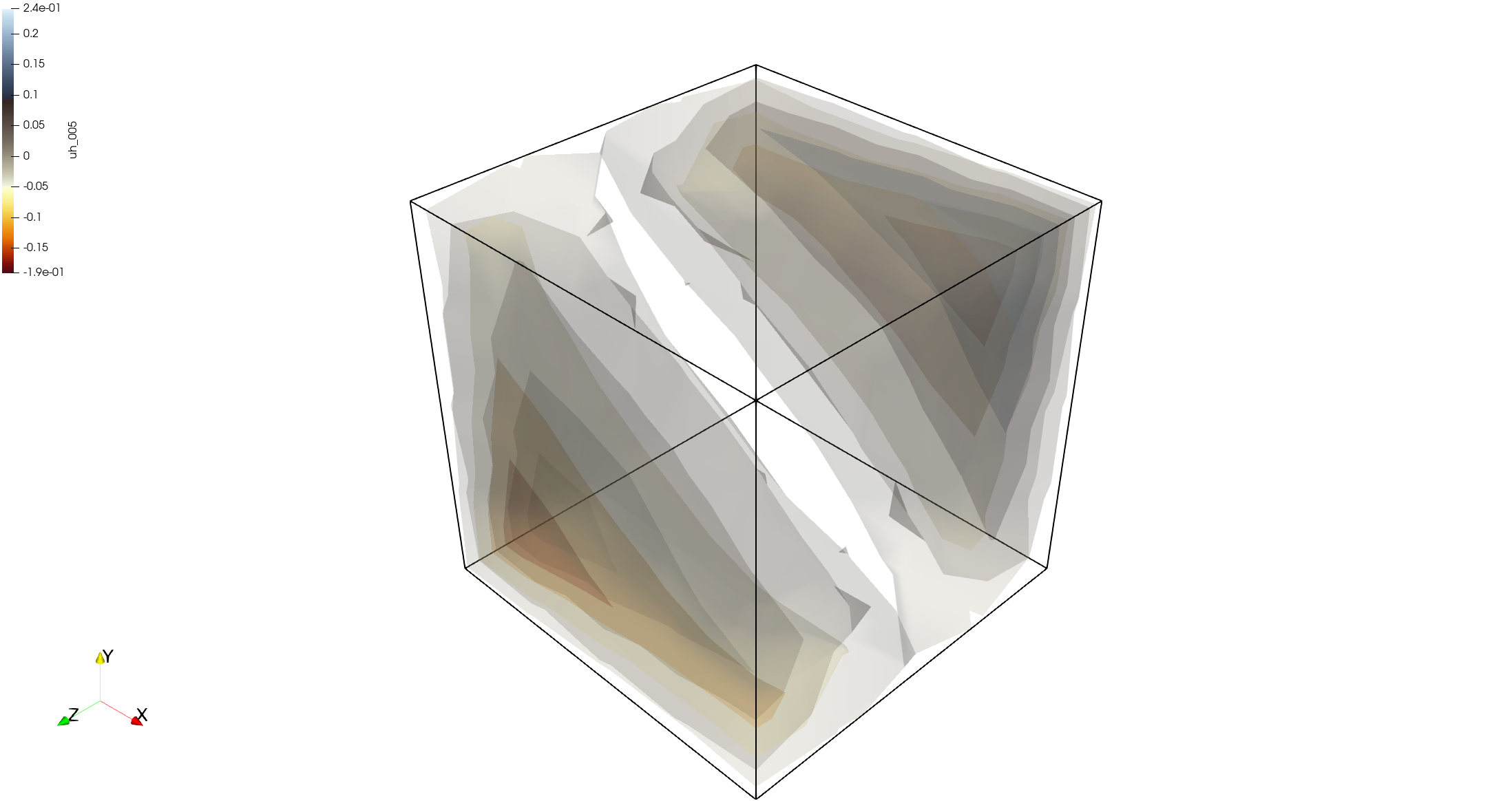}
		\end{minipage}\\
		\begin{minipage}{0.32\linewidth}\centering
			{\footnotesize $\lambda_{h,1},\varepsilon=0$}\\
			\includegraphics[scale=0.12,trim=20cm 0cm 20cm 3cm,clip]{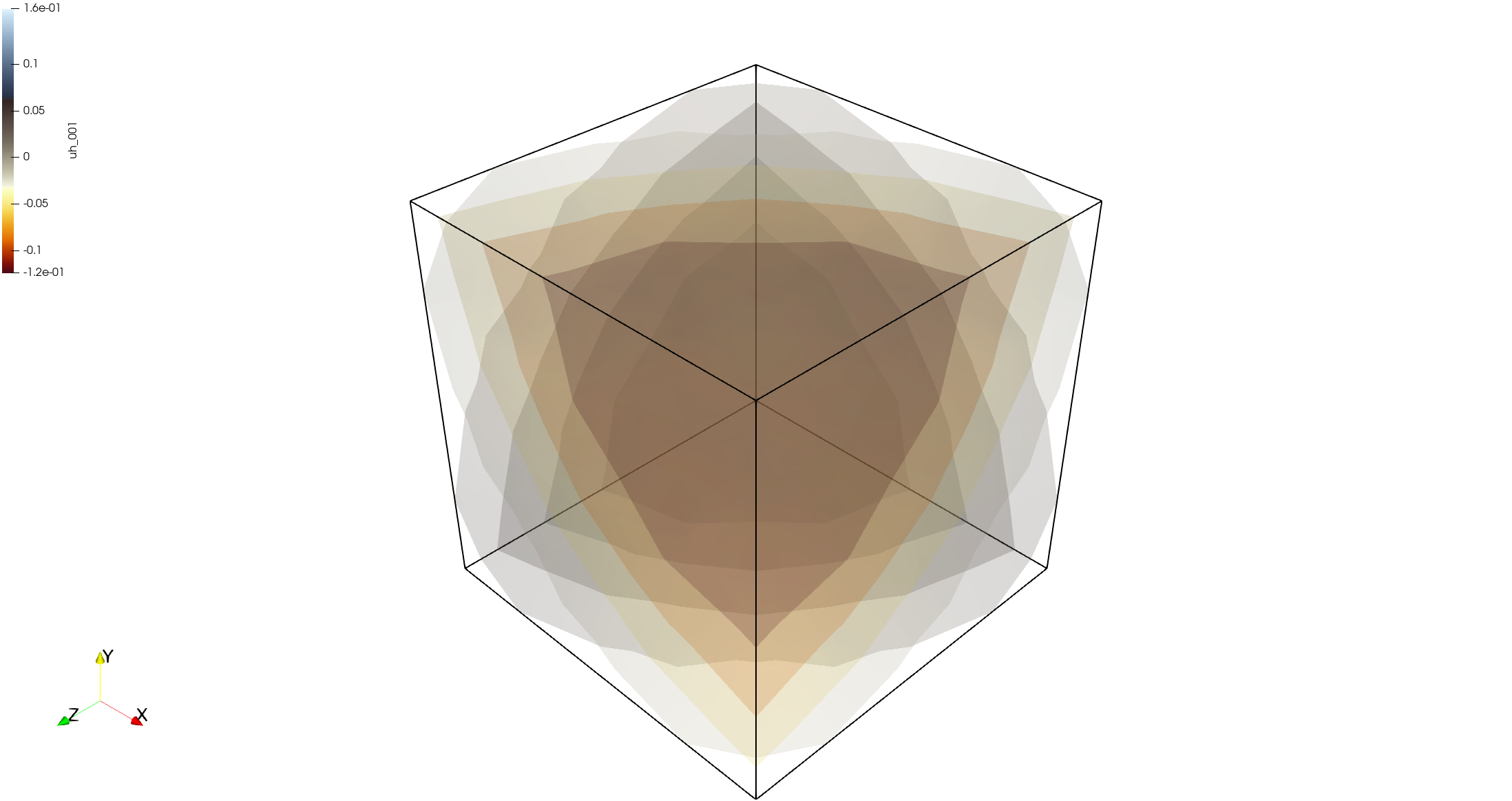}
		\end{minipage}
		\begin{minipage}{0.32\linewidth}\centering
			{\footnotesize $\lambda_{h,2},\varepsilon=0$}\\
			\includegraphics[scale=0.12,trim=20cm 0cm 20cm 3cm,clip]{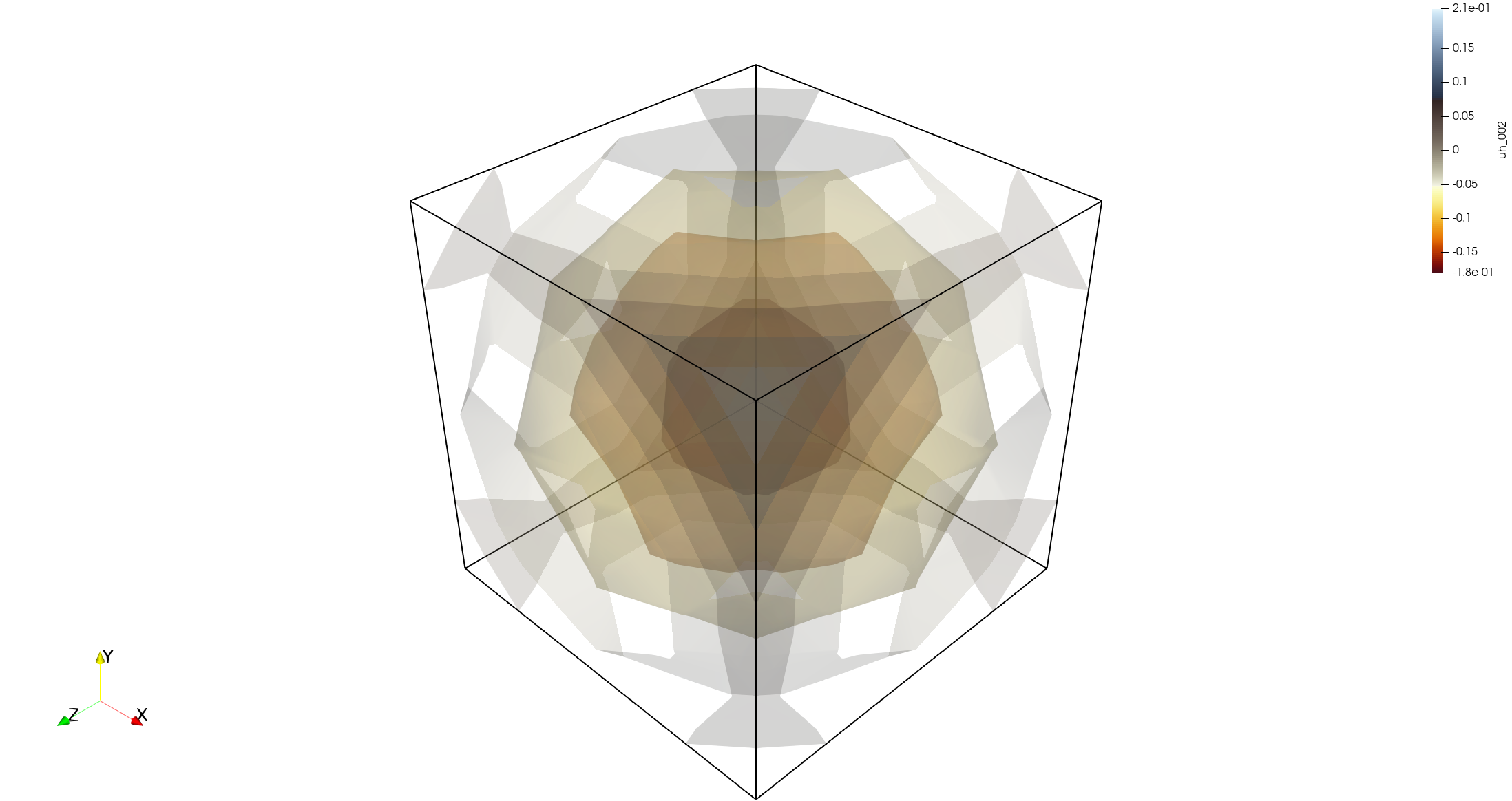}
		\end{minipage}
		\begin{minipage}{0.32\linewidth}\centering
			{\footnotesize $\lambda_{h,2},\varepsilon=0$}\\
			\includegraphics[scale=0.12,trim=20cm 0cm 20cm 3cm,clip]{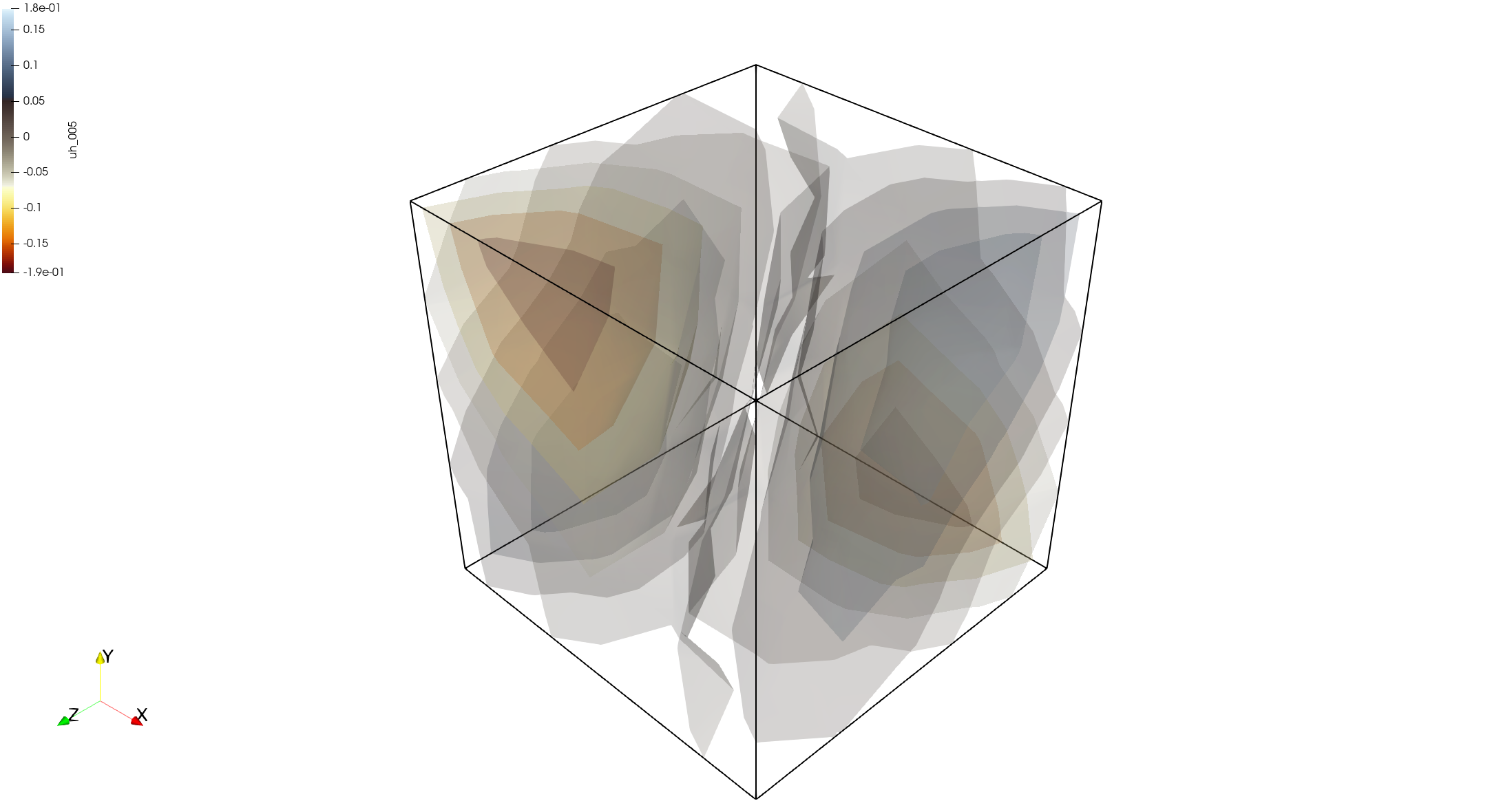}
		\end{minipage}\\
		\begin{minipage}{0.32\linewidth}\centering
			{\footnotesize $\lambda_{h,1},\varepsilon=-1$}\\
			\includegraphics[scale=0.12,trim=20cm 0cm 20cm 3cm,clip]{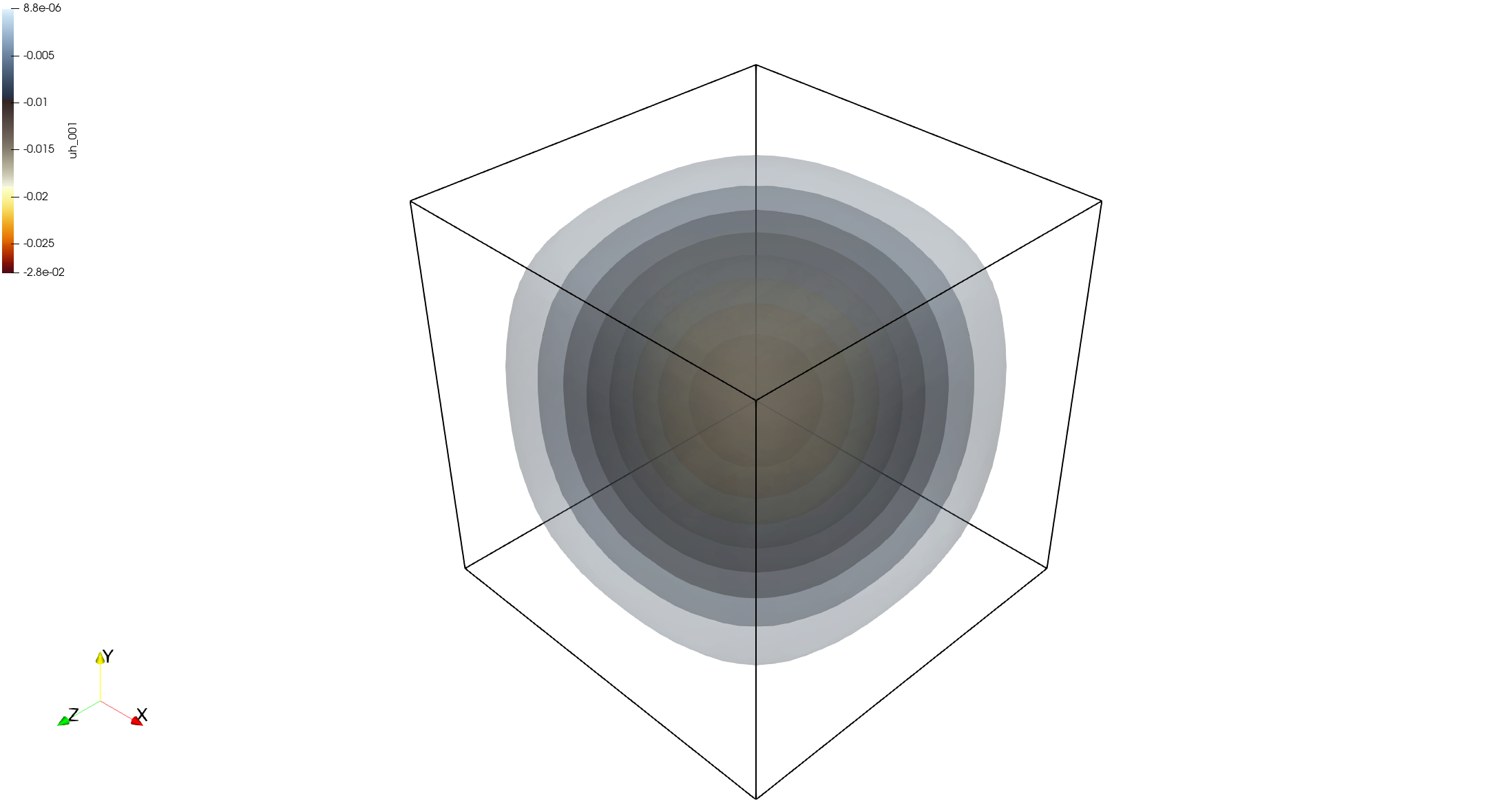}
		\end{minipage}
		\begin{minipage}{0.32\linewidth}\centering
			{\footnotesize $\lambda_{h,2},\varepsilon=-1$}\\
			\includegraphics[scale=0.12,trim=20cm 0cm 20cm 3cm,clip]{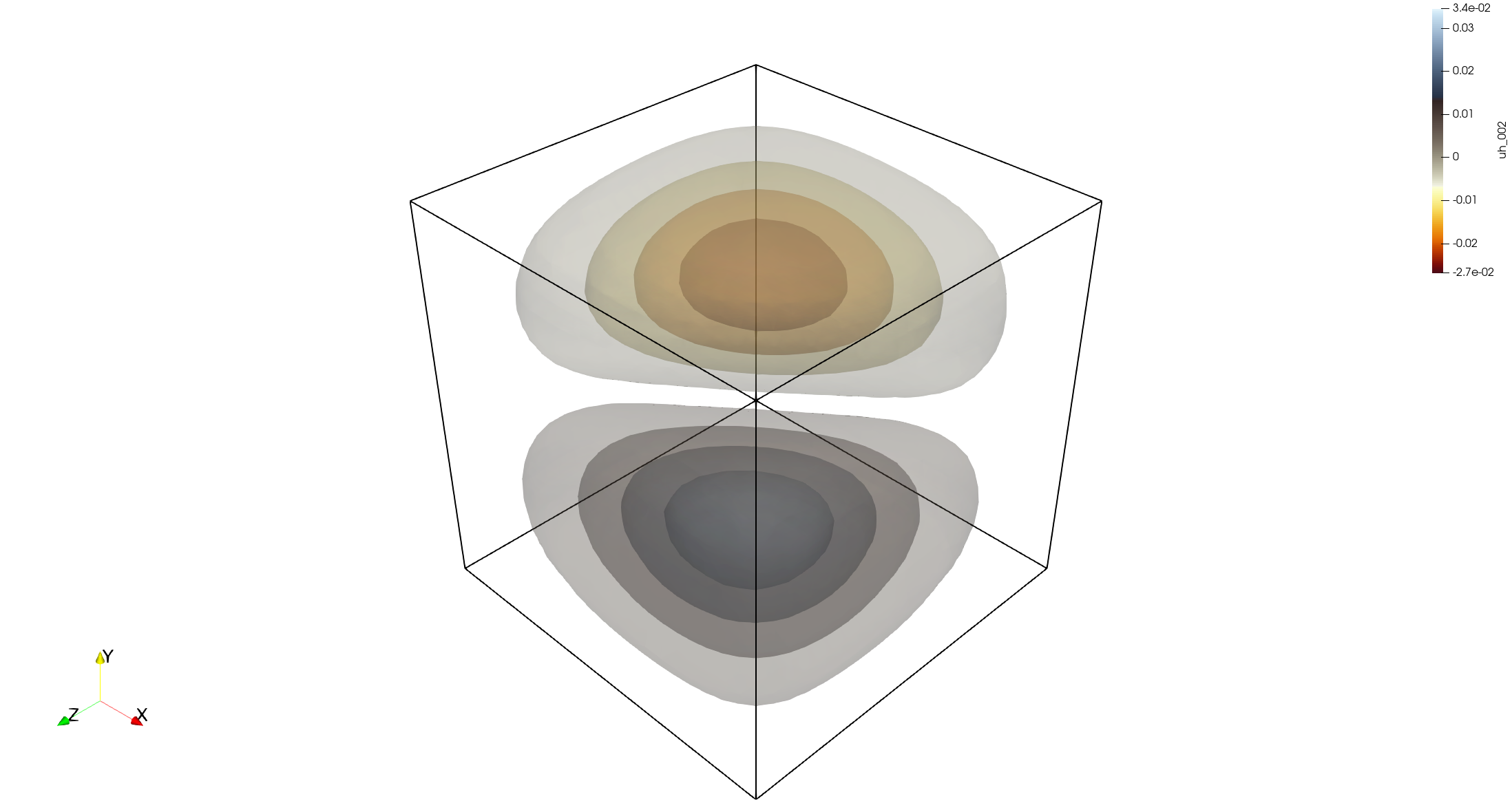}
		\end{minipage}
		\begin{minipage}{0.32\linewidth}\centering
			{\footnotesize $\lambda_{h,2},\varepsilon=-1$}\\
			\includegraphics[scale=0.12,trim=20cm 0cm 20cm 3cm,clip]{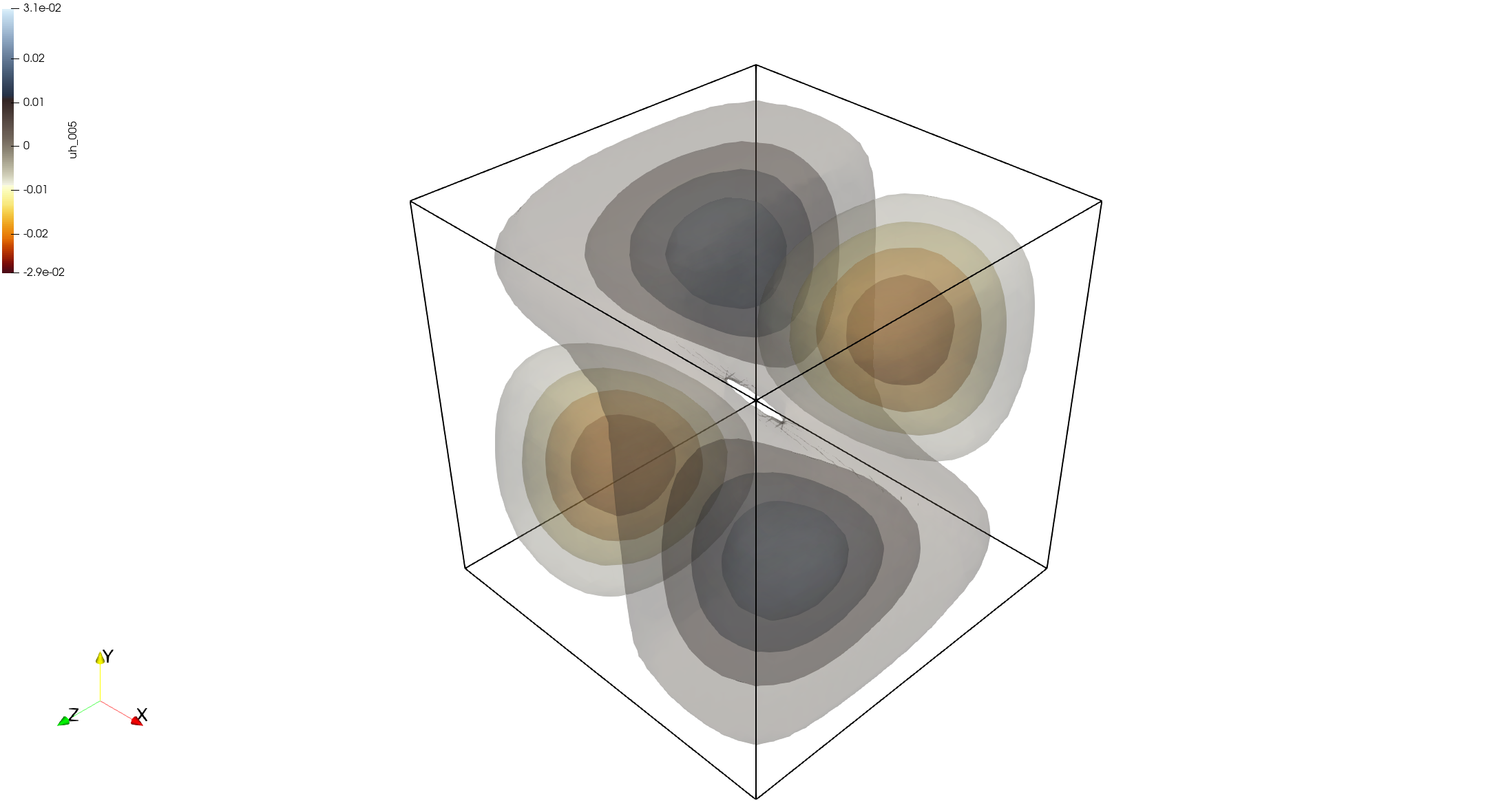}
		\end{minipage}
		\caption{Test \ref{subsec:cube-domain}. Contour plot of the first, second and fifth lowest computed eigenmodes in the cube domain when targeting the first 40 lowest eigenvalues and taking $\alpha=0.1$ in the three variants of Nitsche method for $k=1$ and $N=4$. }
		\label{fig:cube-eigenvalues-spurious}
	\end{figure}

	\subsubsection{Fichera-type three-dimensional domain}\label{subsec:fichera}

We finally consider a three-dimensional singular configuration. The computational domain is the Fichera-type corner $\Omega := (-1,1)^3 \setminus [0,1]^3,$ namely, the cube $(-1,1)^3$ with the positive octant removed. This geometry contains a re-entrant corner at the origin and several re-entrant edges, and therefore provides a
 benchmark for assessing the performance of the adaptive strategy in the
presence of reduced regularity. We consider two boundary configurations. In the first one, the whole boundary is treated as the Dirichlet boundary and the corresponding essential condition is imposed weakly by means of Nitsche's method. In the second one, we impose mixed boundary conditions by taking the Dirichlet part as
\[
\Gamma :=
\{x=0,\ 0<y<1,\ 0<z<1\}
\cup
\{y=0,\ 0<x<1,\ 0<z<1\}
\cup
\{z=0,\ 0<x<1,\ 0<y<1\},
\]
and the Neumann part as $\Sigma:=\partial\Omega\setminus\overline{\Gamma}$. This second configuration preserves the Fichera-type geometric singularity, while introducing Dirichlet--Neumann transitions along the re-entrant region.

For the fully Dirichlet configuration, the adaptive loop is driven by the estimator introduced in Section~\ref{sec:apost}. For the mixed boundary configuration, we use the natural extension of the same residual indicator: the Nitsche boundary contribution is restricted to the Dirichlet part $\Gamma$, whereas on the Neumann part $\Sigma$ we add the natural boundary residual
\[
   \sum_{F\subset \Sigma} h_F\|\partial_n u_h\|_{0,F}^2.
\]
Thus, in the mixed case, the local indicator is obtained by replacing the full-boundary contribution by
\[
   \sum_{F\subset\partial K\cap\Gamma}h_F^{-1}\|u_h\|_{0,F}^2
   +
   \sum_{F\subset\partial K\cap\Sigma}h_F\|\partial_n u_h\|_{0,F}^2.
\]
We emphasize that the a posteriori theory proved in Section~\ref{sec:apost} covers the fully Dirichlet case. The mixed Fichera experiment should therefore be understood as a numerical stress test of the estimator beyond the scope of the reliability and efficiency results established above.

Starting from an initial quasi-uniform tetrahedral mesh, the adaptive loop is driven by the corresponding residual indicator described above. Figure~\ref{fig:fichera-meshes} shows representative meshes obtained during the refinement process for both boundary configurations. In the fully Dirichlet case, the adaptive algorithm concentrates degrees
of freedom in a neighbourhood of the singular corner and along the re-entrant edges. In the mixed case, the refinement pattern is even more localized around the intersection of the re-entrant geometry and the Dirichlet--Neumann transition, which is consistent with the expected loss of regularity.

\begin{figure}[htbp]
\centering
\begin{minipage}{0.32\linewidth}\centering
{\footnotesize initial mesh}
\includegraphics[scale=0.12,trim=11cm 0cm 11cm 2cm,clip]{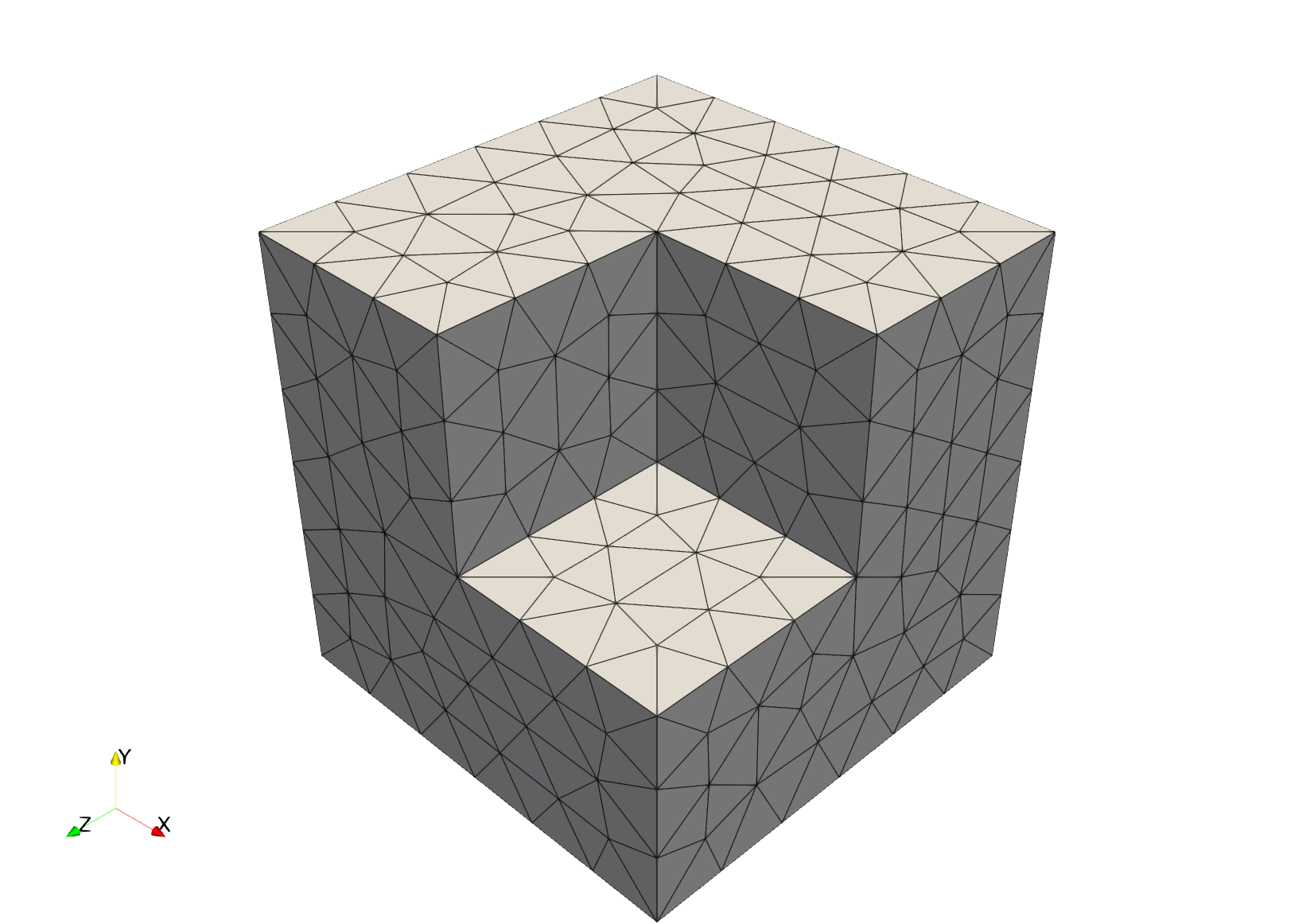}
\end{minipage}
\begin{minipage}{0.32\linewidth}\centering
{\footnotesize fully Dirichlet}
\includegraphics[scale=0.12,trim=11cm 0cm 11cm 2cm,clip]{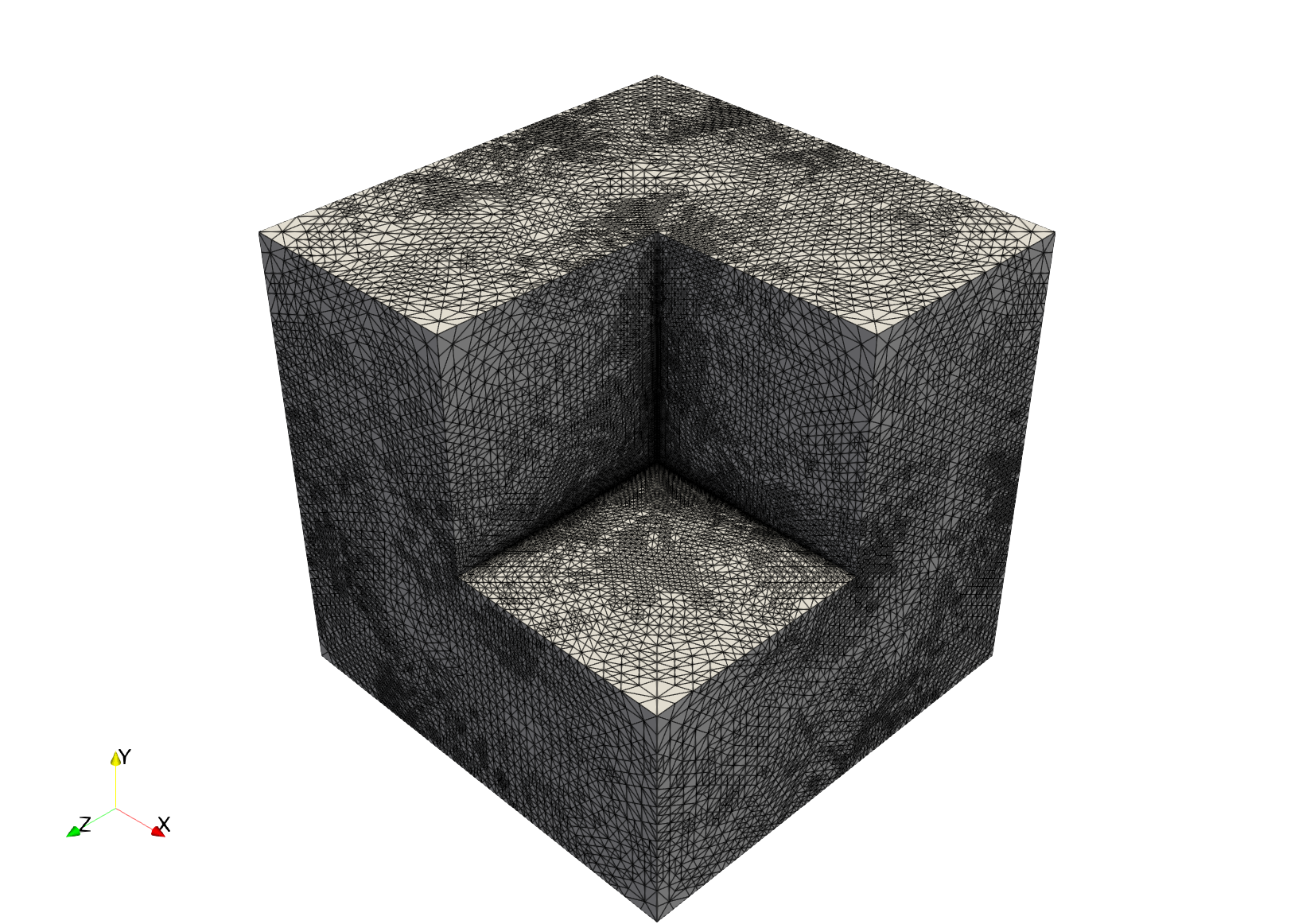}
\end{minipage}
\begin{minipage}{0.32\linewidth}\centering
{\footnotesize mixed boundary}
\includegraphics[scale=0.08,trim=22cm 0cm 22cm 3cm,clip]{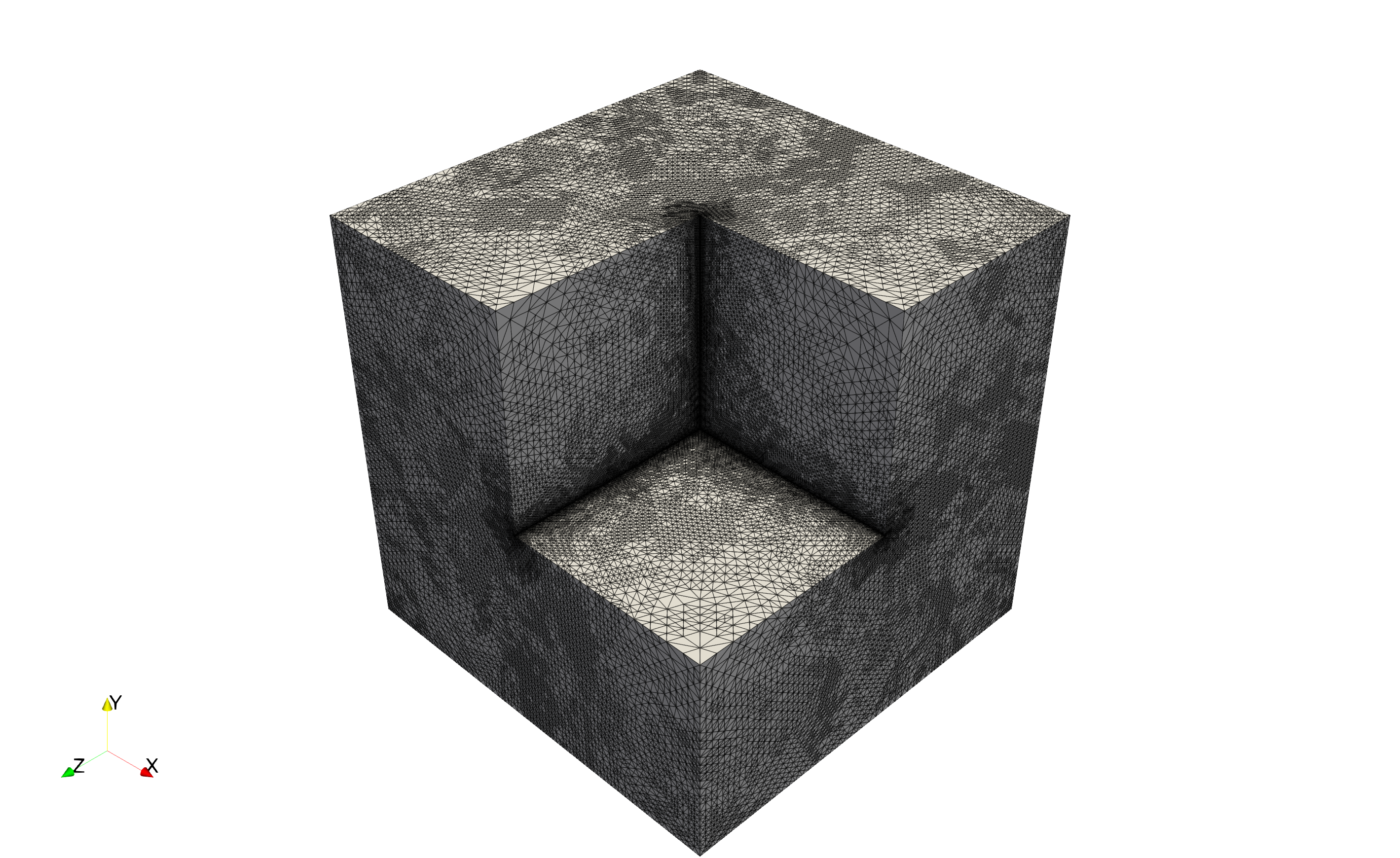}
\end{minipage}
\caption{Fichera-type domain. Initial mesh and final adaptively refined meshes for the
fully Dirichlet and mixed boundary configurations.}
\label{fig:fichera-meshes}
\end{figure}

Figure~\ref{fig:fichera-eigenfunction} displays the numerical approximation of the first
eigenfunction in both cases. The structure of the computed eigenmodes is consistent with
the singular character of the domain. In the fully Dirichlet case, the eigenfunction
exhibits the expected concentration around the re-entrant corner. In the mixed case, the
eigenfunction reflects the additional effect of the boundary transition, while preserving
a clear singular behavior in the neighbourhood of the Fichera corner.

\begin{figure}[htbp]
\centering
\begin{minipage}{0.49\linewidth}\centering
{\footnotesize fully Dirichlet}\\
\includegraphics[scale=0.08,trim=22cm 0cm 22cm 2cm,clip]{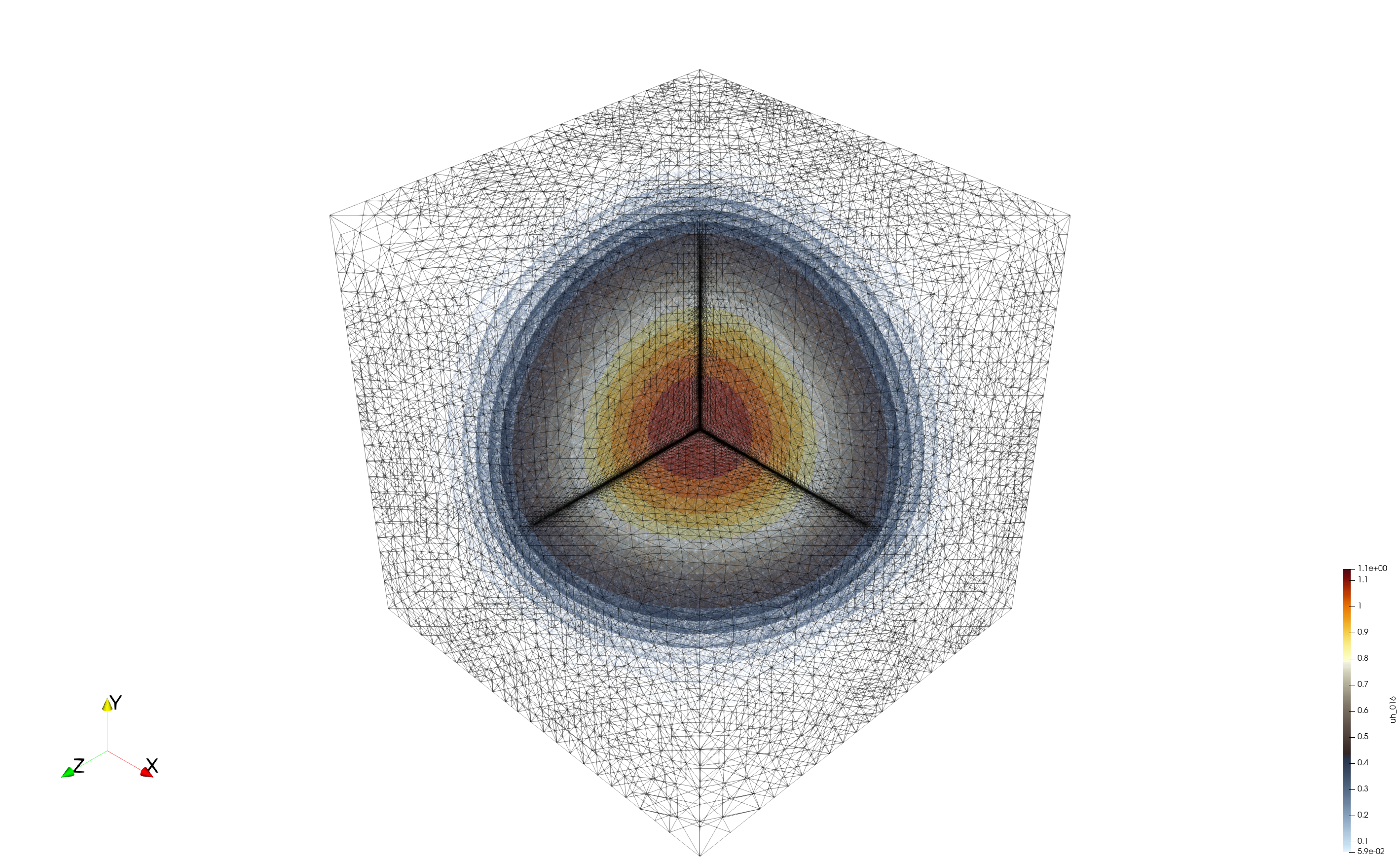}
\end{minipage}
\begin{minipage}{0.49\linewidth}\centering
{\footnotesize mixed boundary}\\
\includegraphics[scale=0.08,trim=22cm 0cm 20cm 2cm,clip]{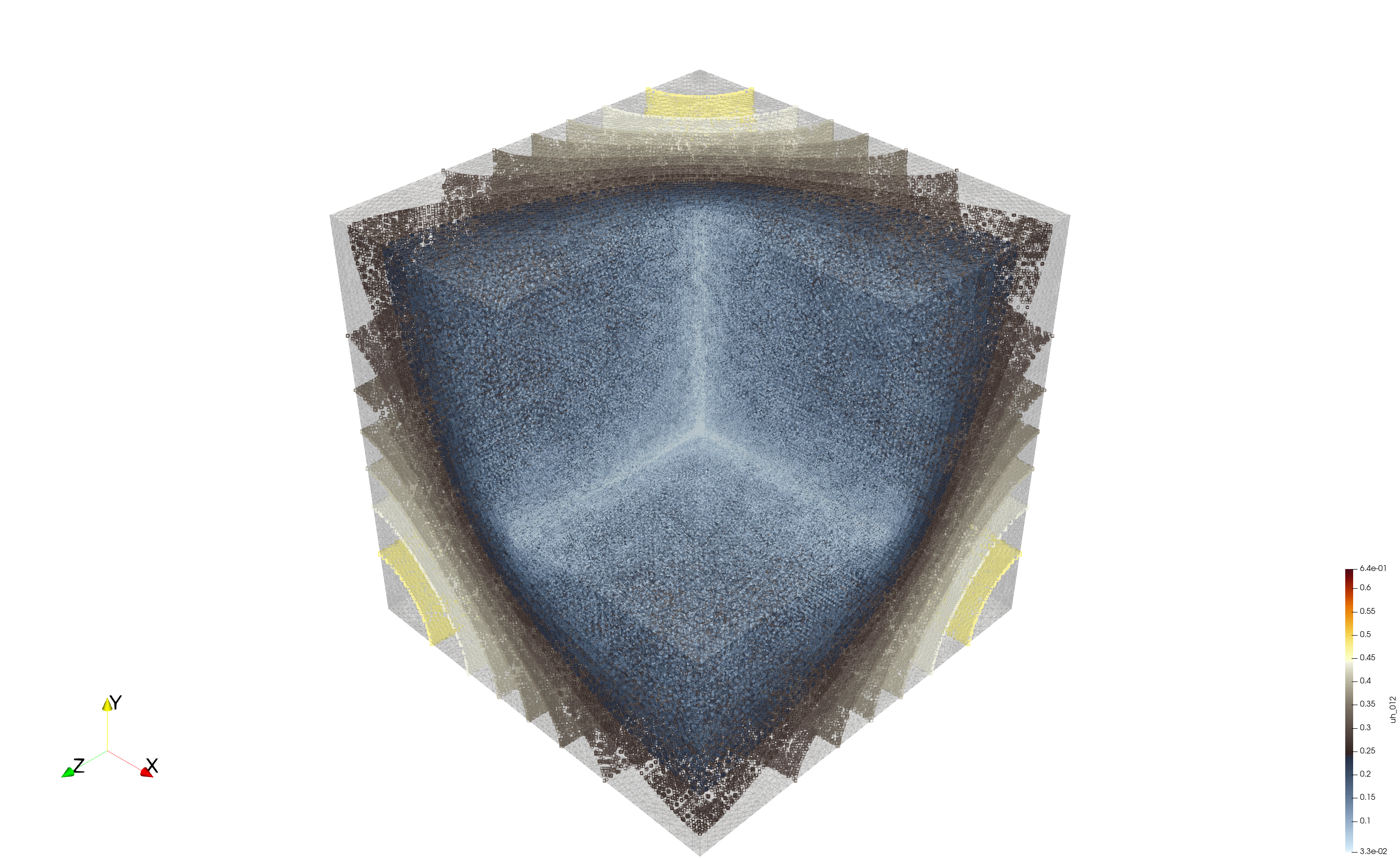}
\end{minipage}
\caption{Approximation of the first eigenfunction on the Fichera-type domain for the
fully Dirichlet and mixed boundary configurations.}
\label{fig:fichera-eigenfunction}
\end{figure}

Since the exact eigenvalues are not available, we estimate the errors by using extrapolated values. For the fully Dirichlet case we take $\lambda_{1,\mathrm{ext}}^{D} = 10.5320475849391,$ whereas for the mixed boundary case we take $\lambda_{1,\mathrm{ext}}^{M} = 0.950584841622746.$ 

Tables~\ref{tab:fichera-eig1-dirichlet} and~\ref{tab:fichera-eig1-mixed} report the number of degrees of freedom, the computed first eigenvalue, the estimated eigenvalue error, the squared residual indicator $\eta^2$, and the corresponding effectivity index. For the mixed boundary case, $\eta^2$ includes the additional Neumann contribution over $\Sigma$ described above. In both cases, the computed eigenvalues converge monotonically to the corresponding extrapolated value. Moreover, the effectivity index remains bounded and nearly constant along the adaptive sequence. In the fully Dirichlet case, the effectivity stabilizes around $3.2\times 10^{-2}$, whereas in the mixed case it stabilizes around $4.5\times 10^{-2}$. Thus, the estimator is able to capture the asymptotic behavior of the eigenvalue error with a stable proportionality factor.

\begin{table}[htbp!]
\centering
\caption{Adaptive approximation of the first eigenvalue on the Fichera-type domain with
fully Dirichlet boundary conditions.}
\label{tab:fichera-eig1-dirichlet}
\begin{tabular}{c c c c c}
\hline
$\mathrm{dof}$ & $\lambda_{1,h}$ &
$\textrm{e}(\lambda_{1})$ & $\eta^2$ &
$\mathrm{eff}(\lambda_1)$ \\
\hline
381     & 12.92404726 & 2.39199968 & 91.52429794 & 0.0261 \\
519     & 12.03318974 & 1.50114216 & 49.07205051 & 0.0306 \\
1066    & 11.39544802 & 0.86340044 & 26.19581333 & 0.0330 \\
1917    & 11.09767665 & 0.56562907 & 16.62839646 & 0.0340 \\
4488    & 10.84242253 & 0.31037495 & 9.42300529  & 0.0329 \\
10439   & 10.71653197 & 0.18448439 & 5.50997722  & 0.0335 \\
23813   & 10.63762803 & 0.10558045 & 3.20258868  & 0.0330 \\
51634   & 10.59565862 & 0.06361103 & 1.93347940  & 0.0329 \\
111416  & 10.57001928 & 0.03797170 & 1.15560480  & 0.0329 \\
281972  & 10.55258391 & 0.02053632 & 0.62884301  & 0.0327 \\
717724  & 10.54317523 & 0.01112764 & 0.33921781  & 0.0328 \\
1497991 & 10.53875508 & 0.00670749 & 0.20687976  & 0.0324 \\
\hline
\end{tabular}
\end{table}

\begin{table}[htbp!]
\centering
\caption{Adaptive approximation of the first eigenvalue on the Fichera-type domain with
mixed boundary conditions.}
\label{tab:fichera-eig1-mixed}
\begin{tabular}{c c c c c}
\hline
$\mathrm{dof}$ & $\lambda_{1,h}$ &
$\textrm{e}(\lambda_{1})$ & $\eta^2$ &
$\mathrm{eff}(\lambda_1)$ \\
\hline
381      & 1.05996361 & 0.10937876 & 1.58594289 & 0.0690 \\
557      & 1.01789834 & 0.06731349 & 0.99124691 & 0.0679 \\
998      & 0.99001544 & 0.03943060 & 0.69200999 & 0.0570 \\
2237     & 0.97439435 & 0.02380951 & 0.44588349 & 0.0534 \\
5134     & 0.96430818 & 0.01372334 & 0.27291937 & 0.0503 \\
10471    & 0.95956456 & 0.00897972 & 0.17611006 & 0.0510 \\
26354    & 0.95539531 & 0.00481047 & 0.09988994 & 0.0482 \\
69626    & 0.95319257 & 0.00260773 & 0.05322025 & 0.0490 \\
140679   & 0.95217816 & 0.00159332 & 0.03412030 & 0.0467 \\
367711   & 0.95143733 & 0.00085248 & 0.01804958 & 0.0472 \\
828157   & 0.95107447 & 0.00048963 & 0.01074049 & 0.0456 \\
1864416  & 0.95086470 & 0.00027986 & 0.00625179 & 0.0448 \\
\hline
\end{tabular}
\end{table}

The convergence histories are shown in Figure~\ref{fig:fichera-convergence}. For the fully Dirichlet case, the adaptive curve is consistently below the uniform one and follows a experimental convergence rate close to $\mathcal{O}(\mathrm{dof}^{-2/3})$, which corresponds to the expected optimal eigenvalue rate for linear elements in three dimensions. In the mixed boundary case, the adaptive strategy also outperforms uniform refinement. The uniform sequence exhibits a slower convergence, close to $\mathcal{O}(\mathrm{dof}^{-0.48})$, while the adaptive sequence recovers the optimal rate. This behavior provides numerical evidence that the residual indicator remains effective in compensating for the reduced regularity produced by the combined effect of the Fichera corner and the mixed boundary transition.
\begin{figure}[htbp]
\centering
\begin{minipage}{0.49\linewidth}\centering
{\footnotesize fully Dirichlet}
\includegraphics[scale=0.4,trim=0cm 0cm 2cm 2cm,clip]{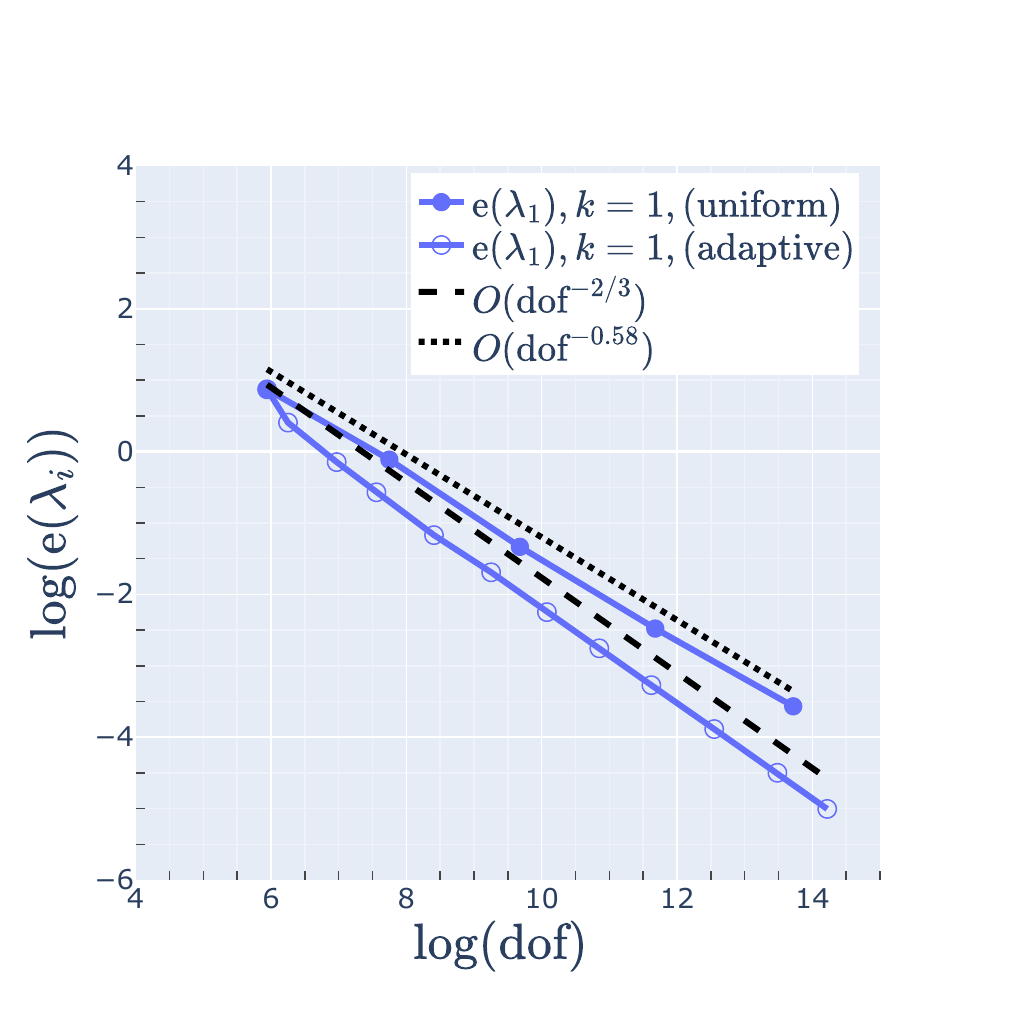}
\end{minipage}
\begin{minipage}{0.49\linewidth}\centering
{\footnotesize mixed boundary}
\includegraphics[scale=0.4,trim=0cm 0cm 2cm 2cm,clip]{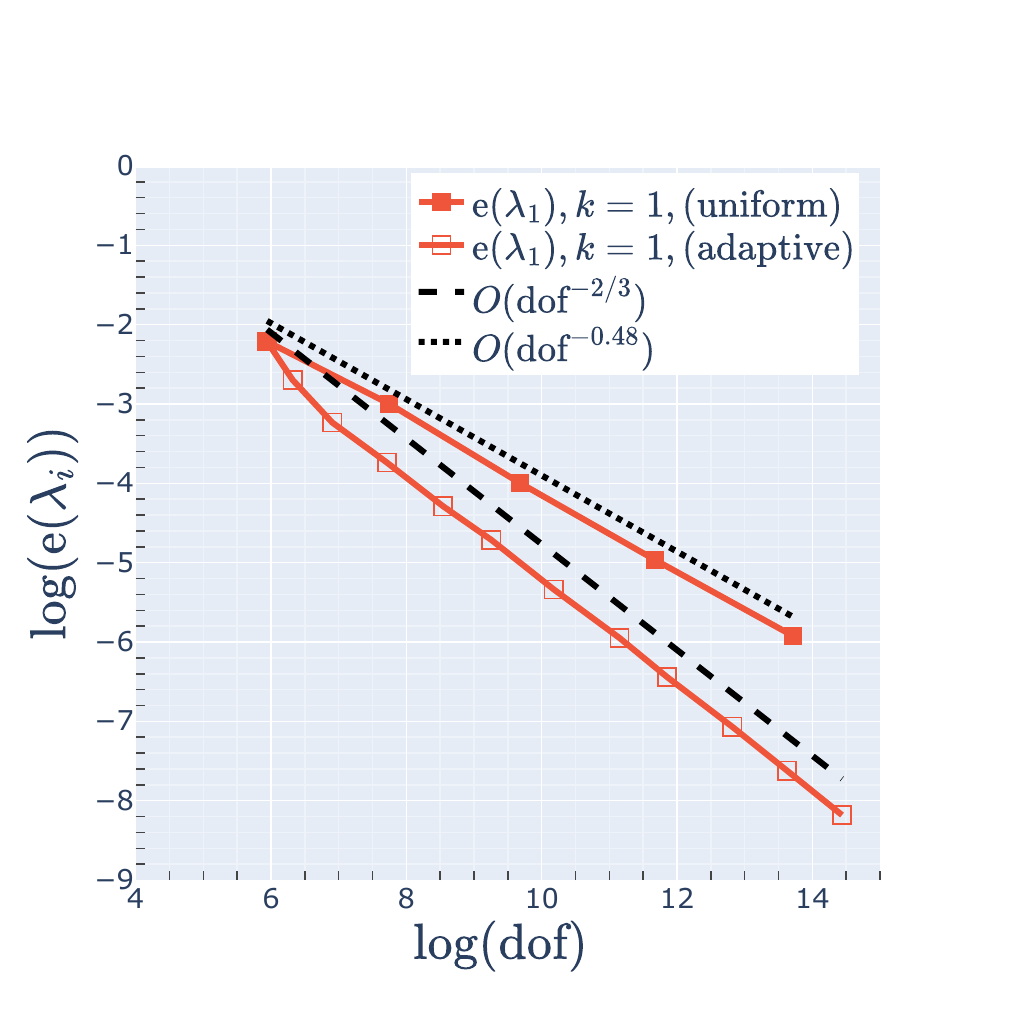}
\end{minipage}
\caption{Fichera-type domain. Convergence history for the first eigenvalue under uniform
and adaptive refinement. Left: fully Dirichlet boundary conditions. Right: mixed boundary
conditions.}
\label{fig:fichera-convergence}
\end{figure}

\begin{figure}[htbp]
\centering
\begin{minipage}{0.49\linewidth}\centering
{\footnotesize fully Dirichlet}\\
\includegraphics[scale=0.45,trim=1.1cm 0cm 2cm 2cm,clip]{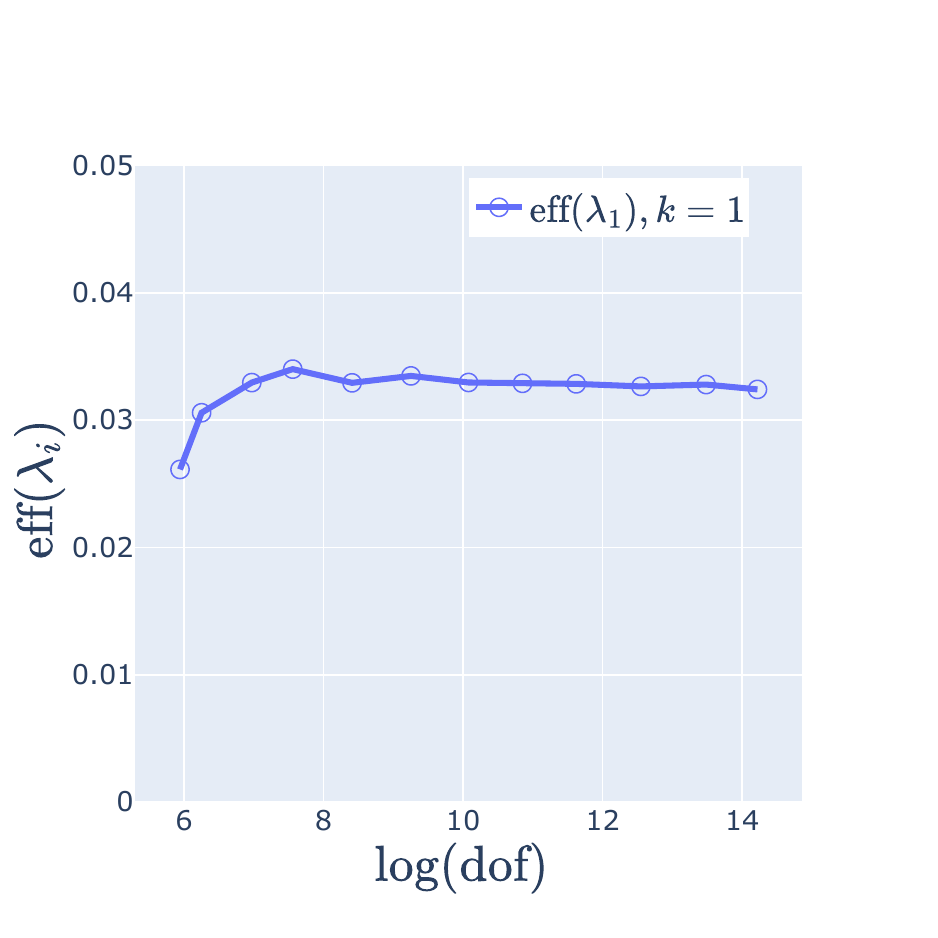}
\end{minipage}
\begin{minipage}{0.49\linewidth}\centering
{\footnotesize mixed boundary}\\
\includegraphics[scale=0.45,trim=1.1cm 0cm 2cm 2cm,clip]{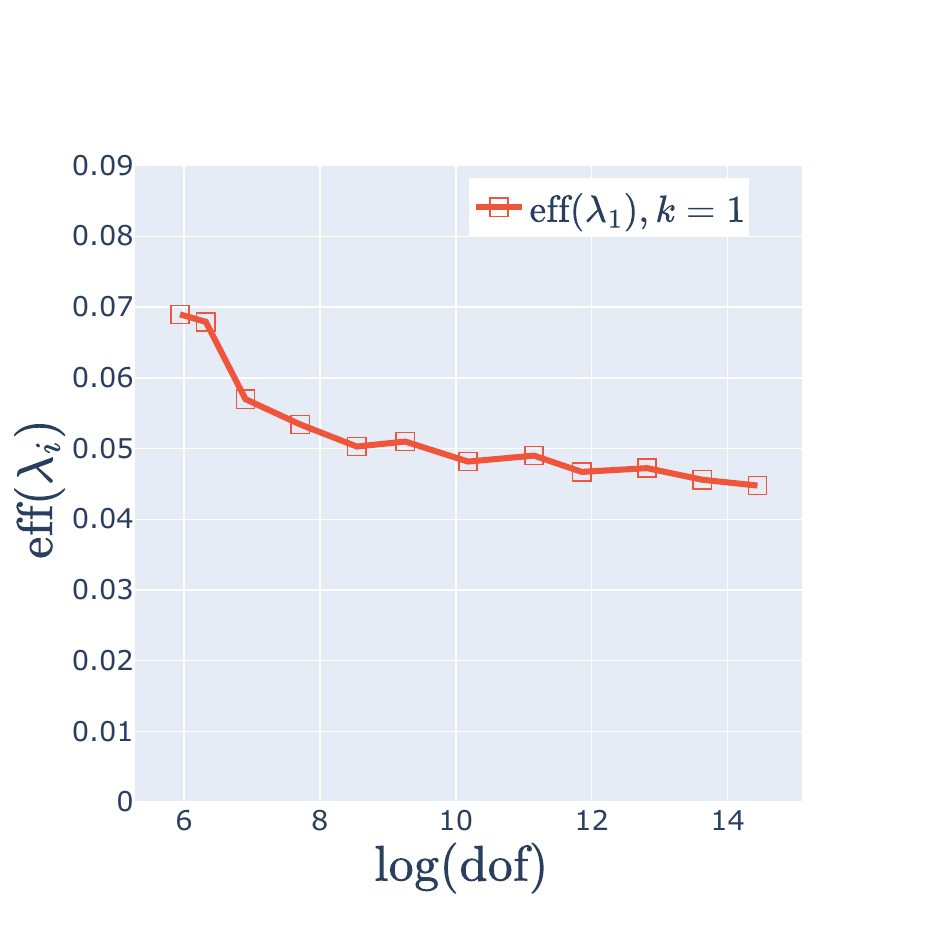}
\end{minipage}
\caption{Fichera-type domain. Effectivity index for the adaptive approximation of the
first eigenvalue. Left: fully Dirichlet boundary conditions. Right: mixed boundary
conditions.}
\label{fig:fichera-efficiency}
\end{figure}

These results show that the proposed adaptive Nitsche finite element scheme remains effective in three-dimensional non-convex domains with reduced regularity. The fully Dirichlet experiment, which is consistent with the estimator analyzed in Section~\ref{sec:apost}, shows that the adaptive procedure correctly identifies the singular region created by the Fichera corner and its re-entrant edges. The mixed boundary experiment goes beyond the theory proved in this work, since the residual indicator also includes the natural Neumann contribution on $\Sigma$. Nevertheless, the results indicate that the same adaptive strategy remains robust when the singular behavior is enhanced by the presence of Dirichlet--Neumann transitions. In both cases, the adaptive procedure yields a clear improvement over uniform refinement and produces a stable effectivity index along the refinement process.
	
	%%%%%% 
	
	\section{Conclusions}\label{sec:conclusions}

In this paper, we have presented the numerical analysis of a Laplace eigenvalue problem in which the Dirichlet boundary condition is imposed weakly by means of Nitsche's technique. We considered the symmetric, incomplete and skew-symmetric variants of the method and studied the corresponding discrete spectral problems within the framework of compact operator theory. The convergence in norm of the discrete solution operator allowed us to apply the Babu\v{s}ka--Osborn theory and to derive error estimates for eigenfunctions and eigenvalues. The predicted rates depend on the Nitsche variant. We observed that the symmetric formulation provides optimal convergence for the eigenfunctions and double order for the eigenvalues, whereas the nonsymmetric variants lead to suboptimal eigenvalue convergence.

We have also developed a residual-based a posteriori error analysis for the symmetric Nitsche formulation. For simple eigenvalues, and under a saturation assumption, we proved a reliability estimate. Local efficiency was obtained by standard bubble-function arguments. This analysis provides a theoretical basis for the adaptive refinement strategy used in the numerical experiments.

The numerical results confirm the predicted convergence rates and illustrate the robustness of the method for stabilization parameters that need not be excessively large. For small values of $\alpha$, however, the symmetric and incomplete variants may exhibit boundary layers, eigenvalue crossings or veering. Non-symmetric variants of the Nitsche method give complex eigenvalues in some cases. This behavior is consistent with the coercivity requirements for the bilinear form $A_h(\cdot,\cdot)$ and the non-self adjoint nature of the scheme. Similar to what was observed in \cite{burman2012penalty}, the skew-symmetric scheme appears to be the most robust variant in terms of spectral computation, at the price of suboptimal convergence rates. Therefore, although Nitsche's method provides a flexible mechanism for imposing boundary conditions weakly, the choice of the stabilization parameter remains essential for a correct approximation of the spectrum.

Finally, the adaptive experiments show that the proposed estimator correctly detects singular regions and improves the performance of the method in non-convex configurations. In particular, the three-dimensional Fichera-type tests indicate that the adaptive strategy remains effective beyond the regularity setting covered by the theory, including mixed boundary configurations where a complete three-dimensional regularity result is not assumed. The extension of the present analysis to more complex eigenvalue problems, such as transmission eigenvalue problems and other multiphysics spectral models, will be the subject of future work.

	\bibliographystyle{siam}
	\bibliography{references}
	
\end{document}